\documentclass[14pt,oneside,american]{extbook}
\usepackage[T1]{fontenc}
\usepackage[latin9]{inputenc}
\usepackage{geometry}
\geometry{verbose,tmargin=3cm,bmargin=3cm,lmargin=3cm,rmargin=3cm,headheight=3cm,headsep=1cm,footskip=2cm}
\setcounter{secnumdepth}{5}
\setcounter{tocdepth}{1}
\usepackage{float}
\usepackage{mathrsfs}
\usepackage{amsmath}
\usepackage{amsthm}
\usepackage{amssymb}
\usepackage{setspace}
\PassOptionsToPackage{normalem}{ulem}
\usepackage{ulem}
\onehalfspacing

\makeatletter
\theoremstyle{definition}
\newtheorem{example}{\protect\examplename}[section]
\theoremstyle{remark}
\newtheorem{rem}{\protect\remarkname}[section]
\theoremstyle{plain}
\newtheorem{prop}{\protect\propositionname}[section]
\theoremstyle{plain}
\newtheorem{lem}{\protect\lemmaname}[section]

\usepackage[all]{xy}

\makeatother

\usepackage{babel}
\providecommand{\examplename}{Example}
\providecommand{\lemmaname}{Lemma}
\providecommand{\propositionname}{Proposition}
\providecommand{\remarkname}{Remark}

\begin{document}
\title{Introdução à Teoria da Homotopia Abstrata}
\author{Yuri Ximenes Martins}
\maketitle

\chapter*{Prefácio}

\thispagestyle{empty}
\addcontentsline{toc}{chapter}{Introdução}

$\quad\;\,$Existe um certo consenso de que tudo aquilo que pertence
à matemática pode ser classificado dentro de três disciplinas: a \emph{álgebra},
a \emph{análise} e a \emph{topologia}. 

Na álgebra, introduz-se operações e relações num determinado conjunto,
de modo a satisfazerem uma lista de propriedades. Diz-se, então, que
em mãos se têm uma estrutura algébrica. Fala-se que duas estruturas
algébricas de mesma natureza são equivalentes quando existe uma aplicação
bijetiva entre elas, chamada de isomorfismo, cuja ação preserva as
operações e as relações envolvidas.

Na análise, são estudadas classes de funções. Por exemplo, a classe
das funções (ditas diferenciáveis) que, nas vizinhanças de cada ponto
de seu domínio, admitem uma aproximação linear. Ou então das funções
chamadas integráveis, que podem ser utilizadas como via de medida
às dimensões de alguma região.

Na topologia (e aqui estamos considerando a geometria como parte integrante
desta disciplina), tem-se os espaços topológicos como entidades fundamentais.
É nela que o conceito de continuidade vem a público em seu sentido
mais profundo. É a partir dela que se pode diferir aquilo que é localmente
válido daquilo que é globalmente verdadeiro. A noção de equivalência
ali empregada é a de homeomorfismo: compreende-se que dois espaços
topológicos são homeomorfos quando existe uma aplicação contínua e
bijetiva entre eles, cuja inversa também é contínua. Preocupa-se,
dentre outras coisas, em determinar se dois espaços topológicos dados
são ou não equivalentes entre si.

Em que pese a enorme abrangência da topologia, na ausência de imposições
adicionais e/ou elaboração de novos métodos de trabalho, não se deve
estranhar a predominância de resultados pouco profundos. Por outro
lado, tendo conhecimento da organização da álgebra e das poderosas
ferramentas da análise, é quase que impensável não utilizar destas
como meio de desenvolvimento da topologia. É neste espírito que se
veem surgir a \emph{topologia algébrica} e a \emph{topologia diferencial}.

Para afirmar que duas estruturas algébricas são isomorfas, deve-se
obter uma correspondência entre elas (digamos $f$) que seja bijetiva
e que preserve operações e relações. A princípio, em contrapartida
ao conceito de homeomorfismo (no qual é a exigida a continuidade da
inversa), pode-se ter estruturas isomorfas sem nada ser dito acerca
de $f^{-1}$. Isto reforça uma máxima: \emph{a tarefa de decidir a
equivalência entre duas estruturas algébricas é geralmente mais simples
do que o problema de encontrar um homeomorfismo entre dois espaços
topológicos}. Com isto em mente, a topologia algébrica se ocupa de
associar a cada espaço topológico uma gama de estruturas algébricas
(geralmente grupos e módulos), de modo que dois deles serão homeomorfos
somente se as respectivas estruturas associadas forem isomorfas.

A topologia diferencial se caracteriza não só pelas ferramentas que
utiliza, mas também pela classe de espaços topológicos que abrange.
Com efeito, ela se restringe às variedades diferenciáveis: entidades
localmente semelhantes aos espaços euclidianos, mas que globalmente
podem ser diferentes destes. Esta semelhança permite trazer para cá
alguns conceitos da análise que sejam de caráter local. Por exemplo,
tem sentido falar de aplicações entre variedades que, ponto a ponto,
podem ser linearmente aproximadas. A integração de funções também
tem seu lugar: é realizada via formas diferenciais. 

No espírito das disciplinas acima descritas, pode-se dizer que o trabalho
aqui apresentado se encontra na intersecção entre a topologia algébrica
e a topologia diferencial. Há um enfoque especial em aspectos abstratos
de uma área da topologia algébrica conhecida como \emph{teoria da
homotopia}. 

Inicia-se o texto desenvolvendo os pré-requisitos básicos em álgebra,
topologia e análise que se farão necessários para a leitura do texto.
Tenta-se, ali, preparar o leitor para vários conceitos que serão introduzidos
em outras situações. Para tanto, a álgebra clássica, a topologia e
a análise em variedades são apresentadas já no contexto da teoria
das categorias. 

No segundo capítulo, introduz-se efetivamente os conceitos de categoria,
de functores e de transformações naturais, os quais constituem a linguagem
moderna utilizada para associar invariantes a espaços topológicos.
Ali também se apresenta algumas situações nas quais os functores se
mostram úteis, como nos problemas de classificação de categorias e
de levantamento/extensão de morfismos. 

O terceiro capítulo é marcado pelas extensões de Kan e pelos limites
e colimites. Mostra-se, por exemplo, que produtos e equalizadores
determinam todos os limites e que limites determinam todas as extensões
de Kan. O quarto capítulo (intitulado Álgebra Abstrata) trata das
categorias monoidais, as quais são as ``categorificações'' dos monoides
usuais.

Já no quinto e no sexto capítulo, a homotopia é estudada em seu contexto
mais abstrato: em categorias com equivalências fracas e em categorias
modelo. O sétimo capítulo se inicia com uma extensão do conceito de
categoria para as $n$-categorias estritas: se uma categoria possui
objetos e morfismos, uma $2$-categoria possui também ``morfismos
entre morfismos'' (chamados de $2$-morfismos), os quais haverão
de corresponder às homotopias. Em seguida, trabalha-se no contexto
dos objetos simpliciais e damos uma estratégia de definição de categorias
``não-estritas'' em altas dimensões. Isto significa que certas propriedades
de $n$-morfismos só são válidas módulo $(n+1)$-morfismos. No fim,
mostra-se que as categorias próprias para o estuda da homotopia são,
precisamente, aquelas que possuem uma noção coerente de homotopia
entre seus morfismos.

Por fim, o oitavo capítulo trata da teoria da homotopia clássica.
Ali, um modelo na categoria dos espaços topológicos ``bem comportados''
é introduzido e, com ele, demostra-se o teorema de classificação de
fibrados principais, estuda-se os grupos de homotopia e apresenta-se
estratégias que nos permitem calculá-los em algumas situações. Assim,
é neste capítulo que os primeiros invariantes topológicos via métodos
puramente algébricos são construídos.

Deve-se ressaltar que, durante a escrita do texto, tentou-se torná-lo
tão acessível quanto foi possível. Isto levou à introdução do primeiro
capítulo, bem como à elaboração de diversos exemplos, os quais foram
cuidadosamente escolhidos. Com efeito, eles foram introduzidos ou
porque seriam úteis em outros momentos, ou por fazerem um \emph{link}
entre novos conceitos e o conhecimento prévio do leitor, ou mesmo
por realçarem diferentes aspectos da teoria que não foram expostos
no texto.

O incentivo de Mario Henrique Andrade Claudio, Rodney Josué Biezuner,
Fábio Dadam e Helvecio Geovani Fargnoli Filho foi, certamente, um
ponto crucial durante a escrita deste texto. Romero Solha leu parte
da versão preliminar e fez diversas sugestões. Por sua vez, tive a
grande sorte de contar (não só durante os momentos de escrita) com
o apoio, o carinho e a paciência da minha família, da Livian e da
Pp. A todos vocês, meus mais sinceros agradecimentos.$\underset{\underset{\underset{\underset{\;}{\;}}{\;}}{\;}}{\;}$

Yuri Ximenes Martins,

Belo Horizonte, 2018.

\chapter*{Convenções}

\addcontentsline{toc}{chapter}{Convenções}
\begin{itemize}
\item Denotaremos os conjuntos numéricos utilizando a convenção de Bourbaki:
na respectiva ordem, $\mathbb{N}$, $\mathbb{Z}$, $\mathbb{Q}$,
$\mathbb{R}$, $\mathbb{C}$ e $\mathbb{H}$ representam os números
naturais (que supomos conter o zero), os inteiros, os racionais, os
reais, os complexos e os quatérnios;
\item trabalhamos, desde o começo, com teoria ingênua de conjuntos. Assim,
pressupomos que o conceito de conjunto e a relação de pertinência
são entes primitivos dentro de nosso arcabouço lógico;
\item a intersecção, a reunião, a reunião disjunta e o produto cartesiano
entre conjuntos $X$ e $Y$ são respectivamente denotados por $X\cap Y$,
$X\cup Y$, $X\sqcup Y$ e $X\times Y$. Escrevemos $X\subset Y$
para indicar que $X$ é subconjunto de $Y$ e utilizamos de $Y-X$
para representar a coleção de todo elemento de $Y$ que não pertence
à $X$;
\item os conjuntos que possuem somente um elemento são confundidos com o
próprio elemento. Assim, por exemplo, escreve-se $x$ ao invés de
$\{x\}$. Desta forma, uma função definida em $\{x\}$ e assumindo
valores em um outro conjunto $Y$ se escreve $f:x\rightarrow Y$;
\item sempre que não especificado o domínio de uma função, este é assumido
como sendo o maior possível (isto é, as funções serão sempre tomadas
em seu domínio natural). Por exemplo, a notação $f(x)=\sqrt{x}$ indica
que $f$ tem $x\geq0$ como domínio.
\item assumimos sempre o produto interno canônico em $\mathbb{R}^{n}$.
Com respeito a ele, $\mathbb{S}^{n}$ e $\mathbb{D}^{n}$ denotam,
respectivamente, a esfera e o disco unitário fechado em dimensão $n$;
\item o bordo, o fecho e o interior de um espaço $X$ são denotados por
$\partial X$, $\overline{X}$ e $\mathrm{int}X$. 
\end{itemize}

\chapter{Preliminares}

$\quad\;\,$Neste capítulo relembramos conceitos fundamentais da álgebra,
da topologia e da análise, os quais serão fortemente utilizados ao
longo do texto. Sobre estes assuntos, pouco se exige além daquilo
que aqui é apresentado. Entretanto, não acreditamos que os singelos
esboços cunhados abaixo substituam a leitura de textos específicos
sobre os assuntos a que se designam. Neste sentido, sugerimos ao leitor
que consulte outras referências sempre que sentir necessidade. Por
exemplo, textos clássicos sobre álgebra, topologia e análise incluem
\cite{LANG_algebra,MACLANE_algebra}, \cite{Kelley_general_topology,munkres}
e \cite{lang_variedades,Hirsch}, respectivamente. 

Na primeira secção, somos devotos à álgebra. Ali se têm como objetos
básicos as \emph{estruturas algébricas}: tratam-se simplesmente de
conjuntos nos quais se sabe operar. Apresentamos diversos exemplos
de tais estruturas e estudamos os mapeamentos naturais entre elas
(estes nada mais são que funções que preservam todas as operações
envolvidas). Também estudamos maneiras universais de se projetar e
de se incluir. Ao final, no contexto da teoria dos módulos, apresentamos
o \emph{produto tensorial}, o qual é universal com respeito à propriedade
de tornar lineares as correspondências bilineares.

A segunda secção é marcada pelo estudo dos \emph{espaços topológicos}:
são os objetos mais genéricos nos quais se sabe dizer, de modo qualitativo,
se dois pontos estão próximos ou estão distantes. É neste ambiente
que se desenvolve a topologia. Os mapeamentos naturais entre dois
destes espaços são simplesmente as funções, ditas \emph{contínuas},
que preservam todas as estruturas envolvidas. Isto é, que levam pontos
próximos em pontos próximos. Mostramos que, assim como na álgebra,
na topologia existem maneiras universais de projetar e de incluir.
Para terminar, discutimos a necessidade de (e o interesse em) regras
que transfiram problemas da topologia para a álgebra.

Na terceira secção, por sua vez, estudamos as \emph{variedades diferenciáveis}.
Estas são as entidades mais gerais nas quais o conceito de diferenciabilidade
tem lugar e, portanto, constituem o ambiente ideal para falar de análise.
Os mapeamentos entre variedades são, naturalmente, as aplicações diferenciáveis.
Finalizamos a secção (e também o capítulo) discutindo que, diferentemente
do que acontece nas categorias algébricas e na categoria topológica,
na categoria das variedades não se tem formas universais de projetar
e de incluir. Isto ressalta a complexidade (e, portanto, o poderio)
da análise diante da álgebra e da topologia.

\section{Álgebra}

$\quad\;\,$Grosso modo, a álgebra clássica tem como objetos básicos
os conjuntos, chamados de \emph{estruturas algébricas}, nos quais
se encontram definidas operações sujeitas à condições de compatibilidade
e de coerência. Para nós, uma \emph{operação} em $X$ é simplesmente
uma função $*:X\times X\rightarrow X$ que goza de associatividade
e da existência de elemento neutro. Isto significa que existe um elemento
$1\in X$ tal que, para quaisquer que sejam $x,y,z\in X$, tem-se
\[
1*x=x=x*1\quad\mbox{e}\quad(x*y)*z=x*(y*z).
\]

Uma outra maneira de descrever tais relações é através da comutatividade
dos diagramas abaixo. Neles, o símbolo $\simeq$ indica uma bijeção
natural, ao passo que o mapa constante no elemento neutro $1$ é também
denotado por $1$.$$
\xymatrix{\ar[d]_-{id \times *} X\times (X\times X) \ar[r]^{\simeq} & (X\times X)\times X \ar[r]^-{*\times id} & X\times X \ar[d]^{*} & \ar[rd]_{\simeq} 1\times X \ar[r]^-{1\times id} & X\times X \ar[d]^{*} &  X \ar[ld]^{\simeq} \times 1 \ar[l]_-{id \times 1} \\
X \times X \ar[rr]_{*} && X && X }
$$

Na construção de estruturas algébricas, costuma-se impor mais algumas
condições sobre as operações. As mais usuais são a \emph{comutatividade},
significando que $x*y=y*x$, e a \emph{existência de inversos}, traduzida
por uma aplicação $inv:X\rightarrow X$, tal que $x*inv(x)=1$. Estas
condições também podem ser descritas por meio de diagramas, os quais
apresentamos abaixo. No primeiro deles, $b(x,x')=(x',x)$, ao passo
que, no segundo, $\Delta$ é o mapa diagonal $\Delta(x)=(x,x)$. Uma
estrutura é \emph{comutativa} (ou \emph{abeliana})\emph{ }quando todas
as suas operações o são.$$
\xymatrix{X\times X \ar[rd]_{*}  \ar[rr]^b && X\times X \ar[ld]^{*} & \ar[d]_{\Delta} X \ar[r]^{1} & 1 \ar[r]^{1} & X \\
& X & & X\times X \ar[rr]_{id\times inv} && X\times X \ar[u]_{*} }
$$
\begin{example}
O exemplo mais simples de estrutura algébrica é um \emph{monoide}.
Este é consistido de um conjunto $X$ dotado de uma única operação
$*:X\times X\rightarrow X$. Em seguida, tem-se os monoides abelianos.
Um monoide abeliano com inversos é chamado de \emph{grupo}. Um grupo
abeliano com outra operação, esta compatível com a primeira, recebe
o nome de \emph{anel}. Se a nova operação é comutativa e também possui
inversos, obtém-se um \emph{corpo}. Assim, pode-se dizer que um monoide
é simplesmente um conjunto no qual se sabe somente somar, ao passo
que num grupo sabe-se somar e subtrair. Por sua vez, num anel é possível
somar, subtrair e multiplicar. Finalmente, além de tudo isto, num
corpo também se sabe dividir. Nesta perspectiva, com a adição e com
a multiplicação usuais de números, $\mathbb{N}$ é monoide, $\mathbb{Z}$
é anel, enquanto que $\mathbb{Q}$, $\mathbb{R}$ e $\mathbb{C}$
são corpos. 
\end{example}
Na álgebra clássica, além de operar num conjunto, sabe-se fazer uma
estrutura algébrica \emph{agir} em outra. Neste processo, a estrutura
que\emph{ }age é sempre mais complexa (isto é, tem sempre um número
maior ou igual de operações) que aquela na qual a ação é efetivada.
De maneira mais precisa, uma \emph{ação} de uma estrutura $X$ numa
outra estrutura $Y$ é um mapa $\alpha:X\times Y\rightarrow Y$, com
imagem denotada por $\alpha(x,y)=x\cdot y$, o qual é associativo,
preserva elemento neutro e também cada uma das operações envolvidas.
Por exemplo, se $*$ e $+$ são operações em $X$ e em $Y$, então
é preciso que 
\[
(x*x')\cdot y=x\cdot(x'\cdot y),\quad1\cdot y=y\quad\mbox{e}\quad x\cdot(y+y')=(x\cdot y)+(x'\cdot y).
\]

Todas estas condições podem ser traduzidas em diagramas comutativos.
Por exemplo, as duas primeiras delas se referem, juntas, ao diagrama
abaixo apresentado.$$
\xymatrix{\ar[d]_-{id \times *} X\times (X\times Y) \ar[r]^{\simeq} & (X\times X)\times Y \ar[r]^-{*\times id} & X\times Y \ar[d]^{\alpha} & 1\times Y \ar[l]_{1\times id} \ar[dl]^{\simeq}  \\
X \times Y \ar[rr]_{\alpha} && Y }
$$

\begin{example}
Um anel $R$ agindo num grupo abeliano é usualmente chamado de \emph{módulo}
\emph{sobre $R$}. No caso em que $R$ é um corpo $\mathbb{K}$, fala-se
que se tem um \emph{espaço vetorial sobre $\mathbb{K}$}. Um anel
$R$ agindo em outro anel $X$ recebe o nome de \emph{álgebra sobre
$R$}. Observamos que dar uma álgebra sobre um anel $R$ é o mesmo
que dar um $R$-módulo $X$ dotado de uma operação adicional $*:X\times X\rightarrow X$,
a qual é compatível tanto com a operação de grupo abeliano quanto
com a ação. 
\end{example}

\subsection*{\uline{Mapeamentos}}

$\quad\;\,$Se uma estrutura algébrica é composta de operações, um
mapeamento entre estruturas de mesma natureza há de preservar cada
uma das operações envolvidas. Tais mapeamentos são genericamente chamados
de \emph{homomorfismos}. 
\begin{example}
Um monoide é formado de uma única operação. Assim, um mapeamento (ou
homomorfismo) entre dois monoides (digamos $X$ e $Y$, com respectivas
operações $+$ e $+'$), é uma função $f:X\rightarrow Y$ tal que
$f(x+x')=f(x)+'f(x')$. A mesma definição se aplica para grupos, pois
estes têm igual número de operações que os monoides. Por sua vez,
se adicionarmos uma operação à tais estruturas, tornando-a anéis,
então a correspondente noção de homomorfismo haverá de ser um mapa
que satisfaz
\[
f(x+x')=f(x)+'f(x')\quad\mbox{e também}\quad f(x*x')=f(x)*'f(x').
\]
\end{example}
Quando uma estrutura age em outra, há, além das operações, uma nova
propriedade: a própria ação. Assim, um mapeamento entre tais entidades
deve preservar não só as operações, mas também as ações.
\begin{example}
Como vimos, os módulos sobre um anel $R$ são simplesmente os grupos
abelianos sujeitos a alguma ação de $R$. Assim, um homomorfismo entre
dois destes módulos há de ser um homomorfismo $f:X\rightarrow Y$
entre os grupos subjacentes, o qual preserva a ação de $R$. Isto
é, que satisfaz a condição $f(a\cdot x)=a\cdot f(x)$, com $a\in R$.
Também se diz que $f$ é $R$-\emph{linear}.

\begin{example}
A composição entre homomorfismo é sempre um homomorfismo, independente
de qual seja a classe de estruturas algébricas consideradas. Assim,
se compomos homomorfismos de grupos, obtemos aplicação com iguais
propriedades. Da mesma forma, se compomos mapeamentos entre módulos,
temos como resultado um homomorfismo de módulos. Algumas vezes, evidencia-se
este fato dizendo que a composição \emph{preserva linearidade}.

\begin{example}
Se $Y\in\mathbf{Alg}$, então no conjunto de todas as funções $f:X\rightarrow Y$
pode-se definir operações naturais que fazem dele um objeto de $\mathbf{Alg}$.
Isto é feito ``termo a termo''. De maneira mais precisa, para cada
operação e cada ação, basta pôr
\[
(f+g)(x)=f(x)+g(x)\quad\mbox{e}\quad(a\cdot f)(x)=a\cdot f(x).
\]
Assim, em particular, o conjunto $\mathrm{Hom}_{\mathbf{Alg}}(X;Y)$
de todos os homomorfismos entre $X$ e $Y$ admite uma estrutura natural
que o torna membro da categoria $\mathbf{Alg}$.
\end{example}
\end{example}
\end{example}
Fala-se que duas estruturas algébricas $X$ e $Y$, ambas de mesma
natureza, são \emph{isomorfas }no momento em que se consegue obter
homomorfismos $f:X\rightarrow Y$ e $g:Y\rightarrow X$, cujas composições
$g\circ f$ e $f\circ g$ coincidem com as respectivas identidades.
Se este é o caso, escreve-se $X\simeq Y$ e diz-se que o mapa $f$
é um \emph{isomorfismo} entre $X$ e $Y$. 

Observamos que, se duas entidades são isomorfas, então elas possuem
essencialmente as mesmas propriedades algébricas. Assim, em geral
não se trabalha com um único objeto $X$, mas sim, de maneira simultânea,
com todos os objetos que são equivalentes a ele. Quando uma propriedade
é válida numa entidade isomorfa a $X$, diz-se que ela vale em $X$
\emph{a menos de isomorfismos }(ou \emph{módulo isomorfismos}).

Uma propriedade importante das equivalências presentes na álgebra
clássica é a seguinte: para que um homomorfismo seja um isomorfismo
basta que ele seja bijetivo. Com efeito, se este é o caso, então\emph{
sua inversa é obrigatoriamente linear}. De um modo geral, esta caracterização
não é válida para as equivalências definidas em outros campos da matemática
como, por exemplo, na topologia e na análise. 
\begin{example}
O conjunto dos automorfismos de uma entidade algébrica $X$ (isto
é, o conjunto dos isomorfismos de $X$ nela mesma) sempre possui uma
estrutura de grupo quando dotado da operação de composição. O elemento
neutro é precisamente a identidade $id_{X}:X\rightarrow X$.
\end{example}
Uma vez fixada uma classe de estruturas algébricas, tem-se uma correspondente
noção de homomorfismo, a qual é preservada por composições e satisfeita
pelas funções identidade. Por conta disso, diz-se que estruturas de
mesma natureza são os \emph{objetos} de uma \emph{categoria}, tendo
os homomorfismos como \emph{morfismos} e a composição usual de funções
como \emph{lei de composição}. Desta forma, tem-se a categoria dos
conjuntos, a categoria grupos, a categoria dos anéis e a categoria
dos $R$-módulos, as quais são usualmente representadas por $\mathbf{Set}$,
$\mathbf{Grp}$, $\mathbf{Rng}$ e $\mathbf{Mod}_{R}$. No que segue,
$\mathbf{Alg}$ denota uma categoria algébrica genérica.

\subsection*{\uline{Operações}}

$\quad\;\,$Em geral, as categorias algébricas vêm dotadas de duas
operações internas, usualmente chamadas de \emph{produto }e de \emph{coproduto},
as quais nos permitem construir novas estruturas a partir de outras
dadas. Tais operações são interessantes por serem \emph{universais}
em um certo sentido. A motivação para a definição de universalidade
pode ser obtida olhando para os conjuntos (afinal, eles são as entidades
algébricas mais simples). 

Dados conjuntos $X$ e $Y$, sabe-se realizar o produto cartesiano
$X\times Y$ e também a reunião disjunta $X\sqcup Y$. Tem-se maneiras
naturais de \emph{projetar }o produto nos conjuntos subjacentes, e
também de \emph{incluí-los} na sua reunião. Com efeito, para $X$
(e, de maneira análoga, para $Y$) basta considerar as respectivas
correspondências 
\[
\pi_{1}:X\times Y\rightarrow X\quad\mbox{e}\quad\imath_{1}:X\rightarrow X\sqcup Y,\quad\mbox{tais que}\quad\pi_{1}(x,y)=x\quad\mbox{e}\quad\imath_{1}(x)=x.
\]

Este produto e estas projeções descrevem todas as outras. Afinal,
se $X'$ é um conjunto arbitrário dotado de funções $f:X'\rightarrow X$
e $g:X'\rightarrow Y$, as quais representam uma nova maneira de projetar,
então existe uma única aplicação $u:X'\rightarrow X\times Y$ cumprindo
$f=\pi_{1}\circ u$ e $g:\pi_{2}\circ u$. Dualmente, dadas $f$ e
$g$, agora representando nova maneira de incluir, obtém-se único
um $u:X\sqcup Y\rightarrow X'$ que verifica as relações $f=u\circ\imath_{1}$
e $g=u\circ\imath_{2}$. É neste sentido (representado pela comutatividade
dos digramas abaixo) que o produto cartesiano e a reunião disjunta
de conjuntos são operações \emph{universais}. 

\begin{equation}{
\xymatrix{& & & X & & & & X \ar@/_/[llld]_f \ar[dl]_{\imath _1}  \\
X' \ar@{-->}[rr]^{u} \ar@/^/[rrru]^f \ar@/_/[rrrd]_g & & X\times Y \ar[ur]^{\pi _1} \ar[dr]_{\pi _2} && X'   & & \ar@{-->}[ll]_{u} X\oplus Y   &\\
& & & Y & & & & Y \ar@/^/[lllu]^g \ar[ul]^{\imath _2}  }}
\end{equation}

Agora, uma vez que as propriedades universais do produto cartesiano
e da reunião disjunta foram descritas simplesmente em termos de mapeamentos
entre conjuntos e comutatividade de diagramas, podemos abstrair tal
conceito, levando-o para outras categorias algébricas. Com efeito,
diz-se que uma categoria algébrica $\mathbf{Alg}$ possui \emph{produtos}
quando há uma regra $\times$ que toma duas entidades $X,Y\in\mathbf{Alg}$
e associa uma terceira $X\times Y$, a qual vem acompanhada de homomorfismos
\[
\pi_{1}:X\times Y\rightarrow X\quad\mbox{e}\quad\pi_{2}:X\times Y\rightarrow Y,
\]
que são \emph{universais} no sentido do primeiro dos diagramas acima.
Igualmente, fala-se $\mathbf{Alg}$ possui \emph{coprodutos} quando
existe uma regra $\oplus$, que a cada par de objetos $X,Y\in\mathbf{Alg}$
devolve um outro $X\oplus Y$, assim como homomorfismos 
\[
\imath_{1}:X\rightarrow X\oplus Y\quad\mbox{e}\quad\mbox{\ensuremath{\imath}}_{2}:Y\rightarrow X\oplus Y,
\]
os quais são\emph{ }universais no contexto do segundo dos diagramas
anteriormente apresentados. 
\begin{example}
Em algumas categorias algébricas, os produtos e coprodutos são facilmente
descritos. Em outras, no entanto, eles (sobretudo os coprodutos) podem
ter uma expressão não muito simplificada. Por exemplo, na categoria
$\mathbf{Grp}$ dos grupos, o produto entre $X$ e $Y$ é simplesmente
o produto cartesiano entre eles, dotado da operação ``componente
à componente'' 
\[
(x,y)+(x',y')=(x+x,y+y'),
\]
em que estamos utilizando a mesma notação para representar as operações
em todos os conjuntos. As projeções, neste caso, são as projeções
usuais. Observamos agora que, se um anel $R$ age em $X$ e $Y$,
então ele também age no grupo $X\times Y$. Com efeito, basta tomá-la
``componente à componente'' mais uma vez. Isto significa que estruturas
de módulos em $X$ e $Y$ induzem correspondente estrutura em $X\times Y$,
tal que as projeções usuais são lineares. Desta forma, elas estendem
o produto de $\mathbf{Grp}$ à categoria $\mathbf{Mod}_{R}$.

\begin{example}
Em contrapartida ao que vimos ocorrer para produtos no exemplo anterior,
na categoria dos módulos, os coprodutos são facilmente descritos,
ao passo que possuem uma expressão complicada em $\mathbf{Grp}$.
A razão é simples: módulos são estruturas abelianas, algo que não
acontece com todos os grupos. Neste mesmo espírito, espera-se que
coprodutos em $\mathbf{Rng}$ sejam difíceis de explicitar. E, de
fato, assim o é. Em $\mathbf{Mod}_{R}$ o coproduto entre $X$ e $Y$
é a \emph{soma direta} entre eles: trata-se do módulo \emph{gerado}
por tais entidades. Isto é, trata-se do conjunto de todas as ``combinações
lineares'' $a\cdot x+b\cdot y$, com $a,b\in R$, $x\in X$ e $y\in Y$.
Para uma expressão dos coprodutos em $\mathbf{Grp}$ e em $\mathbf{Rng}$,
veja \cite{LANG_algebra,MACLANE_algebra}.
\end{example}
\end{example}
Observamos que, como obtido nos exemplos anteriores, os produtos de
$\mathbf{Grp}$ e $\mathbf{Mod}_{R}$ são simplesmente os produtos
em $\mathbf{Set}$. Isto significa que a inclusão de tais categorias
em $\mathbf{Set}$ \emph{preserva produtos}. Nem toda inclusão possui
esta propriedade. 

\subsection*{\uline{Produto Tensorial}}

$\quad\;\,$Se uma categoria algébrica $\mathbf{Alg}$ possui produtos,
os quais são preservados pela inclusão, ali podemos falar de \emph{mapas
bilineares}. Com efeito, diz-se que uma função $f:X\times Y\rightarrow Z$
é \emph{bilinear} quando é um homomorfismo em cada uma de suas entradas.
Isto significa que, para quaisquer operações nas estruturas $X$ e
$Y$, vale
\[
f(x+x',y)=f(x,y)+f(x',y)\quad\mbox{e}\quad f(x,y+y')=f(x,y)+f(x',y).
\]

Particularmente, se existem ações em $X$ e $Y$, então estas também
dever ser preservadas, em cada entrada, por $f$. Mais precisamente,
deve valer 
\[
f(a\cdot x',y)=a\cdot f(x,y)\quad\mbox{e}\quad f(x,a\cdot y')=a\cdot f(x',y).
\]

Observamos que, em geral, \emph{uma função bilinear} \emph{não é um
homomorfismo}. Em outras palavras, ser bilinear não é o mesmo que
ser linear de $X\times Y$ em $Z$. Isto se traduz no fato de que,
de um modo geral, não existem bijeções entre
\[
\mathrm{Hom}_{\mathbf{Alg}}(X\times Y;Z)\quad\mbox{e}\quad\mathrm{Bil}_{\mathbf{Alg}}(X\times Y;Z)
\]

O problema está justamente em $\times$. Uma regra $\otimes$, substituta
para $\times$, para a qual as bijeções anteriores podem ser encontradas,
chama-se \emph{produto tensorial} em $\mathbf{Alg}$. Em outras palavras,
este se trata de uma regra que toma objetos $X,Y\in\mathbf{Alg}$
e devolve um outro $X\otimes Y$, de tal modo que dar um morfismo
$X\otimes Y\rightarrow Z$ é o mesmo que dar uma correspondência bilinear
$X\times Y\rightarrow Z$. 
\begin{example}
Na categoria $\mathbf{Set}$, os objetos são conjuntos sem nenhum
estrutura adicional e, portanto, ser bilinear e não o ser é a mesma
coisa. Disto segue que, em tal categoria, o produto cartesiano é o
próprio produto tensorial. 

\begin{example}
Diferentemente do que acontece em $\mathbf{Set}$, o produto usual
não é tensorial na categoria dos módulos. No entanto, ali ainda se
sabe construir uma tal regra. A ideia é simples: procura-se pela relação
que equivalência no modulo gerado pelo conjunto $X\times Y$ tal que,
para cada mapa bilinear $f:X\times Y\rightarrow Z$, a função $\hat{f}(\pi(x,y))=f(x,y)$,
com domínio no espaço quociente, esteja bem definida e seja linear.
Uma vez obtida esta relação, pode-se tomar $X\otimes Y$ como sendo
o próprio espaço quociente.
\end{example}
\end{example}
Uma observação importante é a seguinte: se introduzimos a estrutura
algébrica natural no conjuntos dos homomorfismos, obtém-se bijeções
\[
\mathrm{Bil}_{\mathbf{Alg}}(X\times Y;Z)\simeq\mathrm{Hom}_{\mathbf{Alg}}(X;\mathrm{Hom}_{\mathbf{Alg}}(Y;Z)),
\]
de modo que $\otimes$ é produto tensorial em $\mathbf{Alg}$ quando,
e somente quando, computa-se 
\[
\mathrm{Hom}_{\mathbf{Alg}}(X\otimes Y;Z)\simeq\mathrm{Hom}_{\mathbf{Alg}}(X;\mathrm{Hom}_{\mathbf{Alg}}(Y;Z)).
\]

Assim, em tese, pode-se utilizar a caracterização anterior para \emph{definir
o que vem a ser um produto tensorial numa categoria algébrica $\mathbf{Alg}$},
\emph{mesmo que ela} \emph{não possua produtos}.

\section{Topologia}

$\quad\;\,$A topologia tem como objetos básicos os \emph{espaços
topológicos}. Estes são conjuntos $X$ nos quais cada elemento é dotado
de uma família de \emph{vizinhanças fundamentais}: tratam-se, pois,
de subconjuntos de $X$ contendo o respectivo elemento, sendo escolhidos
de modo a satisfazerem condições de compatibilidade. 

Mais precisamente, um espaço topológico é um conjunto $X$ dotado
de uma aplicação $\tau$, que a cada \emph{ponto} $x\in X$ faz corresponder
uma coleção $\tau(x)$ de subconjuntos de $X$, todos contendo $x$,
os quais cumprem a seguinte condição:
\begin{itemize}
\item se um ponto $z\in X$ está na intersecção $B_{x}\cap B_{y}$ entre
vizinhanças básicas de $x$ e de $y$, então este possui uma vizinhança
$B_{z}\in\tau(z)$ ali inteiramente contida. Isto significa, em particular,
que qualquer ponto $x\in X$ admite vizinhanças \emph{arbitrariamente
pequenas}: para quaisquer $B_{x},B_{x}'\in\tau(x)$ existe uma vizinhança
menor $B_{x}''\in\tau(x)$ contida em $B_{x}\cap B_{x}'$. 
\end{itemize}
$\quad\;\,$A classe mais importante de subconjuntos de um espaço
topológico $(X,\tau)$ são aqueles que podem ser escritos como a reunião
de vizinhanças básicas. Estes são os chamados \emph{abertos} de $X$
segundo $\tau$. Diz-se que a coleção de todos eles determinam uma
\emph{topologia} ou uma \emph{estrutura topológica} em $X$. Como
logo se convence, diferentes aplicações $\tau$ e $\tau'$ (isto é,
diferentes noções de vizinhança básica) podem produzir a mesma topologia.
\emph{Isto reforça o fato de que o conceito fundamental é o de} \emph{aberto}
e \emph{não o de vizinhança básica.}

Intuitivamente, um espaço topológico é um conjunto no qual se sabe
dizer \emph{qualitativamente} se dois pontos estão ou não próximos.
Com efeito, $x$ e $y$ o estarão se existirem vizinhanças $B_{x}$
e $B_{y}$, com $y\in B_{x}$ e $x\in B_{y}$. Observamos, no entanto,
que tal noção de proximidade não é \emph{quantitativa}, não é \emph{mensurável}.
Espaços topológicos nos quais é possível medir a distância entre dois
pontos são chamados de \emph{espaços métricos}. De forma mais precisa,
um espaço métrico é um espaço topológico $X$ cujas vizinhanças básicas
provém de uma função $d:X\times X\rightarrow\mathbb{R}$, chamada
\emph{métrica}, tal que $d(x,y)$ mensura a distância entre $x$ e
$y$. Isto significa que:
\begin{enumerate}
\item $d(x,y)$ nunca é negativo, sendo nulo se, e somente se, $x=y$;
\item a distância entre $x$ e $y$ é igual a distância entre $y$ e $x$;
\item a hipotenusa nunca excede a soma dos catetos: $d(x,y)\leq d(x,z)+d(z,y)$,
seja qual for o $z$.
\end{enumerate}

\subsection*{\uline{Mapeamentos}}

$\quad\;\,$Em topologia, os mapeamentos entre espaços topológicos
$(X,\tau)$ e $(X',\tau')$ são as correspondências $f:X\rightarrow X'$,\emph{
}ditas\emph{ contínuas}, que levam pontos próximos em pontos próximos.
Isto é, tais que, para quaisquer $x,y\in X$, é sempre possível fazer
$f(y)$ arbitrariamente próximo de $f(x)$, bastando tomar $x$ e
$y$ suficientemente próximos. Mais precisamente, para que $f(y)$
esteja numa dada vizinhança $B_{f(x)}$ de $f(x)$, basta que $y$
esteja numa vizinhança $B_{x}\in\tau(x)$.

A identidade de qualquer espaço é sempre uma função contínua. Assim
como vimos ocorrer para a linearidade, continuidade também é preservada
por composições. Por conta destes fatos, diz-se que os espaços topológicos
e as funções contínuas, juntamente com a composição usual de funções,
definem uma categoria $\mathbf{Top}$.

A noção de equivalência empregada na topologia (isto é, válida na
categoria $\mathbf{Top}$) é a de \emph{homeomorfismo}. Compreende-se
que dois espaços $X$ e $X'$ são \emph{homeomorfos} quando existem
funções contínuas $f:X\rightarrow X'$ e $g:X'\rightarrow X$ cujas
composições são identidades de $X$ e $X'$. 

Intuitivamente, dois espaços serão homeomorfos quando um puder ser
continuamente deformado no outro sem que, para isto, se tenha que
``rasgar'', ``furar'' ou ``colapsar dimensões''. Assim, por
exemplo, espera-se que a esfera $\mathbb{S}^{2}$ seja homeomorfa
ao cubo (basta amassar os cantos), mas não o seja ao toro $\mathbb{S}^{1}\times\mathbb{S}^{1}$,
pois este possui um furo no meio. Da mesma forma, o cilindro infinito
$\mathbb{S}^{1}\times\mathbb{R}$ há de ser homeomorfo a qualquer
cilindro de altura finita (retirado o seu bordo), por menor que esta
seja. No entanto, espera-se que $\mathbb{S}^{1}\times\mathbb{R}$
e $\mathbb{S}^{1}$ não sejam equivalentes: para chegar no círculo,
precisa-se colapsar uma dimensão do cilindro, algo que não é satisfeito
pelos homeomorfismos. 

Observamos haver uma diferença a topologia a geometria clássica. Nesta
última, para que dois objetos sejam equivalentes, é preciso que eles
tenham os mesmos ângulos, o mesmo ``formato'', a mesma ``curvatura''.
Lá não se deve esperar, por exemplo, que o cubo seja equivalente à
esfera, nem mesmo que um plano seja equivalente a uma versão ``em
formato de bacia''. 

Também há uma diferença importante entre a topologia e a álgebra:
como vimos anteriormente, para que um mapeamento entre estruturas
algébricas seja um isomorfismo, é necessário e suficiente que ele
seja bijetivo. Tal caracterização, no entanto, não é válida para homeomorfismos.

\subsection*{\uline{Operações}}

$\quad\;\,$Existem duas operações essenciais dentro de $\mathbf{Top}$.
A primeira delas é o \emph{produto topológico} entre espaços $X$
e $Y$. Este é consistido simplesmente do produto cartesiano $X\times Y$,
dotado da aplicação $\tau$, tal que $\tau(x,y)=\tau_{X}(x)\times\tau_{Y}(y)$.
Em outras palavras, as vizinhanças básicas de $(x,y)\in X\times Y$
são simplesmente produtos entre vizinhanças básicas de $x$ e de $y$.
As projeções na primeira e na segunda entrada são ambas contínuas
e caracterizadas pela seguinte propriedade universal: dado $X'$ e
dadas funções contínuas $f:X\times Y\rightarrow X$ e $g:X\times Y\rightarrow Y$,
há uma única aplicação $u:X\rightarrow X'$ tal que o primeiro dos
diagramas abaixo fica comutativo:\begin{equation}{
\xymatrix{& & & X & & & & X \ar@/_/[llld]_f \ar[dl]_{\imath _1}  \\
X' \ar@{-->}[rr]^{u} \ar@/^/[rrru]^f \ar@/_/[rrrd]_g & & X\times Y \ar[ur]^{\pi _1} \ar[dr]_{\pi _2} && X'   & & \ar@{-->}[ll]_{u} X\oplus Y   &\\
& & & Y & & & & Y \ar@/^/[lllu]^g \ar[ul]^{\imath _2}  }}
\end{equation}

A outra operação em $\mathbf{Top}$ é a \emph{soma topológica}: dados
espaços $X$ e $Y$, a \emph{soma} entre eles é a reunião disjunta
$X\sqcup Y$, dotada da topologia com maior número de abertos em que
as inclusões de $X$ e de $Y$ são ambas contínuas. Assim, a soma
entre $X$ e $Y$ é precisamente o espaço $X\oplus Y$ tal que, para
qualquer espaço $X'$ e quaisquer funções contínuas $f:X\rightarrow X'$
e $g:Y\rightarrow X'$, existe uma única correspondência contínua
$u:X\oplus Y\rightarrow X'$ que deixa comutativo o segundo dos diagramas
acima.$\underset{\underset{\;}{\;}}{\;}$

\noindent \textbf{Conclusão.} Por conta da existência de tais operações,
diz-se que, assim como as categorias algébricas, a categoria topológica
$\mathbf{Top}$ possui \emph{produtos e coprodutos}.

\subsection*{\uline{Invariantes}}

$\quad\;\,$Dado um espaço topológico, quer-se identificá-lo módulo
equivalências. Em outras palavras, quer-se saber quais outros espaços
são homeomorfos a ele. Tendo conseguido fazer isto, pode-se substituí-lo
por qualquer objeto em sua classe de homeomorfismo. Por exemplo, um
espaço difícil de visualizar pode, em princípio, ser equivalente a
outro cuja visualização é mais simples. Neste caso, não há motivos
para manter o complicado, deixando o mais simples de lado.

Deve-se dizer que o problema de obtenção de classificações é de extrema
dificuldade. Dentro deste contexto, o conceito de \emph{invariante
topológico,} análogo ao de invariante algébrico, assume valor. Tratam-se,
pois, de propriedades invariantes por homeomorfismos, de modo que,
se dois espaços são equivalentes, então eles devem ter todos os invariantes
em comum. 

Algumas classes de invariantes podem ser construídos impondo condições
somente sobre a topologia dos espaços em estudo. Por exemplo, pode-se
exigir que quaisquer dois pontos do espaço tenham vizinhanças disjuntas,
ou então que qualquer cobertura desse espaço por abertos possua uma
subcobertura finita. Os entes que cumprem tais condições dizem-se,
respectivamente, \emph{Hausdorff }e \emph{compactos}. Em ambos os
casos, a propriedade envolvida é a quantidade de abertos que o espaço
possui, a qual constitui um invariante topológico. Outro exemplo é
o \emph{número de componentes} \emph{conexas }(isto é, o número de
``pedaços'') que formam o espaço.

Observamos que os invariantes assim produzidos não são muito poderosos.
Com efeito, a partir deles não se consegue decidir nem mesmo se a
esfera é ou não homeomorfa ao toro, algo que já discutimos ser extremamente
intuitivo. Assim, quer-se construir novos invariantes, os quais sejam
suficientemente poderosos a ponto de nos permitirem mostrar que, realmente,
$\mathbb{S}^{2}$ não é homeomorfo a $\mathbb{S}^{1}\times\mathbb{S}^{1}$. 

A estratégia é buscar por regras $F:\mathbf{Top}\rightarrow\mathbf{Alg}$,
em que $\mathbf{Alg}$ é alguma categoria algébrica, que associem
a cada espaço topológico $X$ uma estrutura algébrica $F(X)$, e que
cada função contínua $f:X\rightarrow X'$ façam corresponder um homomorfismo
$F(f)$ entre $F(X)$ e $F(X')$, de tal modo que composições e identidades
são preservadas. Como consequência, homeomorfismos são mapeados em
isomorfismos, mostrando-nos que associado a $F$ tem-se um invariante
topológico. Tais regras são exemplos de \emph{functores}. 

Portanto, para mostrar que dois espaços não são homeomorfos, basta
obter um functor que associa estruturas não isomorfas a eles. Por
exemplo, constrói-se um functor $\pi_{1}$, denominado \emph{grupo
fundamental}, que associa um grupo $\pi_{1}(X)$ a cada espaço $X$,
o qual mensura exatamente o número de buracos (chamado de \emph{gênus})
que $X$ possui. Desta forma, tem-se $\pi_{1}(\mathbb{S}^{2})\neq\pi_{1}(\mathbb{S}^{1}\times\mathbb{S}^{1})$,
garantindo a inexistência de um homeomorfismo entre a esfera e o toro. 

A busca por functores $F:\mathbf{Top}\rightarrow\mathbf{Alg}$ é precisamente
o objetivo de uma disciplina da matemática chamada \emph{Topologia
Algébrica}. 
\begin{example}
Nem todos os functores definidos em $\mathbf{Top}$ fornecem bons
invariantes. Por exemplo, tem-se um functor natural $\mathrm{Aut}:\mathbf{Top}\rightarrow\mathbf{Grp}$,
que a cada espaço topológico faz corresponder o grupo $\mathrm{Homeo}(X)$
formado dos homeomorfismos de $X$, com a operação de composição.
O fato de $\mathrm{Aut}$ ser um functor nos diz que, se $X\simeq Y$,
então $\mathrm{Homeo}(X)\simeq\mathrm{Homeo}(Y)$. Assim, para mostrar
que dois espaços são homeomorfos, bastaria provar que seus grupos
de homeomorfismos não são isomorfos. O problema está ai: dado um espaço
topológico $X$, em geral não se conhece a estrutura de $\mathrm{Homeo}(X)$,
pois este é demasiadamente grande. Portanto, como determinar de dois
grupos são ou não isomorfos se nem mesmo os conhecemos? Isto ressalta
um ponto importante: não basta obter invariantes, tem-se que saber
\emph{calculá-los}. 
\end{example}

\section{Análise}

$\quad\;\,$As \emph{variedades} são os espaços topológicos que podem
ser globalmente complicados, mas que possuem uma estrutura local extremamente
simples. De maneira mais precisa, uma \emph{variedade de dimensão
$n$} é um espaço topológico $X$ tal que cada $x\in X$ admite uma
vizinhança homeomorfa a um aberto do semi-espaço $\mathbb{H}^{n}\subset\mathbb{R}^{n+1}$,
formado de todas as listas de $n+1$ números reais, cuja primeira
entrada nunca é negativa.

Os homeomorfismo definidos nas vizinhanças de $x$ chamam-se \emph{sistemas
de coordenadas }ou \emph{cartas locais} em $x$ e, sempre que não
há risco de confusão, são denotados pela mesma letra ``$x$''. Desta
forma, dizer que $x:U\rightarrow\mathbb{H}^{n}$ é uma carta local,
significa dizer que $U$ é vizinhança de $x\in X$, que $x(U)$ é
aberto e que a regra $x:U\rightarrow x(U)$ é um homeomorfismo.

Numa variedade, os sistemas de coordenadas assumem o mesmo papel que
as bases dos espaços vetoriais possuem no contexto da Álgebra Linear:
para demonstrar um teorema, escolhe-se o sistema de coordenadas que
lhe parece mais conveniente, resolve-se o problema fazendo uso explícito
dele e, ao final, verifica-se que o resultado obtido é independe de
tal escolha. Por sua vez, quando se consegue demonstrar um teorema
(ou mesmo definir um conceito) sem fazer uso de cartas locais, diz-se
que ele é \emph{intrínseco}. 

Toda variedade $X$ é a reunião de dois conjuntos disjuntos: seu \emph{bordo}
e seu \emph{interior}. O primeiro é formado de todos os pontos $x\in X$
para os quais existe uma carta $x:U\rightarrow\mathbb{H}^{n}$ tal
que algum ponto de $x(U)$ tem primeira coordenada nula. Isto é, tal
que $x(U)$ intersecta $\mathbb{R}^{n}\subset\mathbb{R}^{n+1}$. Por
sua vez, o interior de $X$ é formado pelos pontos que não estão no
bordo. Uma variedade que coincide com seu interior é dita \emph{não
possuir bordo}. Evidentemente, o bordo e o interior de qualquer variedade
$n$-dimensional são variedades sem bordo, de respectivas dimensões
$n-1$ e $n$.
\begin{example}
O disco $\mathbb{D}^{n}\subset\mathbb{R}^{n+1}$, formado de todo
$x\in\mathbb{R}^{n+1}$ tal que $\Vert x\Vert\leq1$, é uma variedade
de dimensão $n$. Com efeito, se um ponto $x$ tem norma unitária
(isto é, se está na esfera $\mathbb{S}^{n-1}$), consideramos como
sistema de coordenadas as projeções estereográficas. Por sua vez,
se o ponto tem norma menor que um, tomamos como carta a identidade
de $\mathbb{R}^{n+1}$ restrita a uma vizinhança qualquer de $x$.
\end{example}

\subsection*{\uline{Diferenciabilidade}}

$\quad\;\,$Diz-se que uma variedade $X$ é \emph{diferenciável} quando,
para quaisquer cartas locais $x$ e $x'$ definidas nas vizinhanças
de um mesmo ponto, os homeomorfismos $x'\circ x$ e $x'\circ x$ entre
abertos do $\mathbb{H}^{n}$ são ambos diferenciáveis. Isto significa
que cada um deles é estendível a abertos de $\mathbb{R}^{n+1}$, onde
podem ser linearmente aproximadas. 

Observamos que nem toda variedade pode ser dotada de uma estrutura
diferenciável. Este fato foi inicialmente demonstrado por M. Kervaire
em \cite{KERVAIRE}. Outros contraexemplos, referentes às esferas
exóticas (espaços continuamente equivalentes à esfera, mas não diferenciavelmente
equivalentes a elas) foram posteriormente obtidos por J. Milnor em
\cite{MILNOR1}. Em dimensões superiores, tem-se o trabalho \cite{TAMURA},
de I. Tamura.

Em contrapartida, não se deve preocupar com a \emph{classe de diferenciabilidade}
de uma dada variedade. Com efeito, por um resultado devido à Whitney,
se uma variedade admite uma estrutura de classe $C^{1}$, então também
admite uma estrutura $C^{\infty}$ que, em certo sentido, é equivalente
à inicial (veja, por exemplo, o segundo capítulo de \cite{Hirsch}).
Por conta disso, ao longo do texto trabalhamos sempre com entidades
infinitamente diferenciáveis (também ditas \emph{suaves}).

Os mapeamentos entre variedades diferenciáveis $X$ e $Y$ são as
\emph{funções diferenciáveis. }Estas nada mais são que\emph{ }aplicações
$f:X\rightarrow Y$ tais que para cada $x\in X$ é possível obter
cartas $x$ em $X$ e $y$ em $Y$, respectivamente definidos em vizinhanças
dos pontos $x$ e $f(x)$, tais que $y\circ f\circ x^{-1}$ é estendível
a um aberto de $\mathbb{R}^{n+1}$, onde pode ser linearmente aproximada.

Composição preserva diferenciabilidade e, além disso, para toda variedade
$X$, a correspondente função identidade $id_{X}$ é diferenciável.
Assim, da mesma forma que as entidades algébricas e os espaços topológicos
originam categorias $\mathbf{Alg}$ e $\mathbf{Top}$, diz-se haver
uma categoria $\mathbf{Diff}$, na qual se desenvolvem a análise e
a topologia diferencial, cujos objetos são as variedades e cujos morfismos
são as aplicações diferenciáveis.

As equivalências no contexto das variedades diferenciáveis são os
\emph{difeomorfismos}. Assim, diz-se que duas variedades $X$ e $Y$
são \emph{difeomorfas} quando existem funções diferenciáveis entre
elas, cujas composições produzem identidades. Em suma, $f:X\rightarrow Y$
é difeomorfismo quando, para quaisquer cartas $x$ e $y$, a composição
$y\circ f\circ x^{-1}$ é diferenciável, bijetiva e com inversa também
diferenciável.

Existe uma maneira intrínseca de se fazer corresponder um espaço vetorial
a cada ponto de uma variedade diferenciável. Apresentemo-la, pois.
Iniciamos observando que o conjunto $\mathcal{D}(X)$ de todas as
funções diferenciáveis de $X$ em $\mathbb{R}$ é um anel com respeito
às operações de soma e produto de aplicações com valores reais. Particularmente,
há uma ação natural de $\mathbb{R}$ em tal anel, dada por $(a\cdot f)(x)=af(x)$,
tornando-o uma álgebra real associativa. Tendo isto em mente, definamos:
o \emph{espaço} \emph{tangente a $M$ em $x$} é o espaço vetorial
$TX_{x}$, formado de todas as derivações de $\mathcal{D}(X)$ no
ponto $x$. Isto é, trata-se do conjunto dos funcionais lineares $v:\mathcal{D}(X)\rightarrow\mathbb{R}$
que satisfazem a regra de Leibniz em $x$:
\[
v(f\cdot g)=f(x)\cdot v(f)+v(f)\cdot g(x).
\]

Grosso modo, uma função é diferenciável num ponto quando em suas vizinhanças
admite uma aproximação linear dada pela derivada. No âmbito das variedades,
a noção de diferenciabilidade é definida fazendo uso explícito de
sistemas de coordenadas. Tendo o conceito de espaço tangente em mãos,
pode-se torná-la \emph{intrínseca}. Com efeito, a \emph{derivada}
de $f:X\rightarrow Y$ em um ponto $x$ é definida como sendo a transformação
linear 
\[
Df_{x}:TX_{x}\rightarrow TY_{f(x)},\quad\mbox{tal que}\quad Df_{x}(v)(g)=v(g\circ f).
\]

\begin{example}
A identidade é uma correspondência diferenciável, com $D(id_{X})_{x}=id_{TX_{x}}$
seja qual for o ponto $x\in X$ escolhido. Como logo se convence,
composição preserva diferenciabilidade. Além disso, diretamente da
definição de derivada retira-se a \emph{regra da cadeia}: 
\[
D(g\circ f)_{x}=Dg_{f(x)}\circ Df_{x}.
\]
\end{example}
Uma aplicação diferenciável $f$ cuja derivada é injetiva (resp. sobrejetiva)
em todo $x$ recebe o nome de \emph{imersão }(resp. \emph{submersão}).
Uma imersão que é também um homeomorfismo sobre sua imagem chama-se
\emph{mergulho}. Esta última condição é válida se, e somente se, $f:X\rightarrow Y$
é injetiva e o conjunto $f(X)\subset Y$ está dotado da topologia
induzida de $Y$ (isto é, da topologia com maior número de abertos
que torna a inclusão contínua). 

A vantagem desta exigência é a seguinte: se $f$ é um mergulho, então
podemos ``adaptar'' as cartas de $Y$ de modo a introduzir uma estrutura
diferenciável em $f(X)$. Tendo isto em mente, fica fácil entender
as condições requeridas: para que as cartas locais da variedade $Y$
continuem contínuas ao serem restritas à $f(X)$, algo que se utiliza
no processo de ``adaptação'', basta que a topologia de $f(X)$ seja
àquela induzida de $Y$. Por sua vez, se $f$ não fosse injetiva,
então sua imagem poderia se auto-intersectar em algum ponto, impedindo
que ali se introduza sistemas de coordenadas. Finalmente, a injetividade
da derivada em cada $x$ nos permite identificar o espaço tangente
à $f(X)$ em $y=f(x)$ como um subespaço de $TY_{y}$.

Quando $f$ é mergulho, diz-se que a estrutura diferenciável introduzida
em $f(X)\subset Y$ pelo processo de adaptação faz de tal subespaço
uma \emph{subvariedade regular} de $Y$. Quando $f$ é somente uma
imersão injetiva (isto é, quando se retira a exigência de que seja
um homeomorfismo), mas a topologia fixada em $f(X)$ ainda nos permite
ali introduzir uma estrutura diferenciável, diz-se que tal conjunto
é uma \emph{subvariedade imersa} de $Y$. Enfocamos a diferença entre
tais conceitos: se trocamos a topologia de uma subvariedade imersa
e consideramos àquela provinda de $Y$, então o conjunto $f(X)$ pode
nem mesmo admitir uma estrutura diferenciável.

A tarefa de determinar a existência (ou não) de mergulhos e imersões
de uma variedade é um problema bastante interessante e difícil dentro
da topologia diferencial. Mais uma vez, a ideia é procurar por invariantes
de $X$, os quais forneçam condições necessárias à existência de tais
classes de aplicações. Neste sentido, exemplos interessantes de invariantes
são fornecidos pela teoria das \emph{Classes Características. }Veja,
por exemplo, a parte final de \cite{Husemoller} (sobretudo o capítulo
18), e também o livro \cite{Milnor_classes_caract} (especialmente
o quarto capítulo).
\begin{example}
As superfícies $S\subset\mathbb{R}^{3}$ que são estudadas na geometria
diferencial clássica são exemplos importantes de subvariedades regulares.
Com efeito, a topologia delas é sempre suposta àquela advinda do $\mathbb{R}^{3}$,
ao passo que suas estruturas diferenciáveis são obtidas mediante a
exigência de que a inclusão seja uma imersão injetiva (e, portanto,
um mergulho). Por um resultado devido à Whitney, qualquer variedade
pode ser imersa ou mergulhada num espaço euclidiano de dimensão suficientemente
grande. Em outras palavras, não há prejuízo de generalidade em se
assumir que uma variedade é, na verdade, superfície regular de algum
$\mathbb{R}^{k}$. De fato, quando a dimensão da variedade é $n$,
esta pode ser imersa em $\mathbb{R}^{2n-1}$ e mergulhada em $\mathbb{R}^{2n}$.
Utilizando de classes características, mostra-se que esse é a melhor
cota inferior que pode ser obtida. Isto é, existem variedades $n$-dimensionais
que não podem ser imersas em $\mathbb{R}^{2n-2}$.
\end{example}

\subsection*{\uline{Operações}}

$\quad\;\,$Em categorias algébricas, tem-se o produto cartesiano
e a soma direta. Na categoria dos espaços topológicos, tem-se o produto
e a soma topológica. Por sua vez, em $\mathbf{Diff}$ tem-se o \emph{produto
de variedades} e também a \emph{reunião disjunta} destas. 

Com efeito, dadas variedades $X$ e $Y$, de respectivas dimensões
$n$ e $m$, define-se o \emph{produto} entre elas como sendo o produto
cartesiano $X\times Y$, dotado da topologia produto, e, para cada
par $(x,y)$, dos sistemas de coordenadas $x\times y:U\times U'\rightarrow\mathbb{H}^{n}\times\mathbb{H}^{m}$,
em que $x:U\rightarrow\mathbb{H}^{n}$ é carta nas vizinhanças de
$x$ e $y:U'\rightarrow\mathbb{H}^{m}$ é carta nas vizinhanças de
$y$. Há, no entanto, um problema: em geral, $\mathbb{H}^{n}\times\mathbb{H}^{m}$
\emph{não é um semi-espaço} \emph{de $\mathbb{R}^{n}\times\mathbb{R}^{m}$}.
Em particular, os bordos de $\mathbb{H}^{n}$ e $\mathbb{H}^{m}$
podem se intersectar quando tomado o produto cartesiano $\mathbb{H}^{n}\times\mathbb{H}^{m}$,
formando um ``bico''. Daí, se alguma das cartas $x\times y$ tiver
imagem que contempla tal ponto de intersecção, então a entidade $X\times Y$
também possuirá um ``bico'' e não será diferenciável. Isto acontece,
por exemplo, com o quadrado $I\times I$. Os ``bicos'', neste caso,
são os cantos do quadrado. 

Observamos, em contrapartida, que se nos restringimos às variedades
sem bordo, então as correspondências $x\times y$ assumem valores
em $\mathbb{R}^{n}\times\mathbb{R}^{m}$, não havendo problemas. Assim,
sob esta restrição, o produto $X\times Y$ está bem definido e é uma
variedade, também sem bordo, de dimensão $n+m$. Tal entidade é precisamente
aquela que, em conjunto com as projeções na primeira e na segunda
entrada, satisfaz a propriedade universal ilustrada no primeiro dos
diagramas abaixo, onde cada função é diferenciável:\begin{equation}{
\xymatrix{& & & X & & & & X \ar@/_/[llld]_f \ar[dl]_{\imath _1}  \\
X' \ar@{-->}[rr]^{u} \ar@/^/[rrru]^f \ar@/_/[rrrd]_g & & X\times Y \ar[ur]^{\pi _1} \ar[dr]_{\pi _2} && X'   & & \ar@{-->}[ll]_{u} X\oplus Y   &\\
& & & Y & & & & Y \ar@/^/[lllu]^g \ar[ul]^{\imath _2}  }}
\end{equation}

Ainda com as variedades $X$ e $Y$ em mãos, pode-se considerar a
soma topológica $X\oplus Y$ entre os espaços topológico subjacentes,
o que fornece uma questão natural: admitirá tal soma uma estrutura
diferenciável com respeito à qual as inclusões são diferenciáveis
e satisfazem a propriedade universal ilustrada no segundo dos diagramas
acima? A tentativa mais natural de introduzir esta estrutura é a seguinte:
dado um ponto em $z\in X\oplus Y$, quando $z\in X$ considera-se
os próprios sistemas de coordenadas de $X$, ao passo que, se $z\in Y$,
toma-se as próprias cartas de $Y$. Isto realmente faz de $X\oplus Y$
localmente trivial. Existe, no entanto, um novo problema: as cartas
de $X$ assumem valores em $\mathbb{H}^{n}$, enquanto que as de $Y$
tomam valores em $\mathbb{H}^{m}$. Assim, se for $m\neq n$, então
existirão pontos de $X\oplus Y$ com diferentes dimensões.

Se quisermos considerar esta estrutura diferenciável em $X\oplus Y$,
das duas uma: ou nos retemos às variedades com uma dada dimensão fixa,
ou redefinimos o que entendemos por variedades diferenciáveis, permitindo
que cada carta assuma valores num semi-espaço diferente. Com efeito,
pode-se mostrar que, nesta última situação, a noção de dimensão fica
bem definida em cada componente conexa da variedade (isto é, se dois
pontos estão na mesma componente, então quaisquer cartas definidas
em suas vizinhanças assumem valores no mesmo semi-espaço). O problema
em $X\oplus Y$ está justamente no fato de ser uma espaço desconexo.$\underset{\underset{\;}{\;}}{\;}$

\noindent \textbf{Conclusão.} Como definida inicialmente, a categoria
$\mathbf{Diff}$ é um pouco problemática: diferentemente do que acontece
com as categorias algébricas e com $\mathbf{Top}$, nela não existem
produtos e nem coprodutos. Para obter produtos, precisamos nos restringir
às variedades sem bordo. Por sua vez, se queremos coprodutos, ou nos
retemos às variedades de dimensão fixa ou redefinimos o que entendemos
por variedades. 

\chapter{Linguagem}

$\quad\;\,$Grosso modo, fornecer uma \emph{categoria} é o mesmo que
fornecer uma classe de objetos, entre os quais se tem uma noção de
mapeamento e uma lei de composição. Desta forma, pode-se falar, por
exemplo, da categoria cujos objetos permeiam alguma classe de estruturas
algébricas, cujos mapeamentos são os respectivos homomorfismos e cuja
lei de composição é a usual. Introduzir a concepção formal de categoria
e ilustrá-la por meio de exemplos constitui o principal objetivo da
secção inicial deste capítulo. 

A segunda secção, por sua vez, é voltada à introdução dos conceitos
de \emph{functor} e de \emph{transformação natural}. Os primeiros
nada mais são que mapeamentos entre categorias, os quais preservam
a lei de composição e levam identidades em identidades. Por sua vez,
as transformações naturais constituem os mapeamentos entre functores. 

Estes conceitos inserem-se na topologia sob o seguinte aspecto: há
uma categoria $\mathbf{Top}$, formada por espaços topológicos e tendo
aplicações contínuas como mapeamentos. A cada functor ali definido,
faz-se corresponder um invariante topológico. Uma transformação natural
entre dois de tais functores pode, então, ser pensada como um vínculo
entre os respectivos invariantes que a eles correspondem.

Aquém de sua aplicabilidade imediata em topologia, functores são úteis
na resolução de diversos problemas. Por exemplo, quer-se classificar
os objetos de certa categoria a menos de uma noção de equivalência.
Questiona-se, também, acerca da possibilidade de estender ou levantar
mapeamentos. Em ambos as situações, os functores dão condições necessárias
para que tais problemas admitam solução. Esta íntima relação é assunto
da secção final do capítulo. 

Ressaltamos a influência das clássicas referências \cite{MACLANE_categories,Mitchell_categories-1}.
Um texto mais moderno e que também foi utilizado é \cite{category_theory_TOM}.

\section{Categorias}

$\quad\;\,$Sob um ponto de vista ingênuo (e, portanto, não-axiomático),
uma \emph{categoria }$\mathbf{C}$ é uma entidade consistida de:
\begin{enumerate}
\item uma classe de objetos de mesma natureza;
\item para cada par de objetos $X$ e $Y$, um conjunto $\mathrm{Mor}_{\mathbf{C}}(X;Y)$,
cujos elementos são chamados de \emph{morfismos} de $X$ em $Y$,
sendo usualmente denotados por $f:X\rightarrow Y$;
\item uma lei de composição, que toma morfismos $f:X\rightarrow Y$ e $g:Y\rightarrow Z$,
e devolve um outro morfismo $g\circ f:X\rightarrow Z$, de tal forma
que:

\begin{enumerate}
\item vale associatividade: $(h\circ g)\circ f=h\circ(g\circ f)$;
\item para cada objeto $X$, existe um morfismo $id_{X}:X\rightarrow X$,
chamado \emph{identidade} de $X$, com a seguinte propriedade: para
todo objeto $Y$ e para quaisquer morfismos $f:X\rightarrow Y$ e
$g:Y\rightarrow X$, tem-se $id_{X}\circ g=g$ e $f\circ id_{X}=f$.
\end{enumerate}
\end{enumerate}
$\quad\;\,$Numa categoria $\mathbf{C}$, diz-se que dois objetos
$X$ e $Y$ são \emph{equivalentes} (e escreve-se $X\simeq Y$) quando
existem morfismos $f:X\rightarrow Y$ e $g:Y\rightarrow X$, tais
que $g\circ f=id_{X}$ e $f\circ g=id_{Y}$. Fala-se que $f$ é uma
\emph{equivalência} ou um \emph{isomorfismo} e que $g$ é a sua \emph{inversa}.
A relação $\simeq$ é de equivalência na classe dos objetos de $\mathbf{C}$.
Seu espaço quociente será denotado por $\mathrm{Iso}(\mathbf{C})$.$\underset{\underset{\;}{\;}}{\;}$

\noindent \textbf{Advertência.} A ingenuidade com a qual encararemos
a definição acima é a seguinte: ao longo do texto, \emph{classes}
e \emph{conjuntos} serão tomados sinônimos. Isto, no entanto, não
é bem verdade. Com efeito, numa formulação axiomática da teoria dos
conjuntos (devida a Von Neumann, Gödel e Bernays), o conceito de classe
é introduzido como sendo mais amplo do que o conjunto, no intuito
de contornar paradoxos ``do tipo Russel''. Nela, por exemplo, é
possível falar da classe de todos os conjuntos, ao passo que o conjuntos
de todos os conjuntos não se vê bem definido (veja \cite{Bernays_axiomatic_set}
e também o apêndice de \cite{Kelley_general_topology}). Dito isto,
advertimos: no desenvolvimento que se sucederá, a teoria ingênua de
conjuntos será praticada. Assim os referidos paradoxos ali estarão
presentes.$\underset{\underset{\;}{\;}}{\;}$ 

No restante desta secção, veremos diversos exemplos e construções
de categorias que aparecerão naturalmente ao longo de todo o texto.
\begin{example}
Típicas categorias são aquelas formadas por objetos algébricos de
mesma natureza (grupos, anéis, espaços vetoriais, módulos, etc.),
com morfismos dados pela respectiva noção de homomorfismo inerente
à cada classe de tais objetos. Assim como fizemos no primeiro capítulo,
guardaremos as notações $\mathbf{Grp}$ e $\mathbf{Mod}_{R}$ para
denotar as categorias dos grupos e dos módulos sobre $R$. Quando
$R$ for um corpo $\mathbb{K}$, escreveremos $\mathbf{Vec}_{\mathbb{K}}$.

\begin{example}
Com morfismos iguais às aplicações contínuas, a coleção dos espaços
topológicos define uma categoria $\mathbf{Top}.$ Nela, dois objetos
são equivalentes quando, e só quando, são homeomorfos.

\begin{example}
Todo conjunto $X$, parcialmente ordenado por uma certa relação $\leq$,
define uma categoria $\mathrm{Pos}(X)$: seus objetos são os próprios
elementos de $X$, ao passo que há um (único) morfismo $f:x\rightarrow y$
se, e somente se, $x\leq y$. 

\begin{example}
Uma categoria $\mathbf{D}$ diz-se uma \emph{subcategoria} de $\mathbf{C}$
(e escreve-se $\mathbf{D}\subset\mathbf{C}$) quando todo objeto de
$\mathbf{D}$ é também objeto de $\mathbf{C}$ e, além disso, quando
\[
\mathrm{Mor}_{\mathbf{D}}(X;Y)\subseteq\mathrm{Mor}_{\mathbf{C}}(X;Y)\quad\mbox{para todo}\quad X,Y\quad\mbox{em}\quad\mathbf{D}.
\]
Se vale a igualdade, a subcategoria $\mathbf{D}$ é denominada \emph{cheia
}(tradução para \emph{full})\emph{. }Espaços métricos, com morfismos
dados pelas aplicações contínuas, dão origem a uma subcategoria cheia
$\mathbf{Met}$ de $\mathbf{Top}$. Igualmente, variedades formam
subcategoria cheia $\mathbf{Mfd}$ de $\mathbf{Top}$. Em contrapartida,
variedades diferenciáveis com aplicações diferenciáveis originam uma
subcategoria $\mathbf{Diff}\subset\mathbf{Top}$ que não é cheia.
Afinal, existem funções contínuas que não são diferenciáveis. Ao longo
de todo o texto, $\mathbf{AbGrp}\subset\mathbf{Grp}$ denotará a subcategoria
cheia dos grupos abelianos.

\begin{example}
Todos os exemplos apresentados até agora são subcategorias de $\mathbf{Set}$.
Isto é, são formadas de conjuntos dotados de estruturas e de funções
cumprindo condições adicionais. Aqui apresentamos um exemplo de que
este nem sempre é o caso. Mais precisamente, elencamos uma categoria
cujos morfismos não são funções. Com efeito, para cada inteiro $n$,
tem-se uma categoria $n\mathbf{Cob}$, cujos objetos são as variedades
de dimensão $n-1$, compactas e sem bordo. Os morfismos $\Sigma:X\rightarrow Y$,
chamados de \emph{cobordismos} entre $X$ e $Y$, são as classes de
difeomorfismo de variedades compactas, de dimensão $n$, tendo bordo
igual à reunião disjunta de $X$ com $Y$. Isto é, tais que $\partial\Sigma=X\sqcup Y$.
A composição é obtida por colagem ao longo do bordo em comum. A unidade
de $X$ é a classe de difeomorfismo do produto $X\times I$.

\begin{example}
Associada a toda categoria $\mathbf{C}$ existe uma outra, denotada
por $\mathbf{C}^{op}$ e chamada de \emph{oposta }de $\mathbf{C}$,
a qual se vê caracterizada pelas seguintes propriedades: 
\end{example}
\end{example}
\end{example}
\begin{enumerate}
\item seus objetos são os próprios objetos de $\mathbf{C}$;
\item para quaisquer $X$ e $Y$, tem-se $\mathrm{Mor}_{\mathbf{C}^{op}}(X;Y)=\mathrm{Mor}_{\mathbf{C}}(Y;X)$.
Dado um morfismo $f$ de $\mathbf{C}$, utilizaremos de $f^{op}$
para representar seu correspondente em $\mathbf{C}^{op}$;
\item por definição, $g^{op}\circ f^{op}=(f\circ g)^{op}$.
\end{enumerate}
\begin{example}
Partindo de duas subcategorias $\mathbf{C}$ e $\mathbf{D}$ de $\mathbf{Set}$,
constrói-se uma terceira: o \emph{produto }entre elas, denotado por
$\mathbf{C}\times\mathbf{D}$. Seus objetos são os pares $(X,Y)$,
com $X\in\mathbf{C}$ e $Y\in\mathbf{D}$. Seus morfismos são também
pares $(f,g)$, em que $f$ é morfismo de $\mathbf{C}$ e $g$ é morfismo
de $\mathbf{D}$.

\begin{example}
Uma vez fixado um objeto $A\in\mathbf{C}$, constrói-se uma categoria
$\mathbf{C}\rightarrow A$, cujos objetos são aqueles $X\in\mathbf{C}$
para os quais há ao menos um $f:X\rightarrow A$, e cujos morfismos
são os $h:X\rightarrow Y$ tais que, dado $f$ e existe ao menos um
$g:Y\rightarrow A$ cumprindo $f=h\circ g$. De maneira análoga define-se
$A\rightarrow\mathbf{C}$.

\begin{example}
Seja $\mathbf{C}$ uma categoria e suponha a existência de uma relação
de equivalência $\sim$ em cada conjunto $\mathrm{Mor}(X;Y)$, a qual
é compatível com a composição: se $f\sim g$ e $f'\sim g'$, então
$f\circ f'\sim g\circ g'$. Sob estas condições, tem-se uma categoria
$\mathscr{H}\mathbf{C}$, cujos objetos são os mesmos que os de $\mathbf{C}$,
cujos morfismos são as classes de equivalência $[f]$ de morfismos
de $\mathbf{C}$, e cuja composição é dada pela relação $[f]\circ[g]=[f\circ g]$. 
\end{example}
\end{example}
\end{example}
\end{example}
\end{example}
\end{example}
Fala-se que um objeto $X'$ é \emph{quociente} de $X$ quando existe
um morfismo $\pi:X\rightarrow X'$, denominado \emph{projeção}, segundo
o qual para qualquer que seja $f:X\rightarrow Y$ há um único morfismo
$g:X'\rightarrow Y$ cumprindo $g\circ\pi=f$. Neste caso, como pode
ser conferido no primeiro dos diagramas abaixo, quando $Y'$ também
é objeto quociente (digamos de $Y$), então $f:X\rightarrow Y$ induz
um respectivo $f':X'\rightarrow Y'$, dito ser obtido de $f$ por
meio de \emph{passagem ao quociente.$$
\xymatrix{X \ar[r]^{f} \ar[d]_{\pi} & Y \ar[d]^{\pi} & & X \ar[r]^f & X' \\
X' \ar[r]_{f'} \ar@{-->}[ru]^{g} & Y' && A \ar[u]^{\imath} \ar@{-->}[r]_{g} \ar[ur] & A' \ar[u]_{\imath '}}
$$}
\begin{example}
Toda categoria $\mathbf{C}$ possui uma subcategoria $\mathscr{Q}\mathbf{C}$,
cujos objetos são quocientes de objetos de $\mathbf{C}$ e cujos morfismos
são aqueles obtidos por meio de passagem ao quociente. Quando a categoria
em questão é $\mathbf{Set}$ ou $\mathbf{Top}$, toda regra que associa
uma relação de equivalência $\sim$ a cada objeto $X\in\mathbf{C}$
define uma subcategoria $\mathbf{C}/\sim$ de $\mathscr{Q}\mathbf{C}$.
Seus objetos são, precisamente, os espaços quociente $X/\sim$, ao
passo que o conjunto dos morfismos entre $X/\sim$ e $Y/\sim$ está
em bijeção com o conjunto das $f:X\rightarrow Y$ tais que, se $x\sim x'$,
então $f(x)\sim f(x')$. 
\end{example}
Numa categoria $\mathbf{C}$, diz-se que $X'$ é \emph{subobjeto }de
$X$ quando existe um morfismo $\imath:X'\rightarrow X$, denominado
\emph{inclusão}, o qual satisfaz a seguinte propriedade: se $f,g:Y\rightarrow X'$
são tais que $\imath\circ f=\imath\circ g$, então $f=g$. Neste caso,
escreve-se $X'\subset X$. Dado um morfismo $f:X\rightarrow Y$, a
composição $f\circ\imath$ é denotada por $f\vert_{X'}$ e chamada
de \emph{restrição} de $f$ a $X'$. 
\begin{example}
Um \emph{par} de \textbf{$\mathbf{C}$ }é simplesmente uma dupla $(X,A)$,
em que $X\in\mathbf{C}$ e $A\subset X$ é subobjeto. Um \emph{morfismo}
entre dois pares $(X,A)$ e $(X',A')$ é um morfismo $f:X\rightarrow X'$
para o qual existe $g:A\rightarrow A'$ satisfazendo\footnote{Quando $\mathbf{C}\subset\mathbf{Set}$, esta condição nada mais significa
que $f(A)\subset A'$.} $f\circ\imath=\imath'\circ g$ (veja o segundo dos diagramas acima).
Junto desta noção, a coleção dos pares de \textbf{$\mathbf{C}$ }se
torna uma categoria, denotada por $\mathbf{C}_{2}$. 

\begin{example}
Dada uma categoria $\mathbf{C}$, suponhamos existir $*$ tal que
para cada $X\in\mathbf{C}$ há um único morfismo $X\rightarrow*$
(tais objetos são ditos \emph{terminais}). Neste caso, a respectiva
categoria $*\rightarrow\mathbf{C}$, aqui denotada por $\mathbf{C}_{*}$
e denominada \emph{pontuação} de $\mathbf{C}$, se identifica com
aquela formada por todos os \emph{objetos pontuados} $(X,x_{o})$,
em que o \emph{ponto base} $x_{o}$ é um morfismo $*\rightarrow X$.
Os morfismos entre $(X,x_{o})$ e $(Y,y_{o})$ são precisamente aqueles
$f:X\rightarrow Y$ que preservam o ponto base: isto é, tais que $f\circ x_{o}=y_{o}$.
Quando $\mathbf{C}$ é $\mathbf{Set}$ ou $\mathbf{Top}$, todo conjunto
formado de um só ponto é objeto terminal. Consequentemente, há uma
identificação entre os pares $(X,x_{o})$ e $(X,\{x_{o}\})$, com
$x_{o}\in X$. Por sua vez, os morfismos cumprindo $f\circ x_{o}=y_{o}$
identificam-se com as aplicações tais que $f(x_{o})=y_{o}$. Neste
contexto, considera-se $\mathbf{C}_{*}$ como subcategoria de $\mathbf{C}_{2}$.
O mesmo se aplica quando $\mathbf{C}$ é $\mathbf{Diff}$.
\end{example}
\end{example}

\section{Functores}

$\quad\;\,$Um \emph{functor} entre duas categorias $\mathbf{C}$
e $\mathbf{D}$ consiste-se de um mapeamento $F:\mathbf{C}\rightarrow\mathbf{D}$
que preserva todas as estruturas envolvidas. Isto é, que cada objeto
$X$ de $\mathbf{C}$ faz corresponder um objeto $F(X)$ de $\mathbf{D}$,
e que a cada $f\in\mathrm{Mor}_{\mathbf{C}}(X;Y)$ associa um morfismo
$F(f):F(X)\rightarrow F(Y)$, de tal forma que:
\begin{enumerate}
\item composições são preservadas. Isto é, $F(g\circ f)=F(g)\circ F(f)$;
\item identidades são levadas em identidades: $F(id_{X})=id_{F(X)}$.
\end{enumerate}
$\quad\;\,$Os functores de $\mathbf{C}^{op}$ em $\mathbf{D}$ são
denominados \emph{functores contravariantes} de $\mathbf{C}$ em $\mathbf{D}$
ou mesmo \emph{pré-feixes} de $\mathbf{C}$ com valores em $\mathbf{D}$.
Um functor de duas entradas (também chamado de \emph{bifunctor}) é
simplesmente um functor entre uma categoria produto $\mathbf{C}\times\mathbf{C}'$
e outra categoria $\mathbf{D}$. Indutivamente, define-se o que vem
a ser um functor de $n$ entradas (denominado \emph{$n$-functor}).
Como logo se convence, functores levam equivalências de uma categoria
em equivalências de outra categoria.
\begin{example}
Fixado um objeto $X$ de uma categoria $\mathbf{C}$, tem-se um functor
$h^{X}:\mathbf{C}\rightarrow\mathbf{Set}$, que a cada $Y\in\mathbf{C}$
associa o conjunto dos morfismos de $Y$ em $X$, e que a cada morfismo
$f:Y\rightarrow Z$ devolve a correspondência 
\[
h^{X}(f):\mathrm{Mor}_{\mathbf{C}}(X;Y)\rightarrow\mathrm{Mor}_{\mathbf{C}}(X;Z),\quad\mbox{tal que}\quad h^{X}(f)(g)=f\circ g.
\]
Conta-se também com um functor contravariante $h_{X}:\mathbf{C}\rightarrow\mathbf{Set}$,
caracterizado por 
\[
h_{X}(X)=\mathrm{Mor}_{\mathbf{C}}(Y;X)\quad\mbox{e}\quad\mbox{por}\quad h_{X}(f)(g)=g\circ f.
\]
\end{example}
\begin{rem}
Quando se quer evidenciar a categoria (em particular, a classe morfismos)
com os quais se está trabalhando, costuma-se escrever $\mathrm{Mor}_{\mathbf{C}}(-;X)$
ao invés de $h_{X}$, e $\mathrm{Mor}_{\mathbf{C}}(X;-)$ no lugar
de $h^{X}$. Ao longo do texto, esta prática será adotada.
\end{rem}
\begin{example}
Se $\mathbf{D}$ é subcategoria de $\mathbf{C}$, então existe um
functor de inclusão $\imath:\mathbf{D}\rightarrow\mathbf{C}$, definido
de maneira natural: $\imath(X)=X$ e $\imath(f)=f$ para quaisquer
que sejam o objeto $X$ e o morfismo $f$ de $\mathbf{D}$. Em $\mathbf{C}\times\mathbf{C}'$,
pode-se falar do functor $\mathrm{pr}_{1}$ de \emph{projeção }na
primeira entrada. Ele é definido através de $\mathrm{pr}_{1}(X,Y)=X$
e de $\mathrm{pr}_{1}(f,g)=f$. De forma semelhante, define-se a projeção
noutra entrada.

\begin{example}
A \emph{composição} entre dois functores $F:\mathbf{C}\rightarrow\mathbf{C}'$
e $F':\mathbf{C}'\rightarrow\mathbf{D}$ consiste-se de um novo functor
$F\circ F'$, de $\mathbf{C}$ em $\mathbf{D}$, definido por 
\[
(F\circ F')(X)=F(F'(X))\quad\mbox{e}\quad\mbox{por}\quad(F\circ F')(f)=F(F'(f)).
\]
Quando $\mathbf{C}$ é pequena, tal operação introduz uma estrutura
de monóide no conjunto $\mathrm{Func}(\mathbf{C})$, formado de todos
os functores de $\mathbf{C}$ em si mesma, cujo elemento neutro é
o functor identidade $id_{\mathbf{C}}$, definido de maneira óbvia.

\begin{example}
Há um functor natural $T:\mathbf{Diff}_{*}\rightarrow\mathbf{Vec}_{\mathbb{R}}$,
que a cada par $(X,x)$ associa o espaço tangente a $X$ em $x$,
e que a cada função diferenciável $f:X\rightarrow Y$, com $f(x)=y$,
associa a transformação linear $Df_{x}:TX_{x}\rightarrow TY_{y}$.

\begin{example}
Para qualquer que seja a categoria $\mathbf{C}$, tem-se functores
$\Pi:\mathbf{C}\rightarrow\mathscr{Q}\mathbf{C}$, que a cada objeto
$X$ associam um de seus quocientes, e que passam morfismos $f:X\rightarrow Y$
ao quociente. Particularmente, no caso em que $\mathbf{C}$ é $\mathbf{Set}$
ou $\mathbf{Top}$, toda regra que faz corresponder uma relação de
equivalência a cada objeto define um functor $\pi:\mathbf{C}\rightarrow\mathbf{C}/\sim$.

\begin{example}
Assim como morfismos, functores também podem ser passados ao quociente.
De maneira mais precisa, seja $F:\mathbf{C}\rightarrow\mathbf{D}$
um functor e suponhamos que nos conjuntos $\mathrm{Mor}_{\mathbf{C}}(X;Y)$
e $\mathrm{Mor}_{\mathbf{D}}(X',Y')$ estejam respectivamente definidas
relações de equivalência $\sim$ e $\approx$, cujas classes serão
ambas denotadas por $[f]$. Se $F(f)\approx F(g)$ sempre que $f\sim g$,
então a regra que a cada $X$ associa o próprio $X$, e que a cada
$[f]$ faz corresponder $[F(f)]$, está bem definida e estabelece
um functor de $\mathscr{H}\mathbf{C}$ em $\mathscr{H}\mathbf{D}$.
\end{example}
\end{example}
\end{example}
\end{example}
\end{example}
No que segue, apresentamos alguns exemplos de functores que aparecem
naturalmente no contexto da álgebra. Eles expressam um ``fenômeno''
conhecido como \emph{adjunção}. Com efeito, diz-se que dois functores
$F:\mathbf{C}\rightarrow\mathbf{D}$ e $F':\mathbf{D}\rightarrow\mathbf{C}$
são \emph{adjuntos} quando suas imagens produzem os mesmos morfismos.
Mais precisamente, quando existem bijeções 
\[
\mathrm{Mor}_{\mathbf{D}}(F(X);Y)\simeq\mathrm{Mor}_{\mathbf{C}}(X;F'(Y))\quad\mbox{para todos}\quad X\in\mathbf{C}\quad\mbox{e}\quad Y\in\mathbf{D}.
\]

\begin{example}
Todo morfismo $\imath:S\rightarrow R$ entre anéis induz um functor
$R_{S}^{\imath}:\mathbf{Mod}_{R}\rightarrow\mathbf{Mod}_{S}$, denominado
\emph{restrição por escalar,} e definido como segue: dado um $R$-módulo
$X$, olha-se para ele enquanto grupo abeliano e ali se introduz a
ação $S\times X\rightarrow X$, tal que $s\cdot x=\imath(s)x$, resultando
em $R_{S}^{\imath}(X)$. Por sua vez, $\imath$ induz functor $E_{R}^{\imath}:\mathbf{Mod}_{S}\rightarrow\mathbf{Mod}_{R}$,
usualmente chamado de \emph{extensão por escalar}. De fato, fornecido
$S$-módulo $Y$, toma-o enquanto grupo abeliano e considera-se
\[
R\times(R\otimes_{S}Y)\rightarrow R\otimes_{S}Y,\quad\mbox{tal que}\quad a\cdot(b\otimes_{R}x)=(a\cdot b)\otimes_{R}x,
\]
onde, na primeira igualdade, estamos vendo $R$ como $S$-módulo.
O resultado é $E_{R}^{\imath}(Y)$. A ação de $E_{R}^{\imath}$ em
morfismos é $E_{R}(f)(a\otimes_{L}x)=a\otimes_{L}f(x)$. Os functores
$R_{S}^{\imath}$ e $E_{R}^{\imath}$ são adjuntos. Observamos que,
para qualquer anel $R$ há um único morfismo $char:\mathbb{Z}\rightarrow R$,
de tal modo que é sempre possível restringir ou estender escalar a
partir dos inteiros. O núcleo de $char$ é da forma $n\mathbb{Z}$
para algum natural $n$. Este marca o número máximo de vezes que se
pode somar $1\in R$ sem resultar em $0\in R$, ao qual se dá o nome
de \emph{característica} de $R$. Veja \cite{MACLANE_algebra,LANG_algebra}.

\begin{example}
Diz-se que uma categoria $\mathbf{C}$ gera \emph{objetos} \emph{livres}
de uma subcategoria $\mathbf{D}\subset\mathbf{C}$ quando existe um
functor $\jmath:\mathbf{C}\rightarrow\mathbf{D}$ que é adjunto à
inclusão $\imath:\mathbf{D}\rightarrow\mathbf{C}$. Por exemplo, a
categoria $\mathbf{Mod}_{R}$ possui objetos livres e gerados por
conjuntos. Afinal, há $\jmath:\mathbf{Set}\rightarrow\mathbf{Mod}_{R}$
tal que, para cada conjunto $S$ e cada grupo $G$, existem bijeções
\[
\mathrm{Mor}_{\mathbf{Set}}(S;\imath(G))\simeq\mathrm{Mor}_{\mathbf{Mod}_{R}}(\jmath(S);G).
\]
Costuma-se dizer que $S$ é uma \emph{base} para $\jmath(S)$. A motivação
é evidente: as bijeções anteriores nos dizem que toda função $f:S\rightarrow G$
se estende, de maneira única, a um homomorfismo $\overline{f}:\jmath(S)\rightarrow G$.
Esta é precisamente a condição satisfeita pela base de um espaço vetorial.
Por conta disso, costuma-se identificar $\jmath(S)$ com o conjunto
das combinações formais $\sum a_{i}\cdot s_{i}$, com $a_{i}\in R$
e $s_{i}\in S$. Assim, a extensão de $f:S\rightarrow G$ torna-se
``linear''. Isto é, fica definida por 
\[
\overline{f}(\sum a_{i}\cdot s_{i})=\sum a_{i}\cdot f(s_{i}).
\]
Observamos que inclusões entre categorias mais complexas, como é o
caso de $\imath:\mathbf{Alg}_{R}\rightarrow\mathbf{Grp}$, em que
$\mathbf{Alg}_{R}$ é a categoria das álgebras sobre $R$, também
possuem adjuntos. Nesta particular situação, cai-se no estudo dos
\emph{anéis de grupos}. Veja \cite{Polcino}. Outro exemplo de inclusão
que possui adjunto à esquerda é $\imath:\mathbf{Top}\rightarrow\mathbf{Set}$:
seu adjunto nada mais é que o functor que introduz a topologia discreta
em cada conjunto.
\end{example}
\end{example}

\subsection*{\uline{Transformações Naturais}}

$\quad\;\,$Uma \emph{transformação natural} entre dois functores
$F,F':\mathbf{C}\rightarrow\mathbf{D}$ é uma aplicação $\xi$, que
a cada objeto $X\in\mathbf{C}$ associa um morfismo $\xi(X)$ de $\mathbf{D}$,
de tal forma que os diagramas abaixo são sempre comutativos (se cada
$\xi(X)$ é isomorfismo, fala-se que $\xi$ é \emph{isomorfismo natural}):$$
\xymatrix{
F(X) \ar[d]_{\xi (X)} \ar[r]^{F(f)} & F(Y) \ar[d]^{\xi (Y)} & &  \\
F'(X) \ar[r]_{F'(f)}  & F'(Y) && \mathbf{C} \ar@/_{0.5cm}/[rr]_{F'} \ar@/^{0.5cm}/[rr]^{F} & \;\; \Big{\Downarrow} \text{\footnotesize{$\xi $}} & \mathbf{D}  }
$$

Assim, de um ponto de vista intuitivo, uma transformação natural é
uma correspondência entre dois functores, responsável por conectar
morfismos entre objetos distintos. Por este motivo, às vezes se escreve
$\xi:F\rightarrow F'$, representando-a diagramaticamente de maneira
mais simples, assim como exposta no segundo dos diagramas acima.

Sabe-se compor transformações naturais de duas maneiras distintas.
A primeira delas é válida quando fixamos as categorias que servem
de domínio e contradomínio aos functores entre os quais as transformações
atuam. Mais precisamente, dadas transformações $\xi:F\rightarrow F'$
e $\xi':F'\rightarrow F''$, em que todos os functores saem de uma
mesma categoria $\mathbf{C}$ e chegam numa mesma categoria $\mathbf{D}$,
define-se a \emph{composição vertical} entre elas como sendo a transformação
$\xi'\bullet\xi$, de $F$ em $F''$, tal que $\xi'\bullet\xi(X)=\xi'(X)\circ\xi(X)$.
Abaixo a representamos diagramaticamente:$$
\xymatrix{& \ar@{=>}[d]^{\xi} \\
\mathbf{C} \ar@/^{1.4cm}/[rr]^{F} \ar@/_{1.4cm}/[rr]_{F''}  \ar[rr] & \ar@{=>}[d]^{\xi'} & \mathbf{D} && \mathbf{C} \ar@/_{0.5cm}/[rr]_{F''} \ar@/^{0.5cm}/[rr]^{F} & \;\;\;\;\; \Big{\Downarrow} \text{\footnotesize{$\xi'$$\bullet$$\xi$}} & \mathbf{D} \\
& &} 
$$

A segunda forma de compor transformações naturais é obtida sobre uma
nova exigência: ao invés de se fixar as categorias nos quais os functores
estão definidos, fixa-se os próprios functores. Com efeito, dadas
transformações $\xi:F\rightarrow G$ e $\xi':F'\rightarrow G'$, em
que agora $F'$ e $G'$ saem das categorias em que $F$ e $G$ chegam,
a \emph{composição horizontal} entre elas é a regra $\xi'\circ\xi:F'\circ F\rightarrow G'\circ G$,
definida por $\xi'\circ\xi(X)=\xi'(G(X))\circ F'(\xi(X))$. Sua representação
diagramática é a seguinte:$$
\xymatrix{\mathbf{C} \ar@/_{0.5cm}/[rr]_{G'} \ar@/^{0.5cm}/[rr]^{F'} & \;\; \Big{\Downarrow} \text{\footnotesize{$\xi '$}} & \mathbf{D} \ar@/_{0.5cm}/[rr]_{G} \ar@/^{0.5cm}/[rr]^{F} & \;\; \Big\Downarrow \text{\footnotesize{$\xi $}}  & \mathbf{E} & \mathbf{C} \ar@/_{0.5cm}/[rr]_{G' \circ G} \ar@/^{0.5cm}/[rr]^{F' \circ F} & \;\;\;\;\;\; \Big{\Downarrow} \text{\footnotesize{$\xi'$$\circ$$\xi $}} & \mathbf{E}   }
$$

Tais composições são compatíveis, no sentido de que elas se distribuem
uma com relação à outra. Isto pode ser descrito nos seguintes termos:
fixadas categorias $\mathbf{C}$ e $\mathbf{D}$ tem-se uma nova categoria
$\mathrm{Func}(\mathbf{C};\mathbf{D})$, cujos objetos são os functores
que saem de $\mathbf{C}$ e chegam em $\mathbf{D}$, cujos morfismos
são transformações naturais e cuja lei de composição é a \emph{composição
vertical.} Por sua vez, a \emph{composição horizontal} define bifunctores
\[
\circ:\mathrm{Func}(\mathbf{C};\mathbf{C}')\times\mathrm{Func}(\mathbf{C}';\mathbf{C}'')\rightarrow\mathrm{Func}(\mathbf{C};\mathbf{C}''),
\]
os quais tomam pares $(F,F')$ de functores e $(\xi,\xi')$ de transformações,
e devolvem as correspondentes $F'\circ F$ e $\xi'\circ\xi$. 
\begin{example}
Qualquer morfismo $f:X\rightarrow Y$ induz transformações naturais
$\xi^{f}:h^{Y}\rightarrow h^{X}$ e $\xi_{f}:h_{X}\rightarrow h_{Y}$,
respectivamente definidas por $\xi^{f}(Z)(g)=g\circ f$ e por $\xi_{f}(Z)(g)=f\circ g$.
Particularmente, tem-se functores 
\[
h^{-}:\mathbf{C}^{op}\rightarrow\mathrm{Func}(\mathbf{C};\mathbf{Set})\quad\mbox{e}\quad h_{-}:\mathbf{C}\rightarrow\mathrm{Func}(\mathbf{C}^{op};\mathbf{Set}),
\]
tais que $h^{-}(X)=h^{X}$ e $h^{-}(f)=\xi^{f}$, com $h_{-}$ sendo
definido de maneira análoga. Desta forma, se $X\simeq Y$, então os
functores $h_{X}$ e $h_{Y}$, assim como $h^{X}$ e $h^{Y}$, são
naturalmente isomorfos.

\begin{example}
Há uma categoria $\mathbf{Cat}$, cujos objetos são todas as categorias
e cujos morfismos são todos os functores. Para quaisquer objetos $\mathbf{C}$
e $\mathbf{D}$, a respectiva classe de morfismos $\mathrm{Mor}_{\mathbf{Cat}}(\mathbf{C};\mathbf{D})=\mathrm{Func}(\mathbf{C};\mathbf{D})$
não é um mero conjunto, mas também possui uma estrutura de categoria.
Além disso, entre elas existem bifunctores que atuam como uma outra
operação de composição. Desta forma, $\mathbf{Cat}$ é composta não
só de objetos e de morfismos, mas também de ``morfismos entre morfismos''
(papel ocupado pelas transformações naturais), os quais podem ser
compostos de duas maneiras distintas, mas compatíveis entre si. Por
conta disto, $\mathbf{Cat}$ é o que se chama de $2$-\emph{categoria}.
Tais entidades serão discutidas no sexto capítulo. Observamos haverem
subcategorias cheias importantes de $\mathbf{Cat}$. Exemplos são
$\mathbf{Pos}$ e $\mathbf{Gpd}$. A primeira delas tem como objetos
as categorias definidas por ordenamentos parciais. A segunda, por
sua vez, é formada pelos \emph{grupoides}: categoriais cujos morfismos
são todos isomorfismos.

\begin{example}
Se dois functores $F:\mathbf{C}\rightarrow\mathbf{D}$ e $F':\mathbf{D}\rightarrow\mathbf{C}$
são adjuntos, então existem transformações naturais $\xi$, de $F'\circ F$
em $id_{\mathbf{C}}$, e $\eta$, de $id_{\mathbf{D}}$ em $F\circ F'$,
as quais são respectivamente chamadas de \emph{unidade }e \emph{counidade}
da adjunção. Com efeito, da condição
\[
\mathrm{Mor}_{\mathbf{D}}(F(X);Y)\simeq\mathrm{Mor}_{\mathbf{C}}(X;F'(Y)),
\]
tomando $Y=F(X)$ e variando o $X$ obtém-se $\xi$. Por sua vez,
tomando $X=F'(Y)$ e variando $Y$, encontra-se $\eta$. 
\end{example}
\end{example}
\end{example}

\subsection*{\uline{Equivalência}}

$\quad\;\,$Procuramos por uma boa noção de equivalência entre categorias.
A opção imediata seriam os isomorfismos de $\mathbf{Cat}$. Neste
caso, duas categorias seriam equivalentes quando existissem functores
entre elas cujas respectivas composições coincidissem com identidades.
Esta, no entanto, é uma noção demasiadamente rígida para nossos propósitos:
\emph{em geral não se tem a igualdade de functores, mas apenas a existência
de transformações naturais entre eles. }

Por sua vez, observamos que a relação que identifica functores $F,F':\mathbf{C}\rightarrow\mathbf{D}$
entre os quais há uma transformação natural $\xi:F\rightarrow F$
é de equivalência em $\mathrm{Func}(\mathbf{C};\mathbf{D})$ e compatível
com a composição vertical (denota-a escrevendo $F\simeq F'$). Assim,
está definida uma nova categoria $\mathscr{H}\mathbf{Cat}$, cujos
objetos são as próprias categorias e cujos morfismos são as classes
de functores ligados por transformações naturais. Com isto em mente,
a próxima opção seria considerar como equivalências entre categorias
os isomorfismos de $\mathscr{H}\mathbf{Cat}$. Isto é, $\mathbf{C}$
e $\mathbf{D}$ seriam equivalentes quando existissem functores $F:\mathbf{C}\rightarrow\mathbf{D}$
e $G:\mathbf{D}\rightarrow\mathbf{C}$, tais que $G\circ F\simeq id_{\mathbf{C}}$
e $F\circ G\simeq id_{\mathbf{D}}$.

Espera-se que uma equivalência entre categorias mapeie objetos isomorfos
em objetos isomorfos. Observamos que isto pode não ser satisfeito
pelos isomorfismos de $\mathscr{H}\mathbf{Cat}$, de modo que a opção
por eles proporcionada é muito pouco restritiva para o que procuramos.
A ideia é então considerar uma noção intermediária. Com isto em mente,
a noção de equivalência entre categorias que empregaremos ao longo
do texto é a seguinte: fala-se que duas categorias $\mathbf{C}$ e
$\mathbf{D}$ são \emph{fracamente isomorfas} (ou simplesmente que
são \emph{equivalentes}) quando existe um functor $F:\mathbf{C}\rightarrow\mathbf{D}$
admitindo uma inversa \textquotedbl fraca\textquotedbl{} $F':\mathbf{D}\rightarrow\mathbf{C}$.
Isto significa que, ao invés de se exigir igualdades $F'\circ F=id_{\mathbf{C}}$
e $F\circ F'=id_{\mathbf{D}}$, exige-se a existência de \emph{isomorfismos
naturais} entre eles. 

Desta forma, um functor $F:\mathbf{C}\rightarrow\mathbf{D}$ admite
uma inversa fraca se, e somente se, a respectiva aplicação $X\mapsto F(X)$
é fracamente bijetiva (isto é, $F(X)\simeq F(X')$ implica $X\simeq X'$,
ao mesmo tempo que todo objeto $Y\in\mathbf{D}$ é isomorfo a $F(X)$
para algum objeto $X\in\mathbf{C}$) e existem bijeções
\[
\mathrm{Mor}_{\mathbf{C}}(X;Y)\simeq\mathrm{Mor}_{\mathbf{D}}(F(X);F(Y)).
\]

Em termos ainda mais sucintos, $\mathbf{C}$ e $\mathbf{D}$ são equivalentes
se, e só se, existe $F:\mathbf{C}\rightarrow\mathbf{D}$ que é fracamente
bijetivo em objetos e bijetivo em morfismos. Um functor que é fracamente
injetivo em objetos e injetivo em morfismos chama-se \emph{mergulho}.
Assim, um isomorfismo fraco é simplesmente um mergulho \textquotedbl sobrejetivo\textquotedbl .
Se $F:\mathbf{C}\rightarrow\mathbf{D}$ é mergulho não-sobrejetivo,
então a equivalência se dá entre $\mathbf{C}$ e $F(\mathbf{C})\subset\mathbf{D}$.
Particularmente, a subcategoria $F(\mathbf{C})$ é cheia se, e somente
se, o mergulho $F$ é sobrejetivo em morfismos.$\underset{\;}{\;}$
\begin{rem}
Utilizando da definição acima, o exemplo 2.2.11 nos leva a conclusão
de que a existência de functores adjuntos entre duas categorias também
traduz uma noção de equivalência, a qual é bem mais modesta que aquela
que adotaremos. Observamos, por sua vez, que em outros contextos geralmente
não se consegue (e nem mesmo é útil) mostrar que duas categorias são
equivalentes no sentido apresentado. Neles, a existência de ``adjunções
coerentes'' é então empregada como a noção de equivalência padrão.
Este é o caso das \emph{categorias modelo}, estudadas no sexto capítulo,
nas quais o conceito de equivalência como isomorfismos fracos dá lugar
às \emph{adjunções de Quillen}. 
\end{rem}

\subsection*{\uline{Representação}}

$\quad\;\,$Como vimos anteriormente, para todo objeto $X\in\mathbf{C}$
existe um functor $h_{X}:\mathbf{C}\rightarrow\mathbf{Set}$, bem
como um functor contravariante $h^{X}:\mathbf{C}\rightarrow\mathbf{Set}$.
Os functores covariantes (resp. contravariantes) $F:\mathbf{C}\rightarrow\mathbf{Set}$
naturalmente isomorfos a $h_{X}$ (resp. $h^{X}$) chamam-se \emph{representáveis}
por $X$. Um questionamento natural diz respeito à representabilidade
de um dado functor. Neste espírito, de grande valia é o \emph{lema
de Yoneda }(secção 3.2 de \cite{MACLANE_categories}) que estabelece
a existência de um isomorfismo natural entre 
\[
N,E:\mathrm{Func}(\mathbf{C};\mathbf{Set})\times\mathbf{C}\rightarrow\mathbf{Set},
\]
definidos da seguinte maneira: $E$ é o functor de avaliação, que
toma $(F,X)$ e devolve $F(X)$. De outro lado, $N$ é o functor que
a cada par $(F,X)$ associa o respectivo conjunto das transformações
naturais entre $h^{X}$ e $F$. Assim, por exemplo, para que $F$
seja representável por $X$, é necessário que o conjunto $F(X)$ possua
ao menos um elemento.

A demonstração do lema de Yoneda é bem simples. A ideia é a seguinte:
para qualquer que seja a transformação natural $\xi:h^{X}\rightarrow F$,
o diagrama abaixo deve ser comutativo. Particularmente, quando $Y=X$,
deve-se ter 
\[
[F(g)\circ\xi(X)](id_{X})=[\xi(Y')\circ h^{X}(g)](id_{X}).
\]

No entanto, $h^{X}(g)(id_{X})=g$, mostrando-nos que $\xi$ fica inteiramente
determinada por $\xi(X)(id_{X})$. Assim, a transformação $\alpha:N\rightarrow E$,
definida por $\alpha(F,X)(\xi)=\xi(X)(id_{X})$, fornece o isomorfismo
natural procurado.$$
\xymatrix{\mathbf{C} \ar@/_{0.5cm}/[rr]_{F'} \ar@/^{0.5cm}/[rr]^{h^{X}} & \;\; \Big{\Downarrow} \text{\footnotesize{$\xi $}} & \mathbf{Set}   }
$$

Observamos que o lema de Yoneda possui uma versão dual, demonstrada
de maneira totalmente análoga. Com efeito, se o functor $F:\mathbf{C}\rightarrow\mathbf{Set}$
é contravariante, então também existe um isomorfismo natural entre
\[
N,E:\mathrm{Func}(\mathbf{C}^{op};\mathbf{Set})\times\mathbf{C}^{op}\rightarrow\mathbf{Set},
\]
agora definidos por 
\[
E(F,X)=F(X)\quad\mbox{e}\quad N(F,X)=\mathrm{Mor}(h_{X};F).
\]

\begin{example}
Vejamos um uso do resultado anterior. Dados $X,Y\in\mathbf{C}$, pondo
$F=h_{Y}$ no lema de Yoneda, obtém-se bijeções entre $\mathrm{Mor}(h_{X};h_{X})$
e $\mathrm{Mor}(X;Y)$, mostrando-nos que o functor $h_{-}:\mathbf{C}\rightarrow\mathrm{Func}(\mathbf{C}^{op};\mathbf{Set})$
é mergulho de $\mathbf{C}$ numa categoria de functores. Assim, \emph{toda
categoria é equivalente a uma categoria de pré-feixes}. 
\end{example}

\section{Utilidade}

$\quad\;\,$Um dos problemas básicos dentro de uma categoria $\mathbf{C}$
é o de classificar os seus objetos a menos de sua natural noção de
equivalência. Em outras palavras, procura-se determinar a estrutura
da classe $\mathrm{Iso}(\mathbf{C})$. Em geral, isto é feito através
de bijeções $f:\mathrm{Iso}(\mathbf{C})\rightarrow B$, em que $B$
é alguma outra classe (diz-se que $f$ é uma \emph{classificação}
da categoria $\mathbf{C}$ em termos de $B$). 

Vejamos como functores podem ajudar no problema acima descrito: uma
vez que eles levam equivalências em equivalências, para cada $F:\mathbf{C}\rightarrow\mathbf{D}$
a aplicação $\mathrm{Iso}(F):\mathrm{Iso}(\mathbf{C})\rightarrow\mathrm{Iso}(\mathbf{D})$,
tal que $\mathrm{Iso}(F)([X])=[F(X)]$, está bem definida. Desta forma,
se $F(X)\simeq F(Y)$ implica em $X\simeq Y$, então $f$ torna-se
injetiva e, enquanto vista sobre sua imagem, nos dá uma classificação
de $\mathbf{C}$ em termos das classes de equivalência de $\mathbf{D}$.

De maneira mais sucinta, tal estratégia de classificação se baseia
em buscar functores cuja imagem por $\mathrm{Iso}:\mathbf{Cat}\rightarrow\mathbf{Set}$
seja uma função injetiva. 
\begin{example}
Para toda categoria $\mathbf{C}$ há functor $\mathrm{Aut}:\mathbf{C}\rightarrow\mathbf{Grp}$,
que a cada $X$ faz corresponder o grupo $\mathrm{Aut}(X)$ de seus
automorfismos (isto é, dos isomorfismos de $X$ nele mesmo). Não se
espera que tal functor sirva para classificar qualquer categoria.
Afinal, existem categorias arbitrariamente grandes. Um exemplo de
categoria muito grande, a qual não pode ser classificada $\mathrm{Aut}$
é $\mathbf{Top}$. Para ela, tem-se $\mathrm{Aut}(X)=\mathrm{Homeo}(X)$:
o grupo dos homeomorfismos de $X$. Observamos, no entanto, que se
nos restringirmos à subcategoria cheia das variedades topológicas
compactas, então $\mathrm{Aut}$ fornecerá a classificação procurada.
Por sua vez, se nos restringirmos ainda mais e consideramos apenas
as variedades diferenciáveis (caso em que $\mathrm{Aut}(X)$ se torna
o grupos dos difeomorfismos de $X$), tal classificação poderá ser
obtida sem a exigência de compacidade. Veja os trabalhos originais
de Whittaker e Filipkiewicz \cite{whittaker,Filipkiewicz}.

\begin{example}
Um exemplo mais simples: tem-se um functor $\mathrm{dim}:\mathbf{Vec}_{\mathbb{K}}\rightarrow\mathbb{N}_{\infty}$,
onde $\mathbb{N}_{\infty}$ é a categoria dos naturais estendidos
ao infinito, que a cada espaço vetorial associa a sua dimensão. Ele
não serve para classificar a categoria $\mathbf{Vec}_{\mathbb{K}}$,
pois existem espaços de dimensão infinita que não são isomorfos. No
entanto, ele classifica a subcategoria cheia dos espaços de dimensão
finita.

\begin{example}
Se $\mathbf{D}\subset\mathbf{C}$ é subcategoria livremente gerada
por obtidos de $\mathbf{C}$, então a inclusão possui um adjunto à
esquerda $\jmath:\mathbf{C}\rightarrow\mathbf{D}$. No contexto da
teoria dos anéis de grupos, a escolha de um anel $R$ determinada
um adjunto $\jmath_{R}:\mathbf{Grp}\rightarrow\mathbf{Rng}$. Relembramos
que, para cada grupo $G$, o correspondente $\jmath_{R}(G)$ é o anel
das combinações lineares formais de elementos de $G$ com coeficientes
em $R$. Tem-se uma conjectura famosa, denominada \emph{conjectura
do isomorfismos para anéis de grupos}, que consiste, exatamente, na
afirmação de que $\jmath_{R}$ é fracamente injetivo.
\end{example}
\end{example}
\end{example}
Outra maneira de tentar classificar uma categoria é por meio do functor
$[-]:\mathbf{C}\rightarrow\mathbf{Set}$, que faz corresponder a cada
$X\in\mathbf{C}$ a sua classe de isomorfismos. Com efeito, se este
for representável, então existirá um objeto $B\in\mathbf{C}$, denominado
\emph{espaço classificatório} de $\mathbf{C}$, de tal modo que $\mathrm{Mor}(X;B)\simeq[X]$.
Assim, para classificar $\mathbf{C}$, bastará conhecer os morfismos
que chegam em $B$.
\begin{example}
Como discutiremos ao longo do texto, esta segunda estratégia fornece
a classificação dos fibrados principais.
\end{example}
A classificação completa de uma categoria é, em geral, um problema
difícil. O conceito de propriedade \emph{invariante} assume, portanto,
o seu valor. Diz-se que uma propriedade $\mathcal{P}$ é um \emph{invariante}
de uma categoria $\mathbf{C}$ quando sua validade num objeto $X\in\mathbf{C}$
implica na sua validade em todo objeto equivalente a $X$. Assim,
se $\mathcal{P}$ vale em $X$ e encontrarmos um outro objeto $Y$
no qual $\mathcal{P}$ não é válida, então estes não pertencerão a
mesma classe de equivalência, o que nos dará alguma informação sobre
$\mathrm{Iso}(\mathbf{C})$. Observamos que os invariantes de $\mathbf{Top}$
são, precisamente, os invariantes topológicos (por exemplo, número
de componentes conexas e compacidade).

Novamente, functores vêm à nossa ajuda. Afinal, para cada $F:\mathbf{C}\rightarrow\mathbf{D}$
tem-se um invariante de $\mathbf{C}$. Este é consistido da propriedade
\emph{``ter, a menos da noção natural de equivalência, uma específica
imagem por $F$ em $\mathbf{D}$}''. De fato, como functores preservam
equivalências, se $X\in\mathbf{C}$ possui uma determinada imagem
$F(X)=\alpha$, então cada $Y$ equivalente a $X$ terá respectiva
imagem $F(Y)$ equivalente à $\alpha$. 

Com isto em mente, quando um functor $F:\mathbf{C}\rightarrow\mathbf{D}$
classifica $\mathbf{C}$ por meio da primeira das estratégias anteriormente
apresentadas (isto é, quando ele é fracamente injetivo), diz-se que
tal classificação é obtida a\emph{través do invariante definido por
$F$}. 

Os exemplos 2.3.1 e 2.3.2 ressaltam que, em geral, functores não são
fracamente injetivos. No entanto, pode-se procurar por subcategorias
restritas as quais eles o sejam. Na nova linguagem, isto se traduz
no seguinte: \emph{um invariante em geral não classifica a categoria
inteira, mas é possível que classifique alguma subcategoria}. Por
exemplo, como discutimos, a dimensão é um invariante que não classifica
$\mathbf{Vec}_{\mathbb{K}}$, mas classifica a subcategoria $\mathrm{F}\mathbf{Vec}_{\mathbb{K}}$,
formada dos espaços de dimensão finita.$\underset{\underset{\;}{\;}}{\;}$

\noindent \textbf{Conclusão. }Se queremos obter informação classificatória
sobre uma categoria $\mathbf{C}$, um procedimento canônico é buscar
primeiramente por functores nela definidos e então verificar se os
invariantes por eles assignados classificam alguma subcategoria de
$\mathbf{C}$. Assim, por exemplo, se existem functores $F:\mathbf{C}\rightarrow\mathbf{D}$
e $F':\mathbf{C}\rightarrow\mathbf{D'}$ que classificam respectivas
subcategorias cheias $\mathbf{B},\mathbf{B}'\subset\mathbf{C}$, então
tais invariantes classificam, juntos, a subcategoria maior $\mathbf{B}\cup\mathbf{B}'$.$\underset{\underset{\;}{\;}}{\;}$

No que segue, descreveremos uma situação que acreditamos exemplificar
bem a ideia exposta no parágrafo anterior. Em alguns momentos seremos
um tanto quanto imprecisos. Ressaltamos, no entanto, que os conceitos
aqui discutimos serão mais formalmente apresentados em outros momentos
do texto. 

Iniciamos dizendo que na próxima subsecção associaremos a toda categoria
$\mathbf{C}$ uma nova categoria $\mathbf{\mathscr{B}}(\mathbf{C})$,
cujos objetos (aqui chamados de \emph{fibrados}) são os morfismos
de $\mathbf{C}$. Particularmente, veremos que a regra $\mathscr{B}:\mathbf{Cat}\rightarrow\mathbf{Cat}$
é functorial, de modo que a cada functor $F:\mathbf{C}\rightarrow\mathbf{D}$
faz-se corresponder um novo functor $\mathscr{B}F$ entre $\mathbf{\mathscr{B}}(\mathbf{C})$
e $\mathbf{\mathscr{B}}(\mathbf{D})$. Portanto, todo invariante em
$\mathbf{C}$ induz outro em $\mathbf{\mathscr{B}}(\mathbf{C})$. 

Pois bem: gostaríamos de classificar $\mathscr{B}(\mathbf{Diff}_{*})$,
aqui denotada por $\mathscr{B}_{*}$. Como a dimensão é invariante
em $\mathbf{Diff}$, também é invariante da categoria $\mathscr{B}_{*}$.
Isto significa que, para que $f$ e $g$ sejam isomorfos em $\mathscr{B}_{*}$,
situação em que se diz que eles são \emph{conjugados}, é preciso que
seus domínios e que seus contradomínios tenham a mesma dimensão. 

Observamos, por outro lado, que tal invariante classifica $\mathrm{F}\mathbf{Vec}_{\mathbb{R}}$
inteiramente, de modo que produz bons invariantes em sua categoria
de fibrados, que denotaremos por $\mathscr{B}_{\mathbb{R}}$. Na verdade,
tal invariante também classifica $\mathscr{B}_{\mathbb{R}}$: duas
transformações lineares serão isomorfas se, e somente se, possuírem
o mesmo posto (isto é, se, e somente se, possuírem imagem com igual
dimensão\footnote{Veja o exemplo 2.3.6}). Assim, boa informação classificatória
de $\mathscr{B}_{*}$ (no sentido de ser facilmente calculada) é fornecida
por functores de tal categoria em $\mathscr{B}_{\mathbb{R}}$. 

Ora, há um functor natural $T:\mathbf{Diff}_{*}\rightarrow\mathrm{F}\mathbf{Vec}_{\mathbb{R}}$,
que a cada $(X,x)$ associa o espaço $TX_{x}$, e que a cada $f$
faz corresponder a derivada $Df_{x}$. A ideia é, então, buscar por
subcategorias de $\mathscr{B}_{*}$, restritas às quais $\mathscr{B}T$
seja fracamente injetivo. Se existirem, tais subcategorias também
serão classificadas pela dimensão. Pois estas existem: pode-se mostrar
que, se as derivadas $Df_{x}$ e $Dg_{y}$ são conjugadas, então existem
abertos $(U,x)$ e $(V,y)$ restritos aos quais as aplicações $f$
e $g$ também o são (e, em particular, têm posto constante em tais
vizinhanças). Assim, a\emph{ dimensão classifica a subcategoria $\mathrm{Reg}\subset\mathscr{B}_{*}$
formada dos pares $(f,x)$, em que $f$ tem posto constante nas vizinhanças
de $x$}. 

Questionemo-nos: \emph{quão boa é a classificação obtida?} Neste sentido,
cabe dizer que, para qualquer $f$, o conjunto dos pontos $x\in X$
nas vizinhanças dos quais $f$ tem posto constante é aberto e denso
em $X$. Portanto, o resultado obtido serve para classificar \emph{quase
todos }os pares $(f,x)$. Em outras palavras, quase todos objetos
de $\mathrm{Reg}$ estão em $\mathscr{B}_{*}$. Por outro lado, se
$x$ é ponto crítico isolado de $f$ (isto é, se ele o único ponto
num aberto tal que $Df_{x}=0$), então $(f,x)$ não está em $\mathrm{Reg}$
e, portanto, não é classificado pelo invariante ``dimensão''.

Questionemo-nos: \emph{será possível melhorar a classificação obtida?
}Para tanto, precisamos obter novos invariantes que classificam outras
subcategorias de $\mathscr{B}_{*}$. A obtenção de tais invariantes
em subcategorias formadas de pares $(f,x)$, com $x$ sendo ponto
crítico isolado, é objetivo da \emph{Teoria das Singulares} (veja
\cite{guillemin_teoria_singularidades}), a qual faz uso de métodos
de diversos outros campos da matemática como, por exemplo, Topologia
Diferencial e Sistemas Dinâmicos. 

Ilustramos: a toda função real $f:X\rightarrow\mathbb{R}$, de classe
$C^{2}$, e todo ponto crítico $x\in X$ de $f$, faz-se corresponder
um número inteiro. Trata-se, pois, do número de autovalores negativos
da matriz Hessiana de $f$ em $x$, formada pelas derivadas de segunda
ordem de $f$ em algum sistema de coordenadas nas vizinhanças de $x$.
Esta regra é functorial e, portanto, define um functor $\mathrm{Ind}:\mathbf{C}\rightarrow\mathbb{N}$
numa subcategoria $\mathbf{C}\subset\mathscr{B}_{*}$. O correspondente
invariante é chamado de \emph{índice}. Por um resultado conhecido
como \emph{lema de Morse} (veja \cite{morse_homology,Morse_matsumoto,Milnor_morse}),
o functor $\mathrm{Ind}$ se torna fracamente injetivo ao ser restrito
à subcategoria $\mathrm{Morse}\subset\mathbf{C}\subset\mathscr{B}_{*}$,
formada dos pares $(f,x)$ nos quais o ponto crítico $x$ é não-degenerado
(isto é, tal que o determinante da Hessiana de $f$ em $x$ é não-nulo).
Assim, \emph{o invariante} \emph{``índice'' classifica funções reais
nas vizinhanças de pontos críticos não-degenerados e, portanto, em
conjunto com o invariante ``dimensão'', fornece uma melhor informação
classificatória sobre $\mathscr{B}_{*}$}. 

Vejamos que é possível melhorar ainda mais a classificação anterior.
Para tanto, observamos existir uma estrutura diferenciável na reunião
de todos os espaços tangentes de uma variedade $X$. A entidade resultante
é denotada por $TX$. Tem-se uma aplicação natural $\pi:TX\rightarrow X$,
que toma um vetor em $TX_{x}$ e devolve $x$, a qual é diferenciável.
Assim, $\pi\in\mathscr{B}_{*}$. Os elementos $v\in\mathscr{B}_{*}$
tais que $\pi\circ v=id_{X}$ chamam-se \emph{campos de vetores }em
$X$. 

Toda função real $f:X\rightarrow\mathbb{R}$ induz um campo de vetores
$\nabla f$: uma vez escolhido um produto interno $\langle\cdot,\cdot\rangle_{x}$
em cada espaço tangente $TX_{x}$ de $X$, o qual varia diferenciavelmente
com o ponto $x$, define-se $\nabla f$ como sendo aquele que satisfaz
$df_{x}(v)=\langle\nabla f(x),v\rangle_{x}$. Assim, um ponto crítico
de $f$ é precisamente uma \emph{singularidade }de $\nabla f$ (isto
é, um elemento $x\in X$ tal que $\nabla f(x)=0$). Em particular,
a matriz Hessiana de $f$ em $x$ é a matriz Jacobiana de $\nabla f$,
de modo que o índice de $f$ em $x$ é precisamente o número de autovalores
negativos da matriz Jacobiana de $\nabla f$. Isto nos leva a estender
a regra $\mathrm{Ind}$ para a subcategoria $\mathfrak{X}_{*}\subset\mathscr{B}_{*}$,
formada de todo os pares $(v,x)$ em que $v$ é campo de vetores,
com $v(x)=0$: define-se $\mathrm{Ind}(v,x)$ como sendo o número
de autovalores negativos de $Dv_{x}$. Assim, o índice é invariante
numa subcategoria que, em certo sentido, é mais geral que $\mathbf{C}$.

O próximo passo é procurar por uma subcategoria $\mathrm{Hip}\subset\mathfrak{X}_{*}\subset\mathscr{B}_{*}$,
restrita a qual $\mathrm{Ind}$ é fracamente injetivo. Ora, sabemos
que tal functor classifica $\mathrm{Morse}$. Assim, há de se esperar
que, se $f\in\mathrm{Morse}$, então $\nabla f\in\mathrm{Hip}$. Neste
sentido, observemos que um ponto crítico $x$ de $f$ é não-degenerado
se, e só se a matriz Jacobiana de $\nabla f$ ali tem determinante
não-nulo (neste caso, diz-se que $x$ é \emph{singularidade hiperbólica}).
Assim, há de se esperar que a hiperbolicidade de $x$ implique em
$(v,x)\in\mathrm{Hip}$. Em outras palavras, espera-se que hiperbolidade
seja condição necessária para classificação pelo índice. De fato,
como consequência de um resultado conhecido como \emph{Teorema de
Hartman-Grobman }(veja segundo capítulo de \cite{Palis_sistemas_dinamicos}
ou mesmo o quinto capítulo de \cite{Irwin_sistemas dinamicos}) tal
condição não só necessária como também \emph{suficiente}.$\underset{\underset{\;}{\;}}{\;}$

\noindent \textbf{Conclusão.} Temos dois invariantes (a dimensão e
o índice), os quais nos permitem classificar a maior parte de $\mathscr{B}_{*}$.
A dimensão classifica funções diferenciáveis em regiões de posto constante.
Por sua vez, o índice classifica funções reais nas vizinhanças de
pontos críticos não-generados e, mais geralmente, campos de vetores
nas vizinhanças de singularidades hiperbólicas.$\underset{\underset{\;}{\;}}{\;}$

Finalizamos com três observações:
\begin{enumerate}
\item ressaltamos, mais uma vez, que a Topologia Algébrica (assunto dos
próximos capítulos) se ocupa, essencialmente, do estudo de functores
$F$ definidos em $\mathbf{Top}$. Desta forma, lá se obtém uma grande
quantidade de invariantes topológicos. Estes são mais poderosos na
medida em que as estruturas algébricas $F(X)$ por eles assinadas
são mais complexas (e, portanto, mais difíceis de serem calculadas);
\item qual o papel das transformações naturais em toda essa análise? como
functores associam invariantes a objetos, as transformações naturais,
sendo mapeamentos entre functores, possuem o papel de relacionar diferentes
invariantes de um mesmo objeto. Assim, \emph{se existe uma transformação
natural entre dois functores, então os correspondentes invariantes
por eles assignados não são independentes entre si, mas estão vinculados
por naturalidade;}
\item como talvez tenha ficado mais ou menos evidente ao longo da subsecção,
muitos resultados, problemas, conjecturas e questões, todos provenientes
das mais diversas áreas da matemática, se resumem, ao final do dia,
no problema de classificar uma certa categoria. Isto se evidenciará
ainda mais ao longo do texto. 
\end{enumerate}

\subsection*{\uline{Levantamentos e Extensões}}

$\quad\;\,$Além de classificar uma categoria, procura-se determinar
condições necessárias/suficientes para que seus morfismos possam ser
estendidos e/ou levantados. Mais uma vez, no que tange tal problema,
functores se mostram úteis ferramentas. É isto o que veremos nesta
subsecção.

Um \emph{fibrado} numa categoria $\mathbf{C}$ é uma terna $(X,\pi,Y)$,
em que $X,Y\in\mathbf{C}$ e $\pi:X\rightarrow Y$. Diz-se que $Y$
é a \emph{base} do fibrado, ao passo que $X$ é chamado de \emph{espaço
total}. Um \emph{morfismo} entre dois fibrados $(X,\pi,Y)$ e $(X',\pi',Y')$
é um par $(f,g)$, formado de morfismos $f:X\rightarrow X'$ e $g:Y\rightarrow Y'$
que tornam comutativo o diagrama abaixo:$$
\xymatrix{ X \ar[r]^f \ar[d]_{\pi} & X' \ar[d]^{\pi '} \\
Y \ar[r]_{g} & Y'}
$$

Tem-se uma categoria $\mathscr{B}(\mathbf{C})$, cujos objetos são
fibrados e cujos morfismos são aqueles acima definidos\footnote{Na literatura, tal categoria é algumas vezes chamada de \emph{arrow
category of $\mathbf{C}$} e denotada por $\mathrm{Arr}(\mathbf{C})$.}. Nela, a composição entre $(f,g)$ e $(f',g')$ é definida como sendo
o par $(f\circ f',g\circ g')$. Quando existe um isomorfismo entre
dois fibrados, fala-se que eles são \emph{conjugados}.

Diz-se que um fibrado $(X,\pi,Y)$ possui a propriedade de\emph{ levantamento}
\emph{de morfismos }quando, para toda terna $(X',f,Y)$ existe $(X',g,X)$
tal que $f=g\circ\pi$. Dualmente, fala-se que $(A,\imath,X)$ tem
a propriedade de \emph{extensão de morfismos} quando, para qualquer
que seja $(A,f,Y)$, é possível obter $(X,g,Y)$ cumprindo $f=g\circ\imath$.

Para mostrar que $(X,\pi,Y)$ possui a propriedade de levantamento,
é necessário e suficiente exibir $s:Y\rightarrow X$, chamado de \emph{secção}
do fibrado $(X,\pi,Y)$, tal que $\pi\circ s=id_{X}$. A suficiência
é evidente. Para concluir a necessidade, note que uma secção de $(X,\pi,Y)$
é simplesmente um levantamento da identidade de $Y$. 

De maneira semelhante, a terna $(A,\imath,X)$ possui a propriedade
de extensão se, e somente se, existe um morfismo $r:X\rightarrow A$,
chamado de \emph{retração}, satisfazendo $r\circ\imath=id_{A}$. Os
diagramas abaixo ilustram as respectivas situações.$\underset{\;}{\;}$

$\qquad\qquad\qquad\qquad\qquad\qquad$\xymatrix{& X \ar[d]_{\pi} & & Y \\
X' \ar@{-->}[ru]^{g} \ar[r]_{f} & Y \ar@/_{0.3cm}/[u]_s & A \ar[r]^{\imath} \ar[ru]^f & X \ar@/^{0.3cm}/[l]^{r} \ar@{-->}[u]_{g}  }

Todo functor $F:\mathbf{C}\rightarrow\mathbf{D}$ induz naturalmente
um outro functor $\mathscr{B}F:\mathscr{B}(\mathbf{C})\rightarrow\mathscr{B}(\mathbf{D})$,
o qual se vê caracterizado por 
\[
\mathscr{B}F(X,\pi,Y)=(F(X),F(\pi),F(Y))\quad\mbox{e}\quad\mathscr{B}F(f,g)=(F(f),F(g)).
\]

Como consequência, se $(X,\pi,Y)$ possui a propriedade de levantamento
(resp. de extensão), então $\mathscr{B}F(X,\pi,Y)$ também possui.
Em particular, se $s$ é secção e $r$ é retração, então $F(s)$ e
$F(r)$ também o são. Desta forma, \emph{functores definidos em $\mathbf{C}$
fornecem condições necessárias para que os fibrados de $\mathscr{B}(\mathbf{C})$
possuam as propriedades de extensão e levantamento, bem como para
que admitam secções e retrações}.
\begin{example}
No oitavo capítulo, obteremos uma sequência de functores $\pi_{i}:\mathbf{Top}\rightarrow\mathbf{Grp}$,
denominados \emph{grupos de homotopia, }os quais funcionam como invariantes
topológicos. Por exemplo, o grupo $\pi_{n}(\mathbb{S}^{n})$ é isomorfo
ao grupo aditivo dos inteiros, enquanto que $\pi_{i}(\mathbb{D}^{n})$
é trivial seja qual for o $i>0$. Estes fatos nos permitem mostrar
facilmente que não há função contínua $f:\mathbb{S}^{n-1}\rightarrow\mathbb{D}^{n}$
que se estende ao disco $\mathbb{D}^{n}$. Com efeito, se existisse
uma tal função, então haveria uma retração $r:\mathbb{D}^{n}\rightarrow\mathbb{S}^{n-1}$,
que nada mais seria que uma extensão da identidade de $\mathbb{S}^{n-1}$.
Consequentemente, $\pi_{n-1}(r)$ seria retração para $\pi_{n-1}(f)$,
o que não vêm ao caso (veja diagrama abaixo). O resultado assim demonstrado
é conhecido como \emph{teorema da retração de Brouwer}. Trata-se de
um clássico da Topologia Algébrica e tem como consequência imediata
um outro resultado (também devido à Brouwer) segundo o qual todo morfismo
$f:\mathbb{D}^{n}\rightarrow\mathbb{D}^{n}$ em $\mathbf{Top}$ possui
um ponto fixo.$$
\xymatrix{& \mathbb{D}^n && & 0 \\
\mathbb{S}^{n-1} \ar[ru]^f \ar[r]_{id} & \mathbb{S}^{n-1} \ar@{-->}[u]_r \ar@<0.5cm>@{=>}[rr]^-{\pi_{n-1}} &&  \mathbb{Z} \ar[ru] \ar[r]_{id} & \mathbb{Z} \ar@{-->}[u]    }
$$ 
\end{example}
Fixando um objeto $Y\in\mathbf{C}$, obtém-se uma subcategoria $\mathscr{B}_{Y}(\mathbf{C})\subset\mathscr{B}(\mathbf{C})$.
Seus objetos são os fibrados de $\mathbf{C}$ que possuem base em
$Y$. Por sua vez, os\emph{ }morfismo\emph{s} entre $(X,\pi,Y)$ e
$(X',\pi',Y)$ são morfismos $f:X\rightarrow X'$ tais que $\pi'\circ f=\pi$.
Ou seja, $(f,id_{Y})$ é morfismo de fibrados no sentido usual. Dualmente,
fixado $X\in\mathbf{C}$ também se tem uma subcategoria $\mathscr{B}^{X}(\mathbf{C})\subset\mathscr{B}(\mathbf{C})$,
cujos objetos são os fibrados que possuem $X$ como espaço total.
Tais subcategorias são úteis em diversos sentidos. Vejamos alguns:
\begin{enumerate}
\item há um functor específico $\Gamma:\mathscr{B}_{Y}(\mathbf{C})\rightarrow\mathbf{Set}$,
que a cada fibrado com base em $Y$ associa o conjunto $\Gamma(X,\pi,Y)\subset\mathrm{Mor}(Y;X)$
de suas secções, ao mesmo tempo que toma um morfismo $f:X\rightarrow X'$
e faz corresponder a aplicação $\Gamma(f)$, tal que $[\Gamma(f)](s)=f\circ s$.
Semelhantemente, existe um functor $\Omega:\mathscr{B}^{X}(\mathbf{C})\rightarrow\mathbf{Set}$,
responsável por associar a um fibrado com espaço total $X$ o seu
conjunto de retrações, sendo tal que $[\Omega(g)](r)=r\circ g$;$$
\xymatrix{X \ar[rd]^{\pi} \ar[rr]^f && X' \ar[dl]_{\pi '} & &  X \ar[ld]^{\pi} \ar[rd]_{\pi '} \\
&Y \ar@/^{0.3cm}/[lu]^s \ar@/_{0.3cm}/@{-->}[ru] & &  Y \ar@/^{0.3cm}/[ru]^{r} \ar[rr]_{g} && Y' \ar@/_{0.3cm}/@{-->}[lu] }
$$
\item a classificação de $\mathscr{B}_{Y}(\mathbf{C})$ determina a classe
de isomorfismo de $Y$. Com efeito, um morfismo $f:X\rightarrow Y$
é um isomorfismo em $\mathbf{C}$ se, e somente se, é isomorfo em
$\mathscr{B}_{Y}(\mathbf{C})$ à $id_{Y}$. Desta forma, \emph{classificar
$\mathscr{B}(\mathbf{C})$ implica classificar $\mathbf{C}$}. Por
outro lado, para que dois fibrados sejam isomorfos, é preciso que
as bases e os espaços totais também o sejam. Portanto, \emph{invariantes
de $\mathbf{C}$ determinam invariantes de $\mathscr{B}(\mathbf{C})$.}
\end{enumerate}
$\quad\;\,$Abaixo apresentamos um exemplo atípico no qual a classificação
de uma categoria é suficiente para garantir a classificação de seus
fibrados. Ele foi um dos passos utilizados na descrição apresentada
no início da secção, quando estudados aspectos classificatórios de
$\mathscr{B}_{*}$.
\begin{example}
Vimos que espaços vetoriais de dimensão finita são classificados por
sua dimensão. Mostremos, agora, que tal invariante também classifica
$\mathscr{B}_{\mathbb{K}}$. Com efeito, dado uma transformação linear
qualquer $f:X\rightarrow Y$, pelo teorema do núcleo e da imagem,
tem-se $X\simeq\ker(f)\times\mathrm{img}(f)$. Por outro lado, como
a categoria $\mathrm{F}\mathbf{Vec}_{\mathbb{K}}$ é classificada
pela dimensão, segue-se que, se os espaços $X$, $Y$ e $\mathrm{img}(f)$
tem respectivas dimensões $n$,$m$ e $k$, então $f$ é conjugado
ao mapa $g:\mathbb{K}^{n-k}\times\mathbb{K}^{k}\rightarrow\mathbb{K}^{m}$,
definido por 
\[
g(x_{1},...,x_{n-k},y_{1},...,y_{k})=(0,....0,y_{1},...,y_{k}).
\]
Particularmente, na medida em que $f$ é injetivo, sobrejetivo ou
bijetivo, então é conjugado à inclusão, à projeção ou à identidade. 
\end{example}

\chapter{Unificação}

$\quad\;\,$Neste capítulo, damos continuidade ao estudo iniciado
no anterior. Na primeira secção, estudamos o problema da extensão
de morfismos na categoria $\mathbf{Cat}$ e vemos dali surgir o conceito
de \emph{extensões de Kan}.\emph{ }No genuíno espírito da teoria das
categoriais, estas generalizam uma grande quantidade de conceitos
presentes em diversos campos da matemática e, até mesmo, da própria
teoria das categorias.

Na segunda secção, estudamos casos particulares de extensões de Kan,
denominados \emph{limites }e \emph{colimites}, os quais são suficientemente
genéricos a ponto de englobar, por exemplo, as concepções de adjunção
de functores, de produto cartesiano e de soma direta de estruturas
algébricas, de colagem de espaços topológicos, bem como um procedimento
que permite trocar a base de um fibrado sem alterar suas fibras. 

Ali também discutimos como a noção de limite indutivo nos permite
falar de ``conjugações locais'' entre fibrados. Em seguida, estudados
os fibrados que são localmente conjugados a um fibrado ``trivial''.
O espírito, aqui, é semelhante ao empregado no estudo das variedades:
conhece-se previamente a estrutural local (a qual é suposta simples
ou trivial) e tenta-se utilizar de tal conhecimento para obter informações
globais. 

Entre o final da segunda secção e o término do capítulo, mostramos
que todos os limites são determinados por produtos e equalizadores,
ao passo que os limites determinam todas as extensões de Kan. Como
consequência, conclui-se que para falar de muitas coisas dentro da
matemática, precisa-se apenas dos conceitos prévios de produto e equalizador.

Para o estudo e para a escrita, fizemos uso especial das referências
\cite{categorical_homotopy,category_theory_TOM,MACLANE_categories}.

\section{Extensões}

$\quad\;\,$Consideremos o problema de extensão na categoria $\mathbf{Cat}$,
formada por todas as categorias e todos os functores. Isto significa
que, fixado um functor $\imath:\mathbf{A}\rightarrow\mathbf{C}$ e
dado qualquer $F:\mathbf{A}\rightarrow\mathbf{D}$, procuramos por
um $F':\mathbf{C}\rightarrow\mathbf{D}$ tal que $F'\circ\imath=F$.
Para que isto ocorra, é necessário e suficiente que existam retrações. 

Tanto a formulação destes problema quanto a sua solução (em termos
de retrações) são expressas em termos de igualdades entre functores,
algo que já discutimos ser uma exigência muito forte. Em contextos
passados, isto foi resolvido substituindo a exigência de igualdade
de functores pela hipótese de existência de transformações naturais
entre eles satisfazendo certas condições. Em outras palavras, passou-se
de $\mathbf{Cat}$ para $\mathscr{H}\mathbf{Cat}$. No presente contexto,
sigamos a mesma estratégia. 

Assim, dados functores $\imath:\mathbf{A}\rightarrow\mathbf{C}$ e
$F:\mathbf{A}\rightarrow\mathbf{D}$, busquemos por ``extensões fracas''
de $F$ tais que a composição $F'\circ\imath$ está na mesma classe
que $F$. Isto é, tais que existe uma transformação $\xi:F'\circ\imath\rightarrow F$.
Tem-se particular interesse nas extensões fracas universais, as quais
são chamadas de \emph{e}xtensões de Kan\emph{.} De maneira mais precisa,
diz-se que $F$ possui \emph{extensão de Kan à esquerda} \emph{ao
longo de} $\imath$ quando existem um functor $F'$ e uma transformação
$\xi:F'\circ\imath\rightarrow F$ tal que, se $(F'',\zeta)$ é qualquer
outro par satisfazendo esta mesma condição, então há uma única transformação
natural $\varphi:F''\rightarrow F'$ cumprindo $\varphi_{\imath}\circ\xi=\xi'$.
Se este é o caso, escreve-se $\mathscr{L}F$ ao invés de $F'$. 

De maneira análoga define-se o que vem a ser uma extensão de Kan à
direita. Se $F$ as possui, utiliza-se de $\mathscr{R}F$ para denotá-las.
Como facilmente se verifica, por conta da universalidade que estão
sujeitas, tais extensões são sempre únicas a menos um único isomorfismo
natural.

\subsection*{\uline{Abstração}}

$\quad\;\,$No capítulo X de \cite{MACLANE_categories}, assim como
na secção 43 de \cite{model_categories_KAN-1} e no primeiro capítulo
de \cite{categorical_homotopy} encontram-se a expressão ``\emph{all
concepts are Kan extensions}''. No intuito de ilustrá-la, vejamos,
por exemplo, que adjunções podem ser descritas em termos de tais extensões.
Por definição, $G:\mathbf{D}\rightarrow\mathbf{C}$ possui um adjunto
$F:\mathbf{C}\rightarrow\mathbf{D}$ à esquerda se existem bijeções
\[
\mathrm{Mor}_{\mathbf{D}}(F(X);Y)\simeq\mathrm{Mor}_{\mathbf{C}}(X;G(Y))\quad\mbox{para todos}\quad X,Y.
\]

Particularmente, pondo $Y=F(X)$ e variando o $X$, obtém-se uma transformação
natural entre $F\circ G$ e a identidade de $\mathbf{C}$. Por sua
vez, colocando $X=G(Y)$ e variando o $Y$, encontra-se $F\circ G\simeq id_{\mathbf{D}}$
(relembramos se tratarem da unidade e da counidade da adjunção). Isto
nos leva a crer que, se $G$ possui um adjunto $F$ à esquerda, então
este é a extensão de Kan à direita de $id_{\mathbf{C}}$ relativamente
a $G$, ao passo que $G$ é extensão de Kan à esquerda de $id_{\mathbf{C}}$
com respeito a $F$. Tais fatos procedem, como facilmente se verifica. 

A recíproca se torna verdadeira sob a exigência de \emph{absolutez}.
Sejamos mais precisos: diz-se que uma extensão de Kan $\mathscr{R}F$
é \emph{absoluta} quando ela é \emph{preservada} por qualquer $G$.
Isto é, quando existem isomorfismos naturais entre $G\circ\mathrm{\mathscr{R}}F$
e $\mathscr{R}(G\circ F)$. Tem-se, então, o seguinte enunciado, o
qual dá a caracterização que havíamos prometido: \emph{para que $G:\mathbf{D}\rightarrow\mathbf{C}$
possua adjunto $F:\mathbf{C}\rightarrow\mathbf{D}$ à esquerda, é
necessário e suficiente que $id_{\mathbf{D}}$ admita extensão de
Kan absoluta à direita.} \emph{Neste caso, tem-se $F\simeq\mathscr{R}id_{\mathbf{D}}$
}(teorema 2 na página 248 de \cite{MACLANE_categories})\emph{. }Definições
e enunciados análogos valem no caso de adjunções à direita.

Diversos outros conceitos podem ser descritos através de extensões
de Kan: produtos, equalizadores, \emph{pullbacks}, objetos terminais,
assim como suas versões duais, estão ai inclusos. Todos eles são exemplos
de \emph{limites}, os quais discutimos na próxima secção.

\section{Limites}

$\quad\;\,$Seja $\mathbf{1}$ a categoria que possui um só objeto
e cujo único morfismo é a identidade. Evidentemente, para toda categoria
$\mathbf{C}$ há somente um functor $\imath:\mathbf{C}\rightarrow\mathbf{1}$.
A extensão de Kan à esquerda de $F:\mathbf{C}\rightarrow\mathbf{D}$
relativamente a $\imath$ chama-se \emph{limite}. Como há um único
objeto em $1\in\mathbf{1}$, quando existe, $\mathscr{L}F$ é constante
e se identifica com sua imagem por $1$. Daí, $F:\mathbf{C}\rightarrow\mathbf{D}$
possui limite se, e somente se, há um \emph{cone} $(X,\varphi)$,
onde $X\in\mathbf{D}$ e $\varphi$ é transformação natural entre
o functor constante em $X$ e $F$, no qual qualquer outro cone é
fatorado. Isto significa que, para qualquer $(X',\varphi')$ é possível
obter um único mapa $\mu:X'\rightarrow X$, chamado de \emph{fatoração},
tal que os primeiros diagramas abaixo são sempre comutativos:

\begin{equation}{
\xymatrix{& & & F(Y') & & & & F(Y') \ar@/_/[llld]_{\varphi '} \ar[ld]_{\varphi} \\ 
X' \ar@{-->}[rr]^{u} \ar@/^/[rrru]^{\varphi '} \ar@/_/[rrrd]_{\varphi '} & & X \ar[ur]^-{\varphi} \ar[dr]_-{\varphi} & & X' & & X \ar@{-->}[ll]_{u} &\\
& & & F(Y) \ar[uu]_{F(f)} & & & & F(Y) \ar[uu]_{F(f)}  \ar@/^/[lllu]^{\varphi '} \ar[ul]^{\varphi} } }
\end{equation}

Dualmente, a extensão de Kan à direita de $F:\mathbf{C}\rightarrow\mathbf{D}$
com respeito a $\imath$ é denominada \emph{colimite}. Esta se identifica
com um \emph{cocone} universal. Isto é, com um par $(X,\varphi)$,
em que $X\in\mathbf{D}$ e $\varphi$ é transformação natural entre
$F$ e o functor constante em $\mathscr{R}F(1)$, o qual fatora em
todos os outros $(X',\varphi')$, como representado no segundo dos
diagramas acima.

Toda transformação $\xi$ entre $F$ e $F'$ induz morfismos $\lim\xi$
e $\mathrm{colim}\xi$ entre os correspondentes limites colimites,
os quais são isomorfismos se, e só se, $\xi$ é um isomorfismo natural.

As distintas classes de limites de functores $F$ ficam caracterizadas
pela categoria na qual $F$ está definido. Por exemplo, como detalharemos
em seguida, produtos nada mais são que limites de functores com domínio
em categorias discretas. De outro lado, se $F$ está definido numa
categoria dirigida, então seus colimites são, precisamente, limites
indutivos.
\begin{example}
Como vimos, um functor $\imath:\mathbf{D}\rightarrow\mathbf{C}$ possui
adjunto à esquerda $\jmath:\mathbf{C}\rightarrow\mathbf{D}$ se, e
somente se, a identidade $id_{\mathbf{D}}$ admite extensão de Kan
absoluta à direita, a qual coincide com o próprio $\jmath$. Assim,
em particular, podemos usar de $\jmath$ para calcular os colimites
de $\mathbf{D}$ a partir dos colimites de $\mathbf{C}$. Por outro
lado, se $\imath$ admite $\jmath$ como adjunto à esquerda, então
$\jmath$ possui $\imath$ como adjunto à direta e, portanto, podemos
usar de $\imath$ para expressar limites de $\mathbf{D}$ em termos
de limites de $\mathbf{C}$. Relembramos que um caso particular onde
tais adjunções existem é quando $\mathbf{D}$ é uma subcategoria livremente
gerada por objetos de $\mathbf{D}$. Por exemplo, no contexto dos
módulos, a existência de uma adjunção $\jmath:\mathbf{Set}\rightarrow\mathbf{Mod}_{R}$
para $\imath:\mathbf{Mod}_{R}\rightarrow\mathbf{Set}$ nos permite
concluir que $\imath$ deve levar limites de $\mathbf{Mod}_{R}$ em
limites de $\mathbf{Set}$, ao passo que $\jmath$ deve levar colimites
de $\mathbf{Set}$ em colimites de $\mathbf{Mod}_{R}$. Isto ficará
mais claro ao longo da secção.
\end{example}

\subsection*{\uline{Produtos }}

$\quad\;\,$Uma categoria $\mathbf{J}$ é dita \emph{discreta} quando
seus únicos morfismos são as identidades. Os objetos de $\mathbf{J}$
chamam-se \emph{índices}. Os functores $F:\mathbf{J}\rightarrow\mathbf{C}$
se identificam com regras que a cada índice $j\in\mathbf{J}$ associam
um objeto $X_{j}\in\mathbf{C}$. Fala-se que a categoria $\mathbf{C}$
possui \emph{produtos} quando, independente de qual seja a categoria
discreta $\mathbf{J}$, todo $F:\mathbf{J}\rightarrow\mathbf{C}$
possui limite. 

Desta forma, $\mathbf{C}$ possuirá produtos quando, dados quaisquer
objetos $X_{j}$ (indexados da maneira como se queira), existam $X\in\mathbf{C}$
e morfismos $\pi_{j}:X\rightarrow X_{j}$, de tal forma que, se $\pi_{j}':X'\rightarrow X_{j}$
é outra família de morfismos, então $\pi_{j}'=\pi_{j}\circ f$ para
um único $f:X'\rightarrow X$. Diz-se que $X$ é o \emph{produto}
dos $X_{j}$. Os $\pi_{j}$ são chamados de \emph{projeções}. Assim,
em outras palavras, uma categoria possui produtos quando admite uma
maneira universal de projetar. 

Dualmente, fala-se que uma categoria $\mathbf{C}$ possui \emph{coprodutos
}quando cada functor $F:\mathbf{J}\rightarrow\mathbf{C}$ possui colimites.
Isto significa que, dados $X_{j}$ em $\mathbf{C}$, existem $X$
e $\imath_{j}:X_{j}\rightarrow X$ tais que, para quaisquer $\imath_{j}':X_{j}\rightarrow X'$
há um único morfismo $f:X\rightarrow X'$ através do qual $\imath_{j}=\imath_{j}'\circ f$.
O objeto $X$ é chamado de \emph{coproduto} dos $X_{j}$, ao passo
que os $\imath_{j}$ denominam-se \emph{inclusões}. Desta forma, uma
categoria haverá de ter coprodutos quando possuir uma maneira universal
de incluir.
\begin{example}
Em concordância com o que vimos no primeiro capítulo, tanto $\mathbf{Set}$
quanto cada categoria algébrica (grupos, anéis, módulos e espaços
vetoriais), possuem produtos e coprodutos. Particularmente, quando
$\mathbf{Alg}\subset\mathbf{Set}$ é categoria livremente gerada por
conjuntos (isto é, quando a inclusão $\imath:\mathbf{Alg}\rightarrow\mathbf{Set}$
possui adjunto $\jmath$ à esquerda), então o último exemplo da subsecção
anterior nos diz que produtos em\textbf{ $\mathbf{Alg}$} são levados
por $\imath$ em produtos de $\mathbf{Set}$, ao passo que coprodutos
de $\mathbf{Set}$ são levados nos respectivos coprodutos de $\mathbf{Alg}$
por $\jmath$. Assim, em tal caso, dada uma família $X_{j}$ de objetos
em $\mathbf{Alg}$, o produto entre eles é simplesmente o produto
cartesiano usual dotado (se for o caso) de operações definidas componente
a componente. Por sua vez, $\pi_{j}:X\rightarrow X_{j}$ nada mais
é que a projeção na $j$-ésima entrada. Por sua vez, o coproduto em
$\mathbf{Alg}$ deve ser tal que, quando mandado por $\jmath$ em
$\mathbf{Set}$, caia na reunião disjunta.

\begin{example}
O mesmo argumento nos permite inferir produtos e coprodutos em categorias
livremente geradas por grupos (ao invés de conjuntos, como discutido
no último exemplo). Isto se aplica, por exemplo, à teoria dos anéis
de grupos.

\begin{example}
Se $\mathbf{C}$ tem produtos, então o mesmo ocorre com $\mathscr{B}(\mathbf{C})$.
Realmente, o produto de uma família $(X_{j},f_{j},X_{j}')$ de fibrados
é $(X,f,X')$, em que $X$ e $X'$ são produtos de $X_{j}$ e $X_{j}'$,
ao passo que o morfismo $f:X\rightarrow X'$ é obtido da seguinte
maneira: se $\pi_{j}':X'\rightarrow X'_{j}$ e $\pi_{j}:X\rightarrow X_{j}$
são projeções, então cada $f_{j}\circ\pi_{j}$ é morfismo de $X$
em $X_{j}'$, donde, por universalidade, obtém-se o morfismo $f:X\rightarrow X'$
procurado. Condição análoga é válida para coprodutos.
\end{example}
\end{example}
\end{example}
Uma categoria $\mathbf{C}$ com \emph{produtos }(resp. \emph{coprodutos})\emph{
finitos} é aquela na qual o limite (resp. colimite) de cada $F:\mathbf{J}\rightarrow\mathbf{C}$,
com $\mathbf{J}$ formada apenas de um número finito de índices, sempre
existe. Ressaltamos que aqui se está incluindo a situação em que $\mathbf{J}$
é a categoria que não possui objetos nem morfismos. O limite (resp.
colimite) em tal caso é chamado de \emph{objeto terminal }(resp. \emph{inicial})
de $\mathbf{C}$. A nomenclatura se deve ao seguinte: como imediatamente
se constata, $X$ é objeto terminal (rep. inicial) de $\mathbf{C}$
se, e somente se, para qualquer outro $X'\in\mathbf{C}$ existe um
único morfismo $f:X'\rightarrow X$ (resp. $f:X\rightarrow X'$).
Um objeto que é simultaneamente inicial e final chama-se \emph{nulo}
ou \emph{trivial.}
\begin{example}
Se uma categoria $\mathbf{C}$ tem $*$ como objeto terminal, então
este é um objeto nulo para sua pontuação $\mathbf{C}_{*}$. Por exemplo,
em $\mathbf{Set}$ qualquer conjunto formado de um só elemento é objeto
terminal, de modo que este serve de objeto nulo para $\mathbf{Set}_{*}$.
Quando a inclusão $\imath:\mathbf{Alg}\rightarrow\mathbf{Set}$ preserva
limites (isto é, quando a categoria algébrica é livremente gerada
por conjuntos), então os objetos terminais de $\mathbf{Alg}$ são
levados nos objetos terminais de $\mathbf{Set}$. Assim, todo ente
com um único elemento é terminal em $\mathbf{Alg}$ e, portanto, objeto
nulo para $\mathbf{Alg}_{*}$. Ocorre que, se escolhemos o elemento
neutro como ponto base, então todo morfismo o preserva, de modo que
$\mathbf{Alg}_{*}\simeq\mathbf{Alg}$. Portanto, estruturas triviais
(com um só elemento) são objetos nulos de $\mathbf{Alg}$.

\begin{example}
Em contrapartida ao exemplo anterior, observamos que nem sempre um
objeto terminal é nulo. Com efeito, $\mathbf{Set}$ (ou mesmo $\mathbf{Top}$)
possuem como terminais qualquer objeto formado de um só elemento (digamos
$*$). Estes, no entanto, não são objetos iniciais para tais categorias.
De fato, inexiste um morfismo $*\rightarrow\varnothing$. Consequentemente,
o único objeto inicial em $\mathbf{Set}$, $\mathbf{Top}$ ou em qualquer
categoria que admita o conjunto vazio como objeto, é o próprio $\varnothing$. 
\end{example}
\end{example}
Façamos algumas observações:
\begin{enumerate}
\item por argumentos indutivos, vê-se que uma categoria possui produtos
finitos se, e somente se, possui produtos binários (isto é, limites
de functores $F:\mathbf{J}\rightarrow\mathbf{C}$, com $\mathbf{J}$
formada de só dois índices) e um objeto terminal. Dualmente, para
que uma categoria tenha coprodutos finitos é necessário e suficiente
que ela admita coprodutos binários e um objeto inicial;
\item uma categoria com produtos e coprodutos finitos isomorfos é dita ter
\emph{biprodutos}. Assim, $\mathbf{C}$ têm biprodutos se, e só se,
admite produtos e coprodutos binários isomorfos e um objeto nulo.
Aqui se enquadram $\mathbf{AbGrp}$, $\mathbf{Mod}_{R}$;
\item numa categoria com produtos binários $X\times Y$, para todo espaço
$X$ tem-se um morfismo $\Delta_{X}:X\rightarrow X\times X$, chamado
de \emph{mapa diagonal} de $X$, tal que $\pi_{i}\circ\Delta_{X}=id_{X}$.
Ele é obtido diretamente da universalidade das projeções. Da mesma
forma, se uma categoria têm coprodutos $X\oplus Y$, então, por universalidade,
existem morfismos $\nabla_{X}:X\oplus X\rightarrow X$ satisfazendo
$\nabla_{X}\circ\imath_{i}=id_{X}$.
\end{enumerate}
\noindent \textbf{Notação.} No que segue, produtos serão genericamente
denotados por $\prod$ ou $\times$, ao passo que coprodutos arbitrários
serão representados por $\bigoplus$.

\subsection*{\uline{Limites Indutivos}}

$\quad\;\,$Diz-se que uma categoria não-vazia $\mathbf{I}$ é \emph{dirigida}
(ou que está \emph{direcionada}) quando:
\begin{enumerate}
\item é pequena e em seu conjunto de objetos (aqui também chamados de índices)
está definida uma relação de ordem parcial $\leq$;
\item existe um morfismo $f:i\rightarrow j$ se, e somente se, $i\leq j$.
\end{enumerate}
$\quad\;\,$Um functor covariante (resp. contravariante) $F:\mathbf{I}\rightarrow\mathbf{C}$
chama-se \emph{sistema dirigido }(resp. \emph{codirigido}) de objetos
de $\mathbf{C}$ com coeficientes em $\mathbf{I}$. Costuma-se escrever
$X_{i}$ (resp. $X^{i}$) ao invés de $F(i)$, e $f_{ij}$ (resp.
$f^{ji}$) ao invés de $F(f)$, com $f:i\rightarrow j$. Ao longo
do texto, esta prática será adotada. Quando existe, o colimite (resp.
limite) de um sistema dirigido (resp. codirigido) é chamado \emph{limite
indutivo} (resp. de \emph{colimite indutivo}). Apesar de confusa,
tal nomenclatura é canônica.
\begin{example}
Para todo conjunto parcialmente ordenado $I$, a categoria $\mathrm{Pos}(I)$,
definida no exemplo 2.1.3, é claramente dirigida. Em particular, a
categoria definida pelo ordenamento natural de $\mathbb{N}$ é direcionada.
Um sistema dirigido em $\mathbf{C}$ com coeficientes naturais é simplesmente
uma sequência (no caso co-dirigido, as setas são invertidas)$\underset{\;}{\;}$$$
\xymatrix{ X_0 \ar@/_{0.3cm}/[rr] \ar@/_{0.4cm}/[rrr] \ar@/_{0.5cm}/[rrrr]_{\;} \ar[r] & X_1 \ar@/^{0.3cm}/[rr] \ar@/^{0.4cm}/[rrr] \ar[r] & X_2 \ar[r] & X_3 \ar[r] & \cdots }
$$

\begin{example}
Numa categoria $\mathbf{C}$, fixemos $\mathbf{J}\subset\mathbf{C}$
e $X\in\mathbf{C}$. Seja $\mathbf{J}_{X}$ a categoria formada pelos
objetos de $\mathbf{J}$ que tem $X$ como subobjeto, e com morfismos
dados por inclusões. Esta se torna dirigida quando dotada do ordenamento
parcial proporcionado pela relação usual de continência (observamos,
em particular, que se $X=\varnothing$ é objeto inicial de $\mathbf{J}$,
então $\mathbf{J}_{\varnothing}\simeq\mathbf{J}$). Aqui se enquadram
importantes situações, nas quais em geral se considera $\mathbf{C}=\mathbf{Top}$.
Por exemplo, se $\mathbf{J}$ é a subcategoria dos espaços compactos,
$\mathbf{J}_{X}$ é utilizado na definição de \emph{cohomologias com
suporte compacto} de $X$ (veja, por exemplo, os dois últimos capítulos
de \cite{Greenberg_topologia_algebrica}).

\begin{example}
Uma \emph{cobertura }de um objeto $X\in\mathbf{C}$ é uma decomposição
de $X$ em termos de um limite indutivo de subobjetos $X_{i}$ de
$X$. Outro caso enquadrado no exemplo anterior é obtido tomando $\mathbf{J}$
igual à $\mathrm{Cov}(X)\subset\mathbf{C}$, formada de todas os subobjetos
que fazem parte de coberturas convenientes de $X$. Por exemplo, quando
$\mathbf{C=}\mathbf{Top}$, pode-se considerar $\mathrm{Cov}(X)$
como sendo a categoria dos abertos de $X$. Os functores contravariantes
$\mathscr{F}:\mathrm{Cov}(X)^{op}\rightarrow\mathbf{D}$ chamam-se
\emph{pré-feixes} \emph{de $X$ com valores em $\mathbf{D}$}. Se
por um lado functores são responsáveis por associar invariantes a
espaços, por outro, os pré-feixes estão relacionados com a construção
de \emph{invariantes locais}. Quando $\mathbf{D}=\mathbf{Set}$, os
elementos $s\in\mathscr{F}(X_{i})$ são chamados de \emph{secções}
de $\mathscr{F}$ em $X_{i}$. A razão é a seguinte: como comentaremos
adiante, o estudo de alguns pré-feixes ``globalizáveis'' se resume
ao estudo dos functores $\Gamma$ de secções de certos fibrados.

\begin{example}
Pode ser que o invariante local $\mathscr{F}:\mathrm{Op}(X)^{op}\rightarrow\mathbf{Set}$,
com $\mathbf{C\subset}\mathbf{Set}$, seja ambíguo nas vizinhanças
de um determinado ponto de $X$. Mais precisamente, pode ser que,
fixado $x\in X$, diferentes elementos $\mathscr{F}(X_{i})$ e $\mathscr{F}(X_{j})$
coincidam numa vizinhança de $x$ contida em $X_{i}\cap X_{j}$. Em
muitas ocasiões, esta ambiguidade é dispensável. Isto nos leva a substituir
o pré-feixe $\mathscr{F}$ por um outro $\overline{\mathscr{F}}$,
de tal maneira que secções que coincidem nas vizinhanças de um mesmo
ponto sejam identificadas. Descrevamos esta troca com um pouco mais
de detalhes. Para cada $x\in X$, denotemos por $X(x)$ a categoria
dirigida formada de todo subobjeto conveniente de $X$ que contém
$x$. Isto é, seja $X(x)=\mathbf{J}_{x}$ quando $\mathbf{J}=\mathrm{Cov}(X)$.
Seja, também, $\mathscr{F}(x)$ a reunião dos $\mathscr{F}(X_{i})$,
com $X_{i}\in X(x)$. Define-se o \emph{germe} em $x$ de uma secção
$s\in\mathscr{F}(x)$ como sendo a sua classe de equivalência $s_{x}$
pela relação que identifica a $s$ qualquer outra secção $s'\in\mathscr{F}(x)$
tal que $s'\vert_{V}=s\vert_{V}$ em alguma vizinhança suficientemente
pequena de $x$. O espaço quociente segundo tal relação chama-se \emph{stalk}
de $\mathscr{F}$ em $x$, sendo denotado por $\mathscr{F}_{x}$.
Assim, $\mathscr{F}_{x}$ é o limite co-indutivo da restrição $\mathscr{F}\vert_{X(x)}$.
O novo pré-feixe $\overline{\mathscr{F}}:\mathrm{Cov}(X)\rightarrow\mathbf{Set}$
é aquele que a cada $X_{i}\subset X$ associa a reunião (disjunta)
dos \emph{stalks} em pontos de $X_{i}$, ao passo que, para toda inclusão
$\imath:X_{i}\rightarrow X_{j}$ e todo germe $s_{x}$, com $x\in X_{j}$,
ele satisfaz $[\overline{\mathscr{F}}(\imath)](s_{x})=(s\vert_{X_{i}})_{x}$.
Moral da história: \emph{quando somente o comportamento nas vizinhanças
de pontos é importante, para evitar ambiguidade, deve-se trocar secções
por germes e conjuntos destas por} \emph{stalks}.

\begin{example}
Dois casos particulares do que foi discutido no exemplo anterior:
para toda variedade $X$ e todo ponto $x\in X$ há um pré-feixe $\mathscr{T}:U(x)\rightarrow\mathbf{Vec}_{\mathbb{R}}$,
que a cada $U$ contendo $x$ associa $TU_{x}$, que é o conjunto
das derivações em $x$ de $\mathcal{D}(U)$. No entanto, quaisquer
duas funções que coincidam numa vizinhança de $x$ ali possuem a mesma
derivação. Portanto, ao invés do espaço tangente $TX_{x}$, seria
mais sensato considerar o \emph{stalk }de $\mathscr{T}$ em $x$.
Para naturais $m,k$ há também pré-feixes $\mathscr{J}:\mathrm{Op}(X)\rightarrow\mathbf{Set}$,
responsável por associar a cada aberto de $U\subset X$ o conjunto
dos polinômios de Taylor das funções $f:U\rightarrow\mathbb{R}^{n}$,
de classe $C^{k}$. O germe de $f$ em em $x$ é precisamente a coleção
das aplicações que têm contato de grau $k$ com $f$. Por sua vez,
a reunião $\overline{\mathscr{J}}(U)$ de todos os \emph{stalks} de
$\mathscr{J}$ em pontos de $U$ é o conjunto $J^{k}(U;\mathbb{R}^{m})$,
que aparece na definição da topologia $C^{k}$. Veja, por exemplo,
o segundo capítulo de \cite{Hirsch}.

\begin{example}
Além de pré-feixes, \emph{stalks }e cohomologias com suporte compacto,\emph{
}outra coisa que se pode fazer com limites indutivos é falar de categorias
geradas por subcategorias. Com efeito, diz-se que $\mathbf{C}$ é
\emph{gerada} por $\mathbf{D}$ quando cada $X\in\mathbf{C}$ admite
uma decomposição como um limite indutivo de objetos de $\mathbf{D}$.
Por exemplo, $\mathbf{Vec}_{\mathbb{K}}$ é gerada por $\mathrm{F}\mathbf{Vec}_{\mathbb{K}}$:
qualquer espaço vetorial é limite de seus subespaços de dimensão finita.
\end{example}
\end{example}
\end{example}
\end{example}
\end{example}
\end{example}

\subsection*{\uline{Equalizadores}}

$\quad\;\,$Seja $(\rightrightarrows)$ a categoria que só possui
dois objetos (digamos $1$ e $2$) e, além das identidades, dois únicos
morfismos $\delta,\delta':1\rightarrow2$. Para qualquer que seja
a categoria $\mathbf{C}$, existe uma bijeção natural entre o conjunto
dos functores $F:(\rightrightarrows)\rightarrow\mathbf{C}$ e o conjunto
dos pares de morfismos $\mathbf{C}$ que possuem mesmo domínio e mesmo
co-domínio. Trata-se da aplicação que toma $F$ e devolve os morfismos
$F(\delta)$ e $F(\delta')$, de $F(1)$ em $F(2)$.

Diz-se que dois morfismos $f,g:X\rightarrow X'$ de $\mathbf{C}$
podem ser \emph{equalizados} quando o respectivo functor $F:(\rightrightarrows)\rightarrow\mathbf{C}$,
correspondendo ao par $(f,g)$ pela identificação acima descrita,
possui limite. Isto acontece se, e só se, existem $E\in\mathbf{C}$
e um morfismo $eq:E\rightarrow X$, com $f\circ eq=g\circ eq$, de
tal maneira que, se $eq':E'\rightarrow X$ é outro morfismo cumprindo
$f\circ eq'=g\circ eq'$, então é possível escrever $eq'=eq\circ\imath$
para um único $\imath:E'\rightarrow E$. Na presente situação, o diagrama
(3.1) se resume a: $$ 
\xymatrix{ E' \ar@/_{0.3cm}/[rr]_{eq'} \ar@{-->}[r]^{\imath} & E \ar[r]^{eq} & X \ar@<-.5ex>[r]_g \ar@<.5ex>[r]^f & Y }
$$

Uma categoria \emph{dotada de} \emph{equalizadores} é aquela para
a qual quaisquer dois morfismos com mesmo domínio e mesmo codomínio
podem ser equalizados. Dualmente, define-se de maneira natural o que
vem a ser uma categoria \emph{dotada de} \emph{coequalizadores}.
\begin{example}
Tanto $\mathbf{Set}$ quanto $\mathbf{Top}$ possuem equalizadores.
Em ambos os casos, dadas aplicações $f,g:X\rightarrow X'$, basta
considerar o conjunto $E$, formado de todo $x\in X$ no qual $f(x)=g(x)$,
bem como a inclusão $\imath:E\rightarrow X$. Semelhantemente, as
categorias algébricas abelianas também possuem equalizadores. Com
efeitos, o \emph{kernel} da diferença $f-g$ equaliza os homomorfismos
$f,g:X\rightarrow X'$ (daí o nome \emph{difference kernel}, muitas
vezes utilizado na literatura como sinônimo de equalizador).

\begin{example}
Equalizadores também são utilizados na ``globalização'' de pré-feixes.
Com efeito, diz-se que um pré-feixe $\mathscr{F}$ é \emph{feixe}
quando os invariantes locais por ele assignados podem ser \emph{univocamente
globalizados}. Isto significa que, para toda cobertura por subobjetos
$X_{i}$, as secções globais (isto é, os elementos de $\mathscr{F}(X)$)
são descritas, de maneira única, por suas restrições aos $X_{i}$.
Desta forma, se $s,s'\in\mathscr{F}(X)$ satisfazem $s\vert_{X_{i}}=s'\vert_{X_{i}}$,
então $s=s'$. Assim, $\mathscr{F}$ é feixe quando, para qualquer
cobertura aberta de $X$, o diagrama abaixo é equalizador. Nele, $r$
associa a cada secção global $s$ a família das restrições $s\vert_{X_{i}}$.
Por sua vez, os mapas paralelos tomam uma lista de secções $s_{i}$
e devolvem as respectivas famílias de suas restrições a $X_{i}\cap X_{j}$
e a $X_{j}\cap X_{i}$. $$ 
\xymatrix{ F(X) \ar[r]^-{r} & \prod_{i}F(X_i) \ar@<-.5ex>[r] \ar@<.5ex>[r] & \prod_{i,j}F(X_{i}\cap X_{j}) }
$$
\end{example}
\end{example}
Sobre o exemplo anterior, duas observações:
\begin{enumerate}
\item a subcategoria cheia $\mathbf{Shv}(X)\subset\mathrm{Func}(\mathrm{Op}(X)^{op};\mathbf{Set})$,
formada dos feixes de conjuntos de um espaço $X$, possui uma descrição
bastante simples. Com efeito, ela é equivalente à subcategoria cheia
$\mathbf{\grave{E}tl}\subset\mathscr{B}_{X}$ dos homeomorfismos locais
$f:Y\rightarrow X$, chamados de \emph{espaços Ètale}. Uma maneira
de obter tal equivalência é através do functor $\Gamma:\mathbf{\grave{E}tl}\rightarrow\mathbf{Shv}(X)$,
que a cada fibrado associa o feixe de suas secções. A inversa fraca
é a regra que a cada pré-feixe $\mathscr{F}$ em $X$ associa o espaço
Ètale $\pi:\mathrm{\grave{E}}(\mathscr{F})\rightarrow X$, em que
$\mathrm{\grave{E}}(\mathscr{F})$ é a reunião disjunta de todos os
\emph{stalks} de $\mathscr{F}$ e $\pi(s_{x})=x$;
\item maiores detalhes sobre a Teoria de Feixes podem ser encontrados em
\cite{Tennison_sheaf,Swan_sheaves}, e também no clássico \cite{godement_feixes}.
\end{enumerate}

\subsection*{\emph{\uline{Pullbacks}}\uline{ e }\emph{\uline{Pushouts}}}

$\quad\;\,$Seja $(\rightarrow\cdot\leftarrow)$ a categoria que possui
somente três objetos (aqui denotados por $1$, 2 e $*$), e além das
identidades, dois únicos morfismos $\delta:1\rightarrow*$ e $\delta':2\rightarrow*$.
Para qualquer outra categoria $\mathbf{C}$, o conjunto dos functores
de $(\rightarrow\cdot\leftarrow)$ em $\mathbf{C}$ está em bijeção
com a coleção dos pares de morfismos de $\mathbf{C}$ que têm igual
co-domínio. 

Fala-se que dois morfismos $f:X\rightarrow Y$ e $g:X'\rightarrow Y$
possuem \emph{pullback} quando existe o limite do functor associado
a $(f,g)$ pela referida identificação. Isto significa que, dados
$f:X\rightarrow Y$ e $g:X'\rightarrow Y$, existem um objeto $\mathrm{Pb}\in\mathbf{C}$
e morfismos $\varphi:\mathrm{Pb}\rightarrow X$ e $\varphi:\mathrm{Pb}\rightarrow X'$,
os quais satisfazem a igualdade $g\circ\varphi=\varphi\circ f$. Além
disso, para quaisquer outros morfismos $\varphi':\mathrm{Pb}'\rightarrow X$
e $\varphi':\mathrm{Pb}'\rightarrow X'$ cumprindo a condição $g\circ\varphi'=\varphi'\circ f$,
existe um único $\jmath:\mathrm{Pb}'\rightarrow\mathrm{Pb}$ que torna
comutativo o primeiro dos diagramas abaixo: 

$$
\xymatrix{ \mathrm{Pb}' \ar@{-->}[rd]^{\jmath} \ar@/^/[rrd]^{\varphi '} \ar@/_/[ddr]_{\varphi '} & & & \mathrm{Ps}' \\
& \mathrm{Pb} \ar[r]^{\varphi} \ar[d]_{\varphi} & X' \ar[d]^{g} & & \mathrm{Ps} \ar@{-->}[ul]_{\jmath}   & \ar[l]_{\varphi} \ar@/_/[ull]_{\varphi '}   X'  \\
& X \ar[r]_{f} & Y & & Y \ar[u]^{\varphi} \ar@/^/[uul]^{\varphi '}  & \ar[l]^{f}  X \ar[u]_{g} }
$$$\quad\;\,$Quando cada par $(f,g)$ de morfismos de $\mathbf{C}$
admite \emph{pullback}, fala-se que a própria categoria tem \emph{pullbacks}.
A versão dual do \emph{pullback}, totalmente caracterizada pelo segundo
dos diagramas acima, chama-se \emph{pushout}.
\begin{example}
Se uma categoria $\mathbf{C}$ possui produtos binários e equalizadores,
então ela também possui \emph{pullbacks}: o \emph{pullback} de $f:X\rightarrow Y$
e $g:X'\rightarrow Y$ é o equalizador de $f\circ\pi_{1}:X\times X'\rightarrow Y$
e $g\circ\pi_{2}:X\times X'\rightarrow Y$, em que $\pi_{i}$ são
as projeções do produto $X\times X'$. Nestas circunstâncias, é usual
escrever $X\times_{Y}X'$ ao invés de $\mathrm{Pb}$. Assim, dentro
de $\mathbf{Set}$, bem como em\textbf{ $\mathbf{Top}$,} o \emph{pullback}
de aplicações $f,g$ é o conjunto $\mathrm{X\times_{Y}X'}$ de todo
$(x,x')$ tal que $f(x)=g(x')$. 

\begin{example}
Quando $\mathbf{C}$ tem \emph{pullbacks}, então $\mathscr{B}_{Y}(\mathbf{C})$
também os tem. Em geral, estes aparecem no seguinte contexto: se $(X,\pi,Y')$
é fibrado com base $Y'$, então todo morfismo $f:Y\rightarrow Y'$
induz um fibrado $(f^{*}X,\mathrm{pr}_{1},Y')$ com base em $Y$,
definido pelo \emph{pullback} de $(f,\pi)$. Por exemplo, quando $\mathbf{C}$
é $\mathbf{Set}$ ou $\mathbf{Top}$, o objeto $f^{*}X$ nada mais
é que o conjunto de todo par $(y,x)\in Y\times X$ tal que $f(y)=\pi(x)$.
Identificando $X$ com a reunião das fibras $X_{y'}=\pi^{-1}(y')$,
vê-se que a fibra em $y$ de $f^{*}X$ é o conjunto $\mathrm{pr}_{1}^{-1}(y)=y\times X_{f(y)}$,
naturalmente isomorfo a $X_{f(y)}$. Portanto, tem-se um procedimento
que troca a base de um fibrado $(X,\pi,Y')$, deixando suas fibras
invariantes. 

\begin{example}
Dualmente, numa categoria com coprodutos binários e coequalizadores,
sempre existem \emph{pushouts}: o \emph{pushout} de $f:X\rightarrow Y$
e $g:X\rightarrow Y'$ é o coequalizador dos morfismos $\imath_{1}\circ f:X\rightarrow Y\oplus Y'$
e $\imath_{2}\circ g:X\rightarrow Y\oplus Y'$, onde $\imath_{i}$
são inclusões de $Y\oplus Y'$. Desta forma, em $\mathbf{Set}$ e
$\mathbf{Top}$, o \emph{pushout} de $f,g$ é a reunião disjunta $Y\sqcup Y'$
de seus codomínios, quocientada pela relação que identifica os elementos
$f(x)$ e $g(x)$, para todo $x\in X$. Por exemplo, se $\imath:X\rightarrow Y'$
é inclusão, então o \emph{pushout} de $f:X\rightarrow Y$ com $\imath$
é chamado de \emph{colagem }(através de $f$) de $Y$ em $Y'$ ao
longo de $X$. Esta particular situação é utilizada, por exemplo,
na definição de CW-complexos.
\end{example}
\end{example}
Uma propriedade fundamental dos \emph{pullbacks} é que, quando colados,
seja de forma horizontal ou vertical, eles produzem novo \emph{pullback}.
De maneira mais precisa, como facilmente se verifica, se os dois primeiros
diagramas são \emph{pullbacks}, então o terceiro e o quarto também
o são. Resultado análogo é válido para \emph{pushouts}.$$
\xymatrix{ &&&&&&& \mathrm{Pb} \ar@{-->}[r] \ar@{-->}[dd] & X' \ar[d] \\
\mathrm{Pb} \ar@{-->}[r] \ar@{-->}[d] & X' \ar[d] & \mathrm{Pb} \ar@{-->}[r] \ar@{-->}[d] & Y \ar[d] & \mathrm{Pb} \ar@{-->}[d] \ar@{-->}[rr] && Y \ar[d] & & Y \ar[d] \\
X \ar[r] & Y & Y \ar[r] & Z & X \ar[r] & Y \ar[r] & Z & Y \ar[r] & Z}
$$

\begin{example}
Na álgebra, o \emph{kernel} de $f:X\rightarrow Y$ é o subconjunto
$\ker(f)\subset X$ formado de todo $x\in X$ tal que $f(x)=0$. Seja
$\ker f$ a inclusão $\ker(f)\rightarrow X$. Como logo se convence,
dados um grupo abeliano $A$ e um morfismo $g:A\rightarrow X$, para
que $A=\ker(f)$ e $g=\ker f$, é necessário e suficiente que estes
constituam o \emph{pullback} do par $(0,f)$, em que $0$ é morfismo
do grupo trivial em $Y$. Por outro lado, o \emph{cokernel} de $f$
é o espaço quociente $Y/\mathrm{im}f$. Seja $\mathrm{coker}f$ a
aplicação quociente $Y\rightarrow\mathrm{coker}(f)$. Fornecidos $A\in\mathbf{AbGrp}$
e $g:Y\rightarrow A$, estes constituirão o \emph{cokernel} de $f$
se, e somente se, forem o \emph{pushout} de $(0,f)$. Tem-se, também,
as identificações $\mathrm{im}(f)=\ker(\mathrm{coker}(f))$ e $\mathrm{coim}(f)=\mathrm{coker}(\ker(f))$.
\end{example}
\end{example}
Motivados pelo exemplo anterior, em qualquer categoria com objeto
inicial $\varnothing$ (resp. final $*$), o \emph{pullback} (resp.
\emph{pushout})\emph{ }do par $(\varnothing,f)$ (resp. $(*,f$))
é chamado de \emph{kernel} (resp. \emph{cokernel}) de $f$. Por sua
vez, a \emph{imagem }e a \emph{coimagem} de $f$ serão definidas como
sendo $\ker(\mathrm{coker}(f))$ e $\mathrm{coker}(\ker(f))$. Em
tais categorias, também faz sentido falar de \emph{sequências exatas}
de morfismos: uma sequência $f_{n}$ diz-se \emph{exata} quando, para
qualquer que seja o $n$, o \emph{kernel} de $f_{n}$ coincide, a
menos de isomorfismos, com a imagem de $f_{n-1}$. Uma sequência exata
é dita \emph{curta} quando possui somente dois termos não-nulos, sendo
estes consecutivos.

A aplicabilidade das sequências exatas se deve ao seguinte: uma vez
construídos invariantes (isto é, uma vez obtidos functores que associam
a cada espaço uma estrutura), precisa-se \emph{calculá-los}. Neste
sentido, as sequências exatas constituem poderosas ferramentas. De
maneira mais precisa, procura-se, por exemplo, por functores que associem
a cada sequência exata curta de espaços uma sequência exata longa
na categoria dos correspondentes invariantes. Se tal categoria for
adequada, então é possível retirar informação dos invariantes de um
espaço da sequência a partir dos invariantes dos outros. O estudo
das sequências exatas é concernente à \emph{Álgebra Homológica} (veja,
por exemplo, \cite{Hilton_homological_algebra,Rotman_homological_algebra,Weibel_homological_algebra},
assim como os clássicos \cite{homological_algebra_CARTAN,MACLANE_homology}.
\begin{example}
Em $\mathbf{Set}$ ou em $\mathbf{Top}$, o \emph{cokernel} de uma
inclusão $\imath:A\rightarrow X$ é simplesmente o espaço $X/A$ obtido
identificando todos os elementos de $A$ num único ponto. A sequência
cujos termos não-triviais são a inclusão precedida da projeção $\pi:X\rightarrow X/A$
é exata curta. Situação análoga acontece em categorias algébricas,
em que $X/A$ agora deve ser interpretado como o quociente da estrutura
$X$ por uma subestrutura para a qual tal quociente está bem definido.
Por exemplo, em $\mathbf{Grp}$ o objeto $X$ é grupo, enquanto que
$A\subset X$ é subgrupo normal. Em $\mathbf{Rng}$, por sua vez,
$X$ é anel e $A\subset X$ deve ser um ideal.
\end{example}

\subsection*{\uline{Completude}}

$\quad\;\,$Diz-se que uma categoria é \emph{completa }(resp. \emph{cocompleta})
quando todos os functores que nela assumem valores possuem limites
(resp. colimites). Na subsecção anterior, vimos que algumas classes
de limites podem ser criados se supomos a existência de outros. Por
exemplo, mostramos que se uma categoria admite produtos binários e
equalizadores, então ela possui \emph{pullbacks}. 

A proposição abaixo nos indica que, se reforçamos a hipótese adicionando
a existência de todos os produtos (não só dos binários), ganhamos
a existência não só dos \emph{pullbacks}, mas de qualquer limite.
Ressaltamos que a prova aqui apresentada foi inteiramente baseada
em \cite{MACLANE_categories}.
\begin{prop}
Para que uma categoria $\mathbf{D}$ seja completa, é necessário e
suficiente que ela possua todos os produtos e todos os equalizadores.
\end{prop}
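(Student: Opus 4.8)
A necessidade é trivial. Com efeito, produtos são limites de functores definidos em categorias discretas, ao passo que equalizadores são limites do functor associado à categoria $(\rightrightarrows)$; logo, se $\mathbf{D}$ é completa, tais limites em particular existem. Assim, o plano é concentrar o esforço na suficiência: suporia que $\mathbf{D}$ possui todos os produtos e todos os equalizadores e, a partir deles, construiria o limite de um functor arbitrário $F:\mathbf{C}\rightarrow\mathbf{D}$.

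A ideia é formar dois produtos e equalizar dois morfismos naturais entre eles. Primeiramente, tomaria o produto $P=\prod_{c\in\mathbf{C}}F(c)$, indexado pelos objetos de $\mathbf{C}$, com projeções $\pi_{c}:P\rightarrow F(c)$; em seguida, o produto $Q=\prod_{u}F(c')$, indexado por todos os morfismos $u:c\rightarrow c'$ de $\mathbf{C}$, sendo o fator correspondente a $u$ o objeto $F(c')$, com projeções $p_{u}:Q\rightarrow F(c')$. Pela universalidade de $Q$, ficariam definidos dois morfismos $s,t:P\rightarrow Q$ pelas condições $p_{u}\circ s=\pi_{c'}$ e $p_{u}\circ t=F(u)\circ\pi_{c}$, para cada $u:c\rightarrow c'$. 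Intuitivamente, $s$ projeta na coordenada do contradomínio de $u$, enquanto $t$ aplica $F(u)$ à coordenada do domínio.

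O passo seguinte seria considerar o equalizador $eq:E\rightarrow P$ do par $(s,t)$ e definir, para cada objeto $c$, o morfismo $\varphi_{c}=\pi_{c}\circ eq:E\rightarrow F(c)$. Afirmaria que $(E,\varphi)$ é o limite procurado. Que se trata de um cone seguiria de que, para todo $u:c\rightarrow c'$, valem $p_{u}\circ(s\circ eq)=\varphi_{c'}$ e $p_{u}\circ(t\circ eq)=F(u)\circ\varphi_{c}$; como $eq$ equaliza, tem-se $s\circ eq=t\circ eq$, donde $F(u)\circ\varphi_{c}=\varphi_{c'}$, que é exatamente a condição de comutatividade do cone exigida no diagrama (3.1).

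Restaria verificar a universalidade, que considero o cerne da prova. Dado outro cone $(E',\varphi')$, a universalidade de $P$ forneceria um único $\varphi:E'\rightarrow P$ com $\pi_{c}\circ\varphi=\varphi'_{c}$. O ponto central é observar que a condição de cone sobre $(E',\varphi')$ — isto é, $F(u)\circ\varphi'_{c}=\varphi'_{c'}$ para todo $u$ — se traduz, coordenada a coordenada via as projeções $p_{u}$, exatamente na igualdade $s\circ\varphi=t\circ\varphi$. Portanto $\varphi$ se fatoraria de modo único pelo equalizador, digamos $\varphi=eq\circ\mu$ com $\mu:E'\rightarrow E$ única, e esse $\mu$ seria a fatoração desejada do cone $(E',\varphi')$ através de $(E,\varphi)$; sua unicidade resultaria da combinação das unicidades nas propriedades universais de $P$ e do equalizador. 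O obstáculo principal é justamente esta tradução entre as duas descrições — a condição geométrica de cone e a condição algébrica de equalização — que exige manipular com cuidado as componentes de $s$ e $t$. Observo, por fim, que sob a convenção ingênua de conjuntos adotada no texto não há preocupação com o fato de $\mathbf{C}$ ser pequena, hipótese que, de outro modo, garantiria que $P$ e $Q$ fossem produtos genuínos.
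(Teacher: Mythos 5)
Sua proposta está correta e segue essencialmente o mesmo caminho da prova do texto (baseada em \cite{MACLANE_categories}): constrói-se o produto indexado pelos objetos e o produto indexado pelos morfismos com fatores nos contradomínios, definem-se as duas setas paralelas por universalidade e toma-se o equalizador, verificando depois o cone e a fatoração única exatamente como você descreve. Sua observação final sobre tratar a classe de objetos de $\mathbf{C}$ como conjunto coincide, inclusive, com a ressalva que o próprio texto faz logo após a demonstração.
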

\begin{proof}
A necessidade é evidente. Para a suficiência, fornecido $F:\mathbf{C}\rightarrow\mathbf{D}$,
mostraremos que as hipóteses asseguram a existência de um cone $(X,\varphi)$
para $F$ tal que, se $(X',\varphi')$ é qualquer outro cone de $F$,
então há uma única fatoração $u:X'\rightarrow X$. Escolhido um morfismo
$f:Y\rightarrow Y'$ em $\mathbf{C}$, escrevamos $Y=\mathrm{dom}f$
e $Y'=\mathrm{cod}f$. Uma vez que $\mathbf{D}$ possui todos os produtos,
seja $\prod_{f}F(\mathrm{cod}f)$ o produto dos objetos de $\mathbf{D}$
indexados pelo contradomínio de morfismos de $\mathbf{C}$. Da mesma
forma, seja $\prod_{Y}F(Y)$ o produto dos objetos de $\mathbf{D}$
indexados por objetos de $\mathbf{C}$. Como todo objeto de $\mathbf{C}$
é domínio e contradomínio de ao menos um morfismo (por exemplo de
sua respectiva identidade), segue-se que $F(\alpha)\circ\pi_{\mathrm{dom}f}$
e $id\circ\pi_{\mathrm{cod}f}$ definem outras projeções para $\prod_{f}F(\mathrm{cod}f)$,
de tal modo que, por universalidade, existem as setas paralelas representadas
no diagrama abaixo:$$
\xymatrix{& & & F(\mathrm{dom} f) \ar[rr]^{F(f)} & & F(\mathrm{cod} f)\\
X' \ar@{-->}[rr]^{\mu} \ar@/^/[rrru]^{\varphi '} \ar@/_/[rrrd]_{\varphi '} & & E \ar[ur]^-{\varphi} \ar[dr]_-{\varphi} \ar[r]^-{eq} & \prod _{Y} F(Y) \ar[d]^{\pi _{\mathrm{cod} f}} \ar[u]_{\pi _{\mathrm{dom} f}} \ar@<-.5ex>[rr] \ar@<.5ex>[rr] & & \prod _{f} F(\mathrm{cod} f) \ar[u]_{\pi _{f}} \ar[d]^{\pi _{f}} \\
& & & F(\mathrm{cod} f) \ar[rr]_{id} && F(\mathrm{cod} f)}
$$
O par $(X,\varphi)$, em que $X$ é o equalizador das setas paralelas
e $\varphi(Y)=\pi_{Y}\circ eq$, formam um cone para $F$. Afirmamos
que ele é o limite procurado. Com efeito, se $(X',\varphi')$ é outro
cone, a universalidade dos produtos nos garante a existência de um
único morfismo $eq':X'\rightarrow\prod_{Y}F(Y)$ que preserva as setas
paralelas. Desta forma, a universalidade do equalizador fornece um
único $u:X'\rightarrow E$, garantindo o afirmado e concluindo a demonstração.
\end{proof}
Sobre o resultado anterior, duas observações:
\begin{enumerate}
\item ele é uma típica situação na qual a diferença entre os conceitos de
classe e de conjunto deve ser levada em consideração: durante toda
a demonstração, a classe dos objetos de $\mathbf{C}$ foi tratada
como sendo um conjunto, algo que, numa abordagem axiomática, pode
não ser verdade para uma categoria arbitrária. Mais uma vez, isto
ressalta a ingenuidade com a qual temos trabalhado;
\item tem-se uma versão dual, demonstrada de maneira estritamente análoga:
\emph{para que uma categoria seja cocompleta, é necessário e suficiente
que ela tenha coprodutos e co-equalizadores}.
\end{enumerate}
\begin{example}
Se uma categoria $\mathbf{D}$ for completa, então $\mathrm{Func}(\mathbf{C};\mathbf{D})$
também o será, independente de quem for $\mathbf{C}$. De fato, consideremos
o bifunctor de avaliação 
\[
ev:\mathrm{Func}(\mathbf{C};\mathbf{D})\times\mathbf{C}\rightarrow\mathbf{D}\quad\mbox{tal que}\quad ev(F,X)=F(X).
\]
Ele induz uma família de functores $ev_{X}:\mathrm{Func}(\mathbf{C};\mathbf{D})\rightarrow\mathbf{D}$,
definidos por $ev_{X}(F)=F(X)$. Dado qualquer $\alpha:\mathbf{C}'\rightarrow\mathrm{Func}(\mathbf{C};\mathbf{D})$,
o respectivo $ev_{X}\circ\alpha$ têm limite. Consequentemente, $\alpha$
também possui limite, garantido o que havíamos afirmado.

\begin{example}
As categorias $\mathbf{Mfd}$ e $\mathbf{Diff}$ possuem produtos
e coprodutos, mas não são completas nem cocompletas: em geral elas
não possuem \emph{pullbacks} nem \emph{pushouts}. Para \emph{pushouts},
tome, por exemplo, duas retas coladas ao longo de um único ponto.
Observamos, no entanto, que \emph{pullbacks} de mapas \emph{transversais
}existem em $\mathbf{Diff}$. Mais precisamente, se $f:X\rightarrow Y$
e $g:X'\rightarrow Y$ são diferenciáveis e transversais (isto é,
se a imagem das derivadas $Df_{x}$ e $Dg_{x'}$ geram o espaço $TY_{y}$
em cada $(x,x')$ tal que $f(x)=y=g(x')$), então o \emph{pullback}
do par $(f,g)$ é uma variedade diferenciável.
\end{example}
\end{example}

\subsection*{\uline{Espaços Pontuados}}

$\quad\;\,$Nesta subsecção, estudamos os limites e colimites em categorias
pontuadas $\mathbf{C}_{*}$. A ideia é tentar determiná-los a partir
dos limites de $\mathbf{C}$. De maneira direta, verifica-se que o
despareamento $\mathscr{D}:\mathbf{C}_{*}\rightarrow\mathbf{C}$ preserva
limites, de modo que, para todo functor $F:\mathbf{C}_{*}\rightarrow\mathbf{D}$,
se seu limite existe, então $\mathscr{D}(\lim F)\simeq\lim(\mathscr{D}\circ F)$.
Assim, por exemplo, produtos e \emph{pullbacks} em $\mathbf{C}_{*}$
podem ser diretamente calculados a partir de suas versões em $\mathbf{C}$,
bastando pontuá-los de maneira canônica. O exemplo abaixo nos mostra
que a mesma estratégia, no entanto, não se aplica para colimites.
\begin{example}
O coproduto de pares $(X_{i},x_{i})$ em $\mathbf{Set}_{*}$ é o \emph{produto
wedge} $\bigvee_{i}X_{i}$, definido da seguinte maneira: toma-se
a reunião de todos os produtos $X_{i}\times x_{i}$ e passa-se ao
quociente pela relação que identifica todos os pontos $x_{i}$ num
único, o qual há de ser o ponto base de $\bigvee_{i}X_{i}$ .
\end{example}
Observamos que o coproduto de $\mathbf{Set}_{*}$ foi obtido a partir
de colimites em $\mathbf{Set}$: primeiro tomamos a reunião disjunta
(que é o coproduto) e depois passamos ao quociente (que é um \emph{co-kernel}).
Argumentação análoga funciona para obter coprodutos em qualquer $\mathbf{C}_{*}$,
desde que $\mathbf{C}$ seja cocompleta.

\section{Reconstrução}

$\quad\;\,$Nesta secção, veremos que os limites são as extensões
de Kan fundamentais. Mais precisamente, mostraremos que sua categoria
$\mathbf{D}$ possui quantidade suficiente de coprodutos e co-equalizadores,
então todo $F:\mathbf{A}\rightarrow\mathbf{D}$ admite extensão de
Kan à esquerda relativamente a qualquer $\imath:\mathbf{A}\rightarrow\mathbf{C}$.
Em suma, mostraremos ser possível escrever 
\[
\mathscr{L}F(X)=\int^{Y}\mathrm{Mor}_{\mathbf{C}}(\imath(Y);X)\cdot F(Y),
\]
onde, para todo conjunto $S$ e todo objeto $E$, a \emph{co-potência}
$S\cdot E$ nada mais é que é o coproduto de cópias de $E$ indexadas
em $S$. Por sua vez, o símbolo de integração, usualmente denominado
\emph{coend},\emph{ }é uma espécie de soma universal sobre todos os
objetos de $\mathbf{A}$ e, como discutiremos em seguida, se vê descrita
por um co-equalizador.

Dualmente, mostraremos que se a categoria $\mathbf{D}$ admite produtos
e equalizadores suficientes, então as extensões de Kan à direita sempre
existem, com 
\[
\mathscr{R}F(X)=\int_{Y}F(Y)^{\mathrm{Mor}_{\mathbf{C}}(\imath(Y);X)},
\]
em que a \emph{potência} $E^{S}$ do objeto $E$ pelo conjunto $S$
é simplesmente o produto de cópias de $E$ indexas em $S$. Semelhantemente,
a integral representa o \emph{end}: um objeto dual ao coend e descrito
em termos de um equalizador.

Tais expressões possuem duas consequências imediatas, as quais mencionamos
desde já:
\begin{enumerate}
\item todo functor que assume valores numa categoria completa (resp. cocompleta)
possui extensões de Kan à direta (resp. à esquerda);
\item um tal functor preservará todas as extensões de Kan se, e somente
se, for \emph{contínuo} (isto é, se, e somente se, preservar todos
os limites). Dualmente, ele preservará todas as extensões de Kan à
direita se, e só se, for \emph{cocontínuo}.
\end{enumerate}

\subsection*{\emph{\uline{Ends}}}

$\quad\;\,$Na álgebra linear (isto é, na categoria $\mathbf{Vec}_{\mathbb{K}}$),
uma função do tipo $f:V\times V'\rightarrow W$ tanto pode ser uma
transformação linear do espaço produto $V\times V'$ em $W$, quanto
uma aplicação bilinear. Semelhantemente, correspondendo às transformações
lineares, tem-se as transformações naturais entre functores $F,F':\mathbf{C}^{op}\times\mathbf{C}\rightarrow\mathbf{D}$.
O análogo das aplicações bilineares são as \emph{transformações binaturais}.
Estas nada mais são que regras $\xi$, responsáveis por associar a
cada objeto $X\in\mathbf{C}$ um morfismo $\xi(X):F(X,X)\rightarrow F'(X,X)$
que deixa comutativo o diagrama abaixo para um $f:X\rightarrow X'$
qualquer. $$
\xymatrix{F(X',X) \ar[r]^{F(f,id)} \ar[d]_{F(id,f)} & F(X,X) \ar[r]^{\xi (X)} & F'(X,X) \ar[r]^{F'(id,f)} & F'(X,X') \ar[d]^{F'(f,id)} \\
F(X',X') \ar[rrr]_{\xi (X')} &&& F'(X',X') }
$$

Observamos que, se o bifunctor $F$ é constante (digamos igual a $C$),
então tal diagrama colapsa no seguinte quadrado comutativo:$$
\xymatrix{F'(X,X) \ar[r]^{F'(id,f)} & F'(X,X') \\
C \ar[u] \ar[r]_-{\xi (X')} & F'(X',X') \ar[u]_{F'(f,id) }}
$$

Vimos que o limite de um functor $F:\mathbf{C}\rightarrow\mathbf{D}$
se resume a um par $(X,\varphi)$, em que $X\in\mathbf{D}$ e $\varphi$
é transformação natural universal entre o functor constante em $X$
e $F$. Isto é, tal que qualquer outro par $(X',\varphi')$ fatora
em $(X,\varphi)$. Seguindo a mesma linha do parágrafo anterior, se
$F$ está definido numa categoria produto do tipo $\mathbf{C}^{op}\times\mathbf{C}$,
pode-se falar não só de transformações universais, mas também de transformações
binaturais universais. Realizando esta troca na caracterização do
limite, chega-se ao conceito de \emph{end }de $F$. Assim, o \emph{end}
de $F:\mathbf{C}^{op}\times\mathbf{C}\rightarrow\mathbf{D}$ é simplesmente
um \emph{cubo} $(X,\varphi)$, onde $X\in\mathbf{D}$ e $\varphi$
é uma transformação dinatural entre o functor constante em $X$ e
$F$, obtido de tal maneira que todo cubo $(X',\varphi')$ pode ser
unicamente fatorado em $(X,\varphi)$. Isto é, deve haver um único
$\mu:X'\rightarrow X$ que deixa comutativo o diagrama abaixo:\begin{equation}{
\xymatrix{& & & F(Y',Y') \ar[dr]^{F(f,id)} \\
X' \ar@{-->}[rr]^{u} \ar@/^/[rrru] \ar@/_/[rrrd] & & X \ar[ur]^{\varphi} \ar[dr]_{\varphi} && F(Y,Y') \\
& & & F(Y,Y) \ar[ur]_{F(id,f)}   }}
\end{equation}

\begin{example}
Como consequência do lema de Yoneda, o conjunto $\mathrm{Nat}(H;H')$
das transformações naturais entre dois functores $H,H':\mathbf{C}\rightarrow\mathbf{D}$
pode ser obtido em termos do \emph{end} do correspondente $F:\mathbf{C}^{op}\times\mathbf{C}\rightarrow\mathbf{Set}$,
que a cada par $(X,Y)$ associa $\mathrm{Mor}_{\mathbf{D}}(H(X);H'(Y))$.
\end{example}
De maneira dual ao \emph{end}, define-se o que vem a ser o \emph{coend}
de $F:\mathbf{C}^{op}\times\mathbf{C}\rightarrow\mathbf{D}$. Quando
existem, estes são respectivamente denotados por $\int_{\mathbf{C}}F$
e $\int^{\mathbf{C}}F$. Uma motivação para tais notações é obtida
fazendo alusão ao cálculo tensorial: lá, partindo-se de qualquer objeto
$f_{j}^{i}$ que é contravariante em um índice e covariante em outro,
para tomar seu traço (isto é, ao se contrair $i$ com $j$), deve-se
somar sobre os possíveis valores de $i$, resultando em $\mathrm{tr}(f_{j}^{i})=\sum_{i}f_{i}^{i}$.
A entidade $F:\mathbf{C}^{op}\times\mathbf{C}\rightarrow\mathbf{D}$
é contravariante em uma entrada e covariante em outra. Tomar seu traço
é ``somar'' sobre todos os possíveis valores de $F(X,X)$, com $X\in\mathbf{C}$.
Como o ``índice'' $X$ pode não ser discreto, tal ``soma'' é substituída
por uma ``integral'' $\int_{\mathbf{C}}F$. Esta pode ou não convergir,
traduzindo a existência ou inexistência do end de $F$.

Levemos a anlogia entre cálculo tensorial e ``análise categórica''
adiante. Consideremos um objeto com três índices (digamos $f_{kj}^{i}$)
sendo dois deles covariantes e um contravariante. Ao contrair $i$
com $j$, ganha-se uma soma sobre $i$ e o resultado é um novo objeto
$e_{k}$ com um único índice covariante livre e tal que, para cada
valor de $k$, sua respectiva componente é dada por $\sum_{i}f_{ki}^{i}$.
De maneira análoga, se o objeto inicial possuir dois índices contravariantes,
então o objeto final será contravariante. Assim, se consideramos um
functor do tipo $F:\mathbf{P}\times\mathbf{C}^{op}\times\mathbf{C}\rightarrow\mathbf{D}$,
em que o\emph{ end} de cada $F_{p}:\mathbf{C}^{op}\times\mathbf{C}\rightarrow\mathbf{D}$,
com $F_{p}(X,X')=F(P,X,X')$ existe, então há um $E:\mathbf{P}\rightarrow\mathbf{D}$
tal que $E(P)=\int_{\mathbf{C}}F_{p}$. Analogamente, se o functor
incial for contravariante na primeira entrada, então o respectivo
$E$ também o será, e $E(P)=\int^{\mathbf{C}}F_{p}$. Nestas condições,
diz-se que $F$ possui \emph{end} (resp. \emph{coend}) com parâmetros.
Veja as páginas 228-230 de \cite{MACLANE_categories}. Em cada caso,
costuma-se escrever 
\[
E(P)=\int_{X}F(P,X,X)\quad\mbox{e}\quad E(P)=\int^{X}F(P,X,X).
\]

Se agora consideramos um objeto com quatro índices $t_{lj}^{ki}$,
seu traço será independente da ordem que o tomamos em $i$ e em $k$.
No contexto da análise categórica isto significa que, dado um functor
do tipo $F:\mathbf{P}^{op}\times\mathbf{P}\times\mathbf{C}^{op}\times\mathbf{C}\rightarrow\mathbf{D}$
tal que o \emph{end} dos respectivos $F_{x}:\mathbf{P}^{op}\times\mathbf{P}\rightarrow\mathbf{D}$
e $F_{p}:\mathbf{C}^{op}\times\mathbf{C}\rightarrow\mathbf{D}$ existe,
então a ordem em que o calculamos é indiferente. Mais precisamente,
se $E_{x}$ e $E_{p}$ são os respectivos functores que a cada $x$
e cada $p$ associam o \emph{end} de $F_{x}$ e de $F_{p}$, então
vale $\int_{\mathbf{C}}E_{x}\simeq\int_{\mathbf{P}}E_{p}$. Sucintamente,
\[
\int_{P}\int_{X}F(P,P,X,X)\simeq\int_{X}\int_{P}F(P,P,X,X).
\]

Tal equivalência é usualmente chamada de \emph{teorema de Fubini}
por ser análogo a um resultado homônimo que aparece no cálculo e,
mais geralmente, em teoria da medida. Para uma prova formal, veja
o corolário na página 231 de \cite{MACLANE_categories}.

\subsection*{\emph{\uline{Ends}}\uline{ vs Limites}}

$\quad\;\,$Na subsecção anterior, obtivemos o conceito de \emph{end}
de um functor $F:\mathbf{C}^{op}\times\mathbf{C}\rightarrow\mathbf{D}$
trocando, na definição de limite, transformações naturais por transformações
binaturais. Nesta, exploraremos mais de perto a relação entre \emph{ends}
e limites. 

Iniciamos observando que, em geral, o \emph{end} de $F$ pode existir
sem que seu limite exista. Ainda assim, mesmo que ambos existam, eles
podem ser distintos. No entanto, se $F$ é constante na primeira entrada,
tais conceitos se equivalem. Intuitivamente, a razão é a seguinte:
no processo de categorificação, transformações naturais entre functores
então em correspondência com as transformações lineares entre espaços,
ao passo que transformações binaturais são os análogos de transformações
bilineares. Pode-se ter uma transformação bilinear $f:V\times V'\rightarrow W$
sem que esta o seja linear. No entanto, se a primeira entrada é trivial
(isto é, se vale $V\simeq0$), então toda $f:0\times V'\rightarrow W$
que seja bilinear também será linear. Daí, aplicando a analogia, se
$F$ é constante na primeira entrada, então o cubo $(X,\varphi)$
por ela definido se resume a um cone e, consequentemente, o \emph{end}
coincide com o limite. 

Sob um ponto de vista mais formal, tem-se a seguinte justificativa
para as afirmações anteriores: como vimos, limites e colimites podem
ser descritos em termos de produtos, equalizadores e suas versões
duais. Seguindo a mesma estratégia, vê-se que \emph{ends} e \emph{coends}
também são descritos por produtos e equalizadores. De maneira mais
precisa, se limites de functores $F:\mathbf{C}\rightarrow\mathbf{D}$
eram obtidos de equalizadores da forma

\begin{equation}{
\xymatrix{ \lim F \ar@{-->}[r] & \prod_X F(X) \ar@<-.5ex>[r] \ar@<.5ex>[r] &  \prod_f F(\mathrm{cod}(f)),}}
\end{equation}\emph{ends} de $F':\mathbf{C}^{op}\times\mathbf{C}\rightarrow\mathbf{D}$
passam a ser determinados por\begin{equation}{
\xymatrix{ \int_\mathbf{C} F' \ar@{-->}[r] & \prod_X F'(X,X) \ar@<-.5ex>[r] \ar@<.5ex>[r] &  \prod_f F'(\mathrm{dom} (f), \mathrm{cod}(f)).}}
\end{equation}

Daí, construindo o primeiro diagrama para $F'$, vê-se que seu limite
é dado por$$
\xymatrix{ \lim F' \ar@{-->}[r] & \prod_{(X,Y)} F'(X,Y) \ar@<-.5ex>[r] \ar@<.5ex>[r] &  \prod_{(f,g)} F'(\mathrm{cod}(f),\mathrm{cod}(g)),}
$$o qual é evidentemente distinto de (3.3), mas coincidente com (3.4)
quando $F'$ é constante na primeira entrada. Conclusões análogas
são válidas para colimites e \emph{coends}.

\subsection*{\uline{Fórmula}}

$\quad\;\,$Vejamos, finalmente, que se a categoria $\mathbf{D}$
possui coprodutos e co-equalizadores suficientes, então a extensão
de Kan à esquerda de todo $F:\mathbf{A}\rightarrow\mathbf{D}$ ao
longo de qualquer $\imath:\mathbf{A}\rightarrow\mathbf{C}$ existe
e pode ser representada em termos de um \emph{coend}. O fato fundamental
é que, se $\mathscr{L}F$ é a extensão procurada, então, para cada
$F'':\mathbf{C}\rightarrow\mathbf{D}$ há bijeções naturais 
\begin{equation}
\mathrm{Nat}(\mathscr{L}F;F'')\simeq\mathrm{Nat}(F;F''\circ\imath),\label{extensao_kan_esquerda}
\end{equation}
as quais determinam $\mathscr{L}F$: qualquer outro functor que as
induz é naturalmente isomorfo à $\mathscr{L}F$ e, portanto, uma extensão
de Kan para $F$.

Consideremos o functor $H:\mathbf{C}\times\mathbf{A}^{op}\times\mathbf{A}\rightarrow\mathbf{D}$,
definido em objetos por 
\[
H(X,Y,Z)=\bigoplus_{f\in\mathrm{Mor}_{\mathbf{C}}(\imath(Z);X)}F(Y)=\mathrm{Mor}_{\mathbf{C}}(\imath(Z);X)\cdot F(Y),
\]
e suponhamos que $\mathbf{D}$ admite coprodutos e co-equalizadores
suficientes para que o \emph{coend} de cada $H_{x}$ exista. Neste
caso, $H$ tem \emph{coend} com parâmetros, de modo que há um $E:\mathbf{C}\rightarrow\mathbf{D}$
tal que

\[
E(X)=\int^{\mathbf{C}}H_{x}=\int^{Y}\mathrm{Mor}_{\mathbf{C}}(\imath(Y);X)\cdot F(Y).
\]

Seguindo \cite{MACLANE_categories}, vamos mostrar que tal functor
satisfaz (\ref{extensao_kan_esquerda}) e, portanto, é extensão de
Kan à esquerda de $F$. Para tanto, dado um $F'':\mathbf{C}\rightarrow\mathbf{D}$
qualquer, observemos que 
\begin{eqnarray*}
\mathrm{Nat}(E;F'') & \simeq & \int_{X}\mathrm{Mor}_{\mathbf{D}}(E(X);F''(X))\\
 & \simeq & \int_{Y}\int_{X}\mathrm{Mor}_{\mathbf{Set}}(\mathrm{Mor}_{\mathbf{C}}(\imath(Y);X);\mathrm{Mor}_{\mathbf{D}}(F(Y);F''(X)))\\
 & \simeq & \int_{Y}\mathrm{Nat}(h^{\imath(Y)};\mathrm{Mor}_{\mathbf{D}}(F(Y);F''(-)))\\
 & \simeq & \mathrm{Nat}(F;F''\circ\imath),
\end{eqnarray*}
onde: na primeira passagem utilizamos do exemplo (3.3.1); na segunda
aplicamos as definições de $E$ e dos coprodutos, ao mesmo tempo que
utilizamos da continuidade do functor $h^{X}$, do teorema de Fubini
e também da identidade $\prod_{S}h^{X}(Y)\simeq h^{S}(h^{X}(Y))$;
na terceira fizemos uso do lema de Yoneda e, finalmente, aplicamos
duas vezes consecutivas o exemplo (3.3.1). 

\chapter{Álgebra Abstrata}

$\quad\;\,$Neste capítulo, estudamos as \emph{categorias monoidais}.
Elas são obtidas dos monoides da Álgebra Clássica por um processo
de ``categorificação'', no qual conjuntos são substituídos por categoriais,
elementos dão lugar à objetos, mapeamentos entre conjuntos são trocados
por functores, e relações entre mapeamentos tornam-se transformações
naturais entre os correspondentes functores. Assim, uma categoria
monoidal $\mathbf{C}$ seria aquela na qual se tem definido um bifunctor
$\otimes:\mathbf{C}\times\mathbf{C}\rightarrow\mathbf{C}$ que é associativo
e possui unidade a menos de isomorfismos naturais (o exemplo mais
simples é $\mathbf{Set}$ com $\otimes$ sendo o produto cartesiano). 

Apresentar o processo de categorificação acima descrito, bem como
dar uma definição precisa do que vem a ser uma estrutura monoidal,
ilustrando-a por meio de diversos exemplos, é o principal objetivo
da primeira secção. 

Na segunda secção, explicitamos o interesse nas categoriais monoidais:
é nelas que se pode falar de \emph{estruturas algébricas }de forma
mais genérica e abrangente. Com efeito, se na Álgebra Clássica uma
estrutura algébrica é composta de um conjunto $X$ no qual estão definidas
operações $*:X\times X\rightarrow X$ satisfazendo certas regras,
em categorias monoidais tais estruturas são objetos $X$, dotados
de morfismos $*:X\otimes X\rightarrow X$, os quais tornam comutativos
alguns diagramas. Assim, por exemplo, pode-se falar de ``monoides
em grupos abelianos'', ou mesmo de ``grupos em variedades diferenciáveis''.
Exemplos, estes, que não são meras abstrações: os monoides em grupos
nada mais são que os anéis, ao passo que os grupos em variedades nada
mais são que os grupos de Lie.

Observamos, no entanto, que existem estruturas na Álgebra Clássica
que são obtidas fazendo uma estrutura \emph{agir} em outra. Por exemplo,
módulos sobre um anel $R$ são simplesmente ações de $R$ em grupos
abelianos. A terceira secção do capítulo é marcada por uma generalização
do conceito de \emph{ação} de monóides em objetos no contexto das
categorias monoidais. Com ele em mãos, finalizamos com o estudo de
fibrados cujas fibras estão sujeitas à ação de um grupo. 

Maiores detalhes sobre o assunto podem ser encontrados em \cite{Aguiar_monoidal_categories,MACLANE_categories},
que constituíram as principais referências durante o estudo e a escrita
deste capítulo.

\section{Categorificação}

$\quad\;\,$A matemática clássica é construída sob a linguagem da
Teoria dos Conjuntos. A Teoria das Categorias, por sua vez, tem a
Teoria dos Conjuntos como um caso particular. Espera-se, portanto,
que a Teoria das Categorias forneça procedimentos que nos permita
abstrair qualquer que seja o conceito usual. Um de tais procedimentos
é a \emph{categorificação}, o qual passamos a descrever.

Relembramos que o problema de classificar uma dada categoria $\mathbf{C}$
consiste em obter uma bijeção entre o conjunto das classes de isomorfismo
$\mathrm{Iso}(\mathbf{C})$ e algum outro conjunto $S$. Dualmente,
fornecido um conjunto $S$, o problema de \emph{categorificá-lo} consiste
em obter uma categoria $\mathbf{C}$ cuja classe de isomorfismos esteja
em bijeção com $S$. Em outras palavras, categorificar um conjunto
$S$ é buscar por uma categoria $\mathbf{C}$ que pode ser classificada
em termos de $S$. Por este motivo, algumas vezes se fala que classificar
é o mesmo que \emph{descategorificar}.
\begin{example}
Sabe-se a categoria dos espaços vetoriais de dimensão finita é classificada
pelo invariante dimensão (isto é, pelo functor $\mathrm{dim}:\mathrm{F}\mathbf{Vec}_{\mathbb{K}}\rightarrow\mathbb{N}$).
Desta forma, pode-se pensar na categoria $\mathrm{F}\mathbf{Vec}_{\mathbb{K}}$
como sendo uma categorificação dos naturais.
\end{example}
Observamos que categorias são entidades mais complicadas que conjuntos:
em uma categoria tem-se duas classes (a dos objetos e a dos morfismos),
ao passo que num conjunto tem-se apenas uma. Posto isso, o problema
de categorificação tende a ser mais difícil que o de classificação:
para classificar, procura-se um conjunto (entidade simples) que represente
uma categoria (entidade complexa) previamente conhecida. Para categorificar,
precisa-se determinar uma categoria (entidade complicada) que seja
representada por um dado conjunto (entidade simples). 

Grosso modo, classificar é estudar o functor $\mathrm{Iso}:\mathbf{Cat}\rightarrow\mathbf{Set}$,
ao passo que categorificar é encontrar functores $\mathrm{Ctgz}:\mathbf{Set}\rightarrow\mathbf{Cat}$
que são ``inversas pontuais'' de $\mathrm{Iso}$. Assim, ao se classificar,
perde-se ``informação categórica'': com o objetivo de entender a
categoria, ela é substituída por um ente mais simples. Por sua vez,
ao se categorificar, ganha-se tal informação. É exatamente este o
espírito da categorificação: partir de uma entidade conhecida e substituí-la
por outra mais complexa, ganhando novas ferramentas. Por exemplo,
pode-se partir de um invariante simples e, ao final do processo, obter
outro mais poderoso. Sobre o assunto, remetemos o leitor a um artigo
bastante interessante de John Baez e James Dolan: \cite{categorification}.

\subsection*{\uline{Categorias Monoidais}}

$\quad\;\,$Para categorificar uma estrutura algébrica $S$, deve-se
obter uma categoria $\mathbf{C}$ cujo conjunto das classes de isomorfismo
admite uma estrutura isomorfa à $S$. Assim, por exemplo, se $S$
for um monóide, $\mathbf{C}$ poderá categorificá-lo somente se $\mathrm{Iso}(\mathbf{C})$
também for um monóide. Portanto, se quisermos saber quais categorias
classificam um determinada estrutura algébrica, deveremos restringir
nossa busca às categorias cujo conjunto das classes de isomorfismo
possuem correspondente estrutura. Em outras palavras, dada uma subcategoria
$\mathbf{Alg}\subset\mathbf{Set}$, devemos procurar por subcategorias
$\mathbf{Cat}_{\mathbf{Alg}}\subset\mathbf{Cat}$ restritas as quais
o functor $\mathrm{Iso}$ assume valores em $\mathbf{Alg}$.

Um \emph{insight} para determinar $\mathbf{Cat}_{\mathbf{Alg}}$ é
o seguinte: estruturas algébricas são conjuntos dotados de elementos
distinguidos (elementos neutros) e de operações binárias (que são
funções entre conjuntos), as quais satisfazem certas equações. As
entidades primárias são os elementos, os quais originam os conjuntos
e as funções, as quais se relacionam por meio das equações. Por sua
vez, em $\mathbf{Cat}$ as entidades primárias são os objetos e os
morfismos, os quais originam as categorias e os functores, os quais
se relacionam por meio das transformações naturais. 

Assim, partindo de $\mathbf{Alg}$, para obter $\mathbf{Cat}_{\mathbf{Alg}}$,
a ideia é ``elevar em um grau a informação categórica'': elementos
distinguidos tornam-se objetos distinguidos, conjuntos tornam-se categorias,
operações binárias tornam-se bifunctores e equações são reformuladas
por meio de isomorfismos naturais, chamados de \emph{condições de
coerência}.
\begin{example}
Na álgebra clássica, um monóide é conjunto $X$ dotado de uma única
operação binária $*:X\times X\rightarrow X$, e de um elemento distinguido
$1\in X$, tais que as seguintes equações são satisfeitas para cada
$x,y,z\in X$:
\begin{equation}
1*x=x=x*1\quad\mbox{e}\quad(x*y)*z=x*(y*z).\label{monoide_algebra_classica}
\end{equation}
Seguindo o \emph{insight }anterior, uma categoria em $\mathbf{Cat}_{\mathbf{Mon}}$
seria aquela $\mathbf{C}$ na qual se encontram definidos um bifunctor
$\otimes:\mathbf{C}\times\mathbf{C}\rightarrow\mathbf{C}$ e um objeto
distinguido $1\in\mathbf{C}$, para os quais se tem os seguintes isomorfismos
naturais:
\begin{equation}
1\otimes X\simeq X\simeq X\otimes1\quad\mbox{e}\quad(X\otimes Y)\otimes Z\simeq X\otimes(Y\otimes Z).\label{categoria_monoidal}
\end{equation}
Se $\mathbf{C}$ cumpre tais relações, então $\mathrm{Iso}(\mathbf{C})$
é realmente um monóide, mostrando-nos que o \emph{insight} cumpriu
com o seu papel. Com efeito, basta pôr $[X]*[Y]=[X\otimes Y]$. O
elemento neutro haverá de ser $[1]$. Observamos, no entanto, que
as relações (\ref{monoide_algebra_classica}) nos permitem retirar
os parênteses de qualquer expressão envolvendo um número arbitrário
de elementos. Por exemplo, 
\[
(x*y)*(x'*y')=x*((y*x)*y')=x*(y*(x'*y'))).
\]
Em contrapartida, as relações (\ref{categoria_monoidal}) não são
suficientes para garantir algo análogo para a categoria obtida através
do \emph{insight}. São necessárias, pois, mais algumas condições de
coerência (veja a segunda secção do capítulo VII de \cite{MACLANE_categories}).
Uma categoria que cumpre com o \emph{insight} e também com estas condições
de coerência adicionais chama-se \emph{monoidal}. Assim, pode-se dizer
que \emph{as categorias monoidais são categorificações convenientes
do conceito clássico de monóide}.

\begin{example}
Se uma categoria $\mathbf{C}$ possui produtos binários $X\times Y$
e um objeto terminal $*$, então o bifunctor $(X,Y)\mapsto X\times Y$
está bem definido e ali introduz uma estrutura monoidal, cujo objeto
neutro nada mais é que o próprio $*$. Dualmente, se $\mathbf{C}$
tem coprodutos binários $X\oplus Y$ e um objeto inicial $\varnothing$,
então $(X,Y)\mapsto X\oplus Y$ a faz monoidal, cujo objeto neutro
é $\varnothing$. Em particular, se $\mathbf{C}$ possui biprodutos,
então as estruturas monoidais induzidas por seus produtos e coprodutos
são equivalentes. Isto ocorre, por exemplo, em $\mathbf{Mod}_{R}$.
\end{example}
\end{example}
Nem sempre a estrutura monoidal provém de produtos e coprodutos. Abaixo
ilustramos este fato por meio de três exemplos. Nas próximas subsecções,
veremos que, ainda que a estrutura monoidal de $\mathbf{C}$ provenha
de produtos, sua pontuação $\mathbf{C}_{*}$ pode admitir uma estrutura
que não provém de produtos lá definidos. 
\begin{example}
Na categoria $\mathbf{Mod}_{R}$, com $R$ comutativo, consideremos
a regra $\otimes_{R}$ que a cada par de módulos $(X,Y)$ associa
o produto tensorial $X\otimes_{R}Y$ entre eles, e que a cada par
$(f,g)$ de homomorfismos faz corresponder $f\otimes_{R}g$. Ela é
associativa e possui unidade dada pelo próprio anel $R$, visto enquanto
módulo sobre si mesmo. Assim, $\otimes_{R}$ torna $\mathbf{Mod}_{R}$
uma categoria monoidal sem provir de produtos ou coprodutos.

\begin{example}
Tem-se um bifunctor $\otimes:\partial\mathbf{Diff}\times\partial\mathbf{Diff}\rightarrow\mathbf{Diff}$,
que toma variedades $X$ e $Y$ com bordo de devolve uma nova variedade
$X\otimes Y$, esta sem bordo, obtida colando $X$ e $Y$ ao longo
de seus bordos. Observamos que a subcategoria cheia $\mathbf{C}^{n}\subset\mathbf{Diff}$,
formada da variedades de dimensão $n$ que são compactas e orientáveis,
torna-se monoidal quando dotada do bifunctor $\#:\mathbf{C}^{n}\times\mathbf{C}^{n}\rightarrow\mathbf{C}^{n}$,
chamado de \emph{soma conexa}, e definido como segue: toma-se duas
variedades de mesma dimensão, retira-se um pequeno subespaço homeomorfo
ao disco de cada uma delas, e então aplica-se $\otimes$ (isto é,
cola-se os espaços resultantes ao longo de seus bordos). O objeto
neutro de tal estrutura é a esfera $\mathbb{S}^{n}$.

\begin{example}
Relembramos que, para cada inteiro $n$, a categoria $n\mathbf{Cob}$
tem como objeto as variedades de dimensão $n-1$, compactas e sem
bordo, e como morfismos os cobordismos. Como estamos trabalhando com
dimensão fixa, o coproduto em $\mathbf{Diff}$ está bem definido e
é a reunião disjunta. O bifunctor $\sqcup$ induz uma estrutura monoidal
em $n\mathbf{Cob}$.
\end{example}
\end{example}
\end{example}
De maneira estritamente análoga ao que foi feito no exemplo 4.4, poder-se-ia
aplicar o \emph{insight} de modo a obter categorificações de outras
estruturas algébricas mais complicadas. Por exemplo, a categorificação
dos monóides abelianos são as categorias monoidais $\mathbf{C}$,
com produto $\otimes:\mathbf{C}\times\mathbf{C}\rightarrow\mathbf{C}$,
para as quais existem isomorfismos naturais $X\otimes Y\simeq Y\otimes X$,
denominados \emph{braidings}, satisfazendo certas condições de coerência.
Tais categorias são ditas \emph{simétricas}. 
\begin{example}
Estruturas monoidais provenientes de produtos e coprodutos binários
são sempre simétricas. Particularmente, a soma direta em $\mathbf{Mod}_{R}$
possui inversa $X\oplus(-X)\simeq0$, em que $-X$ é o conjunto de
todo $-x$, com $x\in X$. Desta forma, $\mathbf{Mod}_{R}$ é categorização
de grupo abeliano com $\oplus$. Por sua vez, sabe-se que o produto
tensorial é comutativo a menos de isomorfismos, de modo que a estrutura
definida por $\otimes_{R}$ em $\mathbf{Mod}_{R}$ também é simétrica.
Particularmente, $\oplus$ e $\otimes_{R}$ são distributivos módulo
isomorfismos naturais. Portanto, com tais bifunctores, $\mathbf{Mod}_{R}$
é categorização de um anel comutativo com unidade.
\end{example}
Ao longo do texto, nos restringimos, ao estudo das categorias monoidais
e de suas versões simétricas. Isto porque estaremos interessados no
estudo da Álgebra Abstrata: para falar de estruturas algébricas usuais,
precisa-se falar somente de operações e objetos distinguidos. As,
operações são correspondências $X\times X\rightarrow X$, de modo
que também se faz necessária uma noção prévia de produto entre conjuntos
(trata-se do produto cartesiano). 

As categorias monoidais cumprem todas essas requisições e, portanto,
são as categorias de menor complexidade nas quais a Álgebra pode ser
desenvolvida. Mais precisamente, se $\mathbf{C}$ é monoidal (digamos
com respeito a $\otimes$), então faz sentido falar de operações em
$X\in\mathbf{C}$: tratam-se, pois, de morfismos $X\otimes X\rightarrow X$.
Assim, pode-se dizer que a Álgebra clássica é a Álgebra desenvolvida
na categoria $\mathbf{Set}$, com estrutura monoidal proveniente de
produtos binários.

\subsection*{\emph{\uline{Smash}}}

$\quad\;\,$Seja $\mathbf{C}$ uma categoria completa e cocompleta,
com objeto terminal $*$, dotada da estrutura monoidal $\otimes:\mathbf{C}\times\mathbf{C}\rightarrow\mathbf{C}$.
Esta induz um bifunctor natural $\wedge$ em sua pontuação $\mathbf{C}_{*}$,
denominado \emph{produto smash} e definido como segue: em objetos,
ele toma pares $(X,x_{o})$ e $(Y,y_{o})$ e devolve o \emph{pushout}
abaixo, pontuado pelo único $x_{o}\wedge y_{o}:*\rightarrow X\wedge Y$.
Por sua vez, a cada par de morfismos $f:X\rightarrow Y$ e $g:X'\rightarrow Y'$
ele associa o correspondente $f\wedge g$, obtido por universalidade.$$
\xymatrix{&&& X'\wedge Y' & X'\otimes Y' \ar[l] \\
X\wedge Y & X\otimes Y \ar[l] && \mathrm{*} \ar[u] & X\wedge Y \ar@{-->}[ul]^{f\wedge g} & \ar [l] X\otimes Y \ar@/_/[lu]_{f\otimes g} \\
\mathrm{*} \ar[u] & \ar[l] \ar[u] X\oplus Y &&& \mathrm{*} \ar[u] \ar@/^/[lu] & \ar[l] \ar[u] X\oplus Y}
$$
\begin{example}
Elementos de $\mathbf{Set}_{*}$ se identificam com pares $(X,x_{o})$,
em que $x_{o}\in X$, de modo que $X\times*\sqcup Y\times*\simeq X\vee Y$.
Por sua vez, o mapa $X\sqcup Y\rightarrow X\times Y$ é visto como
inclusão e o produto \emph{smash} $X\wedge Y$ é o quociente de $X\times Y$
pela relação que identifica $X\vee Y$. O mesmo se passa em $\mathbf{Top}_{*}$.
\end{example}
Observamos que, em qualquer situação, o produto $\wedge$ possui uma
unidade natural: trata-se do coproduto $*\oplus*$, denotado por $S^{0}$.
Por exemplo, em $\mathbf{Set}$ ou $\mathbf{Top}$, tem-se $S^{0}=\mathbb{S}^{0}$.
Desta forma, se $\wedge$ for associativo a menos de isomorfismos
naturais, então definirá uma estrutura monoidal na pontuação $\mathbf{C}_{*}$. 

Uma vez que o produto \emph{smash} é \emph{pushout} em $\mathbf{C}$,
pelas condições de coerência, para que valha tal associatividade,
basta que colimites finitos sejam preservados por cada $-\otimes Y$.
Isto ocorre em, particular, se $\mathbf{C}$ é\emph{ fechada}. Ou
seja, se cada um dos $-\otimes Y$ possui adjuntos à direita. Afinal,
neste caso, além de preservarem os colimites que são finitos, preservarão
qualquer outro. Uma categoria fechada com respeito à estrutura monoidal
proveniente de produtos binários costuma ser chamada de \emph{cartesianamente
fechada}.
\begin{example}
Em $\mathbf{Set}$, tem-se $h^{Y}$ como adjuntos de $-\times Y$.
Assim, tal categoria é cartesianamente fechada e, consequentemente,
o produto \emph{smash} $\wedge$ apresentado no exemplo anterior faz
de $\mathbf{Set}_{*}$ uma categoria monoidal. A mesma estratégia
não pode ser empregada em $\mathbf{Top}$: quando restrita a tal subcategoria,
o bifunctor $\wedge$ perde sua associatividade. Um contra-exemplo
é apresentado na secção 1.7 de \cite{MAY_2}. Este é um fato fundamental
no estudo da topologia e será mais detalhadamente discutido no capítulo
oito.

\begin{example}
Evidentemente, com respeito à estrutura monoidal induzida pelo produto
tensorial, a categoria dos módulos é fechada. Por conta disso, uma
categoria fechada é algumas vezes chamada de \emph{tensorial}.
\end{example}
\end{example}
Como pode ser conferido em \cite{categorical_homotopy}, quando $\mathbf{C}$
é fechada, os adjuntos de $-\otimes Y$, digamos dados por $(-)^{Y}$,
determinam adjuntos $(-)_{*}^{Y}$ em $\mathbf{C}_{*}$ para os correspondentes
$-\wedge Y$. Para cada $X\in\mathbf{C}_{*}$, estes são constituídos
dos \emph{pullbacks} abaixo apresentados.

$$
\xymatrix{\mathrm{Mor_{*}}(X;Y) \ar[d] \ar[r] & \mathrm{*} \ar[d] \\
\mathrm{Mor}(X;Y) \ar[r] & \mathrm{Mor}(*;Y). }
$$

\section{Monoides}

$\quad\;\,$Um \emph{monoide }numa categoria monoidal $\mathbf{C}$,
com produto $\otimes:\mathbf{C}\times\mathbf{C}\rightarrow\mathbf{C}$,
é a generalização do conceito usual de monoide presente na álgebra
clássica. Trata-se, pois, de um objeto $X\in\mathbf{C}$ para o qual
existem morfismos $*:X\otimes X\rightarrow X$ e $e:1\rightarrow X$,
obtidos de tal maneira que os diagramas abaixo se tornam comutativos.
O primeiro deles traduz a ``associatividade'' de $*$, enquanto
que o segundo expressa a existência de um ``elemento neutro'' em
$X$ (compare com os diagramas apresentados no início do primeiro
capítulo).$$
\xymatrix{\ar[d]_-{id \otimes *} X\otimes (X\otimes X) \ar[r] & (X\otimes X)\otimes X \ar[r]^-{*\otimes id} & X\otimes X \ar[d]^{*} & \ar[rd] 1\otimes X \ar[r]^-{e\otimes id} & X\otimes X \ar[d]^{*} &  X \ar[ld] \otimes 1 \ar[l]_-{id \otimes e} \\
X \otimes X \ar[rr]_{*} && X && X }
$$

Tem-se uma categoria $\mathrm{Mon}(\mathbf{C},\otimes)$ formada dos
monoides de $\mathbf{C}$ segundo $\otimes$ (quando não há risco
de confusão quanto ao bifunctor fixado, ele é omitido da notação).
Em tal categoria, um morfismo entre $(X,*,e)$ e $(X',*',e')$ é simplesmente
um morfismo $f:X\rightarrow X'$ que preserva $*$ e $e$. Isto é,
que satisfaz as igualdades $f\circ e=e'$ e $f\circ*=*'\circ(f\otimes f)$.
Um \emph{comonoide} em $\mathbf{C}$ é simplesmente um monoide em
$\mathbf{C}^{op}$. Mais precisamente, estes se tratam dos objetos
da categoria $\mathrm{Mon}(\mathbf{C}^{op},\otimes)^{op}$, denotada
por $\mathrm{Comon}(\mathbf{C};\otimes)$.

Numa categoria monoidal simétrica, diz-se que um monoide é \emph{comutativo}
ou \emph{abeliano} quando sua multiplicação comuta com o \emph{braiding}
$X\otimes X\simeq X\otimes X$. Invertendo setas obtém-se a correspondente
noção de comonoides cocomutativos. Tais entidades definem categorias
$\mathrm{_{\mbox{\ensuremath{c}}}Mon}(\mathbf{C},\otimes)$ e $_{c}\mathrm{Comon}(\mathbf{C};\otimes)$.
\begin{example}
Toda categoria monoidal $\mathbf{C}$, com produto $\otimes$ e objeto
neutro $1$ admite um monoide trivial: trata-se do próprio $1$, dotado
da multiplicação dada pelo isomorfismo $1\otimes1\simeq1$, com $e:1\rightarrow1$
sendo a identidade. Consequentemente, $1$ é também um comonoide em
$\mathbf{C}$: a comultiplicação é a inversa $1\simeq1\otimes1$.
Observamos que, quando $\mathbf{C}$ é simétrica, tal monoide/comonoide
é evidentemente abeliano. Por sua vez, se $\otimes$ provém de produtos
(resp. coprodutos) binários, então $1$ há de ser objeto inicial (resp.
final) em $\mathrm{Mon}(\mathbf{C},\times)$ (resp. $\mathrm{Comon}(\mathbf{C};\otimes)$).

\begin{example}
Na categoria $\times:\mathbf{Set}\times\mathbf{Set}\rightarrow\mathbf{Set}$,
os monoides coincidem com os monoides usuais da álgebra clássica.
Por sua vez, para qualquer que seja o anel comutativo $R$, os monoides
de $\mathbf{Mod}_{R}$ são, relativamente ao produto $\otimes_{R}$,
as álgebras sobre $R$. Afinal, dar um morfismo $X\otimes_{R}X\rightarrow X$
é o mesmo que dar uma aplicação bilinear $X\times X\rightarrow X$.
Assim, em particular, os monoides de $\mathbf{AbGrp}\simeq\mathbf{Mod}_{\mathbb{Z}}$
são os anéis.

\begin{example}
Vimos que a soma conexa introduz uma estrutura monoidal na categoria
das superfícies compactas e orientáveis. Assim, $\mathrm{Iso}(\mathbf{C}^{n})$
é monoide, tendo $[\mathbb{S}^{n}]$ como elemento neutro. Para cada
$g\in\mathbb{N}$, seja $\mathbb{S}_{g}^{1}=\mathbb{S}^{1}\times...\times\mathbb{S}^{1}$
com $\mathbb{S}_{0}^{1}=\mathbb{S}^{1}$. A correspondência $g\mapsto\mathbb{S}_{g}^{1}$
passa ao quociente e define um morfismo entre o monóide aditivo dos
números naturais e $\mathrm{Iso}(\mathbf{C}^{2})$. Tal morfismo é,
em verdade, um isomorfismo. Isto significa que toda superfície compacta
e orientável de $\mathbb{R}^{3}$ é classificada, módulo homeomorfismos,
pelo seu número de buracos. Tem-se um resultado análogo no caso não-orientável,
no qual o espaço projetivo $\mathbb{P}^{1}$ (obtido identificando
pares antípodas no círculo) substitui $\mathbb{S}^{1}$. Veja o capítulo
final de \cite{Hirsch}.

\begin{example}
Numa categoria monoidal $\otimes:\mathbf{C}\times\mathbf{C}\rightarrow\mathbf{C}$
definida por produtos binários, todo objeto $X$ admite uma estrutura
única de comonoide, com comultiplicação dada pelo mapa diagonal $\Delta_{X}:X\rightarrow X\otimes X$.
Por sua vez, já que o objeto neutro $1\in\mathbf{C}$ é terminal,
para cada $X$ existe um único morfismo $!:X\rightarrow1$, o qual
haverá de ser a counidade de $X$. Dualmente, se $\otimes$ provém
de coprodutos binários, então todo $X$ possui única estrutura de
monoide. Sua multiplicação é o mapa $\nabla_{X}:X\otimes X\rightarrow X$,
ao passo que sua unidade é o morfismos $!:1\rightarrow X$, obtido
do fato de $1$ ser inicial.

\begin{example}
Em contrapartida ao exemplo anterior, ainda que $\otimes:\mathbf{C}\times\mathbf{C}\rightarrow\mathbf{C}$
seja definido por coprodutos, pode ser que ali só existam comonoides
triviais. Por exemplo, o único objeto inicial de $\mathbf{Set}$ é
o conjunto vazio, de modo que existe um morfismo $1\rightarrow X$
se, e somente se, $X=\varnothing$. Assim, relativamente à estrutura
monoidal proveniente de coprodutos (diga-se reuniões disjuntas), o
único co-monóide em $\mathbf{Set}$ é $\varnothing$. Situação análoga
se passa com $\mathbf{Top}$.
\end{example}
\end{example}
\end{example}
\end{example}
\end{example}
Um functor $F:\mathbf{C}\rightarrow\mathbf{C}'$ entre categorias
monoidais pode não levar monoides em monoides e nem mesmo comonoides
em comonoides. Quando ele preserva monoides (resp. comonoides), diz-se
que ele é \emph{monoidal} (resp. \emph{comonoidal})\emph{. }Se este
é o caso, são induzidos functores 
\begin{eqnarray*}
F_{*}:\mathrm{Mon}(\mathbf{C},\otimes) & \rightarrow & \mathrm{Mon}(\mathbf{C}',\otimes')\\
F^{*}:\mathrm{CoMon}(\mathbf{C},\otimes) & \rightarrow & \mathrm{CoMon}(\mathbf{C}',\otimes').
\end{eqnarray*}

Observamos que, para $F$ ser monoidal, basta que mapeie produtos
em produtos e também objeto neutro em objeto neutro. Isto é, basta
existirem um morfismo $f:1'\rightarrow F(1)$ e transformações $\phi_{xy}:F(X)\otimes'F(Y)\rightarrow F(X\otimes Y)$
satisfazendo certas condições de coerência, as quais são descritas
em termos da comutatividade de diagramas semelhantes àqueles satisfeitos
por $*$ e $e$. Um functor monoidal para o qual $f$ e cada $\phi_{xy}$
são isomorfismos chama-se \emph{fortemente monoidal.}

Da mesma forma, para $F$ ser comonoidal, é suficiente que existam
$g:1'\rightarrow F(1)$ e também transformações $\varphi_{xy}:F(X\otimes Y)\rightarrow F(X)\otimes'F(Y)$
satisfazendo condições compatibilidade. Quando $g$ e $\varphi_{xy}$
são isomorfismos, fala-se que $F$ é \emph{fortemente comonoidal}.

Tem-se subcategorias $\mathbf{Mnd}$, $\mathrm{Co}\mathbf{Mnd}$,
$\mathrm{S}\mathbf{Mnd}$ e $\mathrm{SCo}\mathbf{Mnd}$ de $\mathbf{Cat}$,
cujos objetos são categorias monoidais, e cujos morfismos são, respectivamente,
functores monoidais, comonoidais, fortemente monoidais e fortemente
comonoidais. 
\begin{example}
Seja $\mathbf{D}\subset\mathbf{C}$ uma subcategoria tal que a inclusão
$\imath:\mathbf{D}\rightarrow\mathbf{C}$ preserva produtos e objeto
terminal. Isto ocorre, por exemplo, quando \textbf{$\mathbf{D}$ }é
livremente gerada por objetos de $\mathbf{C}$ (isto é, quando tal
que a inclusão possui um adjunto à esquerda). Se este é o caso, então,
relativamente às estruturas monoidais definidas por produtos binários
em $\mathbf{C}$ e em $\mathbf{D}$, a inclusão é fortemente monoidal.
Em particular, ela também é fortemente comonoidal, pois os únicos
comonoides existentes em ambas categorias são aqueles definidos pelo
mapa diagonal. Dualmente, se $\imath$ preserva coprodutos e objeto
inicial (por exemplo, quando admite adjunto à direita), então, relativamente
à estrutura monoidal definida por coprodutos binários, é fortemente
comonoidal e fortemente monoidal.

\begin{example}
Diz-se que um functor fortemente monoidal simétrico $F:n\mathbf{Cob}\rightarrow\mathbf{Vec}_{\mathbb{C}}$
define uma \emph{teoria quântica de campos topológica}. Assim, uma
tal teoria é uma regra simétrica, que a cada variedade compacta sem
bordo $X$ associa um espaço vetorial $F(X)$, e que a cada cobordismo
$\Sigma:X\rightarrow Y$ faz corresponder uma transformação linear
$F(\Sigma)$ entre os espaços $F(X)$ e $F(Y)$, de tal maneira que
$F(\varnothing)\simeq\mathbb{C}$, com $X\sqcup Y$ sendo mandada
em $F(X)\otimes F(Y)$. Estes functores são de grande interesse em
Física e também em Matemática.
\end{example}
\end{example}

\subsection*{\uline{Bimonóides}}

$\quad\;\,$Toda categoria monoidal simétrica $\mathbf{C}$ induz
uma estrutura monoidal em $\mathrm{Mon}(\mathbf{C},\otimes)$. Esta
é definida pelo bifunctor 
\[
\otimes_{\mathrm{M}}:\mathrm{Mon}(\mathbf{C},\otimes)\times\mathrm{Mon}(\mathbf{C},\otimes)\rightarrow\mathrm{Mon}(\mathbf{C},\otimes),
\]
que toma dois monoides $(X,*,e)$ e $(X',*',e')$ e devolve o respectivo
monoide $X\otimes X'$, tendo $e\otimes e'$ como unidade e cuja multiplicação
$\#$ é obtida compondo as setas do diagrama abaixo, em que a primeira
delas provém de associatividade de $\otimes$ e de \emph{braidings}.
$$
\xymatrix{(X\otimes X')\otimes(X\otimes X') \ar[r]^{\simeq} & (X\otimes X)\otimes(X'\otimes X') \ar[r]^-{*\otimes *'} & X\otimes X'}
$$

O objeto neutro de $\mathrm{Mon}(\mathbf{C},\otimes)$ é o monoide
formado pelo objeto neutro de $\mathbf{C}$, tendo unidade dada por
$id_{1}$ e multiplicação fornecida pelo isomorfismo $1\otimes1\simeq1$.
Por dualidade, a estrutura monoidal de $\otimes$ fixa outra em $\mathrm{Mon}(\mathbf{C}^{op},\otimes)^{op}$.
Tem-se equivalências
\begin{eqnarray*}
\mathrm{Mon}(\mathrm{Mon}(\mathbf{C},\otimes),\otimes_{\mathrm{M}}) & \simeq & \mathrm{_{\mbox{\ensuremath{c}}}Mon}(\mathbf{C},\otimes)\\
\mathrm{Comon}(\mathrm{Comon}(\mathbf{C},\otimes),\otimes_{\mathrm{M}}) & \simeq & \mathrm{_{\mbox{\ensuremath{c}}}Comon}(\mathbf{C},\otimes)
\end{eqnarray*}
as quais são usualmente chamadas de \emph{princípio de} \emph{Eckmann-Hilton}
(veja o artigo original \cite{eckmann_hilton_argument} ou a secção
1.2.7 de \cite{Aguiar_monoidal_categories}). Por sua vez, dar um
comonoide na categoria dos monoides é o mesmo que fornecer um monoide
na categoria dos comonoides. Isto reflete a existência de isomorfismos

\[
\mathrm{Comon}(\mathrm{Mon}(\mathbf{C},\otimes),\otimes_{\mathrm{M}})\simeq\mathrm{Mon}(\mathrm{Comon}(\mathbf{C},\otimes),\otimes_{\mathrm{M}}).
\]

Os objetos de ambas as categorias (aqui indistinguivelmente denotadas
por $\mathrm{Bimon}(\mathbf{C};\otimes)$), são chamados de \emph{bimonoides}
de $\mathbf{C}$. Assim, um bimonoide numa categoria monoidal $\mathbf{C}$
é um objeto que possui simultaneamente estruturas de monoide e de
comonoide, tais que sua multiplicação e sua unidade são morfismos
de comonoides, ao mesmo tempo que sua comultiplicação e counidade
são morfismos de monoides.
\begin{example}
Se $\mathbf{C}$ é definida por produtos binários, então todo objeto
possui uma estrutura natural de comonoide. Assim, os bimonoides de
$\mathbf{C}$ são seus próprios monoides. Dualmente, se a estrutura
monoidal de $\mathbf{C}$ provém de coprodutos binários, então seus
objetos são sempre monoides. Portanto, seus bimonoides nada mais são
que seus comonoides. Particularmente, se $\mathbf{C}$ têm biprodutos,
então, na estrutura monoidal por eles definida, qualquer objeto admite
uma única estrutura de bimonoide.
\end{example}

\subsection*{\uline{Grupos}}

$\quad\;\,$Um \emph{monoide de Hopf} em $\otimes:\mathbf{C}\times\mathbf{C}\rightarrow\mathbf{C}$
é um bimonoide $X$, o qual admite ``inversos''. Isto se traduz
na existência de um morfismo $inv:X\rightarrow X$, chamado de \emph{antípoda
}ou \emph{inversão}, tal que o diagrama abaixo é comutativo. Nele,
$*$ e $*'$ denotam a multiplicação e a comulitiplicação de $X$,
ao passo que $e$ e $e'$ representam a sua unidade e a sua counidade
(compare, mais uma vez, com os diagramas da secção inicial do primeiro
capítulo). $$
\xymatrix{\ar[d]_{*'} X \ar[r]^{e'} & 1 \ar[r]^{e} & X \\
X\otimes X \ar[rr]_{id\otimes inv} && X\otimes X \ar[u]_{*} }
$$

Tem-se subcategoria cheia $\mathrm{Hopf}(\mathbf{C};\otimes)\subset\mathrm{Bimon}(\mathbf{C};\otimes)$
dos monoides de Hopf.
\begin{example}
Os monoides de Hopf da categoria $\mathbf{Mod}_{R}$, com estrutura
monoidal fixada pelo produto tensorial, são usualmente chamados de
\emph{álgebras de Hopf }sobre $R$. Exemplos são os grupos de homologia
singular de um $H$-espaço e a álgebra de Steenrod. Veja, por exemplo,
\cite{whitehead_homotopy,Spanier}.
\end{example}
Observamos que, se a estrutura monoidal de $\mathbf{C}$ provém de
produtos binários, então dar um monoide de Hopf em $\mathbf{C}$ é
o mesmo que dar um monoide $X$ dotado de uma inversão (afinal, sob
tal hipótese, todo monoide é bimonoide). Em outras palavras, em tal
situação, um monoide de Hopf é simplesmente um objeto no qual se têm
uma multiplicação associativa e possuidora de unidade, ao mesmo tempo
que se sabe inverter. Por este motivo, costuma-se dizer que $X$ é
\emph{grupo} em $\mathbf{C}$.

Dualmente, quando $\otimes:\mathbf{C}\times\mathbf{C}\rightarrow\mathbf{C}$
é definido através de coprodutos, então os monoides de Hopf em $\mathbf{C}$
nada mais são que os comonoides adicionados de um mapa antípoda. Isto
nos leva a chamá-los de \emph{cogrupos} em $\mathbf{C}$. 
\begin{example}
Os grupos de $\mathbf{Set}$, com $\times$, são os grupos usuais
da álgebra clássica. Por sua vez, o único cogrupo de tal categoria
é o conjunto vazio. Afinal, este é seu único comonoide.

\begin{example}
Um grupo em $\mathbf{Top}$ é um espaço topológico no qual está definida
uma aplicação contínua $*:X\times X\rightarrow X$ que o torna um
grupo, de tal modo que a inversão $inv(x)=x^{-1}$ também é uma função
contínua. Tais entidades são conhecidas como \emph{grupos topológicos}.
Este é o caso das esferas $\mathbb{S}^{1}$ e $\mathbb{S}^{3}$: basta
considerá-las enquanto subconjuntos de $\mathbb{C}$ e de $\mathbb{H}$,
dotando-as das respectivas multiplicações de números complexos e de
números quaterniônicos.

\begin{example}
Se $X$ é grupo topológico com multiplicação $*:X\times X\rightarrow X$
e unidade $1$, então, relativamente ao produto cartesiano pontuado,
a mesma multiplicação faz de $(X,1)$ um grupo em $\mathbf{Top}_{*}$.
Por sua vez, se $(X,x_{o})$ é um grupo em $\mathbf{Top}_{*}$ com
produto $*$, então este se torna um cogrupo em $\mathbf{Top}_{*}$
quando dotado da regra $f:(X,x_{o})\rightarrow X\vee X$, tal que
$f(x)=[x*x]$.

\begin{example}
Os grupos de $\mathbf{Diff}$ (onde estamos nos resumindo às entidades
sem bordo, para que o produto entre eles esteja bem definido) são
simplesmente variedades $X$, usualmente chamadas de \emph{grupos
de Lie}, as quais se encontram dotadas de uma estrutura adicional
de grupo, tal que a multiplicação e a correspondente inversão são
ambas diferenciáveis. Escreve-se $\mathbf{GLie}$ ao invés de $\mathrm{Hopf}(\mathbf{Diff};\times)$
para denotar a categoria dos grupos de Lie. Observamos que um morfismo
em tal categoria há de ser um mapa que preserva tanto a estrutura
de grupo quando a estrutura de variedades subjacente. Assim, estes
são os \emph{homomorfismos diferenciáveis}. 
\end{example}
\end{example}
\end{example}
\end{example}
Os grupos de uma categoria monoidal admitem a seguinte caracterização
(os cogrupos possuem caracterização estritamente dual):
\begin{prop}
Um objeto $X$ é grupo em $\mathbf{C}$ se, e só se, $h_{X}$ é grupo
em $\mathrm{Func}(\mathbf{C}^{op};\mathbf{Set})$.
\end{prop}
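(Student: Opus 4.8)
The plan is to reduce everything to the Yoneda embedding $h_{-}:\mathbf{C}\to\mathrm{Func}(\mathbf{C}^{op};\mathbf{Set})$, which by the representability discussion of the previous section is fully faithful: there are natural bijections $\mathrm{Nat}(h_{X};h_{Y})\simeq\mathrm{Mor}_{\mathbf{C}}(X;Y)$. Since the monoidal structure on $\mathbf{C}$ that defines ``group'' comes from binary products, and since limits in a functor category with complete target are computed pointwise (the evaluation functors create them), the presheaf category is itself cartesian and a group object there is exactly a presheaf of groups. The first step is therefore to record that $h_{-}$ is strongly monoidal for the two cartesian structures: the universal property of the product gives, for every $Z$, a bijection $\mathrm{Mor}(Z;X\times X)\simeq\mathrm{Mor}(Z;X)\times\mathrm{Mor}(Z;X)$ natural in $Z$, whence $h_{X\times X}\simeq h_{X}\times h_{X}$ (the right-hand product being pointwise); and because $1$ is terminal, $h_{1}(Z)=\mathrm{Mor}(Z;1)$ is a singleton for all $Z$, so $h_{1}$ is the terminal presheaf. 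These isomorphisms are natural in $X$, so $h_{-}$ carries products of morphisms to products of morphisms.

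For the forward implication, suppose $X$ is a group, with structure morphisms $*:X\otimes X\to X$, $e:1\to X$ and $inv:X\to X$ fitting the defining diagrams of a Hopf monoid. Applying $h_{-}$ and inserting the isomorphisms $h_{X\times X}\simeq h_{X}\times h_{X}$ and $h_{1}\simeq\ast$ just established, the images $h_{*}$, $h_{e}$, $h_{inv}$ become a multiplication, unit and antipode on $h_{X}$; functoriality turns each commuting diagram for $X$ into the corresponding commuting diagram for $h_{X}$, so $h_{X}$ is a Hopf monoid, i.e.\ a group in $\mathrm{Func}(\mathbf{C}^{op};\mathbf{Set})$.

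For the converse, suppose $h_{X}$ carries a group structure $(m,u,i)$. Composing $m$ with the isomorphism $h_{X\times X}\simeq h_{X}\times h_{X}$ yields a natural transformation $h_{X\times X}\to h_{X}$, which by full faithfulness is $h_{f}$ for a unique $f:X\times X\to X$; likewise $u$ and $i$ descend to unique $e:1\to X$ and $inv:X\to X$. It then remains to transport the group axioms back: each axiom is an equality of natural transformations built from $m,u,i$ using only products and the terminal object, so its two sides are the $h_{-}$-images of two corresponding composites in $\mathbf{C}$; faithfulness of $h_{-}$ forces those composites to coincide, recovering the three defining diagrams for $X$.

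The delicate point, and the step I expect to require the most care, is the bookkeeping in the converse: one must verify that every instance of the associativity isomorphism, the diagonal and the projections used to phrase the group laws in the presheaf category is itself the $h_{-}$-image of the analogous datum in $\mathbf{C}$, so that the equalities genuinely reflect along $h_{-}$. This is exactly where strong monoidality, namely the compatible and natural preservation of both $\times$ and the terminal object, is indispensable; granting it, the forward and backward passages are mutually inverse and establish the claimed equivalence. The cogroup case follows by the dual argument applied to $\mathbf{C}^{op}$.
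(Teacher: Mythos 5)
Your proposal is correct and follows essentially the same route as the paper's proof: both rest on the fact that $h_{X}$ preserves limits (so $h_{X\times X}\simeq h_{X}\times h_{X}$, with $h_{1}$ terminal) to transport the structure morphisms forward, and on the Yoneda lemma (full faithfulness of $h_{-}$) to descend a group structure on $h_{X}$ to unique morphisms in $\mathbf{C}$ and reflect the axioms. You merely spell out what the paper compresses into ``mostra-se que'' — namely the careful bookkeeping that every structural isomorphism used in the presheaf-level axioms is itself an $h_{-}$-image, so faithfulness applies — which is a welcome but not essentially different elaboration.
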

\begin{proof}
Sendo $\mathbf{Set}$ completa, $\mathrm{Func}(\mathbf{C}^{op};\mathbf{Set})$
também o é e, portanto, possui produtos finitos. Por outro lado, como
$h_{X}$ preserva limites, vale $h_{X}\times h_{X}\simeq h_{X\times X}$.
Desta forma, todo produto $*:X\otimes X\rightarrow X$ fixa multiplicação
em $h_{X}$. Trata-se, pois, da composição entre a identificação $h_{X}\times h_{X}\simeq h_{X\times X}$
e a respectiva transformação $h_{-}(*):h_{X\times X}\rightarrow h_{X}$.
Ela faz de $h_{X}$ um grupo. Reciprocamente, pelo lema de Yoneda,
para cada transformação natural $*':h_{X\times X}\rightarrow h_{X}$
existe um único morfismo $*:X\times X\rightarrow X$. Mostra-se que,
se $*'$ torna $h_{X}$ um grupo, então $*$ faz o mesmo com $X$. 
\end{proof}
Um \emph{functor de Hopf} é aquele que preserva monoides de Hopf.\emph{
}Cada um deles induz um novo functor entre $\mathrm{Hopf}(\mathbf{C};\otimes)$
e $\mathrm{Hopf}(\mathbf{C}';\otimes')$. Existe uma subcategoria
não-cheia $\mathbf{Hopf}\subset\mathbf{Cat}$ tendo categorias monoidais
simétricas como objetos e functores de Hopf como morfismos. 

Mesmo que um functor seja monoidal e comonoidal, ele pode não ser
de Hopf. No entanto, os functores que são ao mesmo tempo (e de maneira
compatível) fortemente monoidais e fortemente comonoidais cumprem
com tal condição. Veja a proposição 3.60 de \cite{Aguiar_monoidal_categories}. 
\begin{example}
Relativamente às estruturas definidas por produtos binários, a inclusão
é um functor Hopf, desde que preserve produtos. Resultado dual é válido
para coprodutos. 
\end{example}

\section{Ações}

$\quad\;\,$Na álgebra clássica, além de operar num dado conjunto
(ato ao qual corresponde o conceito geral de monoide anteriormente
introduzido), sabe-se fazer uma estrutura algébrica agir noutra, definindo
uma terceira. Por exemplo, módulos nada mais são que anéis agindo
em grupos abelianos. Ora, anéis são monoides na categoria monoidal
dos grupos abelianos. Assim, um módulo é obtido fazendo um monoide
de $\mathbf{AbGrp}$ agir sob um objeto desta mesma categoria. 

Esta noção se estende, de maneira natural, às demais categorias monoidais.
Com efeito, numa categoria monoidal com produto $\otimes$, diz-se
que um monoide $X\in\mathbf{C}$ (com multiplicação $*:X\otimes X\rightarrow X$
e elemento neutro $e:1\rightarrow X$) \emph{age }em $Y\in\mathbf{C}$
quando existe $\alpha:X\otimes Y\rightarrow Y$ que deixa comutativo
o diagrama abaixo. Isto significa que $\alpha$ é associativo e que
preserva o elemento neutro (mantenha a comparação entre os diagramas
deste capítulo e aqueles apresentados na secção inicial do capítulo
um). $$
\xymatrix{\ar[d]_-{id \otimes *} X\otimes (X\otimes Y) \ar[r] & (X\otimes X)\otimes Y \ar[r]^-{*\otimes id} & X\otimes Y \ar[d]^{\alpha} & 1\otimes Y \ar[l]_{e\otimes id} \ar[dl]  \\
X \otimes Y \ar[rr]_{\alpha} && Y }
$$

Um objeto $Y$ no qual age um monoide $X$ chama-se \emph{módulo em
$\mathbf{C}$ sobre $X$}. Tem-se uma categoria $\mathrm{Mod}_{X}(\mathbf{C};\otimes)$
formada de todos os módulos sobre $X$, cujos morfismos são os morfismos
de $\mathbf{C}$ que preservam ações. No que segue, o conjunto de
todas as ações de um monoide $X$ num objeto $Y$ será denotado por
$\mathrm{Act}_{\mathbf{C}}(X;Y)$.
\begin{example}
Em $\mathbf{AbGrp}$, com produto $\otimes$, os monoides são os anéis.
Por sua vez, fixado um anel $R$, os módulos sobre ele coincidem com
os $R$-módulos da álgebra clássica, de tal modo que as categorias
$\mathbf{Mod}_{R}$ e $\mathrm{Mod}_{R}(\mathbf{\mathbf{AbGrp}};\otimes)$
são isomorfas.
\end{example}
Quando a estrutura monoidal de $\mathbf{C}$ advém de produtos binários,
faz sentido falar de morfismos com parâmetros. Assim, pode-se pensar
numa ação $\alpha:X\times Y\rightarrow Y$ como sendo morfismos $\alpha_{X}:Y\rightarrow Y$,
parametrizados em $X$, os quais tornam comutativos os diagramas abaixo$$
\xymatrix{X\times Y \ar[rr]^{\alpha _{X\times Y}} && X\times Y \ar[d]^{id} \ar[r]^{\pi _2} & Y \ar[ddd]^{\alpha _X} \\
X\times (X\times Y) \ar[u]_{\pi _2} \ar[d]_{id \times \alpha} \ar[r]^{\simeq } & (X\times X) \times Y \ar[r]^-{*\times id} & X\times Y \ar[d]^{\alpha} && 1\times Y \ar[rd] \ar[r]^{e\times id} & X\times Y \ar[d]^{\alpha} \ar[r]^{\pi_2} & Y \ar[ld]^{\alpha _X} \\
X\times Y \ar[d]_{\pi _2} \ar[rr]^{\alpha} && Y \ar[d]^{id} &&& Y \\
Y \ar[rr]_{\alpha _X} && Y \ar[r]_{id} & Y}
$$

Particularmente, se $X$ é um grupo, então os diagramas anteriores
podem ser combinados com aqueles que caracterizam os monoides de Hopf
para obter o seguinte, em que $\beta$ é a composição de $\pi_{2}\circ\alpha_{X\times Y}\circ\pi_{2}$
com a inversa do isomorfismo $X\times(X\times Y)\simeq(X\times X)\times Y$:$$
\xymatrix{1\times X \ar[rr] && Y \ar[r]^{id} & Y \\
X\times Y \ar[d]_{\Delta \times id} \ar[u]^-{e' \times id} \ar[r]^-{e' \times id} & 1\times Y \ar[r] & X\times Y \ar[r]^{\pi _2} \ar[u]^{\alpha} & Y \ar[u]_{\alpha _X} \\
(X\times X) \times Y \ar[rr]_-{id\times inv \times id} && (X\times X)\times Y \ar[u]^{* \times id} \ar[r]_-{\beta} & Y \ar[u]_{id} }
$$
\begin{example}
Em $\mathbf{Set}$, dotado da estrutura advinda de produtos binários,
seus monóides são usuais e, portanto, a ação de um destes, digamos
$X$ com multiplicação $*$ e unidade $e$, se traduz em $\alpha:X\times Y\rightarrow Y$
tal que $\alpha(x*x',y)=\alpha(x,\alpha(x'\cdot y))$ e $\alpha(e,y)=y$.
Se consideramos ações como morfismos parametrizados, cada $\alpha$
induz uma família de funções $\alpha_{x}:Y\rightarrow Y$, definidas
por $\alpha_{x}(y)=\alpha(x,y)$. Os diagramas anteriores significam
$\alpha_{x*x'}=\alpha_{x}\circ\alpha{}_{x'}$ e $\alpha_{e}=id$.
Particularmente, se $X$ é um grupo, com inversão $inv(x)=x^{-1}$,
então o último diagrama faz de cada $\alpha_{x}$ inversível, com
inversa $\alpha_{x^{-1}}$. Portanto, as ações de um grupo $X$ de
$(\mathbf{Set},\times)$ num certo objeto $Y\in\mathbf{Set}$ induzem
homomorfismos de $X$ em $\mathrm{Aut}(Y)$ e, consequentemente, há
uma aplicação injetiva $f:\mathrm{Act}_{\mathbf{Set}}(X;Y)\rightarrow\mathrm{Mor}_{\mathbf{Grp}}(X;\mathrm{Aut}(Y))$.

\begin{example}
Se $\mathbf{C}\subset\mathbf{Set}$ é qualquer categoria tal que a
inclusão preserva produtos (o que ocorre, por exemplo, com $\mathbf{Top}$,
$\mathbf{Mfd}$, $\mathbf{Diff}$ e com as categorias algébricas usuais),
então, para qualquer grupo $X$ em $\mathbf{C}$ e qualquer objeto
$Y\in\mathbf{C}$, tem-se $\mathrm{Act}_{\mathbf{C}}(X;Y)\subset\mathrm{Act}_{\mathbf{Set}}(X;Y)$,
onde identificamos a imagem pela inclusão com os próprios objetos.
Assim, bastando restringir a injeção $f$, a partir de qualquer ação
de $X$ em $Y$ obtém-se um homomorfismo $X\rightarrow\mathrm{Aut}(Y)$.
Em especial, se a inclusão $\imath:\mathbf{C}\rightarrow\mathbf{Set}$
possui adjunto $\jmath$ à direita, então tal restrição de $f$ cai
em $\mathrm{Mor}_{\mathrm{Hpf(\mathbf{C})}}(X;\jmath(\mathrm{Aut}(Y)))$.
Neste caso, homomorfismo induzido pela ação é entre grupos em $\mathbf{C}$
(e não somente entre grupos usuais).

\begin{example}
Seja $v$ um campo de vetores diferenciável numa variedade compacta.
Os resultados de existência, unicidade, dependência diferenciável
nas condições iniciais e extensão de soluções, usualmente válidos
em equações diferenciais ordinárias, são todos locais e, portanto,
se estendem ao contexto das variedades. Neste caso, por existência
e extensão de soluções, está definida uma aplicação $\varphi:\mathbb{R}\times X\rightarrow X$,
correspondendo ao \emph{fluxo de $v$}, tal que, para cada $x$, o
caminho $t\mapsto\varphi(t,x)$ é o único que passa em tal ponto no
instante zero, e cujo vetor velocidade $\partial\varphi(t,x)/\partial t$
é exatamente $v(\varphi(t,x))$. A diferenciabilidade com respeito
às condições iniciais faz de $\varphi$ diferenciável. Particularmente,
a unicidade de soluções garante que $\varphi(t+s,x)=\varphi(t,\varphi(s,x))$
e que $\varphi(0,x)=x$. Portanto, se vemos $\mathbb{R}$ como grupo
de Lie com a soma, então $\varphi$ é ação de tal grupo em $X$ e,
pelo exemplo anterior, a aplicação $t\mapsto\varphi_{t}$ é homomorfismo
$\mathbb{R}\rightarrow\mathrm{Diff}(X)$.
\end{example}
\end{example}
\end{example}

\subsection*{\uline{Órbitas}}

$\quad\;\,$Quando a estrutura monoidal de $\mathbf{C}$ provém de
produtos e a categoria em questão admite \emph{pushouts}, então faz
sentido falar da \emph{órbita} de uma ação. Com efeito, se um monoide
$X\in\mathbf{C}$ age num certo $Y\in\mathbf{C}$, em algumas situações
quer-se estudar $Y$ a menos das alterações produzidas pela ação (isto
acontece, por exemplo, quando $X$ descreve as simetrias de $Y$).
A órbita da ação $\alpha:X\times Y\rightarrow Y$ é e um novo espaço
$Y/X$ que contempla $Y$ módulo alterações produzidas por $\alpha$.
De maneira mais precisa, é o \emph{pushout} abaixo apresentado.$$
\xymatrix{Y/X & Y \ar@{-->}[l] \\
Y \ar@{-->}[u] & X\times Y \ar[l]^{\pi _2} \ar[u]_{\alpha} }
$$
\begin{example}
Em categorias cocompletas, \emph{pushouts} podem ser calculados em
termos de coprodutos e coequalizadores. Por exemplo, em $\mathbf{Set}$,
$\mathbf{Top}$ e nas categorias algébricas usuais, a órbita $Y/X$
coincide com o quociente de $Y$ pela relação cuja classe de um dado
$y$ é o conjunto $\alpha(X,y)$, formados de todo $\alpha(x,y)$,
com $x\in X$.

\begin{example}
Como $\mathbf{Mfd}$ e $\mathbf{Diff}$ não possuem \emph{pushouts},
ações de monoides e grupos em tais categoriais podem não admitir órbitas.
Por outro lado, se utilizamos de $\mathrm{Act}_{\mathbf{Diff}}(X;Y)\subset\mathrm{Act}_{\mathbf{Set}}(X;Y)$
e consideramos tais ações em $\mathbf{Set}$, então ali faz sentido
falar de suas órbitas. Particularmente, para cada campo de vetores
$v$, podemos tomar a órbita de seu fluxo $\varphi:\mathbb{R}\times X\rightarrow X$.
Ela é chamada de \emph{retrato de fase} de $v$. A classe de $x\in X$
é a sua \emph{trajetória}. Esta se diz \emph{singular} quando se resume
ao próprio ponto $x$, caso em que ele é dito ser uma \emph{singularidade}
de $v$. Evidentemente, $x$ é singularidade se, e só se, $v(x)=0$.
Se isto não acontece, então, por continuidade do fluxo, $v(\varphi(x,t))\neq0$
para todo $t$ e, portanto, o caminho $t\mapsto\varphi(t,x)$ é uma
imersão. Daí, pelo teorema de classificação das variedades unidimensionais,
a trajetória de $x$ é homeomorfa à reta ou ao círculo. Isto explicita
a vantagem de se considerar o retrato de fase de um campo: ele dá
uma decomposição da variedade $X$ em subvariedades unidimensionais
(trajetórias de pontos regulares) e conjuntos discretos (singularidades).
\end{example}
\end{example}

\chapter{Homotopia Abstrata}

$\quad\;\,$Os invariantes presentes na topologia algébrica podem
ser divididos essencialmente em duas classes: as teorias de homotopia
e as teorias de homologia. No presente capítulo, estudamos aspectos
abstratos e formais concernentes às teorias de homotopia. Grosso modo,
a ideia é substituir categorias cuja classificação por isomorfismos
é demasiadamente complicada por outras categorias (ditas \emph{homotópicas}
ou \emph{derivadas}) cuja classificação é mais relaxada e que, em
certo sentido, descrevam aproximadamente a categoria original. 

Iniciamos o capítulo definindo o que vem a ser uma categoria própria
para o estudo da homotopia. Estas nada mais são que categorias completas
e cocompletas nas quais se fixou uma subcategoria conveniente. Exemplos
de tais categorias englobam aquelas nas quais se tem uma noção estrita
de ``morfismos entre morfismos'', ou mesmo àquelas nas quais se
tem um conceito natural de cilindro ou de espaço de caminhos.

Na segunda secção, discutimos o procedimento de passagem de uma categoria
própria para o estudo da homotopia para a sua correspondente categoria
homotópica. A ideia é inverter formalmente os morfismos (denominados
\emph{equivalências fracas}) da subcategoria fixada. Tal passagem
possui um preço: ao fazê-la, pode-se perder completude e cocompletude.
Isto nos leva à terceira secção, onde definimos uma nova classe de
limites, chamados de \emph{limites homotópicos}, os quais são naturais
no presente contexto de trabalho e que se caracterizam por serem as
primeiras aproximações dos limites ordinários.

Durante a escrita, fomos fortemente influenciados pelas referências
\cite{categorical_homotopy,Hirschhorn_model_categories,MACLANE_categories,model_categories_KAN-1}.

\section{Estrutura}

$\quad\;\,$Numa categoria $\mathbf{C}$, diz-se que os morfismos
de uma subcategoria $\mathrm{Weak}\subset\mathbf{C}$ são \emph{equivalências
fracas }quando as seguintes condições são satisfeitas:
\begin{enumerate}
\item todo isomorfismo de $\mathbf{C}$ é um morfismo de $\mathrm{Weak}$;
\item dado um morfismo $f$, se a composição entre ele e alguma equivalência
fraca está em $\mathrm{\mathrm{Weak}}$, então o próprio $f$ é uma
equivalência fraca. Esta propriedade é chamada de \emph{2-de-3}.
\end{enumerate}
$\quad\;\,$Quando se quer evidenciar que um morfismo de $\mathbf{C}$
está em $\mathrm{Weak}$ (ou seja, que é equivalência fraca), utiliza-se
de ``$\rightsquigarrow$'' ao invés de ``$\rightarrow$'' para
denotá-lo. Se existe uma equivalência fraca $X\rightsquigarrow Y$,
fala-se que $X$ é uma \emph{resolução }de $Y$. Por sua vez, $h:X\rightarrow Y'$
é dito ser uma \emph{resolução }do morfismo $f:Y\rightarrow Y'$ quando
é a composição de $f$ por uma resolução de $X$. Isto é, quando existe
uma equivalência fraca $g:X\rightsquigarrow Y$ tal que $h=g\circ f$.

Tem-se particular interesse nas categorias com equivalências equivalências
fracas que sejam completas e cocompletas. Elas são designadas \emph{próprias
para o estudo da homotopia}.
\begin{example}
Toda categoria completa e cocompleta pode ser feita própria para homotopia
de maneira trivial: basta considerar como equivalências fracas os
isomorfismos de $\mathbf{C}$. Evidentemente, tem-se especial interesse
em categorias que se tornam próprias para homotopia de maneira não-trivial.

\begin{example}
A escolha de uma noção de equivalência fraca numa categoria $\mathbf{C}$
determina outra em sua pontuação $\mathbf{C}_{*}$: é só tomar as
equivalências fracas de $\mathbf{C}$ que preservam o ponto base dos
espaços pontuados nos quais estão definidas. 

\begin{example}
Para qualquer categoria pequena $\mathbf{D}$, se $\mathbf{C}$ é
própria para o estudo da homotopia, então a respectiva $\mathrm{Func}(\mathbf{D};\mathbf{C})$
também o é. Suas equivalências fracas são simplesmente as transformações
naturais $\xi$ tais que, para todo $X\in\mathbf{D}$, o morfismo
$\xi(X)$ é uma equivalência fraca em $\mathbf{C}$.
\end{example}
\end{example}
\end{example}
Diz-se que um functor $F:\mathbf{C}\rightarrow\mathbf{D}$ é \emph{homotópico}
quando ele mapeia equivalências fracas em equivalências fracas. Tem-se
uma subcategoria $\mathrm{W}\mathbf{Cat}\subset\mathbf{Cat}$ formada
por todas as categorias próprias para o estudo da homotopia e tendo
functores homotópicos como morfismos.

\subsection*{\uline{\mbox{$2$}-Categorias}}

$\quad\;\,$Seja $\mathbf{Cat}$ a categoria tendo categorias como
objetos e functores como morfismos. No que segue, diremos que dois
functores são \emph{equivalentes} quando existir ao menos uma transformação
natural entre eles. Relembramos que, para quaisquer objetos $\mathbf{C},\mathbf{D}\in\mathbf{Cat}$,
isto dota o respectivo $\mathrm{Mor}_{\mathbf{Cat}}(\mathbf{C};\mathbf{D})=\mathrm{Func}(\mathbf{C};\mathbf{D})$
de uma relação de equivalência. A reflexividade é evidente. Por sua
vez, sendo o conjunto do morfismos $\mathrm{Func}(\mathbf{C};\mathbf{D})$
uma categoria (o que se deve ao fato de sabermos compor verticalmente
transformações naturais), para toda transformação $\xi:F\rightarrow F'$,
a respectiva $\xi^{op}$ é morfismo de $\mathrm{Func}(\mathbf{C};\mathbf{D})^{op}$
e, portanto, transformação de $F'$ em $F$, garantindo-nos a simetria.
A transitividade é obtida tomando a composição horizontal de transformações
naturais.

Desta forma, está bem definida uma categoria $\mathscr{H}\mathbf{Cat}$
(já apresentada na segunda secção do primeiro capítulo), tendo categorias
como objetos e as classes de functores ligados por transformações
naturais como morfismos. Com isto em mente, vê-se que $\mathbf{Cat}$
serve ao estudo da homotopia: basta considerar como equivalência fraca
qualquer functor cuja classe em $\mathscr{H}\mathbf{Cat}$ é um isomorfismo.

Observamos que, na construção anterior, para definir $\mathscr{H}\mathbf{Cat}$
e, consequentemente, a noção de equivalência fraca, utilizou-se apenas
do conceito de ``morfismos entre morfismos'' proporcionados pelas
transformações naturais, os quais podem ser compostos de duas maneiras
compatíveis, formando, para cada uma delas, uma devida categoria. 

Quando numa categoria se conta com ``morfismos entre morfismos''
(chamados de \emph{$2$-morfismos} ou de \emph{homotopias}) satisfazendo
tais condições, diz que ela é uma $2$-\emph{categoria} \emph{estrita}.
De maneira mais precisa, uma $2$-categoria estrita é uma categoria
$\mathbf{C}$ tal que cada $\mathrm{Mor}_{\mathbf{C}}(X;Y)$ também
é uma categoria (com respectivas composições correspondendo às composições
verticais), e para a qual existem bifunctores 
\[
\circ:\mathrm{Mor}_{\mathbf{C}}(X;Y)\times\mathrm{Mor}_{\mathbf{C}}(Y;Z)\rightarrow\mathrm{Mor}_{\mathbf{C}}(X;Z),
\]
representando as composições horizontais, que cumprem com associatividade
e que preservam identidades. 

Posto isto, \emph{qualquer $2$-categoria estrita $\mathbf{C}$ (particularmente
$\mathbf{Cat}$) que seja completa e cocompleta é própria para estudar
homotopia}: inicia-se identificando os morfismos que estão ligados
por $2$-morfismos, depois constrói-se $\mathscr{H}\mathbf{C}$ e,
finalmente, toma-se como equivalências fracas os morfismos cujas classes
em $\mathscr{H}\mathbf{C}$ são isomorfismos. 

Um $2$-morfismo $h$ é usualmente chamado de \emph{homotopia}. Quando
há\emph{ }homotopia $h$\emph{ }entre $f$ e $g$, escreve-se $h:f\Rightarrow g$
ou $f\simeq g$ (quando não há relevância em $h$), e fala-se que
tais morfismos são \emph{homotópicos.} Sua representação diagramática,
assim como as de suas composições, imitam aquelas utilizada para as
transformações naturais, como pode ser visto abaixo. Assim, pode-se
dizer que o passo fundamental na construção de $\mathscr{H}\mathbf{C}$
(e, portanto, no fato de que $2$-categorias servem ao estudo da homotopia)
é precisamente o conceito de \emph{homotopia} \emph{entre morfismos}.$$
\xymatrix{& \ar@{=>}[d]^{h} &  \\
X \ar@/^{1.4cm}/[rr]^{f} \ar@/_{1.4cm}/[rr]_{f''}  \ar[rr] & \ar@{=>}[d]^{h'} & Y & X \ar@/_{0.5cm}/[rr]_{g'} \ar@/^{0.5cm}/[rr]^{f'} & \;\; \Big{\Downarrow} \text{\footnotesize{$h '$}} & Y \ar@/_{0.5cm}/[rr]_{g} \ar@/^{0.5cm}/[rr]^{f} & \;\; \Big\Downarrow \text{\footnotesize{$h$}}  & Z \\
& &} 
$$

Um functor $F:\mathbf{C}\rightarrow\mathbf{D}$ entre $2$-categorias
é dito ser um \emph{$2$-functor }quando preserva não só os objetos
e os morfismos, mas também os $2$-morfismos. Particularmente, como
estes preservam homotopias, eles passam ao quociente e definem um
novo functor entre $\mathscr{H}\mathbf{C}$ e $\mathscr{H}\mathbf{D}$.
Em suma, a regra $\mathscr{H}:\mathrm{S}\mathbf{Cat}_{2}\rightarrow\mathrm{W}\mathbf{Cat}$,
onde $\mathrm{S}\mathbf{Cat}_{2}$ é formada de $2$-categorias e
$2$-functores, tem a característica functorial e, na verdade, define
um mergulho. 

Não se deve esperar que o functor $\mathscr{H}$ possua uma inversa
fraca. Isto é, não se deve esperar que qualquer categoria com equivalências
fracas seja uma $2$-categoria. Com efeito, para construir $\mathscr{H}$
viu-se necessário um classe de informação de ``nível categórico''
superior àquela presente nos elementos de $\mathrm{W}\mathbf{Cat}$:
tratam-se dos $2$-morfismos, das homotopias. Em contrapartida, no
próximo capítulo mostraremos haver uma subcategoria intermediária
\[
\mathrm{S}\mathbf{Cat}_{2}\subset\mathbf{Cat}_{\infty}^{1}\subset\mathrm{W}\mathbf{Cat},
\]
formada das chamadas \emph{$(\infty,1)$-categorias}, as quais também
possuem a noção de homotopia e que modelam as categorias com equivalências
fracas. Isto nos dará o slogan: \emph{as categorias próprias para
o estudo da homotopia são, precisamente, aquelas nas quais se têm
uma noção coerente de homotopia entre seus morfismos}.\emph{ }

Finalizamos esta subsecção apresentando alguns exemplos de $2$-categorias.
\begin{example}
Toda subcategoria $\mathbf{D}$ de uma $2$-categoria $\mathbf{C}$
também é uma $2$-categoria, cujos $2$-morfismos são os próprios
de $\mathbf{C}$. Mais precisamente, um $2$-morfismos $h:f\Rightarrow g$
é simplesmente um $2$-morfismos $h$ de $\mathbf{C}$ entre $\imath(f)$
e $\imath(g)$. Assim, outros exemplos de $2$-categorias incluem
as subcategorias de $\mathbf{Cat}$, com $2$-morfismos dados pelas
transformações naturais. Aqui se enquadram, por exemplo, $\mathbf{Mnd}$,
$\mathrm{Co}\mathbf{Mnd}$ e seus derivados $\mathrm{S}\mathbf{Mnd}$,
$\mathrm{SCo}\mathbf{Mnd}$ e $\mathbf{Hopf}$, bem como $\mathrm{W}\mathbf{Cat}$.

\begin{example}
Mesclando qualquer categoria $\mathbf{C}$ com $\mathscr{B}(\mathbf{C})$,
encontra-se uma $2$-categoria: seus objetos e seus $1$-morfismos
são os próprios objetos e morfismos de $\mathbf{C}$, ao passo que
os $2$-morfismos são os morfismos de $\mathscr{B}(\mathbf{C})$.
Em outras palavras, um $2$-morfismo entre $f,g:X\rightarrow Y$ é
formado por um par $\mathfrak{h}=(h,h')$ de morfismos em $\mathbf{C}$,
tal que $h'\circ f=g\circ h$. Cada conjunto $\mathrm{Mor}_{\mathbf{C}}(X;Y)$
pode ser feito uma categoria se consideramos a composição horizontal
de $2$-morfismos definida ``componente a componente'' $(h,h')\bullet(k,k')=(h\circ k,h'\circ k')$.
Por sua vez, relativamente a tais estruturas de categoria, tem-se
bifunctores 
\[
\circ:\mathrm{Mor}_{\mathbf{C}}(X;Y)\times\mathrm{Mor}_{\mathbf{C}}(Y;Z)\rightarrow\mathrm{Mor}_{\mathbf{C}}(X;Z),
\]
definidos por $(h,h')\circ(k,k')=(h,k')$, os quais representam a
composição vertical de $2$-morfismos. Estes são claramente associativos
e preservam identidades.

\begin{example}
A categoria $n\mathbf{Cob}$ tem variedades de dimensão $n-1$ como
objetos e variedades $n$-dimensionais como morfismos. Por sua vez,
$(n+1)\mathbf{Cob}$ é formada de $n$-variedades ligadas por entidades
de dimensão $n+1$. Isto nos leva a considerar, para cada $n$ fixo,
uma possível $2$-categoria $n\mathbf{Cob}_{2}$, mantendo $n\mathbf{Cob}$
e considerando como $2$-morfismos os morfismos de $(n+1)\mathbf{Cob}$.
Assim, seus objetos seriam variedades sem bordo, seus morfismos seriam
cobordismos, enquanto que os $2$-morfismos seriam cobordismos em
dimensão superior. No entanto, além de problemas envolvendo a diferenciabilidade
do bordo, haveriam problemas relacionados à associatividade da colagem.
Este último fato poderia ser sanado enfraquecendo as condições exigidas
sobre a composição vertical de uma $2$-categoria. Ao fazer isto,
chegaríamos ao conceito de $2$-\emph{categoria fraca}, o qual será
apresentado no sexto capítulo. Assim, em certo sentido, para todo
$n$, a correspondente $n\mathbf{Cob}_{2}$ é uma espécie $2$-categoria
fraca. 
\end{example}
\end{example}
\end{example}

\subsection*{\uline{Cilindros Naturais}}

$\quad\;\,$Na subsecção anterior, analisamos uma classe particular
de categorias próprias para o estudo homotopia: as $2$-categorias
estritas. Nesta, estudaremos outra classe que é fonte de exemplos
de categorias com equivalências fracas. Tratam-se, pois, das categorias
nas quais se tem uma noção natural de ``cilindro''.

Um \emph{cilindro natural} numa categoria $\mathbf{C}$ é um functor
$C:\mathbf{C}\rightarrow\mathbf{C}$, em conjunto com transformações
naturais $\imath_{0},\imath_{1}:id_{\mathbf{C}}\rightarrow C$, as
quais possuem uma inversa à esquerda $\pi:C\rightarrow id_{\mathbf{C}}$
em comum, ao mesmo tempo que estão vinculadas por outra transformação
$\tau:C\rightarrow C$. Isto significa que, para qualquer que seja
o $X$, os diagramas abaixo são comutativos.$$
\xymatrix{&X &&& X \ar[d]_-{\imath _1 (X)} \ar[r]^-{\imath _0 (X)} & CX\\
CX \ar[ru]^-{\pi (X)} & X \ar[u]^-{id} \ar[r]_-{\imath _1 (X)} \ar[l]^-{\imath _0 (X)} & CX \ar[lu]_-{\pi (X)} && CX \ar[ru]_-{\tau (X)}  }
$$ 

Vejamos, agora, que todo cilindro natural induz uma noção de equivalência
fraca. Com efeito, dados $f,g:X\rightarrow Y$, diz-se que eles são
\emph{$C$-homotópicos} quando existe um morfismo 
\[
H:CX\rightarrow Y,\quad\mbox{tal que}\quad H\circ\imath_{0}(X)=f\quad\mbox{e}\quad H\circ\imath_{1}(X)=g,
\]
ao qual se dá o nome de \emph{$C$-homotopia }entre $f$ e $g$. Observamos
que a relação que identifica morfismos $C$-homotópicos é de equivalência:
para a reflexividade, basta tomar $H=f\circ\pi(X)$. A transitividade
é evidente. Por fim, para a simetria, se $H$ é $C$-homotopia entre
$f$ e $g$, então $\tau\circ H$ é $C$-homotopia entre $g$ e $f$.
Além disso, ela é compatível com composições. Consequentemente, fica
definida uma nova categoria $\mathscr{H}_{C}(\mathbf{C})$, obtida
de $\mathbf{C}$ trocando morfismos por suas correspondentes classes
de $C$-homotopia. Posto isto, podemos tomar como equivalências fracas
em $\mathbf{C}$ todos os morfismos cujas classes são isomorfismos
em $\mathscr{H}_{C}(\mathbf{C})$.
\begin{example}
O protótipo de cilindro natural é o functor $-\times I$ em $\mathbf{Set}$,
dotado das transformações $\imath_{0}$ e $\imath_{1}$, tais que
$\imath_{0}(X)(x,t)=(x,0)$ e $\imath_{1}(X)(x,t)=(x,1)$, as quais
possuem inversa à esquerda dada pela projeção na primeira entrada.
Isto é, coloca-se $\pi(X)=\mathrm{pr}_{1}$ para todo $X$. Além disso,
estas transformações estão relacionadas por meio das correspondências
\[
\tau(X):X\times I\rightarrow X\times I,\quad\mbox{tais que}\quad\tau(X)(x,t)=(x,t-1).
\]
Neste caso, uma $C$-homotopia entre $f,g:X\rightarrow Y$ é simplesmente
uma função $H:X\times I\rightarrow Y$ satisfazendo $H(x,0)=f(x)$
e $H(x,1)=g(x)$ para todo $x$. Resta dizer que a análise continua
inteiramente válida se trocamos $\mathbf{Set}$ por $\mathbf{Top}$.

\begin{example}
Toda categoria monoidal $\otimes:\mathbf{C}\times\mathbf{C}\rightarrow\mathbf{C}$,
com objeto neutro $1$, admite um cilindro natural trivial, dado por
$-\otimes1$. As transformações $\imath_{0}$ e $\imath_{1}$ são
aquelas que para cada $X\in\mathbf{C}$ associam os isomorfismos naturais
$X\simeq X\otimes1$, enquanto que $\pi(X)$ nada mais é que o isomorfismo
inverso $X\otimes1\simeq X$. Com estas escolhas, dois morfismos são
$C$ homotópicos se, e somente se, são iguais. Assim, a categoria
$\mathscr{H}_{C}(\mathbf{C})$ coincide com $\mathbf{C}$, de modo
que as equivalências fracas são todos os isomorfismos de $\mathbf{C}$.
Em outras palavras, a estrutura de categoria própria para homotopia
fornecida à $\mathbf{C}$ por $\otimes$ é a trivial.
\end{example}
\end{example}
Observamos haver uma semelhança evidente entre a estratégia utilizada
para definir as categorias $\mathscr{H}_{C}(\mathbf{C})$ e $\mathscr{H}(\mathbf{C})$,
nos respectivos casos em que se tem cilindros naturais ou uma estrutura
de $2$-categoria. Com efeito, em ambos os casos utilizou-se de uma
noção prévia de ``homotopia'', a qual nos permitiu definir uma relação
de equivalência em cada conjunto de morfismos de $\mathbf{C}$. As
correspondentes categorias $\mathscr{H}_{C}(\mathbf{C})$ e $\mathscr{H}(\mathbf{C})$
foram então obtidas por passagem ao quociente com respeito a tais
relações. 

Existe, no entanto, uma diferença entre os dois procedimentos: se
por um lado sempre se sabe compôr $2$-morfismos de duas diferentes
maneiras, as quais são estritamente associativas, não se tem maneiras
canônicas de compôr $C$-homotopias. Além disso, ainda que se as tenha,
pode ser que as composições não sejam associativas. Assim, olhando
desta forma, $2$-categorias parecem possuir mais informação.

Em contrapartida, diferentemente dos $2$-morfismos, as $C$-homotopias
são morfismos da categoria subjacente. Faz sentido, portanto, considerar
$C$-homotopias entre $C$-homotopias, iterando o processo \emph{ad
infinitum}. Assim, os cilindros naturais induzem não só uma noção
de homotopia, mas também noções de ``homotopias de grau superior''. 

Em alguns casos (como no oitavo capítulo veremos acontecer em $\mathbf{Top}$),
todas as homotopias induzidas pelos cilindros naturais são, em certo
sentido, inversíveis. Assim, em tais situações tem-se entidades compostas
de objetos, morfismos, homotopias, homotopias entre homotopias, etc.,
cuja informação categórica de grau superior a um (das homotopias para
adiante) são sempre inversíveis. Tais entidades correspondem às $(\infty,1$)-categorias
que, como comentamos na subsecção anterior e provaremos no próximo
capítulo, modelam todas as categorias com equivalências fracas.

\subsection*{\uline{Caminhos Naturais}}

$\quad\;\,$A versão dual dos cilindros naturais são os \emph{espaços
de caminhos naturais}. Assim, estes são functores $P:\mathbf{C}\rightarrow\mathbf{C}$,
dotados de transformações naturais $\pi_{0},\pi_{1}:P\rightarrow id_{\mathbf{C}}$,
as quais possuem uma inversa à direita em comum $\imath:id_{\mathbf{C}}\rightarrow P$
e estão vinculadas por uma certa $\tau:P\rightarrow P$. Mais precisamente,
os diagramas abaixo são comutativos para todo $X$.$$
\xymatrix{PX \ar[r]^-{\pi _0 (X)} & X & PX \ar[l]_-{\pi _1 (X)} && X & PX \ar[l]_-{\pi _0 (X)} \ar[ld]^-{\tau (X)} \\
& X \ar[u]^-{id} \ar[lu]^-{\imath (X)} \ar[ru]_-{\imath (X)} &&& PX \ar[u]^-{\pi _1 (X)} }
$$

A presença de espaços de caminhos naturais também induz equivalências
fracas. Com efeito, diz-se que dois mapas $f,g:X\rightarrow Y$ são
\emph{$P$-homotópicos} quando existe uma \emph{$P$-homotopia} entra
eles. Isto é, quando existe um morfismo 
\[
H:X\rightarrow PY\quad\mbox{tal que}\quad\pi_{0}(Y)\circ H=f\quad\mbox{e}\quad\pi_{0}(Y)\circ H=g.
\]

De maneira análoga ao que vimos valer para cilindros naturais, a relação
de $P$-homotopia é de equivalência e compatível com composições.
Assim, para cada $P$ fica definida uma categoria $\mathscr{H}^{P}(\mathbf{C})$,
cujos objetos são os próprios objetos de $\mathbf{C}$ e cujos morfismos
são as classes de $P$-homotopia. Consequentemente, os morfismos de
$\mathbf{C}$ que são isomorfismos em $\mathscr{H}^{P}(\mathbf{C})$
podem ser tomados como equivalências fracas.
\begin{example}
Assim como $\mathbf{Set}$ admite o protótipo de cilindros naturais,
também admite o exemplo que motiva a definição (e a nomenclatura)
de espaços de caminhos. Com efeito, ali se tem o functor $P:\mathbf{Set}\rightarrow\mathbf{Set}$,
que a cada $X$ associa o conjunto $\mathrm{Mor}_{\mathbf{Set}}(I;X)$
dos caminhos em $X$ (isto é, das aplicações $\gamma:I\rightarrow X$).
Também se tem as transformações $\pi_{0}(X)$ e $\pi_{1}(X)$, que
a cada caminho $\gamma$ associam, respectivamente, seu ponto inicial
e seu ponto final. Elas estão ligadas pela transformação $\tau(X)$
que toma $\gamma$ e devolve outro caminho $\tau(X)(\tau)$, que em
cada instante vale $\gamma(1-t)$. Argumentos análogos se aplicam
em $\mathbf{Top}$.
\end{example}
Pode ocorrer que numa categoria admita tanto cilindros naturais $C$
quando espaços de caminhos naturais $P$. Neste caso, nela haverão
duas noções (a princípio distintas) de equivalências fracas. Gostar-se-ia
que tais noções fossem coincidentes. Ora, as equivalências fracas
são obtidas por meio das $C$-homotopias e das $P$-homotopias. Desta
forma, se estas forem coincidentes, então as respectivas noções de
equivalências fracas também o serão. Como logo se verifica, \emph{uma
condição necessária e suficiente para que se tenha correspondência
biunívoca entre $C$-homotopias e $P$-homotopias é a existência de
uma adjunção entre $C$ e $P$}. 
\begin{example}
Como vimos no últimos capítulo, $\mathbf{Set}$ é cartesianamente
fechada. Com efeito, para todo conjunto $Y$, o adjunto de $-\times Y$
é precisamente o functor $\mathrm{Mor}_{\mathbf{Set}}(Y;-)$. Por
sua vez, nos exemplos anteriores vimos que o functor $-\times I$
define cilindro natural, ao passo que $\mathrm{Mor}_{\mathbf{Set}}(I;-)$
define espaço de caminhos. Consequentemente, as respectivas noções
de homotopia e de equivalências fracas por eles proporcionadas são
idênticas.

\begin{example}
Por outro lado, nos últimos exemplos também vimos que $-\times I$
e $\mathrm{Mor}_{\mathbf{Top}}(I;-)$ servem de cilindros e de espaços
de caminhos naturais em $\mathbf{Top}$. Ressaltamos, no entanto,
que, \emph{diferentemente do que se passa em $\mathbf{Set}$, as noções
de homotopia por eles produzidas não coincidem}. Afinal, $\mathbf{Top}$
não é cartesianamente fechada. Assim, mesmo que uma função $H:X\times I\rightarrow Y$
seja contínua, a correspondente $H':X\rightarrow\mathrm{Mor}_{\mathbf{Top}}(I;Y)$,
obtida pela adjunção em $\mathbf{Set}$ e caracterizada por $H'(x)(t)=H(x,t)$,
pode não o ser.
\end{example}
\end{example}

\section{Categoria Homotópica}

$\quad\;\,$Como dito na introdução do capítulo, o contexto no qual
a teoria da homotopia se apresenta é o seguinte: quer-se substituir
uma categoria $\mathbf{C}$ por uma primeira aproximação, em que o
problema de classificação é ligeiramente mais brando que o de $\mathbf{C}$.
Nesta subsecção, mostraremos que, se $\mathbf{C}$ é própria para
o estudo da homotopia, então a subcategoria distinguida $\mathrm{Weak}$
nos permite construir a aproximação desejada. 

Observamos que, nos capítulos anteriores, sempre substituímos a relação
de igualdade entre functores surgia, esta era substituída por aquela
que identifica functores entre os quais há uma transformação natural.
Foi desta maneira que definimos os conceitos de \emph{equivalência
entre categorias} e de \emph{extensões de Kan}. No presente contexto,
esta atitude ressalta o fato de que $\mathbf{Cat}$ (ou, mais geralmente,
qualquer $2$-categoria $\mathbf{C}$) é aproximada por $\mathscr{H}\mathbf{Cat}$. 

Assim, temos um protótipo para o processo de aproximação que procuramos.
Para abstraí-lo, precisamos descrever $\mathscr{H}\mathbf{C}$ em
termos de conceitos presentes numa categoria arbitrária. Neste sentido,
observamos que $\mathscr{H}\mathbf{C}$ é universal com respeito ao
functor quociente $\pi:\mathbf{C}\rightarrow\mathscr{H}\mathbf{C}$.
Mais precisamente, toda categoria $\mathbf{D}$ para a qual existe
um functor $F:\mathbf{C}\rightarrow\mathbf{D}$ mandando equivalências
fracas em isomorfismos é fracamente isomorfa a $\mathscr{H}\mathbf{C}$.
Realmente, a hipótese assegura que $f\simeq g$ implica $F(f)=F(g)$,
de modo que (passando ao quociente) há um único functor $F_{*}:\mathscr{H}\mathbf{C}\rightarrow\mathbf{D}$
cumprindo $F_{*}\circ\pi=F$. Como logo se convence, ele define a
equivalência buscada.

Uma \emph{localização} de uma categoria $\mathbf{C}$ numa classe
de morfismos $\mathcal{C}$ é uma outra categoria $\mathbf{C}[\mathcal{C}]$
dotada de um functor $p:\mathbf{C}\rightarrow\mathbf{C}[\mathcal{C}]$
que manda morfismos de $\mathcal{C}$ em isomorfismos de $\mathbf{C}[\mathcal{C}]$,
sendo universal com respeito a tal propriedade. Assim, para uma $2$-categoria,
$\mathscr{H}\mathbf{C}$ é simplesmente a sua localização na classe
das equivalências fracas. 

Evidentemente, $\mathbf{C}[\mathcal{C}]$ é única a menos de um único
isomorfismo. No que diz respeito a sua existência, observamos que
ela pode ser descrita como sendo a categoria que possui os mesmos
objetos que $\mathbf{C}$ e cujos morfismos são os \emph{zig-zags}
de morfismos de $\mathbf{C}$ por morfismos de $\mathcal{C}$. Isto
é, as sequências$$
\xymatrix{X & \ar[l] A_1 \ar[r] & A_2 & \ar[l] A_3 \ar[r] & \cdots \ar[r] & Y}
$$ que não possuem identidades e nem morfismos consecutivos na mesma
direção, sendo tais que toda seta voltada para à esquerda está em
$\mathcal{C}$. Em suma, $\mathbf{C}[\mathcal{C}]$ é obtida de $\mathbf{C}$
acrescentando uma inversa formal a cada um de seus morfismos. Veja
\cite{calculus_fractions_GABRIEL}.

Voltando ao questionamento inicial, dada uma categoria $\mathbf{C}$
qualquer, o análogo de $\mathscr{H}\mathbf{C}$ há de ser a localização
de $\mathbf{C}$ nas equivalências fracas. Ela é chamada de \emph{categoria
homotópica }ou mesmo de \emph{categoria derivada} de $\mathbf{C}$,
sendo usualmente denotada por $\mathrm{Ho}(\mathbf{C})$. Evidentemente,
quando $\mathbf{C}$ é $2$-categoria, tem-se a equivalência $\mathrm{Ho}(\mathbf{C})\simeq\mathscr{H}\mathbf{C}$.

Se dois objetos $X$ e $Y$ são isomorfos na categoria homotópica,
escreve-se $X\simeq Y$ e diz-se que eles possuem o mesmo \emph{tipo
de homotopia}. No contexto das $2$-categorias, dois objetos tem o
mesmo tipo de homotopia se, e somente se, existe um morfismo $f:X\rightarrow Y$
(chamado de \emph{equivalência homotópica}) que admite inversa a menos
de $2$-morfismos. Isto é, tal que há um outro $g:Y\rightarrow X$,
bem como $2$-morfismos $g\circ f\simeq id_{X}$ e $f\circ g\simeq id_{Y}$.
\begin{example}
Na $2$-categoria $\mathbf{Cat}$, duas categorias tem o mesmo tipo
de homotopia precisamente quando são equivalentes. Isto justifica
uma máxima: \emph{numa $2$-categoria, em geral não se exige a igualdade
entre dois morfismos}. \emph{Exige-se, apenas, que eles coincidam
a menos de um $2$-morfismo}. Grosso modo, isto significa que, se
no contexto de trabalho tem-se em mãos a noção de homotopia, costuma-se
enfraquecer a exigência de comutatividade dos diagramas, passando
a exigir apenas comutatividade módulo homotopias. Em $\mathbf{Cat}$,
por exemplo, tal máxima recai em outra que já temos utilizado desde
o início do texto: \emph{não se exige a igualdade de functores, mas
apenas que sejam idênticos a menos de isomorfismos naturais}.
\end{example}

\subsection*{\uline{Completude}}

$\quad\;\,$Por definição, uma categoria própria para o estudo da
homotopia deve ser completa e cocompleta. Tenhamos em mente uma nova
questão: se existir, será $\mathrm{Ho}(\mathbf{C})$ também completa
e cocompleta? No intuito de analisá-la, observamos que $p:\mathbf{C}\rightarrow\mathrm{Ho}(\mathbf{C})$
induz, para cada categoria $\mathbf{J}$, um respectivo functor 
\[
p_{\mathbf{J}}:\mathrm{Func}(\mathbf{J};\mathbf{C})\rightarrow\mathrm{Func}(\mathbf{J};\mathrm{Ho}(\mathbf{C})),\quad\mbox{tal que}\quad p_{\mathbf{\mathbf{J}}}(F)=p\circ F\quad\mbox{e}\quad p_{\mathbf{\mathbf{J}}}(\xi)=\xi.
\]

Disto segue que, se $F:\mathbf{\mathbf{J}}\rightarrow\mathbf{C}$
possui limite e/ou colimite, então o correspondente $p_{\mathbf{J}}(F)$
também o possui. Portanto, uma vez que a categoria $\mathbf{C}$ é
ao mesmo tempo completa e cocompleta, se $p_{\mathbf{J}}$ for um
isomorfismo, então todo functor entre $\mathbf{J}$ e $\mathbf{C}$
se escreverá sob a forma $p_{\mathbf{J}}(F)$ para algum $F:\mathbf{\mathbf{J}}\rightarrow\mathbf{C}$
e $\mathrm{Ho}(\mathbf{C})$ possuirá todos os limites e colimites
em $\mathbf{J}$. 

Observamos que tal estratégia funciona quando $\mathbf{J}$ é discreta,
de modo que, quando existe, a categoria homotópica possui todos os
produtos e coprodutos. No caso particular em que $\mathbf{C}$ é $2$-categoria,
$\mathrm{Ho}(\mathbf{C})$ coincide com $\mathscr{H}\mathbf{C}$,
de tal modo que seus produtos e coprodutos são dados pelos próprios
produtos e coprodutos de $\mathbf{C}$, tendo como projeções e inclusões
as suas respectivas classes de homotopia. 

Assim, pelo teorema de existência de limites e colimites, para garantir
completude e co-completude de $\mathrm{Ho}(\mathbf{C})$, bastaríamos
mostrar que a estratégia anterior também se aplica quando $\mathbf{J}$
é constituída das categorias $\rightarrow\cdot\leftarrow$ ou $\leftarrow\cdot\rightarrow$.
Este, no entanto, não é o caso. Em geral, a categoria homotópica não
possui \emph{pullbacks} e nem \emph{pushouts}. Daremos exemplos deste
fato no oitavo capítulo, quando estudarmos teoria da homotopia em
$\mathbf{Top}$.

\section{Functores Derivados}

$\quad\;\,$Em princípio, a categoria homotópica atende como uma \emph{primeira
aproximação} da categoria da qual ela descende. Diante disso, dado
um functor $F:\mathbf{C}\rightarrow\mathbf{C}'$, torna-se interessante
qualquer processo que nos permita substituí-lo por algum functor entre
$\mathrm{Ho}(\mathbf{C})$ e $\mathrm{Ho}(\mathbf{C}')$, dito ser\emph{
derivado} de $F$ e que haverá de traduzir uma primeira aproximação
deste. 

A maneira imediata de realizar esta troca é por ``passagem ao quociente''.
Com efeito, para que haja um único $F':\mathrm{Ho}(\mathbf{C})\rightarrow\mathrm{Ho}(\mathbf{C}')$
tal que $F'\circ p=p'\circ F$, é necessário e suficiente que $F$
seja \emph{homotópico}. Assim, ao tomá-la, cair-se-ia numa igualdade
de functores, que é uma condição demasiadamente restrita. Como sempre,
a ideia é substituir a igualdade pela relação $\simeq$. Para fazer
isto, observamos que a passagem ao quociente acima nada mais é que
uma extensão do functor $p'\circ F$ em $\mathbf{Cat}$. Ora, mas
problemas de extensão já sabemos enfraquecer: ao invés de se tentar
estender estritamente, procura-se pelas extensões de Kan. 

Motivados por estes fatos, definamos: o \emph{functor} \emph{derivado}
\emph{à esquerda} (resp. \emph{à direita}) de $F$ nada mais é que
a extensão à esquerda (resp. direita) de $p'\circ F$ relativamente
a $p$.

No capítulo anterior, vimos que se um functor assume valores numa
categoria completa e cocompleta, então suas extensões de Kan sempre
existem e podem ser computadas por meio de \emph{ends} e \emph{coends}.
Observamos que isto não pode ser utilizado para garantir a existência
de functores derivados, uma vez que, em geral, categorias homotópicas
não são completas. 

Uma estratégia mais palpável para a obtenção de tais functores será
dada na última secção do capítulo seguinte, onde consideramos categorias
que estão dotadas, além das equivalências fracas, de outras classes
de ``morfismos auxiliares''. 

\subsection*{\uline{Limites Especiais}}

$\quad\;\,$Prosseguindo com o espírito da subsecção anterior, buscamos
pelas primeiras aproximações de limites de functores que assumem valores
em categoria próprias para o estudo da homotopia. Seja $\mathbf{C}$
uma categoria com equivalências fracas. Sendo ela completa, fixada
uma outra categoria $\mathbf{J}$ qualquer, todo functor $F:\mathbf{J}\rightarrow\mathbf{C}$
possui limite. Consequentemente, está bem definido um functor $\lim:\mathrm{Func}(\mathbf{J};\mathbf{C})\rightarrow\mathbf{C}$,
que cada $F$ associa o seu respectivo limite. O mesmo se passa com
os colimites, uma vez que a categoria $\mathbf{C}$ é também cocompleta.
Desta forma, limites e colimites se traduzem em functores, ao passo
que, pelo exemplo 5.1.3, a classe distinguida de $\mathbf{C}$ estabelece
uma outra em $\mathrm{Func}(\mathbf{J};\mathbf{C})$.

Pelo o que discutimos anteriormente, a maneira natural de transferir
functores à categoria homotópica é tomando suas versões derivadas.
Assim, define-se o \emph{limite homotópico} de $F$, denotado por
$h\lim F$, como sendo a sua imagem pelo functor derivado à esquerda
de $\lim$. Dualmente, o \emph{colimite homotópico} de $F$, ao qual
guardamos a notação $h\mathrm{colim}F$, é a sua imagem através do
functor derivado à direita de $\mathrm{colim}$. Em suma, 
\[
h\lim F=L\lim(F)\quad\mbox{e}\quad h\mathrm{colim}F=R\mathrm{colim}(F).
\]

Observamos que, como $\mathbf{C}$ é completa e cocompleta, todo functor
que nela assume valores sempre possui extensões de Kan, de tal modo
que, diferentemente dos limites e colimites usuais, limites e colimites
homotópicos sempre existem na categoria homotópica. 

Outra observação relevante é a seguinte: limites homotópicos podem
não ser passagens ao quociente de limites na categoria ordinária.
Mais precisamente, os functores $\lim$ e $\mathrm{colim}$ em geral
não preservam equivalências fracas (não são homotópicos). Daremos
exemplos deste fato no oitavo capítulo, quando estudarmos teoria da
homotopia em $\mathbf{Top}$.

Ainda não temos uma maneira prática de computar limites homotópicos.
No entanto, como mencionado na subsecção anterior, já no próximo capítulo,
quando contarmos com classes auxiliares de morfismos, aprenderemos
a calcular extensões de Kan e, particularmente, limites homotópicos.

\subsection*{\uline{Suspensões e Loops}}

$\quad\;\,$No intuito de ilustrar a diferença entre limites e limites
homotópicos, aqui apresentamos uma classe particular de limites, os
quais só não são triviais no âmbito homotópico. Tratam-se das \emph{suspensões}
e de sua versão dual, os \emph{loops}. 

Como abstraímos no terceiro capítulo, em qualquer categoria com objetos
nulos o \emph{kernel} (resp. \emph{co-kernel}) de um morfismo $f:X\rightarrow Y$
é simplesmente o \emph{pullback} (resp. \emph{pushout}) abaixo apresentado.
$$
\xymatrix{
\ker (f) \ar[d] \ar[r] & X \ar[d]^f & \mathrm{coker}(f) & Y \ar[l] \\
\mathrm{*} \ar[r] & Y & \mathrm{*} \ar[u] & \ar[l] X \ar[u]_f }
$$

Pela lei de colagem de \emph{pullbacks} e \emph{pushouts}, a composição
sucessiva de \emph{kernels} ou de \emph{co-kernels} pode ser computada
pelos limites dos quadrados externos abaixo expostos, os quais são
evidentemente triviais.$$
\xymatrix{
\ker (\ker (f)) \ar[d] \ar[r] & \ker (f) \ar[d] \ar[r] & \mathrm{*} \ar[d] & \mathrm{coker}(\mathrm{coker})(f) & \mathrm{coker}(f) \ar[l] & \mathrm{*} \ar[l] \\
\mathrm{*} \ar[r] & X \ar[r]_f & Y & \mathrm{*} \ar[u] & Y \ar[u] \ar[l] & X \ar[u] \ar[l]^f }
$$

Em particular, se estamos em uma categoria com equivalências fracas
$\mathbf{C}$, então sua pontuação possui objetos nulos, de modo que
os limites anteriores fazem sentido e são triviais. No entanto, também
podemos olhar tais diagramas enquanto limites homotópicos. Neste caso,
eles são devidamente chamados de \emph{mapping cocone} e \emph{mapping
cone} de $f$, sendo denotados por $\mathrm{ccone}(f)$ e $\mathrm{cone}(f)$.
Diferentemente do caso usual, quando iterados, estes produzem espaços
que podem não ser triviais. 

Uma vez que \emph{pullbacks} e \emph{pushouts} homotópicos também
satisfazem leis de colagem, segue-se que os espaços $\mathrm{ccone}(\mathrm{ccone}(f))$
e $\mathrm{cone}(\mathrm{cone}(f))$, respectivamente denominados
\emph{loop} e \emph{suspensão }de $X$ (denotados por $\Omega X$
e $\Sigma X$), podem ser computados, a menos de isomorfismos, pelos
limites homotópicos abaixo.$$
\xymatrix{
\Omega X \ar[d] \ar[r] & \mathrm{*} \ar[d] & \Sigma X & \mathrm{*} \ar[l] \\
\mathrm{*} \ar[r] & X & \mathrm{*} \ar[u] & \ar[l] X \ar[u] }
$$

\chapter{Modelos}

$\quad\;\,$Algumas categorias próprias para o estudo da homotopia
contam com duas classes auxiliares de morfismos, denominados \emph{fibrações
}e \emph{cofibrações}. Diz-se que tais classes determinam um \emph{modelo}
na categoria original. As categorias nas quais um modelo foi fixado
são os objetos de estudo do presente capítulo. Enfatizamos: ainda
que nelas se conte com classes auxiliares de morfismos, estes não
são mencionadas na construção da categoria derivada. Isto significa
que, \emph{para estudar homotopia, o conceito fundamental é realmente
o de equivalência fraca}.

A escolha de um modelo numa categoria é comparada à escolha de uma
base num espaço vetorial ou à escolha de um sistema de coordenadas
numa variedade. Tratam-se, pois, de ferramentas que facilitam a demonstração
de resultados: uma vez que se sabe que a estrutura de espaço vetorial
(resp. a categoria homotópica) independe da escolha de uma base (resp.
das fibrações e cofibrações fixadas), para provar um certo resultado,
pode-se escolher a base (resp. o modelo) que melhor lhe convém.

Iniciamos o capítulo apresentando a definição de categoria modelo
e apresentando algumas propriedades que dali surgem de maneira quase
que imediata. Na segunda secção, discutimos que toda categoria modelo
admite diversos cilindros e espaços de caminhos, os quais podem ser
utilizados para construir resoluções de objetos e de morfismos. Em
seguida, verificamos que tais entidades determinam todo o modelo.
Mais precisamente, mostramos que se uma categoria admite cilindros
e espaços de caminhos naturais adjuntos, então também admite um modelo.

Na terceira secção, discutimos diversas facilidades proporcionadas
pela introdução de modelos em categorias com equivalências fracas.
Ali mostramos, por exemplo, que as categorias homotópicas, assim como
os functores derivados e, consequentemente, os limites homotópicos,
podem ser determinados de maneira versátil. Por fim, apresentamos
uma classe de functores entre categorias modelo, os quais induzem
isomorfismos em homotopia sem a necessidade de preservarem equivalências
fracas.

As referências mais utilizadas para o estudo e para a escrita foram
\cite{Dwyer_model_categories,Hirschhorn_model_categories,Hovey_model_categories,model_categories_KAN-1}.

\section{Estrutura}

$\quad\;\,$Um \emph{modelo} numa categoria $\mathbf{C}$, previamente
dotada de equivalências fracas, é consistido de outras duas classes
de morfismos, respectivamente chamadas de \emph{fibrações }e \emph{cofibrações},
escolhidas de tal modo que as seguintes propriedades são satisfeitas
(quando uma fibração ou cofibração é ao mesmo tempo uma equivalência
fraca, diz-se que ela é \emph{acíclica} ou \emph{trivial}):
\begin{enumerate}
\item cada umas das classes é preservada por retrações. Isto significa que,
para quaisquer morfismos $f:X\rightarrow X'$ e $g:Y\rightarrow Y'$,
se $g$ pertence a uma classe e existem outros morfismos entre os
respectivos objetos que tornam comutativo o diagrama abaixo, então
$f$ faz parte desta mesma classe;$$
\xymatrix{\ar@/^{0.3cm}/[rr]^{id} \ar[d]_{f} X \ar[r] & \ar[d]^{g} Y \ar[r] & X \ar[d]^{f} \\
\ar@/_{0.3cm}/[rr]_{id} X' \ar[r] & Y' \ar[r] & X'}
$$
\item cofibrações têm a propriedade de levantamento relativamente a fibrações
acíclicas. Dualmente, fibrações possuem a propriedade de levantamento
com respeito a cofibrações acíclicas. Isto é, fornecidas uma fibração
$\jmath:X\rightarrow X'$ e uma cofibração $\imath:Y\rightarrow Y'$,
se alguma delas é também uma equivalência fraca, então, para quaisquer
morfismos $f:X\rightarrow Y$ e $g:X'\rightarrow Y'$ tais que $\imath\circ f=g\circ\jmath$,
existe $h$ que torna comutativo o seguinte diagrama:$$
\xymatrix{\ar[d]_{\imath} X \ar[r]^f & X' \ar[d]^{\jmath} \\
Y \ar@{-->}[ru]^h \ar[r]_{g} & Y'}
$$
\item todo morfismo $f:X\rightarrow Y$ de $\mathbf{C}$ pode ser functorialmente
decomposto sob a forma $f=\jmath\circ\imath$ para uma cofibração
$\imath$ e uma fibração $\jmath$, em que alguma das duas é acíclica.
\end{enumerate}
$\quad\;\,$Uma \emph{categoria modelo} é simplesmente uma categoria
com equivalências fracas na qual um modelo foi fixado. Quando se quer
evidenciar que uma seta é uma cofibração (resp. fibração) num certo
modelo, escreve-se ``$\rightarrowtail$'' (resp. $\twoheadrightarrow$)
para representá-la.
\begin{example}
Se $\mathbf{C}$ é uma categoria modelo, então $\mathbf{C}^{op}$
admite um modelo natural, cujas equivalências fracas são os morfismos
$f^{op}:Y\rightarrow X$ tais que $f:X\rightarrow Y$ é equivalência
fraca em $\mathbf{C}$, tendo fibrações dadas pelas cofibrações de
$\mathbf{C}$, e vice-versa. Por conta disso, tem-se um princípio
de dualização: \emph{todo resultado demonstrado numa categoria modelo
continua verdadeiro se em seu enunciado trocamos fibrações por cofibrações
e invertemos as setas}.

\begin{example}
Cada modelo $\mathbf{C}$ se estende a um modelo na pontuação $\mathbf{C}_{*}$:
é só considerar como fibrações e cofibrações as correspondentes fibrações
e cofibrações da categoria $\mathbf{C}$ que preservam ponto base.
\end{example}
\end{example}

\subsection*{\uline{Propriedades}}

$\quad\;\,$Diretamente da definição de modelo decorrem algumas propriedades
concernentes às classes distinguidas de morfismos. Por exemplo, vê-se
que a classe das equivalências fracas em conjunto com a classe das
fibrações determinam univocamente as cofibrações como sendo os morfismos
que possuem a propriedade de levantamento de morfismos relativamente
à cofibrações acíclicas (veja o lema 1.1.10 de \cite{Hovey_model_categories}
ou terceira secção de \cite{Dwyer_model_categories}). Por dualização,
conclui-se que equivalências fracas e cofibrações determinam as fibrações.

Como facilmente se convence, estes fatos garantem que, em qualquer
categoria modelo, as identidades tanto são fibrações quanto cofibrações.
Deles também se conclui que tais classes distinguidas de morfismos
são fechadas com respeito a composição e, consequentemente, definem
subcategorias $\mathrm{Fib}$ e $\mathrm{CFib}$.

Outra propriedade obtida diretamente dos axiomas é a invariância das
fibrações por \emph{pullbacks} e, por dualização, das cofibrações
por \emph{pushouts}. Com efeito, sejam $f:X\rightarrow Y$ uma fibração,
$g:X'\rightarrow Y$ um mapa qualquer e suponhamos haver o \emph{pullback}
do par $(f,g)$. Mostraremos que o morfismo $\jmath:\mathrm{Pb}\rightarrow X$,
apresentado no primeiro diagrama abaixo, também é uma fibração. Pelo
parágrafo anterior, basta verificar que $\jmath$ possui a propriedade
de levantamento com respeito a cofibrações acíclicas. Seja $\imath:A\rightarrow B$
uma cofibração acíclica, como exposto no diagrama do meio. Afirmamos
haver a seta $h$ indicada. Juntando ambos os dois diagramas, obtém-se
$h'$. Daí, como se vê no último diagrama, da universalidade do \emph{pullback}
segue a existência de $h$.$$
\xymatrix{ &&&&&& B \ar@/^/[drr]^{h'} \ar@/_/[ddr] \ar@{-->}[dr]^h \\
\mathrm{Pb} \ar[d]_{\jmath} \ar[r] & X \ar[d]^f & A \ar[d]_{\imath} \ar[r] & \mathrm{Pb} \ar[d]^{\jmath} & A \ar[d]_{\imath} \ar[r] & X \ar[d]^f && \mathrm{Pb} \ar[d]_{\jmath} \ar[r] & X \ar[d]^f \\
X' \ar[r]_g & Y & B \ar@{-->}[ru]^h \ar[r] & X' & B \ar[r] \ar[ru]^{h'} & Y && X' \ar[r]_g & Y }
$$
\begin{example}
Em virtude da invariância das fibrações por \emph{pullbacks}, segue-se
que, para quaisquer objetos $X_{1}$ e $X_{2}$, as projeções $\pi_{i}:X_{1}\times X_{2}\rightarrow X_{i}$
são fibrações. Da mesma forma, a invariância das cofibrações por \emph{pushouts}
garante que as inclusões $\imath_{j}:X_{j}\rightarrow X_{1}\oplus X_{2}$
são cofibrações.
\end{example}

\section{Resoluções}

$\quad\;\,$Diz-se que um objeto $X$ numa categoria modelo $\mathbf{C}$
é \emph{fibrante} ou \emph{cofibrante}\footnote{Na literatura inglesa, tais expressões correspondem, respectivamente,
a \emph{fibrant} e \emph{cofibrant}.} na medida que o morfismo $X\rightarrow*$ é fibração ou $\varnothing\rightarrow X$
é cofibração. Evidentemente, todo objeto admite ao menos uma resolução
fibrante e outra cofibrante. Com efeito, pelo axioma da decomposição,
os referidos morfismos podem sempre ser functorialmente decompostos
na forma 
\begin{equation}
\varnothing\twoheadrightarrow\mathcal{C}X\rightsquigarrow X\quad\mbox{e}\quad X\rightsquigarrow\mathcal{F}X\rightarrowtail*\quad\mbox{para certos}\quad\mathcal{C}X\;\mbox{e}\;\mathcal{F}X.\label{troca_fibrante}
\end{equation}

Vejamos que qualquer morfismo $f:X\rightarrow Y$ também admite resoluções
fibrantes e cofibrantes. Mais precisamente, mostremos a existência
de resoluções $X'\rightsquigarrow X$ e $X\rightsquigarrow X''$ que,
quando compostas com $f$, produzem fibrações e cofibrações. Observamos
que, diretamente da invariância das fibrações e cofibrações por \emph{pullbacks},
\emph{pushouts} e composições, segue-se que cada fibração $\mathrm{path}(Y)\twoheadrightarrow Y$
e cada cofibração $X\rightarrowtail\mathrm{cyl}(X)$ determinam as
resoluções $X'$ e $X''$ procuradas. $$
\xymatrix{X' \ar@{~>}[r] \ar@{->>}[d] & X \ar[d]^f && X'' & Y \ar@{~>}[l] \\
\mathrm{path}(Y) \ar@{->>}[r] & Y && \mathrm{cyl}(X) \ar[u] & X \ar[u]_f \ar[l] }
$$

Diante disso, nos resumimos a garantir a existência de fibrações $\mathrm{path}(Y)$
que assumem valores em $Y$, assim como de cofibrações $\mathrm{cyl}(X)$
partindo de $X$. Particularmente, como projeções são fibrações, ao
passo que inclusões são cofibrações, basta que encontremos mapas $X\oplus X\rightarrowtail\mathrm{cyl}(X)$
e $\mathrm{path}(Y)\twoheadrightarrow Y\times Y$. Estes existem,
pois $\nabla:X\oplus X\rightarrow X$ (resp.$\Delta:Y\rightarrow Y\times Y$)
pode sempre ser decomposto na forma $\jmath\circ\imath$, em que $\imath$
é cofibração (resp. cofibração acíclica) e $\jmath$ é fibração acíclica
(resp. fibração). Em cada um dos casos, $\mathrm{cyl}(X)$ e $\mathrm{path}(Y)$
são os objetos intermediários da decomposição, sendo chamados de \emph{cilindro}
de $X$ e de \emph{espaço de caminhos} de $Y$.

\subsection*{\uline{Cilindros Naturais}}

$\quad\;\,$Na subsecção anterior, vimos que uma categoria modelo
possui diversos cilindros e espaços de caminhos. Veremos, agora, uma
recíproca de tal condição. Mais precisamente, mostraremos que se uma
categoria admite cilindros e espaços de caminhos naturais (os quais
supomos adjuntos), então estes ali determinam um modelo.

A ideia é essencialmente a seguinte: cilindros naturais fornecem cofibrações
e equivalências fracas, ao passo que espaços de caminhos naturais
fornecem fibrações e equivalências fracas. Se estes são adjuntos,
então as correspondentes noções de equivalência fraca coincidem e
então podemos juntar as informações por eles fornecidas de modo a
produzir um modelo. Vejamos isto com um pouco mais de cuidado.

Pois bem, se $C:\mathbf{C}\rightarrow\mathbf{C}$ é cilindro natural
numa categoria $\mathbf{C}$, então ele ali induz uma correspondente
noção de homotopia, a qual nos serve como uma classe distinguida.
Por sua vez, em uma categoria modelo, as cofibrações são os morfismos
que possuem a propriedade de extensão com respeito a uma outra classe
de morfismos. 

A ideia, então, é definir $C$-cofibrações como sendo os morfismos
que têm a propriedade de extensão relativamente às $C$-homotopias.
De maneira mais precisa, diz-se que $\imath:X\rightarrow Y$ é uma
\emph{$C$-cofibração }quando, para todo morfismo $f:Y\rightarrow Z$
e toda $C$-homotopia $h$ partindo de $\imath$ (isto é, satisfazendo
$h\circ\imath_{0}=\imath$), é possível obter uma outra $C$-homotopia
$H:CY\rightarrow Z$, esta partindo de $f$, a qual pode ser escolhida
que modo que ao se restringir a $CX$ coincida com $h$. Em outras
palavras, vale a comutatividade do primeiro dos diagramas abaixo.$$
\xymatrix{X \ar[dd]_{\imath } \ar[rr]^{\imath _0 (X)} && CX \ar[ld]_h \ar[dd]^{C(\imath )} && X \ar[dd]_{\jmath} && PX \ar[ll]_{\pi _0 (X)} \ar[dd]^{P(\jmath )} \\
& Z &&&& Z \ar[ul]^f \ar[dr]_h \ar@{-->}[ur]_H \\
Y \ar[ru]^f \ar[rr]_{\imath _0 (Y)} && CY \ar@{-->}[lu]^H && Y && PY \ar[ll]^{\pi _0 (Y)} }
$$

De outro lado, se $P$ é espaço de caminhos natural em $\mathbf{C}$,
então ele também induz uma noção de homotopia, a qual utilizamos como
classe distinguida. Posto isto, definimos as $P$-fibrações como sendo
mapas que têm a propriedade de levantamento com respeito às $P$-homotopias
(segundo dos diagramas acima).

Cabem, aqui, duas observações:
\begin{enumerate}
\item para qualquer $X$, o correspondente $\imath_{0}(X)$ é tanto $C$-cofibração
quanto equivalência homotópica (isto é, um isomorfismo na categoria
homotópica $\mathscr{H}_{C}(\mathbf{C})$). Assim, eles funcionam
como as ``cofibrações acíclicas''. Em particular, disto segue que
todo $X\in\mathbf{C}$ serve como um ``objeto cofibrante'';
\item dualmente, os $\pi_{0}(Y)$ são ao mesmo tempo $P$-fibrações e isomorfismos
em $\mathscr{H}^{P}(\mathbf{C})$. Consequentemente, eles servem de
análogos às ``fibrações acíclicas'' e todo objeto admite comportamento
semelhante aos ``cofibrantes''.
\end{enumerate}
$\quad\;\,$Suponhamos, agora, a existência de uma adjunção entre
os functores $P$ e $C$. Neste caso, as respectivas noções coincidem
e as $C$-cofibrações e as $P$-fibrações admitem uma nova caracterização.
Com efeito, $\imath:X\rightarrow Y$ será uma $C$-cofibrações se,
e só se, para todo $f:X\rightarrow Y$ e toda $P$-homotopia $h$
partindo de $f$ existe uma outra $P$-homotopia $H$, agora partindo
de $\imath$, que ao ser restrita a $PX$ coincide com $h$ (veja
o primeiro dos diagramas abaixo). Em outras palavras, um mapa é uma
cofibração se, e só se, possui a propriedade de extensão com respeito
à $\pi_{0}(Y)$ e, portanto, pela observação anterior, com respeito
a uma ``fibração acíclica''.$$
\xymatrix{X \ar[d]_{\imath} \ar[r]^h & PZ \ar[d]^{\pi _0 (Z)} && Z \ar[d]_{\imath _0 (Z)} \ar[r]^f & X \ar[d]^{\jmath} \\
Y \ar@{-->}[ru]^H \ar[r]_f & Z && CZ \ar@{-->}[ru]^H \ar[r]_h & Y }
$$

De maneira análoga, a adjunção entre $P$ e $C$ nos permite caracterizar
as $P$-fibrações como sendo os morfismos que têm a propriedade de
levantamento com respeito à $\imath_{0}(X)$ e, portanto, com respeito
a uma ``cofibração acíclica''.

Diante disso, para mostrar que $C$-cofibrações, $P$-fibrações e
as correspondentes equivalências fracas (que coincidem sob a hipótese
de adjunção) definem um verdadeiro modelo em $\mathbf{C}$, resta
verificar que cada uma destas classes de morfismos é invariante por
retração, ao passo que todo morfismo em $\mathbf{C}$ admite uma decomposição
em termos de $C$-cofibrações seguidas de $P$-fibrações, sendo alguma
delas também uma equivalência fraca.

No que tange à invariância por retrações, observamos que a caracterização
que obtivemos para as $F$-cofibrações e para as $P$-fibrações nos
permite utilizar da mesma estratégia empregada na primeira secção
para demonstrar que cofibrações e fibrações são respectivamente invariantes
por \emph{pullbacks} e\emph{ }por \emph{pushouts}. No entanto, como
logo se convence, se uma classe de morfismos é invariante por \emph{pullbacks}
ou por \emph{pushouts}, então também é invariante por retrações.

Finalmente, no que diz respeito à existência de decomposições, elas
seguem do lema abaixo, usualmente chamado de \emph{lema de fatoração.
}Vamos enunciá-lo para fibrações, mas há uma versão dual para cofibrações
cuja demonstração é análoga.$$
\xymatrix{X \ar[rd]^{\imath} \ar@/^/[rrd]^{id} \ar@/_/[rdd]_{w\circ f} \\
& X' \ar@{~>}[r] \ar@{->>}[d] & X \ar[d]^f \\
& \mathrm{Path}(Y) \ar@{->>}[r]_-{\kappa} & Y }
$$
\begin{lem}
Seja $\mathbf{C}$ uma categoria dotada das noções de fibração, equivalência
fraca e de espaços de caminhos. Se as fibrações são invariantes por
pullbacks e todo objeto $Y$ é fracamente equivalente ao seu espaço
de caminhos $\mathrm{Path}(Y)$, então qualquer morfismo $f$ se decompõe
sob a forma $\jmath\circ\imath$, em que $\imath$ é equivalência
fraca e $\jmath$ é fibração.
\end{lem}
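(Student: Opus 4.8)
The plan is to run the classical \emph{mapping path space} construction, now phrased purely in terms of the hypotheses. Fix $f\colon X\to Y$ and let $\kappa=\pi_0\colon\mathrm{Path}(Y)\twoheadrightarrow Y$ be the structural (endpoint) fibration of the path space, $\pi_1\colon\mathrm{Path}(Y)\to Y$ the other endpoint, and $w\colon Y\to\mathrm{Path}(Y)$ the canonical section with $\pi_0\circ w=\pi_1\circ w=\mathrm{id}_Y$. Hypothesis (H2) says $w$ is a weak equivalence; since $\kappa\circ w=\mathrm{id}_Y$ is a weak equivalence (being an identity), the $2$-de-$3$ property forces $\kappa$ to be a weak equivalence as well, i.e.\ an acyclic fibration.

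First I would form the pullback $X'$ of $f$ along $\kappa$, with projections $p\colon X'\to X$ and $q\colon X'\to\mathrm{Path}(Y)$ satisfying $f\circ p=\kappa\circ q$. Because $p$ is the base change of the fibration $\kappa$, invariance of fibrations under pullbacks (H1) gives that $p$ is a fibration. The universal property then yields a unique $\imath\colon X\to X'$ with $p\circ\imath=\mathrm{id}_X$ and $q\circ\imath=w\circ f$ (these are compatible, since $f\circ\mathrm{id}_X=f=\kappa\circ w\circ f$). To produce the fibration half I would set $\jmath=\pi_1\circ q$; then $\jmath\circ\imath=\pi_1\circ q\circ\imath=\pi_1\circ w\circ f=f$, so the candidate factorization $f=\jmath\circ\imath$ holds on the nose.

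It then remains to check that $\imath$ is a weak equivalence and $\jmath$ a fibration. For $\jmath$, I would observe that the pair $\langle p,\jmath\rangle\colon X'\to X\times Y$ is exactly the base change of the path fibration $\langle\pi_0,\pi_1\rangle\colon\mathrm{Path}(Y)\to Y\times Y$ along $f\times\mathrm{id}_Y$; by (H1) this is a fibration, and composing it with the projection $X\times Y\twoheadrightarrow Y$ (a fibration, as projections are—Example 6.1.5) exhibits $\jmath$ as a composite of fibrations, hence a fibration. For $\imath$, the clean route is $2$-de-$3$ once more: as soon as $p$ is known to be a weak equivalence, the relation $p\circ\imath=\mathrm{id}_X$ forces $\imath$ to be one too.

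The main obstacle is precisely the claim that $p$ is a weak equivalence. The hypotheses grant pullback-invariance only for fibrations, not for \emph{acyclic} fibrations, so I cannot simply transport the weak equivalence $\kappa$ across the pullback. I expect to close this gap using the homotopy already available in the ambient setting: since $p\circ\imath=\mathrm{id}_X$ strictly, it suffices to produce a homotopy $\imath\circ p\simeq\mathrm{id}_{X'}$ contracting the path coordinate of $X'$ back to the constant path, which would make $\imath$ an isomorphism in the homotopy category and hence a weak equivalence by definition. Extracting this contraction from the path-space data (rather than postulating that acyclic fibrations are stable under base change) is the delicate point; everything else is a routine diagram chase.
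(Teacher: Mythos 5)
Your construction is the same as the paper's: pull back the fibration $\kappa:\mathrm{Path}(Y)\twoheadrightarrow Y$ along $f$, use universality to produce $\imath:X\to X'$ with $p\circ\imath=id_{X}$ and $q\circ\imath=w\circ f$, take $\jmath$ to be an endpoint map composed with $q$, and try to conclude by 2-de-3. On the fibration half you are in fact \emph{more} careful than the paper: the paper's proof takes $\jmath$ to be ``$\kappa$ composed with the vertical leg of the pullback'' and its diagram marks that leg $q:X'\to\mathrm{Path}(Y)$ as a fibration, although $q$ is the base change of $f$ (not of $\kappa$) and so is not a fibration under the stated hypotheses; your identification of $\langle p,\jmath\rangle:X'\to X\times Y$ as the base change of $\langle\pi_{0},\pi_{1}\rangle:\mathrm{Path}(Y)\to Y\times Y$ along $f\times id_{Y}$, followed by a projection, is the correct argument (granting, as in the paper's example 6.1.3, that projections are fibrations, which tacitly uses that all objects are fibrant --- true in the models where the lemma is applied).

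The genuine gap is the one you flag yourself, and it is real: nothing in the two hypotheses (pullback-invariance of fibrations; $Y\rightsquigarrow\mathrm{Path}(Y)$) makes the base change $p:X'\to X$ of the \emph{acyclic} fibration $\kappa$ a weak equivalence, and without that, 2-de-3 applied to $p\circ\imath=id_{X}$ yields nothing about $\imath$. You should know that the paper does not close this gap either: its proof simply recalls the ``resolu\c{c}\~ao fibrante'' diagram of the preceding subsection, where $X'\rightsquigarrow X$ is \emph{marked} as a weak equivalence with the justification ``invari\^ancia das fibra\c{c}\~oes por \emph{pullbacks}'' --- an invariance that gives that $p$ is a fibration, not that it is a weak equivalence --- and then invokes 2-de-3 exactly as you anticipate. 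Your proposed repair is the right one in the setting where the lemma is actually used (the $P$-fibration structure generated by a natural path space, with every object $P$-fibrant): there $p$ admits the section $\imath$, and the structure maps of $P$ (the unit $\imath(Y)$ together with the contraction/reparametrization data) supply a $P$-homotopy $\imath\circ p\simeq id_{X'}$ collapsing the path coordinate, so $p$ is a $P$-homotopy equivalence, hence an isomorphism in $\mathscr{H}^{P}(\mathbf{C})$ and thus a weak equivalence. So your plan is completable in the intended context, but as written --- and as the lemma is literally stated --- the weak-equivalence half remains unproved, at precisely the point where the paper's own proof is an assertion rather than a derivation.
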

\begin{proof}
Iniciamos relembrando algo que já foi visto na subsecção anterior:
se $\mathrm{Path}(Y)$ é espaço de caminhos de $Y$, então toda fibração
$\kappa:\mathrm{Path}(Y)\twoheadrightarrow Y$ determina uma resolução
fibrante de qualquer $f:X\rightarrow Y$ (é só utilizar a invariância
das fibrações por \emph{pullbacks}). Agora, se $\omega:Y\rightarrow\mathrm{Path}(Y)$
é equivalência fraca, então, por universalidade, existe o mapa $\imath:X\rightarrow\mathrm{Pb}$
apresentado no diagrama acima. Em particular, a propriedade 2-de-3
faz dele uma equivalência fraca. Portanto, tomando $\jmath$ igual
à composição de $\kappa$ com o morfismo vertical do \emph{pullback},
vê-se que $f=\jmath\circ\imath$, fornecendo a decomposição procurada.
\end{proof}
$\qquad\qquad\qquad\qquad\qquad\qquad\quad$

\section{Utilidade}

$\quad\;\,$Ao final da secção anterior, mostramos que os cilindros
e os espaços de caminhos naturais determinam, em certo sentido, toda
a estrutura de um modelo. Assim, espera-se que muito do que se utiliza
no estudo das categorias próprias para o estudo da homotopia possa
ser determinado somente pela hipótese de existência de objetos fibrantes
e cofibrantes, assim como de cilindros e espaços de caminhos. 

Em contrapartida, se já supomos a presença de um modelo, então tais
classes de objetos existem aos montes. Como consequência, espera-se
que a escolha prévia de um modelo numa categoria com equivalências
fracas facilite diversos cálculos. Particularmente, espera-se que
estes sejam \emph{versáteis}. Afinal, eles dependem apenas da \emph{existência}
de objetos fibrantes, cilindros, etc, e não da maneia com estes foram
\emph{escolhidos} para realizar o referido cálculo.

Nesta secção, iniciamos confirmando a suspeita para as categorias
homotópicas. Em seguida, confirmamo-la também para functores derivados.
Por fim, tratamos de uma classe particular de adjunções, denominadas
\emph{adjunções de Quillen}, as quais induzem isomorfismos em homotopia
mesmo ser preservar equivalências fracas. 

\subsection*{\uline{Categoria Homotópica}}

$\quad\;\,$Diz-se que $f,g;X\rightarrow Y$, são \emph{formalmente
homotópicos à esquerda} quando existe $\mathrm{cyl}(X)$ no qual está
definido um morfismo $H:\mathrm{cyl}(X)\rightarrow Y$ que torna comutativa
a primeira parte do diagrama da próxima página. Nele, $n_{0}$ e $n_{1}$
denotam, respectivamente, a composição da cofibração $X\oplus X\twoheadrightarrow\mathrm{cyl}(X)$
com as inclusões na primeira segunda entrada de $X\oplus X$. Assim,
$f$ e $g$ são formalmente homotópicos à esquerda se $\nabla\circ f\oplus g$,
denotado por $f+g$, estende-se a algum cilindro.

De maneira semelhante, fala-se que $f$ e $g$ são \emph{formalmente
homotópicos à direita} quando é possível obter um espaço de caminhos
de $Y$ e um morfismo $h:X\rightarrow\mathrm{path}(Y)$ tal que a
segunda parte do diagrama acima é comutativa. Ali, $pr_{0}$ e $pr_{1}$
representam a composição da fibração $\mathrm{path}(X)\rightarrowtail X\times X$
com a projeção de $X\times X$ na primeira e segunda entrada. Assim,
os morfismos $f$ e $g$ são formalmente homotópicos à direita se
$f\times g\circ\Delta$, representado por $f\cdot g$, pode ser levantado
a algum espaço de caminhos.$$
\xymatrix{\ar[dd]_{n_{0} \oplus n_{1}} X\oplus X \ar[r]^{f \oplus g} & Y\oplus Y \ar[rd]^{\nabla} && Y \times Y \ar[r]^{f \times g} & X \times X \\
& & Y \ar[ru]^{\Delta} \ar[rd]_{h} \\
\mathrm{cyl}(X)\oplus \mathrm{cyl}(X) \ar[r]_-{\nabla} & \mathrm{cyl}(X) \ar[ru]_{H} && \mathrm{path}(X) \ar[r]_-{\Delta} & \mathrm{path}(X) \times \mathrm{path}(X) \ar[uu]_{pr_0 \times pr_1}  }
$$

Se $X$ é cofibrante, então a relação que identifica morfismos formalmente
homotópicos à esquerda é de equivalência no conjunto $\mathrm{Mor}(X;Y)$.
Dualmente, se $Y$ é fibrante, então a relação responsável por identificar
morfismos formalmente homotópicos à direita é de equivalência neste
mesmo conjunto. Além disso, se as duas condições são simultaneamente
satisfeitas, então ambas as equivalências coincidem. Isto é, se $X$
é cofibrante e $Y$ é fibrante, então $f,g:X\rightarrow Y$ são formalmente
homotópicos à esquerda se, e só se, o são à direita. Em tal caso,
diz-se simplesmente que eles são \emph{formalmente homotópicos} e
escreve-se $f\cong g$ (não confundir com a notação $f\simeq g$,
utilizada para denotar $1$-morfismos homotópicos em $2$-categorias).
Para detalhes, sugerimos a quarta secção de \cite{Dwyer_model_categories}
ou as secções 7.3-7.5 de \cite{Hirschhorn_model_categories}.

Em virtude do exposto, para quaisquer objetos $X$ e $Y$ numa categoria
modelo $\mathbf{C}$, o respectivo conjunto $\mathrm{Mor}_{\mathbf{C}}(\mathcal{F}\mathcal{C}X;\mathcal{F}\mathcal{C}Y)$
está dotado de uma relação de equivalência, a qual passa ao quociente
e define uma nova categoria $\mathcal{H}\mathbf{C}$, denominada \emph{categoria
homotópica formal}. Se dois objetos $X,Y\in\mathbf{C}$ são ali isomorfos,
escreve-se $X\cong Y$ e diz-se que possuem o mesmo \emph{tipo de
homotopia} \emph{formal}. Isto significa que é possível obter morfismos
$f:\mathcal{F}\mathcal{C}X\rightarrow\mathcal{F}\mathcal{C}Y$ e $g:\mathcal{F}\mathcal{C}Y\rightarrow\mathcal{F}\mathcal{C}X$,
denominados \emph{equivalência homotópica formal} e \emph{inversa
homotópica} \emph{formal}, cujas composições satisfazem $g\circ f\cong id_{X}$
e $f\circ g\cong id_{Y}$.

Tem-se um functor sobrejetivo $\pi:\mathbf{C}\rightarrow\mathcal{H}\mathbf{C}$,
que é identidade em objetos e que a cada morfismo $f$ entre $X$
e $Y$ faz corresponder a sua \emph{classe de homotopia formal} $[f]$
(isto é, o conjunto dos morfismos homotópicos a $\mathcal{FC}f$).
A coleção de todas estas classes é denotada por $[X;Y]_{\mathbf{C}}$.
Em outras palavras, costuma-se escrever $[X;Y]_{\mathbf{C}}$ ao invés
de $\mathrm{Mor}_{\mathcal{H}\mathbf{C}}(X;Y)$.

\subsection*{\uline{Equivalência}}

$\quad\;\,$Mostraremos que, para qualquer categoria modelo, sua categoria
homotópica e sua categoria homotópica formal são equivalentes. Para
tanto, necessitaremos do seguinte resultado, aqui denominado \emph{teorema
de Whitehead} por se tratar da generalização de um importante resultado
no estudo da homotopia clássica que leva este mesmo nome:
\begin{prop}
Um morfismo entre objetos que são simultaneamente fibrantes e cofibrantes
é equivalência fraca se, e somente se, é equivalência homotópica formal.
\end{prop}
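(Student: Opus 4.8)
The statement is the abstract Whitehead theorem, and the plan is to reduce it to the case of a fibration and then settle the two implications using the behaviour of acyclic (co)fibrations on homotopy classes. The two facts I would isolate first, each proved directly from the lifting and factorization axioms, concern objects that are simultaneously fibrant and cofibrant: (i) if $p\colon X\twoheadrightarrow Y$ is an acyclic fibration, then postcomposition $p_{*}\colon[Z;X]_{\mathbf C}\to[Z;Y]_{\mathbf C}$ is a bijection for every such $Z$; and (ii) the dual, that for an acyclic cofibration $\imath\colon X\rightarrowtail Y$ precomposition $\imath^{*}\colon[Y;Z]_{\mathbf C}\to[X;Z]_{\mathbf C}$ is a bijection. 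Surjectivity in (i) is a lift of a representative $Z\to Y$ against $p$, legitimate since $\varnothing\rightarrowtail Z$ is a cofibration; injectivity is a lift of a formal homotopy out of the cylinder $\mathrm{cyl}(Z)$ against the same acyclic fibration; and (ii) is strictly dual, using $\mathrm{path}(Z)$ and the fact that fibrations lift against acyclic cofibrations.

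Given $f\colon X\to Y$ with $X,Y$ both fibrant and cofibrant, I would factor $f=p\circ\imath$ with $\imath\colon X\rightarrowtail W$ an acyclic cofibration and $p\colon W\twoheadrightarrow Y$ a fibration. Then $W$ is cofibrant (through $\varnothing\rightarrowtail X\rightarrowtail W$) and fibrant (through $W\twoheadrightarrow Y\twoheadrightarrow *$), so it lies in the same good class. Applying (ii) with $Z=X$ and $Z=W$ shows that $\imath$ is already a formal homotopy equivalence, while it is a weak equivalence by construction; hence, by the $2$-de-$3$ property and its analogue in $\mathcal{H}\mathbf{C}$ (where it is just the groupoid law for isomorphisms), $f$ is a weak equivalence if and only if $p$ is, and a formal homotopy equivalence if and only if $p$ is. This reduces the whole proposition to the single claim that a fibration $p$ between fibrant-cofibrant objects is a weak equivalence exactly when it is a formal homotopy equivalence.

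For the forward implication, if $p$ is a weak equivalence it is an acyclic fibration, so (i) gives bijections $p_{*}\colon[Y;W]_{\mathbf C}\to[Y;Y]_{\mathbf C}$ and $p_{*}\colon[W;W]_{\mathbf C}\to[W;Y]_{\mathbf C}$. Choosing $g$ with $pg\cong\mathrm{id}_{Y}$ and then invoking injectivity of the second bijection on the relation $[p\circ(gp)]=[p]=[p\circ\mathrm{id}_{W}]$ yields $gp\cong\mathrm{id}_{W}$, so $g$ is a formal homotopy inverse and $p$ is a formal homotopy equivalence.

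The reverse implication is the crux. Starting from a formal homotopy inverse $g$ with $pg\cong\mathrm{id}_{Y}$, I would first rectify $g$ to a genuine section: since $Y$ is cofibrant, $\imath_{0}(Y)\colon Y\rightarrowtail\mathrm{cyl}(Y)$ is an acyclic cofibration, so lifting the homotopy $\mathrm{cyl}(Y)\to Y$ (witnessing $pg\cong\mathrm{id}_{Y}$) against the fibration $p$, starting at $g$, produces $s\colon Y\to W$ with $ps=\mathrm{id}_{Y}$ and $s\cong g$, whence $sp\cong\mathrm{id}_{W}$. It then remains to promote the homotopy identity $sp\cong\mathrm{id}_{W}$ into the assertion that $p$ is acyclic. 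The plan is to show $p$ has the right lifting property against every cofibration: in a square over $p$, the composite of the bottom map with $s$ is a naive candidate that is correct as a map into $Y$ but only homotopic to the prescribed map on the subobject, and one repairs it by extending the homotopy $sp\cong\mathrm{id}_{W}$ across the cofibration, using once more that $p$ lifts against acyclic cofibrations. This covering-homotopy correction — equivalently, exhibiting $p$ as a retract of an acyclic fibration manufactured from $\mathrm{cyl}(W)$ and then appealing to closure of weak equivalences under retracts — is the main obstacle, and the only point where genuine work beyond the axioms and facts (i)--(ii) is required. Once $p$ is known to be a weak equivalence, the reductions above return the statement for $f$, completing the proof.
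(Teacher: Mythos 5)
Your reduction and your forward implication coincide with the paper's own proof: the same factorization $f=p\circ\imath$ into an acyclic cofibration followed by a fibration, the same observation that the intermediate object is again fibrant and cofibrant, and your facts (i)--(ii) are precisely the paper's two lifts (the section obtained by lifting against $\varnothing\rightarrow Y$, and the homotopy obtained by lifting $g\circ f+id$ along the cofibration $X\oplus X\rightarrowtail\mathrm{cyl}(X)$) repackaged as bijections on formal homotopy classes. For the reverse implication the paper does exactly what you do up to the rectification of the homotopy inverse $g$ into a genuine section $s$ with $p\circ s=id_{Y}$, and then stops, deferring the remaining steps to Lemma 4.24 of \cite{Dwyer_model_categories}; your proposal is an attempt to sketch those remaining steps.

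It is there that your sketch, taken literally, would fail. The homotopy $K:\mathrm{cyl}(W)\rightarrow W$ witnessing $s\circ p\cong id_{W}$ is in general \emph{not vertical}: $p\circ K$ is some homotopy from $p$ to $p$, not the constant one. Consequently, if in a lifting square $(a,b)$ over a cofibration $i$ you correct the naive candidate $s\circ b$ by extending the homotopy $K\circ\mathrm{cyl}(a)$ across $i$ and reading off the endpoint $c$, you do obtain $c\circ i=a$, but you lose $p\circ c=b$, because downstairs the extended homotopy has no reason to return to $b$. The missing idea --- the actual content of the ``mais passos'' the paper outsources to Dwyer--Spalinski --- is to first improve $K$ to a \emph{fiberwise} homotopy over $Y$ (the model-categorical analogue of Dold's theorem), which costs one further lift against the fibration $p$; only with a vertical $K$ does your covering-homotopy correction close the square. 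Your alternative route also needs care: invoking ``closure of weak equivalences under retracts'' appeals to a property that is not among the axioms as this paper states them (only the two auxiliary classes are explicitly required to be closed under retracts), so you would have to derive it from the two weak factorization systems rather than cite it as an axiom.
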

\begin{proof}
Mostraremos que as fibrações acíclicas assumindo valores em objetos
cofibrantes são são equivalências homotópicas. Por dualização, o mesmo
poderá ser concluido sobre as cofibrações acíclicas definidas em objetos
fibrantes. Uma vez que qualquer equivalência fraca $f:X\rightarrow Y$
se decompõe em termos de uma cofibração acíclica $\imath:X\rightarrow X'$
seguida de uma fibração $\jmath:X'\rightarrow Y$, a propriedade 2-de-3
garantirá que $\jmath$ também é acíclica. Em particular, se $f$
estiver definida entre objetos que são ao mesmo tempo fibrantes e
cofibrantes, então $X'$ também será fibrante e cofibrante, de modo
que $\imath$ e $\jmath$ (e, consequentemente, $f=\jmath\circ\imath$)
serão equivalências homotópicas. Seja, então, $f:X\rightarrow Y$
uma fibração acíclica, com $Y$ cofibrante. Evidentemente, ela possui
a propriedade de levantamento com respeito a $\varnothing\rightarrow Y$.
Daí, existe um mapa $g:Y\rightarrow X$ que deixa comutativo o primeiro
dos diagramas abaixo.$$
\xymatrix{ \varnothing \ar[r] \ar[d] & X' \ar[d]^f && X\oplus X \ar[d]_{\imath} \ar[rr]^-{g\circ f + id} && X \ar[d]^{f} \\
Y \ar[r]_{id} \ar@{-->}[ru]^g & Y && \mathrm{cyl}(X) \ar@{-->}[rru]^H \ar[rr]_{f\circ \jmath} && Y}
$$De imediato, isto nos dá $f\circ g=id_{Y}$. Afirmamos que $g\circ f\simeq id_{X'}$,
o que nos mostrará que $f$ é equivalência homotópica, tendo $g$
como inversa. Dado um cilindro $\mathrm{cyl}(X)$ qualquer, digamos
obtido decompondo $\nabla$ sob a forma $\jmath\circ\imath$, observamos
que $f:X\rightarrow Y$ admite propriedade de levantamento relativamente
a $\imath:X\oplus X\twoheadrightarrow\mathrm{cyl}(X)$. Diante disso,
há homotopia $H:\mathrm{cyl}(X)\rightarrow X$ que torna comutativo
o segundo dos diagramas abaixo, garantindo o que havíamos afirmado.
A recíproca segue a mesma linha, mas utiliza de mais passos. Para
uma prova detalhada, veja o lema 4.24 à página 23 de \cite{Dwyer_model_categories}.
\end{proof}
De posse do teorema de Whitehead, vamos mostrar que $\pi:\mathbf{C}\rightarrow\mathcal{H}\mathbf{C}$
é localização de $\mathbf{C}$ com respeito a classe das equivalências
fracas. Por unicidade, isso implicará em $\mathcal{H}\mathbf{C}\simeq\mathrm{Ho}(\mathbf{C})$,
que é a equivalência que procuramos. Como consequência, isto nos mostrará
que \emph{dois objetos numa categoria modelo tem o mesmo tipo de homotopia
se, e somente se, possuem o mesmo tipo de homotopia formal}. Particularmente,
se $\mathbf{C}$ é $2$-categoria dotada de um modelo, então \emph{dois
de seus morfismos serão homotópicos quando, e só quando, o forem formalmente}.

Iniciamos observando que, se $f:X\rightarrow Y$ é equivalência fraca,
então $\mathcal{FC}f$ é equivalência homotópica e, portanto, $\pi(f)$
é isomorfismo. Seja $F:\mathbf{C}\rightarrow\mathbf{D}$ outro functor
satisfazendo esta mesma propriedade. Afirmamos haver $F_{*}:\mathcal{H}\mathbf{C}\rightarrow\mathbf{D}$
cumprindo $F_{*}\circ\pi=F$. Em objetos, $F_{*}$ é a identidade.
Para definí-lo em morfismos, observemos o segundo dos diagramas abaixo,
o qual foi obtido aplicando $F$ ao primeiro deles. Devidamente motivados,
colocando
\[
F_{*}([f])=F(\jmath_{y})\circ F(\imath_{y})^{-1}\circ F(f)\circ F(\imath_{x})\circ F(\jmath_{x})^{-1}
\]
vê-se que $F_{*}\circ\pi=F$ é trivialmente satisfeita. Por sua vez,
a comutatividade do segundo diagrama nos diz que todo mapa $F(f)$
na imagem do functor $F$ deve ter precisamente a forma lá apresentada,
o que garante a unicidade de $F_{*}$.$$
\xymatrix{X \ar[d]_{f} & \mathcal{C}X \ar[r]^{\jmath _x} \ar[l]_{\imath _x} & \mathcal{FC}X \ar[d]^{[f]} && F(X) \ar[d]_{F(f)} & F(\mathcal{C}X) \ar[r]^{F(\jmath _x)} \ar[l]_{F(\imath _x)} & F(\mathcal{FC}X) \ar[d]^{F([f])} \\
Y & \mathcal{C}Y \ar[r]^{\jmath _y} \ar[l]_{\imath _y} & \mathcal{FC}Y && F(Y) & F(\mathcal{C}Y) \ar[r]_{F(\jmath _y)} \ar[l]^{F(\imath _y)} & F(\mathcal{FC}Y) }
$$
\begin{rem}
Numa categoria com equivalências fracas, nem todo isomorfismo de $\mathrm{Ho}(\mathbf{C})$
provém de equivalências fracas. Quando este é o caso, diz-se que $\mathbf{C}$
é \emph{saturada} com respeito a $\mathrm{Weak}$. Por exemplo, toda
$2$-categoria é saturada com respeito às equivalências homotópicas.
O resultado aqui demonstrado nos garante que qualquer categoria modelo
é saturada relativamente às equivalências homotópicas formais.
\end{rem}

\subsection*{\uline{Functores Derivados}}

$\quad\;\,$Uma vez que categorias homotópicas independem de fibrações
e cofibrações, espera-se que a escolha de um determinado modelo não
produza condições \emph{necessárias} à existência de functores derivados.
Vejamos, no entanto, que cada modelo determina condições \emph{suficientes}
à existência de tais functores.
\begin{prop}
Seja $F:\mathbf{C}\rightarrow\mathbf{C}'$ um functor entre categorias
com equivalências fracas. Para que $LF$ exista, basta haver um modelo
em $\mathbf{C}$ com respeito ao qual $F$ mapeia cofibração acíclicas
entre objetos cofibrantes em equivalências fracas. Neste caso, $LF(X)\rightsquigarrow F(\mathcal{C}X)$
seja qual for a resolução cofibrante $\mathcal{C}$.
\end{prop}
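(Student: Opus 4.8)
The statement to prove is a criterion for the existence of the left derived functor $LF$ of a functor $F:\mathbf{C}\rightarrow\mathbf{C}'$ between categories with weak equivalences. The claim: if $\mathbf{C}$ carries a model structure for which $F$ sends acyclic cofibrations between cofibrant objects to weak equivalences, then $LF$ exists and is computed by $LF(X)\rightsquigarrow F(\mathcal{C}X)$ for any cofibrant resolution $\mathcal{C}X$.

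Let me think about what's being asserted and how I'd prove it.

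---

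The recipe for proving the result is as follows.

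First, recall that $LF$ is by definition the right Kan extension of $p'\circ F$ along $p:\mathbf{C}\rightarrow\mathrm{Ho}(\mathbf{C})$. The plan is to produce an explicit candidate functor, show it descends to $\mathrm{Ho}(\mathbf{C})$, and then verify the universal property. The natural candidate is the rule $X\mapsto p'(F(\mathcal{C}X))$, where $\mathcal{C}X$ is a functorial cofibrant replacement supplied by the factorization axiom (the decomposition $\varnothing\twoheadrightarrow\mathcal{C}X\rightsquigarrow X$ from equation (\ref{troca_fibrante})). Because the factorization is functorial, this is genuinely a functor on $\mathbf{C}$, and the acyclic cofibration $\mathcal{C}X\rightsquigarrow X$ witnesses the claimed relation $LF(X)\rightsquigarrow F(\mathcal{C}X)$ once we know the candidate actually is the derived functor.

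The crucial step is showing that this candidate is well-defined on homotopy classes, and here is where the hypothesis enters. First I would establish a \emph{Ken Brown--type lemma}: a functor that sends acyclic cofibrations between cofibrant objects to weak equivalences automatically sends \emph{all} weak equivalences between cofibrant objects to weak equivalences. The trick is the standard one: given a weak equivalence $g:A\rightsquigarrow B$ between cofibrant objects, factor the map $A\oplus B\rightarrow B$ (built from $g$ and $\mathrm{id}_B$) as a cofibration followed by an acyclic fibration, and observe that composing with the two coproduct inclusions $\imath_1,\imath_2$ exhibits both $g$ and $\mathrm{id}_B$ as retracts of acyclic cofibrations between cofibrant objects; applying $F$ and using the 2-of-3 property of weak equivalences then forces $F(g)$ to be a weak equivalence. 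Granting this, two homotopic maps between cofibrant objects induce maps agreeing after applying $F$ and passing to $\mathrm{Ho}(\mathbf{C}')$, because a (left) homotopy is detected through a cylinder $\mathrm{cyl}(A)$, whose structure maps are weak equivalences between cofibrant objects and are therefore carried to isomorphisms in $\mathrm{Ho}(\mathbf{C}')$.

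Once the candidate descends to a functor $\overline{LF}:\mathrm{Ho}(\mathbf{C})\rightarrow\mathrm{Ho}(\mathbf{C}')$, the remaining task is the universal property of the right Kan extension along $p$: for every functor $G:\mathrm{Ho}(\mathbf{C})\rightarrow\mathrm{Ho}(\mathbf{C}')$ equipped with a natural transformation $G\circ p\rightarrow p'\circ F$, there is a unique factorization through $\overline{LF}$. The natural transformation $\overline{LF}\circ p\rightarrow p'\circ F$ is supplied by applying $p'\circ F$ to the resolution maps $\mathcal{C}X\rightsquigarrow X$, which become isomorphisms in $\mathrm{Ho}(\mathbf{C}')$ precisely by the Ken Brown lemma (the resolutions are cofibrant and the map is a weak equivalence into a possibly non-cofibrant target, so one works with the cofibrant replacement throughout). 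Uniqueness follows because every object of $\mathbf{C}$ is weakly equivalent to its cofibrant replacement, so any comparison transformation is pinned down on cofibrant objects, and these generate $\mathrm{Ho}(\mathbf{C})$ up to the isomorphisms $p(\mathcal{C}X)\simeq p(X)$.

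The main obstacle I anticipate is the Ken Brown lemma itself, specifically the bookkeeping of the retract diagram: one must arrange the factorization of $A\oplus B\to B$ so that each inclusion $\imath_i$ composes with it to give an acyclic cofibration between cofibrant objects (cofibrancy of the intermediate object uses that coproducts of cofibrant objects are cofibrant, which follows from invariance of cofibrations under pushouts, established in the first section). The 2-of-3 property must then be invoked carefully, since $F$ is only assumed to preserve the \emph{acyclic cofibration} class, not fibrations or general weak equivalences. Everything after that lemma is formal manipulation of the universal property of localization and Kan extensions, already available from the third chapter.
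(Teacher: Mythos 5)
Your proposal is correct and shares the paper's overall skeleton: the candidate is the same functor $\mathcal{C}F(X)=F(\mathcal{C}X)$ built from the functorial cofibrant replacement, descended to $\mathrm{Ho}(\mathbf{C})$ by the universal property of localization, with the comparison transformation obtained by applying $F$ to the resolutions $\mathcal{C}X\rightsquigarrow X$. You diverge at the key step, however, and your divergence is a genuine improvement. The paper disposes of the crucial point in one line by asserting that, when $f$ is a weak equivalence, $\mathcal{C}f$ is an \emph{acyclic cofibration} between cofibrant objects, so that the hypothesis applies directly; but the functorial factorization only makes $\mathcal{C}f$ a morphism between cofibrant objects which, by 2-of-3, is a weak equivalence — nothing forces it to be a cofibration (in the Str\"om model, where every object is cofibrant, one may take $\mathcal{C}f=f$). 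Your interpolation of the Ken Brown--type lemma — $F$ sends acyclic cofibrations between cofibrant objects to weak equivalences, hence sends \emph{all} weak equivalences between cofibrant objects to weak equivalences — is exactly what bridges this gap, and your sketch (factor $A\oplus B\rightarrow B$ as a cofibration $\jmath$ followed by an acyclic fibration $q$, note that the coproduct inclusions are cofibrations by pushout-invariance, apply $F$ and 2-of-3) is the right mechanism. One wording slip: $g$ and $\mathrm{id}_{B}$ are exhibited as \emph{composites} $q\circ(\jmath\circ\imath_{1})$ and $q\circ(\jmath\circ\imath_{2})$ of acyclic cofibrations with the common factor $q$, not as retracts; it is the equation $F(q)\circ F(\jmath\circ\imath_{2})=\mathrm{id}$ together with 2-of-3 that makes $F(q)$, and hence $F(g)$, a weak equivalence. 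What the paper's shortcut buys is brevity; what yours buys is an argument that actually uses the hypothesis as stated. Your additional paragraph on homotopic maps descending is harmless but dispensable: once $p'\circ\mathcal{C}F$ inverts weak equivalences, the localization property alone yields the descended functor, which is the only route the paper needs.
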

\begin{proof}
Seja $\mathcal{C}F$ o functor que a cada $X$ associa a imagem $F(\mathcal{C}X)$
de $F$ por uma resolução cofibrante, e que a cada morfismo $f$ devolve
$F(\mathcal{C}f)$. Se $f$ é uma equivalência fraca, então $\mathcal{C}f$
é cofibração acíclica entre objetos cofibrantes, de modo que, pela
hipótese, $\mathcal{C}F(f)$ também é equivalência fraca. Daí, $p'\circ\mathcal{C}F$
mapeia equivalências fracas em isomorfismos, de modo que, por universalidade
da categoria homotópica de $\mathbf{C}$, existe um único functor
$LF:\mathrm{Ho}(\mathbf{C})\rightarrow\mathrm{Ho}(\mathbf{C}')$,
tal que $p\circ LF=p'\circ\mathcal{C}F$. Tem-se uma transformação
natural $\xi:\mathcal{C}F\rightarrow F$, que a cada $X$ faz corresponder
a imagem de $F$ por $\mathcal{C}X\rightsquigarrow X$. Portanto,
há também uma transformação natural entre $p'\circ\mathcal{C}F$ (que
nada mais é que $p\circ LF$) e $p'\circ F$. Com ela, $LF$ é universal
e, portanto, functor derivado à esquerda de $F$.
\end{proof}
Via dualização, obtém-se um resultado estritamente análogo ao anterior
para functores derivados à direita. Com efeito, para garantir a existência
$RF$ , é suficiente exibir um modelo em $\mathbf{C}$ no qual $F$
leva fibrações acíclicas entre objetos fibrantes em equivalências
fracas. Nele, tem-se $RF(X)\rightsquigarrow F(\mathcal{F}X)$ independentemente
da escolha da resolução fibrante $\mathcal{F}$.

\subsection*{\uline{Limites Homotópicos}}

$\quad\;\,$Os limites homotópicos são os functores derivados à esquerda
de $\lim:\mathrm{Func}(\mathbf{J};\mathbf{C})\rightarrow\mathbf{C}$.
Até o momento, apesar de sabermos que eles sempre existem, não temos
uma maneira prática de calculá-los. Com isto em mente, observemos
o seguinte: suponhamos que para uma certa categoria $\mathbf{J}$
haja um modelo em $\mathrm{Func}(\mathbf{J};\mathbf{C})$. Neste caso,
pela unicidade das extensões de Kan e pela última proposição, tem-se
\[
\mathrm{hlim}(F)\rightsquigarrow\lim(\mathcal{C}F)\quad\mbox{e}\quad\mathrm{hcolim}(F)\rightsquigarrow\mathrm{colim}(\mathcal{F}F)
\]
sejam quais forem o functor $F:\mathbf{J}\rightarrow\mathbf{C}$,
a resolução cofibrante $\mathcal{C}$ e a resolução fibrante $\mathcal{F}$.
Assim, sob tal hipótese, a menos de equivalências fracas, o limite
e o colimite homotópico de $F$ podem ser obtidos em termos de limites
e colimites usuais. 

Posto isto, procuremos por modelos em $\mathrm{Func}(\mathbf{J};\mathbf{C})$.
Vimos que, se $\mathbf{C}$ é própria para homotopia, então $\mathrm{Func}(\mathbf{J};\mathbf{C})$
também o é, qualquer que seja a categoria $\mathbf{J}$: suas equivalências
fracas são as transformações naturais $\xi$ tais que $\xi(X)$ é
equivalência fraca de $\mathbf{C}$ para todo $X$. Assim, a ideia
imediata é propor como fibrações ou cofibrações de $\mathrm{Func}(\mathbf{J};\mathbf{C})$
as transformações $\xi$ em que $\xi(X)$ é fibração ou cofibração
de $\mathbf{C}$. 

Quando a categoria $\mathbf{C}$ satisfaz condições favoráveis (veja
a secção A.3.3 de \cite{Higher_topos_LURIE}), tais suspeitas se concretizam.
Mais precisamente, existem dois modelos em $\mathrm{Func}(\mathbf{J};\mathbf{C})$,
chamados de \emph{modelo injetivo }e de \emph{modelo projetivo}, sendo
tais que: no primeiro (resp. segundo) deles as fibrações (resp. cofibrações)
são precisamente as transformações $\xi$, em que $\xi(X)$ é fibração
(resp. cofibração) de $\mathbf{C}$ para cada $X$.

\subsection*{\uline{Sequências Longas}}

$\quad\;\,$A menos de equivalências fracas, algumas classes de limites
homotópicos podem ser calculados sem a necessidade de haver um modelo
em $\mathrm{Func}(\mathbf{J};\mathbf{C})$. Este é o caso, por exemplo,
dos \emph{pullbacks} e dos \emph{pushouts} homotópicos. Com efeito,
para calcular o \emph{pullback} homotópico entre $f:X\rightarrow Y$
e $g:X'\rightarrow Y$, basta substituir cada um dos objetos e um
dos mapas ($f$ ou $g$) por uma resolução fibrante, e então computar
o limite do diagrama obtido. Numa categoria cujos objetos são todos
fibrantes, basta realizar a troca de um dos mapas. Condições duais
são válidas para os \emph{pushouts} homotópicos. Remetemos o leitor
à secção A.2.4 de \cite{Higher_topos_LURIE} e à secção 13.3 de \cite{Hirschhorn_model_categories}.$$
\xymatrix{ & X' \ar[d]^g && \mathcal{C}X' \ar[d] &&& \mathcal{C}X' \ar[d]  \\
X \ar[r]_f & Y & \mathcal{C} \ar[r] & \mathcal{C}Y & &\mathrm{path}(\mathcal{C}Y) \ar[d] \ar[r] & \mathcal{C}Y \\
&&&& \mathcal{C}X \ar[r] & \mathcal{C}Y}
$$

Acima ilustramos os passos concernentes à substituição dos objetos
e de um dos morfismos por suas versões fibrantes (não há diferença
em substituir $f$ ou $g$, como logo se convence). É possível realizar
o cálculo do diagrama final de duas maneiras, apresentadas abaixo.
Na primeira delas, efetua-se dois \emph{pullbacks}. Na segunda, efetua-se
três. O objeto $\mathrm{ccyl}(f)$ ali apresentado chama-se \emph{mapping
path space} de $f$. O objeto dual, que surge durante o cálculo de
\emph{pushouts} homotópicos, é chamado de \emph{mapping cylinder}
e denotado por $\mathrm{cyl}(f)$.$$
\xymatrix{\mathrm{HPb} \ar@{.>}[r] \ar@{.>}[dd]& \mathrm{ccyl}(f) \ar@{-->}[r] \ar@{-->}[d] & \mathcal{C}X' \ar[d] & \mathrm{HPb} \ar@{.>}[r] \ar@{.>}[d]& \mathrm{ccyl}(f) \ar@{-->}[r] \ar@{-->}[d] & \mathcal{C}X' \ar[d]  \\
&\mathrm{path}(\mathcal{C}Y) \ar[d] \ar[r] & \mathcal{C}Y & \mathrm{ccyl}(g) \ar@{-->}[r] \ar@{-->}[d] & \mathrm{path}(\mathcal{C}Y) \ar[d] \ar[r] & \mathcal{C}Y  \\
\mathcal{C}X \ar[r] & \mathcal{C}Y && \mathcal{C}X \ar[r] & \mathcal{C}Y}
$$

Como consequência, \emph{mapping cones}, \emph{mapping cocones}, suspensões
e \emph{loops} podem todos ser comutados através de \emph{pullbacks}
e \emph{pushouts} usuais. Por universalidade, existem, então, os mapas
$-\Omega f$ e $-\Sigma f$ abaixo apresentados.$$
\xymatrix{\Omega X \ar@{-->}[rd]^{- \Omega f} \ar@/_/[rdd] \ar@/^/[drrr] &&&& \Sigma X   \\
& \Omega Y \ar[d] \ar[r] & \mathrm{ccone}(f) \ar[d] \ar[r] & \mathrm{*} \ar[d] && \ar@{-->}[lu]_{- \Sigma f} \Sigma Y   & \ar[l] \mathrm{cone}(f)   & \ar[l] \mathrm{*} \ar@/_/[ulll]  \\
& \mathrm{*} \ar[r] & X \ar[r]_f & Y && \ar@/^/[luu] \ar[u] \mathrm{*} & \ar[l] Y \ar[u] & X \ar[l]^f \ar[u] }
$$

Iterando o primeiro diagrama, conclui-se que qualquer morfismo $f:X\rightarrow Y$,
definido na pontuação $\mathbf{C}_{*}$ de uma categoria modelo $\mathbf{C}$,
a qual supomos ser formada somente por objetos fibrantes, origina
a seguinte sequência: $$
\xymatrix{\cdots \ar[r] & \Omega \mathrm{ccone}(f) \ar[r] & \Omega X \ar[r]^{-\Omega f} & \Omega Y \ar[r] & \mathrm{ccone}(f) \ar[r] & X \ar[r]^f & Y}
$$ 

A ela se dá o nome de \emph{sequência de fibrações} $f$. De maneira
análoga, agora supondo que a categoria em questão é formada de objetos
cofibrantes, ao se iterar o \emph{pushout} representado no segundo
dos diagramas acima, encontra-se a \emph{sequência de cofibrações
de} $f$: $$
\xymatrix{
X \ar[r]^f & Y \ar[r] & \mathrm{cone}(f) \ar[r] & \Sigma X  \ar[r]^{-\Sigma f} & \Sigma Y \ar[r] & \Sigma \mathrm{cone}(f) \ar[r] & \cdots}
$$

Observamos que, para qualquer que seja o objeto $Z\in\mathbf{C}_{*}$,
aplicando $[Z;-]_{\mathbf{C}_{*}}$ na sequência de fibrações de um
morfismo $f:X\rightarrow Y$, obtém-se uma sequência exata em $\mathbf{Set}$.
Dualmente, se aplicamos $[-;Z]_{\mathbf{C}_{*}}$ na sequência de
cofibrações, também obtemos uma sequência exata de conjuntos. Para
detalhes, veja a secção 6.5 de \cite{Hovey_model_categories}.

\subsection*{\uline{Adjunções de Quillen}}

$\quad\;\,$Procura-se por uma noção de isomorfismo entre categorias
modelo. Uma vez que a estrutura fundamental subjacente é a categoria
homotópica, a correspondente noção de isomorfismo deverá implicar
numa equivalência a nível das categorias homotópicas. Por sua vez,
não se deve esperar a validade recíproca, pois a categoria homotópica
descreve apenas aproximadamente a categoria ordinária. 

O \emph{insight} imediato é considerar os functores homotópicos. Esta,
no entanto, como já discutido anteriormente, é uma exigência muito
restritiva. A próxima ideia é então considerar functores $F:\mathbf{C}\rightarrow\mathbf{C}'$
tais que $LF$ existe e é equivalência entre as categorias homotópicas.
A inversa de $F$ deveria ser sua versão dual: um functor $G:\mathbf{C}'\rightarrow\mathbf{C}$
tal que $RG$ existe e é precisamente a inversa fraca de $LF$. Espera-se
que $G$ seja univocamente determinado por $F$. Isto se obtém exigindo
que ele seja adjunto de $F$.

Depois destas considerações, definamos: duas categorias modelo $\mathbf{C}$
e $\mathbf{C}'$ são ditas \emph{Quillen adjuntas} quando existem
functores $F:\mathbf{C}\rightarrow\mathbf{C}'$ e $G:\mathbf{C}'\rightarrow\mathbf{C}$,
os quais são adjuntos e preservam, respectivamente, cofibrações acíclicas
e fibrações acíclicas. Pela proposição 6.3.2, esta última condição
garante a existência de $LF$ e $RG$. Por sua vez, diz-se que $\mathbf{C}$
e $\mathbf{C}'$ são \emph{Quillen equivalentes} quando $LF$ e $RG$
são equivalências, sendo um a inversa fraca do outro.$\underset{\;}{\;}$
\begin{rem}
Será toda equivalência entre categorias homotópicas uma verdadeira
equivalência de Quillen? A resposta é negativa. Um contraexemplo é
dado em \cite{equivalence_not_model}.
\end{rem}

\chapter{$n$-Categorias}

$\quad\;\,$Na primeira secção deste capítulo, generalizamos o conceito
de categoria e consideramos entidades formadas não só de objetos e
morfismos entre eles, mas também de ``morfismos entre morfismos''
(chamados de \emph{homotopias}), de morfismos entre homotopias, e
assim sucessivamente. Isto nos dá a ideia intuitiva de $\omega$-\emph{categoria
fraca}. A secção termina com uma discussão acerca da dificuldade de
formalização de tal conceito e com a apresentação de uma família particular
de $\omega$-categorias fracas: as $(\infty,1)$-\emph{categorias},
cujos morfismos de grau superior são sempre isomorfismos. Nelas se
tem uma maneira natural de definir sua correspondente categoria homotópica. 

A segunda secção é marcada pela introdução de categorias cujos objetos
são ``combinatórios'', a partir da qual estabelecemos uma estratégica
para formalizar o conceito de $\omega$-categoria fraca, ao mesmo
tempo que apresentamos um modelo para as $(\infty,1)$-categorias. 

Finalmente, na terceira secção efetivamos uma construção que nos permite
associar a cada categoria com equivalências fracas uma $(\infty,1)$-categorias,
produzindo o seguinte slogan: \emph{as categorias próprias para o
estudo da homotopia são, precisamente, aquelas nas quais se tem a
noção coerente de homotopia} \emph{entre morfismos}.

Diferentemente dos capítulos anteriores, este não foi escrito seguindo
de perto uma ou outra referência: utilizou-se de um apanhado delas.
Ainda assim, pode-se dizer que os tópicos aqui abordados são cobertos
pelos textos \cite{higher_categories_CHENG,Higher_topos_LURIE,simplicial_homotopy_JARDINE}.

\section{Estrutura}

$\quad\;\,$Vimos que uma $2$-categoria estrita $\mathbf{C}$ é aquela
na qual há a noção ``morfismos entre morfismos'', chamados de homotopias,
que podem ser compostos de duas maneiras distintas e sendo tais que,
junto de cada uma delas, tem-se uma nova categoria. 

Assim, pode-se pensar numa $2$-categoria como sendo aquela em que
cada $\mathrm{Mor}_{\mathbf{C}}(X;Y)$ possui a estrutura de categoria,
aqui denotada por $\mathrm{Enr}_{\mathbf{C}}(X;Y)$. Seus morfismos
são precisamente as homotopias, ao passo que suas leis de composição
correspondem à composição horizontal. Finalmente, referindo à composição
vertical, é preciso que se tenha uma regra que toma homotopias e devolve
uma outra. Em outras palavras, dados $X,Y,Z\in\mathbf{C}$, é preciso
obter functores 
\begin{equation}
\circ:\mathrm{Enr}_{\mathbf{C}}(X;Y)\times\mathrm{Enr}_{\mathbf{C}}(X;Y)\rightarrow\mathrm{Enr}_{\mathbf{C}}(X;Y).\label{enrichment}
\end{equation}
que cumpram com associatividade e que preservam identidades. Assim,
em suma, uma $2$-categoria estrita é aquela que pode ser obtida mediante
o seguinte procedimento:
\begin{enumerate}
\item parte-se de uma categoria $\mathbf{C}$ e mantém-se seus objetos;
\item substitui-se cada conjunto $\mathrm{Mor}_{\mathbf{C}}(X;Y)$ por um
objeto $\mathrm{Enr}_{\mathbf{C}}(X;Y)$ de $\mathbf{Cat}$;
\item para cada terna de objetos de $\mathbf{C}$ associa-se um morfismo
$\circ$ de $\mathbf{Cat}$, como apresentado em \ref{enrichment},
o qual é associativo e preserva identidades.
\end{enumerate}
$\quad\;\,$Observamos que o procedimento acima possui sentido se
substituirmos $(\mathbf{Cat},\times)$ por qualquer outra categoria
monoidal $(\mathbf{V},\otimes)$. As entidades assim obtidas chamam-se
\emph{categorias} \emph{enriquecidas} \emph{sobre} $\mathbf{V}$. 
\begin{example}
As categorias usuais são aquelas enriquecidas sobre $\mathbf{Set}$,
ao passo que as $2$-categorias estritas são aquelas enriquecidas
sobre $\mathbf{Cat}$. 

\begin{example}
Toda categoria $\mathbf{C}$ pode ser trivialmente enriquecida sobre
qualquer $\mathbf{V}$: basta substituir cada $\mathrm{Mor}_{\mathbf{C}}(X;Y)$
pelo objeto neutro $1\in\mathbf{V}$ e considerar os morfismos $\circ$
todos dados pelo isomorfismo $1\otimes1\simeq1$. Para $\mathbf{Cat}$,
isto traduz o fato de que toda categoria é uma $2$-categoria estrita
trivial, tendo como $2$-morfismos somente as identidades. 
\end{example}
\end{example}
Observamos que um functor $F:\mathbf{C}\rightarrow\mathbf{D}$ entre
duas categorias quaisquer pode ser pensado como sendo composto de
uma regra que mapeia objetos em objetos, e, para quaisquer $X,Y$,
de uma outra função 
\[
F_{xy}:\mathrm{Mor}_{\mathbf{C}}(X;Y)\rightarrow\mathrm{Mor}_{\mathbf{D}}(F(X);F(Y)),
\]
que preserva composição e identidades. Por sua vez, quando as categorias
$\mathbf{C}$ e $\mathbf{D}$ estão enriquecidas sobre uma mesma estrutura
monoidal $\mathbf{V}$, seus ``conjuntos'' de morfismos são, na
verdade, \emph{objetos} de $\mathbf{V}$. Portanto, em tal situação,
é natural supor que as regras $F_{xy}$ sejam \emph{morfismos} em
$\mathbf{V}$, e não funções como no caso usual. Isto nos leva ao
conceito de functor enriquecido.

Com efeito, um \emph{functor} \emph{enriquecido} entre categorias
$\mathbf{C}$ e $\mathbf{D}$, ambas enriquecidas sobre uma mesma
estrutura monoidal $\mathbf{V}$, é consistido de uma regra $F$,
que manda objetos em objetos, e, para cada par $X,Y\in\mathbf{C}$,
de um correspondente morfismo
\[
F_{xy}:\mathrm{Enr}_{\mathbf{C}}(X;Y)\rightarrow\mathrm{Enr}_{\mathbf{D}}(F(X);F(Y)),
\]
em $\mathbf{V}$, o qual preserva composições e unidades. Obtém-se
uma categoria $\mathrm{Enr}(\mathbf{V})$, formada de categorias e
functores enriquecidos. Por exemplo, quando $\mathbf{V}=\mathbf{Set}$,
recaímos em $\mathbf{Cat}$.

Tal categoria pode ser feita monoidal de maneira natural. Desta forma,
é possível falar de categorias enriquecidas sobre $\mathrm{Enr}(\mathbf{V})$.
Em outras palavras, é possível iterar o processo de enriquecimento
sobre uma categoria monoidal fixa, de modo a obter uma sequência$$
\xymatrix{\cdots \ar[r] & n\mathrm{-Enr}(\mathbf{V}) \ar[r] & \cdots \ar[r] & 2\mathrm{-Enr}(\mathbf{V}) \ar[r] & 1\mathrm{-Enr}(\mathbf{V}) \ar[r] & \mathbf{V}.}
$$

Para $\mathbf{V}=\mathbf{Set}$, o $n$-ésimo termo da sequência é
denotado por $\mathrm{S}\mathbf{Cat}_{n}$. Seus objetos são chamados
de $n$-\emph{categorias} \emph{estritas}. Assim, uma $0$-categoria
estrita é simplesmente um conjunto, ao mesmo tempo que as $1$-categorias
estritas coincidem com as categoriais usuais, e as $2$-categorias
estritas são aquelas já apresentadas no último capítulo. Tem-se então
definido um sistema co-dirigido em $\mathbf{Cat}$, cujo limite é
a categoria $\mathrm{S}\mathbf{Cat}_{\omega}$ das $\omega$-\emph{categorias
estritas}.

Se uma $2$-categoria estrita é aquela na qual se tem objetos, morfismos
e $2$-morfismos, os quais podem ser compostos de duas maneiras distintas,
uma $3$-categoria estrita é aquela em que, além de toda essa informação
categórica, tem-se também $3$-morfismos, os quais admitem três distintas
formas de composição. Indutivamente, uma $\omega$-categoria estrita
é aquela formada de objetos, morfismos, $2$-morfismos, $3$-morfismos
e assim sucessivamente. Um $n$-morfismo pode ser composto de $n$
diferentes formas.

Relembramos que um $2$-functor é aquele que preserva não só objetos
e morfismos, mas também $2$-morfismos e suas correspondentes composições.
Analogamente, um mapeamento de $\mathrm{S}\mathbf{Cat}_{n}$ é um
\emph{$n$-functor}. Isto é, uma regra entre $n$-categorias que preserva
todas as estruturas envolvidas: objetos, cada uma das classes de morfismos
e cada uma das composições. 
\begin{example}
No sexto capítulo, vimos que qualquer categoria $\mathbf{C}$ pode
ser estendida de modo a se tornar uma $2$-categoria. Com efeito,
bastava-nos considerar como $2$-morfismos os quadrados comutativos
de $\mathbf{C}$ (isto é, os morfismos de $\mathscr{B}(\mathbf{C})$).
Vejamos agora que esta extensão pode ser continuada de modo a transformar
qualquer categoria numa $\omega$-categoria. Se $2$-morfismos são
quadrados comutativos, a ideia é considerar $3$-morfismos como ``cubos
comutativos'', e assim sucessivamente. De maneira mais precisa, para
cada $n$, a inclusão $\mathrm{S}\mathrm{\mathbf{Cat}}_{n}\rightarrow\mathbf{Cat}$
possui um adjunto à esquerda. Em outras palavras, \emph{seja qual
for o $n$, a teoria das $n$-categorias estritas é livremente gerada
pela teoria das categorias usual}.
\end{example}
Iniciamos o quarto capítulo tratando do procedimento de \emph{categorificação}.
Recordamos que ele possui um viés oposto ao problema de \emph{classificação}.
Com efeito, se por um lado classificar uma categoria significa encontrar
um conjunto que a traduz, por outro, categorificar um conjunto significa
determinar uma categoria que o engloba. Assim, classificar implica
em \emph{perder} informação categórica, ao passo que categorificar
faz com que a \emph{ganhemos}. Nesse sentido, pode-se pensar numa
$2$-categoria estrita como sendo simplesmente uma categorificação
das categorias usuais, e assim por diante. Em suma: \emph{enriquecer
sobre $\mathbf{Set}$ é categorificar}.

\subsection*{\uline{Enfraquecimento}}

$\quad\;\,$Na definição de $2$-categoria estrita (e, consequentemente,
na definição de $n$-categoria estrita), exigiu-se que os functores
$\circ$ cumprissem com associatividade e que preservassem identidades.
Isto se traduziu em igualdades entre functores, algo que já discutimos
ser uma restrição muito forte. Ao invés disso, se exigíssemos apenas
associatividade e preservação de identidades módulo isomorfismos naturais,
chegaríamos ao conceito de \emph{$2$-categoria fraca}.

A ideia seria então proceder de maneira indutiva, definindo $n$-categorias
fracas e, depois de uma tomada de limite, $\infty$-categorias fracas.
Uma tal entidade consistir-se-ia de objetos, morfismos entre objetos,
$2$-morfismos entre morfismos, e assim sucessivamente. Entretanto,
diferentemente do que acontece numa $\omega$-categoria estrita, morfismos
seriam associativos e preservariam identidades apenas módulo $2$-isomorfismos,
ao passo que $2$-morfismos o fariam a menos de $3$-isomorfismos,
\emph{ad infinitum}.

Sob este ponto de vista, seguindo o raciocínio construído no caso
estrito, a passagem indutiva de uma $(n-1)$-categoria fraca para
uma $n$-categoria fraca deveria ser dada num processo de ``enriquecimento
fraco''. Sua construção é delicada, sendo introduzida em \cite{weak_n_categories_TRIMBLE}
e recentemente generalizada (ano 2012) em \cite{weak_omega_categories_CHENG_TOM}.

Em contrapartida, com o objetivo de definir $\omega$-categorias fracas,
ao invés de se tentar enfraquecer o conceito de $n$-categoria estrita
e depois tomar o limite $n\rightarrow\infty$, poder-se-ia pensar
em tomar tal limite estrito e enfraquecer o resultado final. Seguindo
tal estratégia, diversas propostas de definições para $\omega$-categorias
fracas foram apresentadas. Um comparativo entre dez delas (publicado
no ano 2002) pode ser encontrado em \cite{survey_n_categories_TOM}.
Veja também os livros \cite{higher_categories_TOM,higher_categories_CHENG}.
Ao que parece, até o momento não se tem um consenso sobre qual definição
utilizar. Na próxima subsecção, discutiremos uma estratégia padrão
para obtê-las. 

A ideia é essencialmente a seguinte: vimos que as categorias usuais
geram todas as $\omega$-categorias estritas. Procura-se, então, por
categorias $\mathbf{C}$ que gerem $\mathbf{Cat}$ (e, portanto, $\mathrm{S}\mathbf{Cat}_{\omega}$)
e que tenham uma caracterização ``fácil de enfraquecer''. Define-se,
então, uma $\omega$-categoria fraca como sendo aquela que é gerada
por entes que satisfazem propriedades enfraquecidas de objetos de
$\mathbf{C}$. Por exemplo (que é justamente o que obteremos na próxima
secção), pode ser que os objetos da categoria $\mathbf{C}$ obtida
sejam caracterizados por um problema de extensão \emph{única}. Com
isso em mãos, considera-se como $\omega$-categorias fracas aquelas
geradas pelos objetos que satisfazem o mesmo problema de levantamento,
só que abonados da exigência de unicidade.

Uma vez com o conceito de $\omega$-categoria fraca em mãos, pode-se
falar de \emph{$(\infty,n)$-categorias}. Estas são simplesmente classes
particulares de $\omega$-categorias fracas tais que todo $m$-morfismo,
com $m>n$, é um $m$-isomorfismo. Por exemplo, as $(\infty,0)$-categorias,
chamadas de $\infty$\emph{-grupóides},\emph{ }são aquelas em que
qualquer $n$-morfismo é uma equivalência. Intuitivamente, o processo
de enriquecimento fraco sobre as \emph{$(\infty,n)$-}categorias deve
produzir uma $(\infty,n+1)$-categoria. 

Em uma $(\infty,1$)-categoria, a relação que identifica $1$-morfismos
que podem ser ligados por $2$-morfismos é de equivalência. Isto nos
permite generalizar aquilo que foi feito para as $2$-categorias:
a cada $(\infty,1$)-categoria $\mathbf{C}$ faz-se corresponder uma
nova categoria $\mathscr{H}\mathbf{C}$, cujos objetos são os mesmos
que os de $\mathbf{C}$ e cujos morfismos são as classes de $1$-morfismos. 

Como consequência, toda $(\infty,1)$-categoria torna-se própria para
o estuda da homotopia de maneira natural: basta considerar como equivalências
fracas os $1$-morfismos cujas classes são isomorfismos em $\mathscr{H}\mathbf{C}$.
Diante disso, tem-se uma cadeia de categorias 
\[
\mathrm{S}\mathbf{Cat}_{2}\subset\mathbf{Cat}_{\infty}^{1}\subset\mathrm{W}\mathbf{Cat}.
\]

No final do capítulo, associaremos a cada categoria com equivalências
fracas $\mathbf{C}$ uma $(\infty,1)$-categoria $\mathrm{Ext}^{\infty}L\mathbf{C}$,
de tal modo que $\mathscr{H}(\mathrm{Ext}^{\infty}\mathrm{L}\mathbf{C})\simeq\mathrm{Ho}(\mathbf{C})$.
Isto nos dará o seguinte slogan: \emph{as categorias próprias para
o estudo da homotopia são, precisamente, aquelas nas quais se tem
uma noção coerente de homotopia entre morfismos}.

\section{Simplexos}

$\quad\;\,$Como já comentado, se a categoria $\mathbf{C}$ é completa
e cocompleta, então todo $F:\mathbf{S}\rightarrow\mathbf{C}$ admite
extensão de Kan $\vert\cdot\vert:\mathbf{S}'\rightarrow\mathbf{C}$
relativamente a qualquer $p:\mathbf{S}\rightarrow\mathbf{S}'$ e,
em particular, com respeito ao mergulho de Yoneda $h_{-}:\mathbf{S}\rightarrow\mathrm{Func}(\mathbf{S}^{op};\mathbf{Set})$.
Esta última é chamada de \emph{realização geométrica} induzida por
$F$. Como pode ser conferido em \cite{simplicial_sets_RIEHL}, tal
realização possui um adjunto, ao qual denominaremos \emph{nervo} \emph{de
$F$}, sendo dado pelo functor $\mathrm{N}:\mathbf{C}\rightarrow\mathrm{Func}(\mathbf{S}^{op};\mathbf{Set})$,
que em objetos é caracterizado pelas composições $\mathrm{N}(X)=h_{X}\circ F^{op}$.

Observamos que a realização $\vert\cdot\vert$ é extensão de Kan ao
longo de $h_{-}$ e, portanto, pode ser descrita em termos de \emph{coends}.
Com efeito, para qualquer functor $\alpha:\mathbf{S}^{op}\rightarrow\mathbf{Set}$,
tem-se 
\begin{equation}
\vert\alpha\vert\simeq\int^{S}\mathrm{Nat}(h_{S};\alpha)\cdot F(S).\label{realizacao_geometrica}
\end{equation}

Diz-se que uma subcategoria $\mathbf{S}\subset\mathbf{C}$ é \emph{densa}
quando o nervo da inclusão $\imath:\mathbf{S}\rightarrow\mathbf{C}$
é um mergulho e, consequentemente, uma equivalência sobre sua imagem.
Isto significa que cada objeto de $\mathbf{C}$ pode ser visto como
um functor de $\mathbf{S}^{op}$ em $\mathbf{Set}$ cumprindo determinadas
condições. Em outras palavras, observando que $\imath$ é mergulho
se, e somente se, $\mathrm{N}(\vert\alpha\vert)\simeq\alpha$ para
cada $\alpha$, a subcategoria $\mathbf{S}\subset\mathbf{C}$ é densa
precisamente quando todo objeto de $\mathbf{C}$ pode ser \emph{construído},
por meio de (\ref{realizacao_geometrica}), através de objetos de
$\mathbf{S}$.
\begin{example}
Dado um corpo $\mathbb{K}$, a subcategoria $\mathrm{F}\mathbf{Vec}_{\mathbb{K}}\subset\mathbf{Vec}_{\mathbb{K}}$
dos $\mathbb{K}$-espaços de dimensão finita é densa, o que expressa
a possibilidade de se escrever qualquer espaço vetorial como o limite
de seus subespaços de dimensão finita.

\begin{example}
A subcategoria das variedades difeomorfas a algum $\mathbb{R}^{n}$
é densa em $\mathbf{Diff}$. Afinal, toda variedade é obtida colando
regiões difeomorfas a abertos de espaços euclidianos.
\end{example}
\end{example}
Como comentamos na subsecção anterior, existem diversas maneiras de
definir o que vem a ser uma $\omega$-categoria fraca. Há uma estratégia
comum que permeia várias destas definições. A ideia é buscar por subcategorias
densas $\mathbf{S}\subset\mathrm{S}\mathbf{Cat}_{\omega}$, permitindo-nos
identificar $\omega$-categorias estritas com functores $\mathbf{S}^{op}\rightarrow\mathbf{Set}$
cumprindo certas propriedades. Assim, pode-se definir uma $\omega$-categoria
fraca como sendo um functor $\mathbf{S}^{op}\rightarrow\mathbf{Set}$
que satisfaz propriedades ``enfraquecidas'' se comparadas com aquelas
que caracterizam as $\omega$-categorias estritas.

De um modo geral, escolhe-se categorias $\mathbf{S}$ cujos objetos
são combinatórios. Adotaremos esta postura e trabalharemos com a categoria
$s\mathbf{Set}$ dos \emph{conjuntos simpliciais}, introduzida na
próxima subsecção. Do ponto de vista histórico, ela foi a primeira
a ser adotada. Em contrapartida, ela é natural em diversos outros
aspectos, como ficará claro ao longo do texto. 

\subsection*{\uline{Objetos Simpliciais}}

$\quad\;\,$Seja $\Delta$ a categoria gerada pelos conjuntos de números
naturais $[n]=\{0,...,n\}$, parcialmente ordenados por inclusão.
Seus morfismos são as aplicações $f:[n]\rightarrow[m]$ tais que $f(i)\leq f(j)$
sempre que $i\leq j$. Existem duas subcategorias evidentes $\Delta_{+}$
e $\Delta_{-}$, respectivamente compostas dos morfismos injetivos
e sobrejetivos de $\Delta$. Estes satisfazem certas relações de compatibilidade,
as quais não utilizaremos aqui (veja a quinta secção do capítulo VII
de \cite{MACLANE_categories}, o primeiro capítulo \cite{MAY_3},
a secção 8.1 de \cite{Weibel_homological_algebra} ou qualquer livro
mais moderno de homotopia simplicial, como \cite{simplicial_homotopy_JARDINE}). 

Todo morfismo de $\Delta$ pode ser decomposto em termos de um morfismo
de $\Delta_{+}$ seguido de outro de $\Delta_{-}$. Consequentemente,
dar um functor $X:\Delta^{op}\rightarrow\mathbf{C}$ é o mesmo que
dar uma sequência de objetos $X_{n}\in\mathbf{C}$ e morfismos $\partial_{i}:X_{n}\rightarrow X_{n-1}$
e $\sigma_{i}:X_{n}\rightarrow X_{n+1}$, os quais satisfazem relações
de compatibilidade. Tais functores são chamados de \emph{objetos simpliciais}
de $\mathbf{C}$. O termo $X_{n}$ recebe o nome de \emph{$n$-símplice},
ao passo que os morfismos $\partial_{i}$ e $\sigma_{i}$ chamam-se,
respectivamente, \emph{operadores de face} e \emph{operadores de}
\emph{degenerescência}. Há uma categoria $s\mathbf{C}$, formada de
objetos simpliciais e de transformações naturais. Isto é, $s\mathbf{C}=\mathrm{Func}(\Delta^{op};\mathbf{C})$.

Há especial interesse nos objetos simplicias de $\mathbf{Set}$, aos
quais se dá o nome de \emph{conjuntos simpliciais}. Se $X$ é um deles,
fala-se que $X_{n}$ é seu conjunto de $n$-\emph{células}. A nomenclatura
se deve ao seguinte: pensa-se num conjunto simplicial como sendo uma
sequência de objetos de diversas dimensões (vértices, arestas, faces,
etc.) que podem ser ``realizados geometricamente'' e depois ``colados'',
formando um espaço específico. Assim, a estrutura do espaço como um
todo fica determinada pela estrutura de suas partes, transformando
um problema global numa questão combinatória. Em topologia, os espaços
que podem ser obtidos por tal procedimento chamam-se \emph{complexos
celulares}. Exemplos são, evidentemente, os poliedros.

Num conjunto simplicial $X$, se pensamos em $X_{0}$ como coleção
de vértices, em $X_{1}$ como família de arestas ligando vértices,
em $X_{2}$ como lista faces completando arestas, e assim sucessivamente,
conclui-se que os operadores $\partial_{i}$ e $s_{i}$ possuem um
papel geométrico bastante claro. Com efeito, $\partial_{i}x$ nada
mais é que a célula obtida retirando de $x$ o $i$-ésimo vértice,
ao passo que $s_{i}x$ é obtido repetindo o $i$-ésimo vértice. Por
exemplo, se $x$ é $2$-célula (isto é, face), então $\partial_{i}x$
é aresta oposta ao $i$-ésimo vértice. Tendo isto em mente, diz-se
que $x\in X$ é \emph{face }ou \emph{degenerescência} de uma outra
célula $x'\in X$ quando $x=s_{i}x'$ ou $x=\partial_{i}x'$. Se inexiste
$x'$ tal que $x=s_{i}x'$, fala-se que a célula $x$ é \emph{não-degenerada}. 

Quando $\mathbf{C}$ é completa ou cocompleta, então o mesmo se passa
com qualquer categoria de functores que nela assume valores e, em
particular, com $s\mathbf{C}=\mathrm{Func}(\Delta^{op};\mathbf{C})$.
Além disso, tal hipótese faz de $s\mathbf{C}$ cartesianamente fechada.
Com efeito, pelo discutido anteriormente, pela completude e co-completude
de $\mathbf{C}$, o functor $F:\Delta\rightarrow s\mathbf{C}$, tal
que $F([n])=h_{[n]}\times Y$, induz adjuntos $\vert\cdot\vert$ e
$\mathrm{N}$ em $s\mathbf{C}$. Como a realização geométrica $\vert\cdot\vert$
é extensão de Kan de $F$, tem-se $h_{[n]}\times Y=\vert h_{[n]}\vert$,
donde $\vert\cdot\vert=-\times Y$, garantindo o afirmado. Por sua
vez, se $\mathbf{C}$ possui uma estrutura monoidal definida por um
bifunctor $\otimes$, então esta induz uma estrutura monoidal em $s\mathbf{C}$,
tal que $n$-ésima símplice de $X\otimes X'$ nada mais é que $X_{n}\otimes X{}_{n}'$. 

Estes fatos ressaltam a importância de $s\mathbf{Set}$: ela é completa,
cocompleta, cartesianamente fechada e naturalmente monoidal.

\subsection*{\uline{Conjuntos Compliciais}}

$\quad\;\,$Tem-se um mergulho natural $\imath:\Delta\rightarrow\mathbf{Cat}$,
o qual vê cada $[n]$ como uma categoria formada de um só objeto.
A correspondente subcategoria $\Delta\subset\mathbf{Cat}$ é densa.
Daí, qualquer categoria pode ser construída em termos dos $\mathrm{N}_{s}([n])=h_{[n]}$,
respectivamente denotados por $\Delta^{n}$ e denominados \emph{simplexos
padrões} ($\mathrm{N}_{s}:\mathbf{Cat}\rightarrow s\mathbf{Set}$
é chamado de \emph{nervo simplicial}). 

É possível ``orientar'' cada $\Delta^{n}$ de tal maneira que, para
todo $n$, as possíveis orientações do simplexo $\Delta^{n}$ gerem
uma $\omega$-categoria estrita. Assim, há também um mergulho $\imath:\Delta\rightarrow\mathrm{S}\mathbf{Cat}_{\omega}$,
apresentado pela primeira vez em \cite{Street_oriented_complex},
o qual induz adjunções $\mathrm{N}_{\omega}$ e $\vert\cdot\vert_{\omega}$.
No entanto, a respectiva subcategoria $\Delta\subset\mathrm{S}\mathbf{Cat}_{\omega}$
não é densa. Isto significa que o $\omega$-nervo $\mathrm{N}_{\omega}:\omega\mbox{-}\mathrm{S}\mathbf{Cat}\rightarrow s\mathbf{Set}$
pode não ser um mergulho e, consequentemente, pode não ser equivalência
sobre sua imagem. 

Tal fato impede que utilizemos da estratégia previamente mencionada
para definir $\omega$-categorias fracas em termos de conjuntos simpliciais
satisfazendo alguma condição enfraquecida. Para contornar o problema,
ainda em \cite{Street_oriented_complex} as definições de $\omega$-nervo
e da respectiva realização geométrica foram ligeiramente estendidas
de modo a assumirem valores na categoria dos \emph{conjuntos simpliciais
estratificados} $ss\mathbf{Set}$. Estes nada mais são que conjuntos
simpliciais $X$ nos quais se fixou um conjunto \emph{$tX$} contendo
todas as células degeneradas de $X$ e nenhuma $0$-célula. 

Em \cite{Street_fillers_nerves}, por sua vez, demonstrou-se que todo
elemento na imagem de $\mathrm{N}_{\omega}:\omega\mbox{-}\mathrm{S}\mathbf{Cat}\rightarrow ss\mathbf{Set}$
é equivalente a um \emph{conjunto complicial}. Isto é, um conjunto
simplicial que satisfaz certas propriedades \emph{únicas} de extensão.
Diante disso, conjecturou-se que o $\omega$-nervo induz uma equivalência
entre $\omega\mbox{-}\mathrm{S}\mathbf{Cat}$ e a categoria dos conjuntos
compliciais. Observamos que, uma vez verificada esta conjectura, poder-se-ia
definir $\omega$-categorias fracas como aquelas cujo $\omega$-nervo
é um \emph{conjunto complicial fraco}. Isto é, um conjunto simplicial
que possui a referida propriedade de extensão, abonada da exigência
de unicidade. 

A conjectura em questão foi comprovada recentemente em \cite{Verity_complicial_sets_0}
e os conjuntos compliciais fracos foram estudados em \cite{Verity_complicial sets_1,Verity_complicial_sets_2}.
Uma prova alternativa pode ser encontrada \cite{Steiner_complicial_sets_1,Steiner_complicial_sets_2}.

Como havíamos dito na secção anterior, com a definição de $\omega$-categoria
fraca em mãos, pode-se falar $(\infty,n)$-categorias. Isto significa
que, dentro da nossa perspectiva, estas haverão de ser casos particulares
de conjuntos compliciais fracos. Temos especial interesse nas $(\infty,1)$-categoriais
e nos $\infty$-grupóides. Os conjuntos compliciais fracos que a eles
correspondem são, respectivamente, as \emph{quasi-categorias} e os
\emph{complexos de Kan}, os quais definimos na próxima subsecção.
As referênciais padrões para o estudo de tais entidades são \cite{Higher_topos_LURIE,Joyal_quasi_categories}.
Uma boa introdução é \cite{Groth_infinity_categories}.

Observamos ainda que, sob a perspectiva apresentada, o $\omega$-nervo
permite identificar em qual classe uma dada categoria $\mathbf{C}$
pertence: esta será uma $\omega$-categoria fraca precisamente quando
$\mathrm{N}_{\omega}\mathbf{C}$ for um conjunto simplicial fraco.
Por sua vez, ela será um $(\infty,1$)-categoria ou um $\infty$-grupóide
quando $\mathrm{N}_{\omega}\mathbf{C}$ for uma quasi-categoria ou
um complexo de Kan. Assim, se suspeitamos que $\mathbf{C}$ possui
estrutura em dimensão mais alta, para confirmá-la ou refutá-la, basta
aplicar o $\omega$-nervo e verificar em qual classe de conjuntos
simpliciais ele se enquadra. Na medida em que a estrutura de $\mathbf{C}$
for mais complicada, o correspondente $\mathrm{N}_{\omega}\mathbf{C}$
também o será.

Ressaltamos a diferença entre o nervo simplicial e o $\omega$-nervo:
enquanto que o $\omega$-nervo nos permite classificar toda a informação
categórica de uma categoria, o nervo simplicial traduz apenas informações
de ``primeira ordem'' (isto é, acerca dos objetos e dos $1$-morfismos).
Assim, se quisermos estudar apenas $(\infty,1)$-categorias e $\infty$-grupóides,
o nervo simplicial é suficiente. Em contrapartida, se quisermos analisar
informação de maior calibre categórico, precisamos utilizar de $N_{\omega}$.
Por exemplo, mesmo que $\mathbf{C}$ seja $\omega$-categoria fraca
(e não uma ($\infty,1$)-categoria), o nervo simplicial $N_{s}\mathbf{C}$
será uma quasi-categoria.

\subsection*{\uline{Complexos de Kan}}

$\quad\;\,$Iniciamos observando que o functor $\imath:s\mathbf{Set}\rightarrow\mathbf{Set}$,
que a cada conjunto simplicial $X$ associa a reunião disjunta de
suas símplicies, possui um adjunto $\jmath:\mathbf{Set}\rightarrow s\mathbf{Set}$.
Isto significa que faz sentido falar conjuntos simpliciais livremente
gerados. Define-se o $i$\emph{-ésimo bordo} de $\Delta^{n}$ como
sendo o conjunto simplicial gerado pela $i$-ésima função injetiva
de $\Delta_{n-1}^{n}$. Seja $\Lambda_{n}^{k}$ a reunião de todos
(menos o $k$-ésimo) bordo de $\Delta^{n}$. Isto é, seja $\Lambda_{n}^{k}$
o conjunto simplicial gerado por todas (a não ser a $k$-ésima) aplicação
injetiva de $\Delta_{n-1}^{n}$.

Um \emph{complexo de Kan} nada mais é que um conjunto simplicial $X$
no qual, para cada $0\leq k\leq n$, a respectiva inclusão $\imath:\Lambda_{n}^{k}\rightarrow\Delta^{n}$
possui a propriedade de extensão de morfismos de $X$. Vejamos que
tais entidades são, como mencionado, modelos para a definição de $\infty$-grupóides. 
\begin{prop}
Complexos de Kan modelam $\infty$-grupóides.
\end{prop}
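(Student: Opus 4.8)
The plan is to make precise the slogan that $\infty$-grupoides are entities in which every $n$-morphism (for $n\geq 1$) is invertible, and to show that the Kan extension condition encodes exactly this invertibility. First I would fix the dictionary between the combinatorial data of a simplicial set and the categorical data it is supposed to represent: a $0$-cell is an object, a $1$-cell $x\in X_1$ with faces $\partial_0 x = b$, $\partial_1 x = a$ is a morphism $a\to b$, a $2$-cell witnesses a composite (or a homotopy between $1$-morphisms), and so on in higher degrees. Under the $\omega$-nervo $\mathrm{N}_\omega$ discussed above, a strict $\omega$-categoria $\mathbf{C}$ is sent to a simplicial set whose $n$-cells record the $n$-morphisms; the task is to read off, from the horn-filling conditions, precisely the property that characterizes $\infty$-grupoides among $\omega$-categorias fracas, namely that every higher morphism is an equivalence.

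\textbf{The two key steps.} The heart of the argument is the analysis of the horn inclusions $\imath:\Lambda_n^k\rightarrow\Delta^n$ and what their extension property says in each range of $k$. I would separate the \emph{inner horns} ($0<k<n$) from the \emph{outer horns} ($k=0$ and $k=n$). For inner horns the filling condition expresses the existence of composites of $n$-morphisms together with the associativity and unit coherences holding up to higher cells — this is exactly the content that any $\omega$-categoria fraca (in particular any quasi-category) already satisfies, and it recovers the composition structure. The decisive step is the \emph{outer} horns: filling a $\Lambda_2^0$ or $\Lambda_2^2$ horn amounts to solving, for a given composable-looking configuration, the existence of an inverse up to a $2$-cell. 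Concretely, a filler for $\Lambda_2^0$ built from morphisms $f:a\to b$ and $g:a\to c$ produces a cell exhibiting $c$ as the target of a morphism $b\to c$ factoring through $f$; playing this against identities forces each $1$-morphism to admit a one-sided inverse up to homotopy, and the symmetric outer horn gives the other side. Thus the conjunction of \emph{all} horns fillable (inner plus both outer families) is equivalent to: composites exist and every morphism is invertible up to higher cells. Iterating the same argument in every dimension, using the degeneracy operators $\sigma_i$ to supply the identity witnesses, shows that every $n$-morphism is an $n$-equivalencia.

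\textbf{Assembling the equivalence.} With the dictionary in place I would argue both directions. Given a complexo de Kan $X$, the analysis above produces, on its homotopy-relation quotient, a structure in which all higher cells are invertible, i.e. an $\infty$-grupoide in the sense fixed earlier (an $(\infty,0)$-categoria, every $m$-morphism an equivalence). Conversely, given an $\infty$-grupoide presented as a fraco conjunto complicial, I would verify that the universal invertibility of all its cells lets one solve every horn-filling problem, including the outer ones that generic quasi-categorias fail to fill: the missing face of an outer horn is recovered by composing with the homotopy inverse guaranteed to exist. This shows $\mathrm{N}_\omega$ restricts to an equivalence between $\infty$-grupoides and complexos de Kan, completing the identification.

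\textbf{The main obstacle.} The hard part will be the outer-horn step, and specifically verifying that homotopy-invertibility of $1$-morphisms is genuinely \emph{equivalent} to — not merely implied by — the fillability of the outer horns $\Lambda_n^0$ and $\Lambda_n^n$ in all dimensions simultaneously. The forward implication (invertibility $\Rightarrow$ outer fillers) is the gentler one; the reverse requires producing a coherent system of inverses and of the higher cells witnessing that the inverse relations themselves compose associatively, which is exactly where one must either invoke the coherence already packaged into the definition of fraco conjunto complicial or run a careful inductive filling argument dimension by dimension. I expect this coherence bookkeeping, rather than any single filling, to be the technically delicate point, and I would lean on the results of Street and Verity cited above to supply the coherence rather than reconstructing it by hand.
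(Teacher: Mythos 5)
Your proposal is correct and rests on the same core mechanism as the paper's proof: the dictionary identifying $n$-cells with $n$-morphisms (with degeneracies supplying identities) and the horn-extension property delivering invertibility. There are two genuine differences worth recording. First, the attribution of labor among the horns: you route \emph{all} invertibility through the outer horns $\Lambda_n^0$, $\Lambda_n^n$, iterated dimension by dimension, whereas the paper distributes it more economically --- the outer horns are invoked only for invertibility of $1$-morphisms, while invertibility of the higher morphisms is extracted from the \emph{inner} horns $0<k<n$. The paper's bookkeeping works because, in its encoding (an $n$-morphism is an $n$-cell whose faces $\partial_0,\dots,\partial_{n-2}$ are identities), a higher cell is a homotopy, and inner-horn filling already makes the homotopy relation symmetric; no outer horn is needed above dimension $2$. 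Second, scope: the paper proves only the forward construction (every Kan complex carries an $\infty$-groupoid structure) and stops there, treating the proposition as a modeling statement at that level of rigor, while you additionally undertake the converse --- that coherent invertibility of all cells implies fillability of the outer horns. You are right that this is where the real difficulty sits: it is essentially Joyal's special-outer-horn theorem, and your instinct to outsource the coherence to the Street--Verity complicial machinery rather than reprove it is sound, but be aware that this makes your argument strictly more ambitious than the one the paper actually carries out; the one-directional version suffices for the proposition as stated, and your symmetric version buys the full equivalence at the cost of a substantially harder lemma.
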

\begin{proof}
A ideia é considerar $n$-células como $n$-morfismos. Mais precisamente,
os objetos são as $0$-células e os $1$-morfismos entre $x,y\in X_{0}$
são as $1$-células $f\in X_{1}$ tais que $\partial_{0}(f)=x$ e
$\partial_{1}(f)=y$. Por sua vez, toma-se como $2$-morfismos entre
$f,g:x\rightarrow y$ as $2$-células $H\in X_{2}$ satisfazendo as
respectivas condições 
\[
\partial_{0}(H)=id_{x},\quad\partial_{1}(H)=g\quad\mbox{e}\quad\partial_{2}(H)=f,
\]
em que $s_{0}(id_{x})=x$. Indutivamente, dados $(n-1)$-morfismos
$\alpha$ e $\alpha'$, define-se um $n$-morfismo entre eles como
sendo uma $n$-célula $\varphi$ tal que 
\[
\partial_{0}(\varphi)=id,\quad\partial_{1}(\varphi)=id,\quad\mbox{até}\quad\partial_{n-1}(\varphi)=\alpha'\quad\mbox{e}\quad\partial_{n}(\varphi)=\alpha.
\]
Para mostrar os Kan complexos são realmente $\infty$-grupóides, devemos
verificar que todo $n$-morfismo assim definido possui inversa. De
fato, a propriedade de extensão dos $\imath:\Lambda_{n}^{k}\rightarrow\Delta^{n}$,
com $k=0,n$ implica na invertibilidade dos $1$-morfismos. Por sua
vez, a propriedade de extensão das outras inclusões, com $0<k<n$,
garantem a invertibilidade dos morfismos superiores.
\end{proof}
Na proposição anterior, a propriedade de extensão das inclusões $\imath:\Lambda_{n}^{k}\rightarrow\Delta^{n}$,
com $k=0,n$ e $0<k<n$, garantiu a existência de inversas fracas
para $1$-morfismos e morfismos de ordem superior. Diante disso, espera-se
que os conjuntos simpliciais tais que as inclusões $\imath:\Lambda_{n}^{k}\rightarrow\Delta^{n}$,
somente com $0<k<n$, possuam a propriedade de extensão definam $(\infty,1)$-categorias.
De fato, estes (usualmente chamados de \emph{quasi-categorias}) são
precisamente os modelos de $(\infty,1)$-categorias\emph{ }que advém
na noção prévia de $\omega$-categorias fracas enquanto conjuntos
compliciais fracos.

\section{Slogan}

$\quad\;\,$Nesta subsecção, construímos um functor $\mathrm{Ext}^{\infty}:s\mathbf{Set}\rightarrow s\mathbf{Set}$
que nos permite substituir qualquer conjunto simplicial por um complexo
de Kan e, portanto, por um $\infty$-grupoide. Seguimos de perto a
quarta secção do terceiro capítulo de \cite{simplicial_homotopy_JARDINE}.
A construção original é de \cite{Kan_c.s.s_complexes}.

Seja $\mathrm{Pos}_{k}^{n}$ a coleção de todas $k$-símplices não-degeneradas
de $\Delta^{n}$. Isto é, seja $\mathrm{Pos}_{k}^{n}$ a família das
funções injetivas $[k]\rightarrow[n]$. Por exemplo, quando $k=n-1$,
tal coleção coincide com o bordo de $\Delta^{n}$. Na medida em que
se aumenta o $k$, o número de funções injetivas diminuem, de modo
que, para cada $n$, a respectiva coleção $\mathrm{Pos}^{n}$ dos
$\mathrm{Pos}_{k}^{n}$ se torna parcialmente ordenada por inclusão.
De maneira mais precisa, observando haver uma função injetiva $[k]\rightarrow[n]$
para cada subconjunto de cardinalidade $k+1$ de $[k]$, conclui-se
que $\mathrm{Pos}^{n}$ se identifica ao conjunto das partes de $[n]$,
ordenado por inclusão. 

Tem-se functor $\Delta\rightarrow\mathbf{Pos}$, que a cada $[n]$
associa $\mathrm{Pos}^{n}$. Compondo com o nervo simplicial, define-se
um novo functor $\mathrm{nd}:\Delta\rightarrow s\mathbf{Set}$. Este
induz adjuntos $\mathrm{N}:s\mathbf{Set}\rightarrow s\mathbf{Set}$
e $\vert\cdot\vert:s\mathbf{Set}\rightarrow s\mathbf{Set}$, que aqui
serão respectivamente denotados por $\mathrm{Ext}$ e $\mathrm{sd}$.
Diz-se que $\mathrm{sd}(X)$ é a \emph{subdivisão baricêntrica} de
$X$. 

Existe um morfismo natural $X\rightarrow\mathrm{Ext}(X)$, obtido
como segue. Como $\Delta\subset\mathbf{Cat}$ é densa, toda categoria
é descrita em termos dos simplexos padrões $\Delta^{n}$. Consequentemente,
tem-se $X\simeq\mathrm{colim}\Delta^{n}$ para todo $X\in s\mathbf{Set}$.
Sendo $\mathrm{\mathrm{sd}}$ adjunto à esquerda, ele preserva todas
as extensões de Kan à direita e, particularmente, os colimites. Daí,
$\mathrm{sd}(X)\simeq\mathrm{colim}\,\mathrm{sd}(\Delta^{n})$. Há
um mapa natural $f:\mathrm{sd}(\Delta^{n})\rightarrow\Delta^{n}$:
considera-se $v:\mathrm{Pos}^{n}\rightarrow[n]$, que a cada subconjunto
de $[n]$ associa seu maior elemento. Põe-se, então, $f=\mathrm{N}_{s}(v)$.
Tomando o colimite, encontra-se um morfismo entre $\mathrm{sd}(X)$
e $X$. Seu adjunto precisamente o morfismo $X\rightarrow\mathrm{Ext}(X)$
procurado. 

Utilizando de $\mathrm{Ext}^{k}(X)$ para denotar o conjunto simplicial
obtido por aplicações sucessivas do functor $\mathrm{Ext}$, cada
$X$ determina um sistema dirigido$$
\xymatrix{X \ar[r] & \mathrm{Ext}(X) \ar[r] & \mathrm{Ext}^2(X) \ar[r] & \mathrm{Ext}^3(X) \ar[r] & \cdots}
$$em $s\mathbf{Set}$, cujo colimite será denotado por $\mathrm{Ext}^{\infty}(X)$.
Observamos que, como as construções envolvidas são functoriais, está
bem definido um functor $\mathrm{Ext}^{\infty}:s\mathbf{Set}\rightarrow s\mathbf{Set}$.

Mostremos, por fim, o principal resultado da subsecção.
\begin{prop}
Para todo $X\in s\mathbf{Set}$, o conjunto simplicial $\mathrm{Ext}^{\infty}X$
é um complexo de Kan.
\end{prop}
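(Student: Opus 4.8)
The plan is to verify the Kan extension condition directly: I must show that for every $n$, every $0\le k\le n$, and every morphism $\alpha\colon\Lambda_{n}^{k}\to\mathrm{Ext}^{\infty}X$, there is a morphism $\Delta^{n}\to\mathrm{Ext}^{\infty}X$ restricting to $\alpha$ along the inclusion $\imath\colon\Lambda_{n}^{k}\to\Delta^{n}$. The first reduction exploits that $\mathrm{Ext}^{\infty}X=\mathrm{colim}_{i}\mathrm{Ext}^{i}X$ is a directed (sequential) colimit whose structure maps are monomorphisms, while $\Lambda_{n}^{k}$ has only finitely many nondegenerate cells. Hence each cell of $\alpha$ lands at some finite stage and, taking a maximum, $\alpha$ factors through a single stage, giving $\alpha_{N}\colon\Lambda_{n}^{k}\to\mathrm{Ext}^{N}X$ for some $N\ge 1$. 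Once I produce a filler $\Delta^{n}\to\mathrm{Ext}^{N+1}X$ compatible with the canonical map $\mathrm{Ext}^{N}X\to\mathrm{Ext}^{N+1}X$, composing with the colimit inclusion $\mathrm{Ext}^{N+1}X\to\mathrm{Ext}^{\infty}X$ yields the desired extension. So the whole problem collapses to a single finite-stage extension lemma.

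Second, I would transport this lemma across the adjunction $\mathrm{sd}\dashv\mathrm{Ext}$ established above (where $\mathrm{Ext}$ and $\mathrm{sd}$ are the adjoints induced by $\mathrm{nd}\colon\Delta\to s\mathbf{Set}$). Writing $Y=\mathrm{Ext}^{N-1}X$, the horn $\alpha_{N}\colon\Lambda_{n}^{k}\to\mathrm{Ext}(Y)$ transposes to a map $g\colon\mathrm{sd}(\Lambda_{n}^{k})\to Y$, while a filler $\Delta^{n}\to\mathrm{Ext}^{2}(Y)=\mathrm{Ext}^{N+1}X$ transposes to a map $\mathrm{sd}^{2}(\Delta^{n})\to Y$. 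The canonical morphism $Z\to\mathrm{Ext}(Z)$ is, by construction, the transpose of the last-vertex map $\mathrm{sd}(Z)\to Z$ induced by $v\colon\mathrm{Pos}^{n}\to[n]$, and this is exactly the bookkeeping that makes the two stages compatible. Thus the task becomes: extend $g$ to a map out of $\mathrm{sd}^{2}(\Delta^{n})$ that agrees with $g$ on $\mathrm{sd}^{2}(\Lambda_{n}^{k})$ through the last-vertex maps.

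The crux, and the step I expect to be the main obstacle, is the purely combinatorial lemma that after subdividing once more the subdivided horn becomes a retract of the subdivided simplex. Concretely, I would construct a simplicial map $r\colon\mathrm{sd}^{2}(\Delta^{n})\to\mathrm{sd}(\Lambda_{n}^{k})$ compatible with the last-vertex maps on $\mathrm{sd}^{2}(\Lambda_{n}^{k})$, and then take the filler to be the transpose of $g\circ r$. This is where the geometry of barycentric subdivision enters: a single subdivision does not suffice (there is no order-preserving retraction of the poset $\mathrm{Pos}^{n}$ onto the poset of nondegenerate cells of $\Lambda_{n}^{k}$, since vertices like the endpoints of the omitted face have no common upper bound inside the horn), but the extra subdivision creates enough room to push the cells of $\Delta^{n}$ absent from the horn back into $\mathrm{sd}(\Lambda_{n}^{k})$ along the contracting direction determined by the distinguished vertex $k$. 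I would build $r$ at the level of posets, using that $\mathrm{sd}$ of a simplicial set is the nerve of its poset of nondegenerate cells, and verify order-preservation by treating the cases $k=0$, $k=n$ and $0<k<n$ separately.

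Finally, I would assemble the pieces: the lemma supplies the finite-stage filler $\Delta^{n}\to\mathrm{Ext}^{N+1}X$, the reduction of the first paragraph upgrades it to a filler into $\mathrm{Ext}^{\infty}X$, and since $n$, $k$ and $\alpha$ were arbitrary this establishes the extension condition for every horn, i.e. that $\mathrm{Ext}^{\infty}X$ is a complex of Kan. The compatibility of $r$ with the last-vertex maps is precisely what guarantees that the filler restricts to $\alpha$ on the nose rather than merely up to homotopy, which is what the strict extension property demands.
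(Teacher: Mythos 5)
Your proposal is correct and takes essentially the same route as the paper's proof: reduce along the sequential colimit to the one-step extension problem (every $\Lambda_{n}^{k}\rightarrow\mathrm{Ext}^{i}X$ extends to $\Delta^{n}\rightarrow\mathrm{Ext}^{i+1}X$), then settle that single step. The details you fill in --- factoring the horn through a finite stage, transposing across $\mathrm{sd}\dashv\mathrm{Ext}$, and the last-vertex-compatible retraction $\mathrm{sd}^{2}(\Delta^{n})\rightarrow\mathrm{sd}(\Lambda_{n}^{k})$ --- are precisely the content of the pages of Goerss-Jardine (187-188) to which the paper defers.
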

\begin{proof}
Deve-se mostrar que as inclusões $\imath:\Lambda_{n}^{k}\rightarrow\Delta^{n}$
possuem a propriedade de levantamento de morfismos em $\mathrm{Ext}^{\infty}X$.
Mais precisamente, para cada $\Lambda_{n}^{k}\rightarrow\mathrm{Ext}^{\infty}X$
deve-se obter um $\Delta^{n}\rightarrow\mathrm{Ext}^{\infty}X$ que
deixa comutativo o segundo dos diagramas abaixo. A ideia da prova
consiste-se na observação de que tal diagrama é obtido tomando o colimite
daquele representado ao lado.$$
\xymatrix{\mathrm{Ext}^{i}X \ar[r] & \mathrm{Ext}^{i+1}X && \mathrm{Ext}^{\infty}X \ar[r]^{id} & \mathrm{Ext}^{\infty}X\\
\Lambda ^{k}_{n} \ar[u] \ar[r] & \Delta ^{n} \ar@{-->}[u] && \Lambda ^{k}_{n} \ar[u] \ar[r] & \Delta ^{n} \ar@{-->}[u] }
$$ Mais precisamente, a ideia é mostrar que todo morfismo $\Lambda_{n}^{k}\rightarrow\mathrm{Ext}^{i}X$
se estende a um mapa $\Delta^{n}\rightarrow\mathrm{Ext}^{i+1}X$.
Daí, ao se passar o limite $i\rightarrow\infty$, obtém-se o resultado
procurado. Como $\mathrm{Ext}^{i}$ é a composição sucessiva de $\mathrm{Ext}$,
basta mostrar a condição de extensão para $i=1$. Detalhes podem ser
encontrados nas páginas 187-188 de \cite{simplicial_homotopy_JARDINE}.
\end{proof}

\subsection*{\uline{Prova do }\emph{\uline{Slogan}}}

$\quad\;\,$Nesta subsecção, finalmente construiremos o slogan ``\emph{as
categorias próprias para o estudo da homotopia são, precisamente,
aquelas nas quais se tem uma noção coerente de homotopia entre morfismos}'',
comentado na secção anterior.

Iniciamos observando que o functor $\mathrm{Ext}^{\infty}$ nos permite
associar a cada categoria $\mathbf{C}$ enriquecida sobre $s\mathbf{Set}$
uma categoria $\mathrm{Ext}^{\infty}\mathbf{C}$ enriquecida sobre
os complexos de Kan, obtida substituindo os conjuntos simpliciais
$\mathrm{Mor}_{\mathbf{C}}(X;Y)$ pelos correspondentes $\mathrm{Ext}^{\infty}\mathrm{Mor}_{\mathbf{C}}(X;Y)$.
Como os complexos de Kan modelam $\infty$-grupoides, está definida
uma regra 
\[
\mathrm{Ext}^{\infty}:\mathrm{Enr}(s\mathbf{Set})\rightarrow(\infty,1)\mbox{-}\mathbf{Cat}.
\]

A estratégia para o \emph{slogan} gira entorno da construção de um
functor $\mathrm{L}:\mathrm{W}\mathbf{Cat}\rightarrow\mathrm{Enr}(s\mathbf{Set})$,
que a cada categoria com equivalências fracas $\mathbf{C}$ faz corresponder
uma categoria $\mathrm{L}\mathbf{C}$ enriquecida sobre $s\mathbf{Set}$,
de tal maneira que $\mathscr{H}(\mathrm{Ext}^{\infty}\mathrm{L}\mathbf{C})\simeq\mathrm{Ho}(\mathbf{C})$.
Como veremos em seguida, este functor existe, sendo usualmente chamado
de \emph{localização simplicial}. Remetemos o leitor aos trabalhos
\cite{simplicial_localization_1,simplicial_localization_2,simplicial_localization_3},
onde ele foi introduzido e estudado.

Sabe-se associar um conjunto simplicial a cada categoria: basta aplicar
o nervo $\mathrm{N}_{s}$. Diante disso, para definir $\mathrm{L}\mathbf{C}$,
primeiro construímos uma regra que substitui os conjuntos de morfismos
de $\mathbf{C}$ por categorias e depois aplicaremos $\mathrm{N}_{s}$
em cada uma delas. A categoria associada ao conjunto $\mathrm{Mor}_{\mathbf{C}}(X;Y)$
é $\mathrm{Ham}_{\mathbf{C}}(X;Y)$: seus objetos são os \emph{zig-zags} 

$$
\xymatrix{X & \ar@{~>}[l] A_1 \ar[r] & A_2 & \ar@{~>}[l] A_3 \ar[r] & \cdots \ar[r] & Y,}
$$ ao passo que seus morfismos são os seguintes diagramas comutativos
(ditos serem \emph{hammocks}): $$
\xymatrix{X \ar[d]_{id} & \ar@{~>}[l] A_1 \ar@{~>}[d] \ar[r] & A_2 \ar@{~>}[d] & \ar@{~>}[l] A_3 \ar@{~>}[d] \ar[r] & \cdots \ar@{~>}[d] \ar[r] & Y \ar[d]^{id} \\
X & \ar@{~>}[l] B_1 \ar[r] & B_2 & \ar@{~>}[l] B_3 \ar[r] & \cdots \ar[r] & Y}
$$

Terminamos o capítulo com a
\begin{prop}
Para toda categoria $\mathbf{C}$ própria para o estudo da homotopia,
há uma equivalência entre sua categoria homotópica $\mathrm{Ho}(\mathbf{C})$
e a categoria homotópica $\mathscr{H}(\mathrm{Ext}^{\infty}\mathrm{L}\mathbf{C})$
da $(\infty,1)$-categoria que lhe corresponde.
\end{prop}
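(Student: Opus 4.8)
A estrat\'egia \'e exibir uma equival\^encia que seja a identidade em objetos e verificar que ela \'e bijetiva em morfismos. Por constru\c{c}\~ao, tanto $\mathrm{L}$ quanto $\mathrm{Ext}^{\infty}$ preservam a classe de objetos de $\mathbf{C}$, e o mesmo ocorre com a passagem $\mathscr{H}$; logo, as categorias $\mathrm{Ho}(\mathbf{C})$ e $\mathscr{H}(\mathrm{Ext}^{\infty}\mathrm{L}\mathbf{C})$ possuem exatamente os mesmos objetos. Resta, ent\~ao, produzir bije\c{c}\~oes naturais entre os respectivos conjuntos de morfismos e mostrar que elas s\~ao compat\'iveis com composi\c{c}\~ao e identidades.

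Primeiro descreveria o lado $(\infty,1)$-categ\'orico. Como $\mathrm{Ext}^{\infty}\mathrm{L}\mathbf{C}$ \'e enriquecida sobre complexos de Kan, os quais (pela proposi\c{c}\~ao anterior) modelam $\infty$-grupoides, a passagem $\mathscr{H}$ identifica $1$-morfismos ligados por $2$-morfismos, o que neste contexto equivale a tomar o conjunto $\pi_0$ das componentes conexas de cada complexo de Kan de morfismos. Assim,
\[
\mathrm{Mor}_{\mathscr{H}(\mathrm{Ext}^{\infty}\mathrm{L}\mathbf{C})}(X;Y) \simeq \pi_0\bigl(\mathrm{Ext}^{\infty}\,\mathrm{N}_s(\mathrm{Ham}_{\mathbf{C}}(X;Y))\bigr).
\]
O passo seguinte \'e observar que o morfismo natural $Z \rightarrow \mathrm{Ext}^{\infty}Z$ \'e uma equival\^encia fraca de conjuntos simpliciais e, portanto, induz bije\c{c}\~ao em $\pi_0$; isto permite descartar $\mathrm{Ext}^{\infty}$ do c\'alculo, restando $\pi_0(\mathrm{N}_s(\mathrm{Ham}_{\mathbf{C}}(X;Y)))$, que coincide com o conjunto das componentes conexas da pr\'opria categoria $\mathrm{Ham}_{\mathbf{C}}(X;Y)$, j\'a que o $\pi_0$ do nervo de uma categoria \'e o seu conjunto de componentes conexas.

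O cerne do argumento \'e identificar tais componentes conexas com os morfismos de $X$ em $Y$ na categoria homot\'opica. Relembrando a descri\c{c}\~ao de $\mathrm{Ho}(\mathbf{C})$ como localiza\c{c}\~ao via \emph{zig-zags} (vista na se\c{c}\~ao 5.2), os objetos de $\mathrm{Ham}_{\mathbf{C}}(X;Y)$ s\~ao precisamente tais \emph{zig-zags}, ao passo que seus morfismos (os \emph{hammocks}) geram exatamente a rela\c{c}\~ao que, no c\'alculo de fra\c{c}\~oes, identifica \emph{zig-zags} representando o mesmo morfismo em $\mathrm{Ho}(\mathbf{C})$. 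Mostraria que dois \emph{zig-zags} est\~ao na mesma componente conexa se, e somente se, coincidem ap\'os inverter formalmente as equival\^encias fracas, obtendo
\[
\pi_0(\mathrm{Ham}_{\mathbf{C}}(X;Y)) \simeq \mathrm{Mor}_{\mathrm{Ho}(\mathbf{C})}(X;Y).
\]
Por fim, verificaria que a composi\c{c}\~ao horizontal de \emph{hammocks} induz, em componentes conexas, precisamente a concatena\c{c}\~ao de \emph{zig-zags} em $\mathrm{Ho}(\mathbf{C})$, e que a identidade corresponde ao \emph{zig-zag} trivial; isto promove as bije\c{c}\~oes acima a um functor, identidade em objetos e bijetivo em morfismos, ou seja, a equival\^encia procurada.

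O principal obst\'aculo \'e o \'ultimo passo: a identifica\c{c}\~ao de $\pi_0$ da categoria dos \emph{hammocks} com o conjunto de morfismos de $\mathrm{Ho}(\mathbf{C})$, de modo compat\'ivel com composi\c{c}\~ao. Trata-se do conte\'udo essencial da teoria de localiza\c{c}\~ao simplicial de Dwyer--Kan, e sua verifica\c{c}\~ao completa exige controlar cuidadosamente a rela\c{c}\~ao de equival\^encia gerada pelos \emph{hammocks} e compar\'a-la com a rela\c{c}\~ao do c\'alculo de fra\c{c}\~oes. Igualmente delicado \'e justificar que $\mathscr{H}$, aplicada a uma categoria enriquecida sobre complexos de Kan, fornece de fato o $\pi_0$ dos espa\c{c}os de morfismos com composi\c{c}\~ao bem definida no n\'ivel das componentes.
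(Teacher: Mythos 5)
Sua proposta est\'a correta e segue essencialmente o mesmo caminho da demonstra\c{c}\~ao do texto: identidade em objetos, identifica\c{c}\~ao de $\mathscr{H}$, aplicada a uma categoria enriquecida sobre complexos de Kan, com o functor $\pi_0$ dos espa\c{c}os de morfismos, e apelo ao resultado de Dwyer--Kan segundo o qual $\pi_0$ da localiza\c{c}\~ao simplicial \'e a localiza\c{c}\~ao ordin\'aria --- passo que o texto tamb\'em apenas enuncia ao final, remetendo-o \`as refer\^encias, e que voc\^e honestamente sinaliza como o obst\'aculo central. O \'unico acr\'escimo seu \'e a redu\c{c}\~ao expl\'icita via a equival\^encia fraca $Z\rightarrow\mathrm{Ext}^{\infty}Z$ (bije\c{c}\~ao em $\pi_0$), que o texto deixa impl\'icita ao identificar os $n$-morfismos de $\mathrm{Ext}^{\infty}\mathrm{L}\mathbf{C}$ com as $(n-1)$-c\'elulas da estrutura simplicial subjacente.
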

\begin{proof}
Num conjunto simplicial $X$ qualquer, a relação que identifica $0$-células
entre as quais há uma $1$-célula é reflexiva e transitiva, mas pode
não ser simétrica. No entanto, se $X$ é complexo de Kan, então todo
$1$-morfismo (isto é, toda $1$-célula ligando $0$-células) possui
inverso, donde a validade da simetria. Denotamos com $\pi_{0}(X)$
o espaço quociente da $0$-símplice $X_{0}$ por tal relação. Esta
construção se estende a um functor $\pi_{0}:\mathbf{Kan}\rightarrow\mathbf{Set}$,
definido na subcategoria cheia $\mathbf{Kan}\subset s\mathbf{Set}$
dos complexos de Kan. Enriquecendo ambos os lados vê-se, por sua vez,
que tal functor se estende a um outro $\pi_{0}:(\infty,1)\mbox{-}\mathbf{Cat}\rightarrow\mathbf{Cat}$.
A cada categoria $\mathbf{C}$ enriquecida sobre complexos de Kan
ele faz corresponder a categoria com os mesmos objetos, sendo tal
que $\mathrm{Mor}_{\pi_{0}\mathbf{C}}(X;Y)=\pi_{0}\mathrm{Mor}_{\mathbf{C}}(X;Y)$.
Afirmamos que, $\pi_{0}(\mathrm{Ext}^{\infty}\mathrm{L}\mathbf{C})\simeq\mathscr{H}(\mathrm{Ext}^{\infty}\mathrm{L}\mathbf{C})$$.$
Como ambas possuem objetos iguais aos de $\mathbf{C}$, basta mostrarmos
que elas também têm os mesmos morfismos. Por um lado, os morfismos
em $\mathscr{H}(\mathrm{Ext}^{\infty}\mathrm{L}\mathbf{C})$ são classes
de $1$-morfismos de $\mathrm{Ext}^{\infty}\mathrm{L}\mathbf{C}$
ligados por $2$-morfismos. Por outro, os $n$-morfismos de $\mathrm{Ext}^{\infty}\mathrm{L}\mathbf{C}$
são precisamente as $(n-1)$-células da estrutura simplicial subjacente.
Daí, as classes de $1$-morfismos ligados por $2$-morfismos coincidem
com as classes de $0$-células ligadas por $1$-células, donde o afirmado.
Posto isto, a demonstração termina ao notarmos que $\pi_{0}(\mathrm{Ext}^{\infty}\mathrm{L}\mathbf{C})$
é localização de $\mathbf{C}$ com respeito às equivalências fracas,
donde $\pi_{0}(\mathrm{Ext}^{\infty}\mathrm{L}\mathbf{C})\simeq\mathrm{Ho}(\mathbf{C})$.
\end{proof}

\chapter{Homotopia Clássica}

$\quad\;\,$No presente capítulo, desenvolvemos a \emph{teoria da
homotopia clássica}. Em outras palavras, estudamos as propriedades
homotópicas de subcategorias convenientes $\mathscr{C}$ de $\mathbf{Top}$.
Tais subcategorias se caracterizam por serem completas e cocompletas
(propriedades herdadas de $\mathbf{Top}$) e por terem pontuação fechada
e simétrica com respeito a estrutura monoidal definida pelo correspondente
produto \emph{smash}. Esta última propriedade é extremamente importante
dentro da teoria e impede que tomemos $\mathscr{C}=\mathbf{Top}$.

Na primeira secção, mostramos que as categorias convenientes admitem
cilindros e espaços de caminhos naturais e adjuntos, permitindo-nos
ali introduzir um modelo. Verificamos, ainda, que para tais categorias
o \emph{slogan} ``\emph{categorias com cilindros naturais são protótipos
de $(\infty,1)$-categorias}'' se aplica fielmente. Finalmente, damos
um olhar intuitivo às homotopias deste modelo, o qual nos permite
analisar melhor a diferença entre limites e limites homotópicos.

Como temos afirmado desde o início, a Topologia Algébrica tem o objetivo
de construir invariantes topológicos poderosos por métodos puramente
algébricos. É na segunda secção que nos deparamos com os primeiros
exemplos de tais invariantes. Tratam-se, pois, dos \emph{grupos de
homotopia}. Eles estão associados à functores $\pi_{i}:\mathscr{HC}_{*}\rightarrow\mathbf{Grp}$
e são poderosos o suficiente para que finalmente consigamos discernir
o toro da esfera.

Pelo justo fato dos grupos de homotopia serem bastante poderosos,
eles também são tremendamente difíceis de serem calculados. Tecnicamente
isto se deve ao fato dos functores $\pi_{i}$ preservaremos poucos
limites/colimites homotópicos. Na terceira e última secção, discutimos
esta inconveniência prática e apresentamos algumas ferramentas das
existentes que nos ajudam a efetivar alguns cálculos.

Para o estudo e escrita deste capítulo, fizemos uso sistemático das
referências \cite{homotopia_strom,MAY_1,MAY_4}, as quais tratam o
assunto mais ou menos do ponto de vista que adotamos. Outras referências
clássicas sobre teoria da homotopia, as quais assumem uma postura
ligeiramente distinta daquela aqui empregada, incluem \cite{gray_homotopy,Hu homotopia,Spanier,whitehead_homotopy}.

\section{Estrutura}

$\quad\;\,$Uma categoria $\mathscr{C}$ é \emph{conveniente} para
se estudar topologia algébrica quando:
\begin{enumerate}
\item é completa e cocompleta;
\item a sua pontuação $\mathscr{C}_{*}$ é fechada e simétrica com respeito
ao produto \emph{smash} definido através de produtos binários.
\end{enumerate}
$\quad\;\,$A primeira condição é satisfeita por $\mathbf{Top}$.
No entanto, como vimos anteriormente, seu o produto \emph{smash} em
geral não é associativo, de modo que $\mathbf{Top}_{*}$ não cumpre
com a segunda das requisições. A ideia é, então, buscar por subcategorias
$\mathscr{C}\subset\mathbf{Top}$ que satisfaçam ambas. Um exemplo
foi introduzido por Steenrod em \cite{convenient_category_steenrod}.
Ele é construído a partir da subcategoria cheia $\mathscr{CG}\subset\mathbf{Top}$
dos espaços que são \emph{compactamente gerados}. Isto significa que
$U\subset X$ é aberto se, e somente se, $U\cap K$ é aberto de $K\subset X$,
sempre que este é compacto. Aqui se enquadram, por exemplo, os espaços
métricos e as variedades.

Claramente, todo conjunto pode ser dotado de uma topologia $\tau$
que o torna compactamente gerado. Portanto, a inclusão $\imath:\mathscr{CG}\rightarrow\mathbf{Top}$
possui um adjunto à esquerda $\kappa:\mathbf{Top}\rightarrow\mathscr{CG}$,
responsável por substituir a topologia de cada espaço $X$ por $\tau$.
Sua imagem pela subcategoria dos espaços Hausdorff constitui uma categoria
$\mathscr{C}$ satisfazendo as condições requisitadas, como justificamos
em seguida.

Já que $\kappa$ possui adjunto à esquerda, $\mathscr{C}$ é completa
e seus limites são as imagens por $K$ dos limites em $\mathbf{Top}$.
Por sua vez, como $\imath$ tem adjunto à direita, $\mathscr{C}$
também é cocompleta e seus colimites são os próprios colimites de
$\mathbf{Top}$. Assim, $\mathscr{C}$ cumpre com a primeira das requisições.

Fixados $X,Y\in\mathscr{C}$, consideremos o espaço $\mathrm{Map}(X;Y)\in\mathscr{C}$
obtido introduzindo a topologia compacto-aberta no conjunto das funções
contínuas $f:X\rightarrow Y$ e depois fazendo agir $\kappa$. Para
qualquer $Z\in\mathscr{C}$, tem-se bijeções
\[
\mathrm{Mor}_{\mathscr{C}}(X\times Y;Z)\simeq\mathrm{Mor}_{\mathscr{C}}(X;\mathrm{Map}(Y;Z)),
\]
mostrando-nos que $\mathscr{C}$ é cartesianamente fechada e, em particular,
enriquecida sobre si mesma. Como consequência, o produto \emph{smash}
introduz uma estrutura monoidal fechada e simétrica em $\mathscr{C}_{*}$,
de modo que $\mathscr{C}$ cumpre com a segunda das requisições e,
de fato, é conveniente ao estudo da topologia algébrica.

Sendo $\mathscr{C}$ cartesianamente fechada, $Y\wedge X$ possui
adjunto dado pelo o \emph{pullback} abaixo. Uma vez que limites em
$\mathscr{C}_{*}$ são computados como limites de $\mathscr{C}$,
e estes, por sua vez, são as imagens dos limites de $\mathbf{Top}$
por $K$, segue-se que, identificando elementos de $\mathbf{Top}_{*}$
com pares $(X,x_{o})$, tem-se a identificação entre $\mathrm{Map}_{*}(X;Y)$
e $(\mathrm{Map}_{\mathscr{C}_{*}}(X;Y),f_{o})$. Aqui, $\mathrm{Map}_{\mathscr{C}_{*}}(X;Y)$
é o subespaço de $\mathrm{Map}(X;Y)$, formado de toda $f:X\rightarrow Y$
contínua e cumprindo $f(x_{o})=y_{o}$, ao passo que $f_{o}$ é a
função constante e igual a $y_{o}$.$$
\xymatrix{\mathrm{Map_{*}}(X;Y) \ar[d] \ar[r] & \mathrm{*} \ar[d] \\
\mathrm{Map}(X;Y) \ar[r] & \mathrm{Map}(*;Y). }
$$

Maiores detalhes a respeito de $\kappa$ podem também ser encontrados
no capítulo oito de \cite{gray_homotopy} e entre as páginas 21-27
de \cite{whitehead_homotopy}. Veja também \cite{CGWH}.$\underset{\underset{\;}{\;}}{\;}$

\noindent \textbf{Advertência.} No que segue, a menos de explícita
menção em contrário, trabalharemos sempre na categoria conveniente
$\mathscr{C}$ dos espaços Hausdorff e compactamente gerados.

\subsection*{\uline{Modelo}}

$\quad\;\,$A categoria $\mathbf{\mathscr{C}}$ possui cilindros naturais,
os quais são dados pelo functor $C:\mathscr{C}\rightarrow\mathscr{C}$,
que a cada $X$ faz corresponder o espaço produto $X\times I$, e
pelas transformações naturais $n_{0}$ e $n_{1}$ entre $id_{\mathscr{C}}$
e $C$, definidas por $n_{0}(X)(x)=(x,0)$ e $n_{0}(X)(x)=(x,1)$.
Como logo se convence, todos os objetos de $\mathscr{C}$ são $C$-cofibrantes.
Daí, pelo lema de fatoração, qualquer $f:X\rightarrow Y$ pode ser
escrito na forma $f=\imath\circ\jmath$, em que $\imath$ é uma $C$-cofibração.

Em $\mathscr{C}$ também existem espaços de caminhos naturais, obtidos
do functor $P:\mathbf{\mathscr{C}}\rightarrow\mathbf{\mathscr{C}}$,
que a cada $X$ associa $\mathrm{Map}(I;X)$, e das transformações
$p_{0}$ e $p_{1}$, de $P$ em $id_{\mathscr{C}}$, caracterizadas
por $p_{0}(X)(f)=f(0)$ e $p_{1}(X)(f)=f(1)$, em que $f:I\rightarrow X$.
Os objetos de $\mathscr{C}$ são todos $P$-fibrantes, de modo que
qualquer morfismo é decomposto na forma $\imath\circ\jmath$, onde
$\jmath$ é $P$-fibração. 

Observamos que, sendo $\mathbf{\mathscr{C}}$ cartesianamente fechada,
$C$ e $P$ são adjuntos. Pelo discutido ao final do capítulo seis,
há, portanto, um modelo em $\mathbf{\mathscr{C}}$ cujas equivalências
fracas são as equivalências homotópicas, cujas cofibrações são as
$C$-cofibrações, e cujas fibrações são as $P$-fibrações. Ele é\emph{
}denominado \emph{modelo de Ström}. Nele, em particular, todos os
objetos são tanto fibrantes quanto cofibrantes.$\underset{\underset{\;}{\;}}{\;}$

\noindent \textbf{Advertência.} Quando não houver risco de confusão,
$C$-cofibrações e $P$-fibrações no modelo de Ström serão chamadas
simplesmente de \emph{cofibrações} e de \emph{fibrações}.$\underset{\underset{\;}{\;}}{\;}$

Vejamos alguns exemplos.
\begin{example}
Um fibrado $f:X\rightarrow Y$ em $\mathbf{Top}$ se diz \emph{localmente
trivial} quando é localmente conjugada ao fibrado trivial. Isto é,
quando cada ponto $x\in Y$ admite uma vizinhança aberta $U$, tal
que $f^{-1}(U)\rightarrow U$ é conjugado à projeção $\pi_{1}:U\times F_{x}\rightarrow U$,
para algum espaço $F_{x}$. Quando $F_{x}$ é o mesmo para cada $x$,
diz-se que ele é a \emph{fibra típica} de $f$.\emph{ }Observamos
que a projeção de qualquer fibrado localmente trivial é uma fibração
no modelo de Ström. Este resultado pode ser encontrado em diversos
livros sobre a teoria de fibrados. Uma referência clássica é o livro
\cite{Steenrod} de Steenrod, que contempla o referido resultado entre
as páginas 50 e 53.

\begin{example}
Em modelos gerados por cilindros naturais $C$, verifica-se facilmente
que um morfismo $\imath:A\rightarrow X$ é uma $C$-cofibração se,
e somente se, o mapa natural $\jmath:\mathrm{cyl}(\imath)\rightarrow CX$,
que aparece na definição de \emph{mapping cylinder} $\mathrm{cyl}(\imath)$
e que nada mais é que o mapa vertical do \emph{pushout} entre $\imath$
e $n_{0}(A)$, admite uma retração. No modelo de Ström, tem-se $\mathrm{cyl}(\imath)\simeq X\times0\cup A\times I$
sempre que $\imath$ é inclusão. Assim, suas cofibrações se caracterizam
por serem aquelas em que $X\times0\cup A\times I$ é retrato de $X\times I$.
Por exemplo, $\imath:\mathbb{S}^{n-1}\rightarrow\mathbb{D}^{n}$ é
cofibração: basta tomar como retração a correspondência 
\[
r:\mathbb{D}^{n}\times I\rightarrow\mathbb{D}^{n}\times0\cup\mathbb{S}^{n-1}\times I,\quad\mbox{tal que}\quad r(x,t)=(t\cdot\frac{x}{\Vert x\Vert}-t\cdot x+x,t).
\]
\end{example}
\end{example}

\subsection*{\uline{Alta Dimensão}}

$\quad\;\,$Anteriormente comentamos que, em geral, as categorias
com cilindros naturais são protótipos de $(\infty,1)$-categorias.
Vejamos que isto se passa com $\mathscr{C}$. Como sempre, $0$-morfismos
são os objetos (isto é, os espaços topológicos) e $1$-morfismos são
os morfismos (isto é, as funções contínuas). Os $2$-morfismos são
as homotopias. No modelo de Ström, estas são aplicações $H:X\times I\rightarrow Y$,
de modo que é possível falar de ``\emph{homotopias formais entre
}$2$\emph{-morfismos}'', as quais corresponderão aos $3$-morfismos.
Estas, por sua vez, haverão de ser funções $X\times I\times I\rightarrow Y$,
permitindo-nos falar de ``\emph{homotopias formais entre $3$-morfismos}'',
correspondendo aos $3$-morfismos. Indutivamente define-se os $n$-morfismos
para todo $n$.

Para concluirmos que a estrutura assim obtida é $(\infty,1)$-categoria,
devemos mostrar que, para qualquer $n>1$, os $n$-morfismos são todos
inversíveis e podem ser compostos de $n$ maneiras distintas, cada
uma das quais é associativa e preserva unidade a menos de morfismos
de ordem superior. Nos resumimos apresentar duas distintas maneiras
de compor homotopias formais e a mostrar que $2$-morfismos são inversíveis
a menos de $3$-morfismos. 

Dadas homotopias formais $H$ e $H'$, ambas entre $f,g:X\rightarrow Y$,
sua \emph{composição} \emph{vertical}, é a aplicação $H\bullet H'$,
também do produto $X\times I$ em $Y$, definida por concatenação:
\[
(H\bullet H')(x,t)=\begin{cases}
H(x,2t), & 0\leq t\leq1/2\\
H'(x,2t-1), & 1/2<t\leq1.
\end{cases}
\]

Por sua vez, se $H$ e $H'$ são respectivas homotopias formais entre
$f,g:X\rightarrow Y$ e $f',g':Y\rightarrow Z$, então a \emph{composição
horizontal} entre elas é a correspondência 
\[
H'\circ H:X\times I\rightarrow Z,\quad\mbox{tal que}\quad(H'\circ H)(x,t)=H'(H(x,t),t).
\]

Vejamos agora que, se existe uma homotopia formal $H:f\rightarrow g$,
então também existem homotopias formais $H':g\rightarrow f$, assim
como $3$-morfismos $\alpha$ entre $H'\bullet H$ e $id_{f}$, e
$\alpha'$ entre $H'\bullet H$ e $id_{g}$. Aqui, $id_{f}(x,t)=f(x)$
para todo $t$. A existência de $H'$ provém da simetria da relação
que identifica morfismos ligados por homotopias formais. Por sua vez,
a existência dos $\alpha$ e $\alpha'$ segue como caso particular
do seguinte fato mais geral: entre quaisquer homotopias formais $H$
e $H'$ entre $f,g:X\rightarrow Y$ há ao menos um $3$-morfismo $\alpha:X\times I\times I\rightarrow Y$,
definido por
\[
\alpha(x,t,s)=\begin{cases}
H(x,t-2ts) & 0\leq t\leq1/2\\
H'(x,2ts-t), & 1/2<t\leq1.
\end{cases}
\]

\subsection*{\uline{Intuição}}

$\quad\;\,$Intuitivamente, o papel das homotopias $H:f\Rightarrow g$
é deformar $f$ continuamente, até que se torne $g$. Assim, um espaço
$X$ terá o mesmo tipo de homotopia que outro espaço $Y$ quando nele
pode ser continuamente deformado. 
\begin{example}
Os espaços que podem ser continuamente deformados (via homotopia)
a pontos são ditos \emph{contráteis}. Este é o caso, por exemplo,
de qualquer subespaço convexo do $\mathbb{R}^{n}$. No estudo da homotopia
tem-se particular interesse em invariantes homotópicos. Neste sentido,
espaços contráteis possuem invariantes homotópicos triviais. Por exemplo,
como comentamos anteriormente, existem certos functores $\pi_{i}:\mathscr{C}\rightarrow\mathbf{Grp}$,
chamados de \emph{grupos de homotopia}. Estes são homotópicos e, portanto,
passam à categoria homotópica $\mathscr{H}\mathbf{\mathscr{C}}$.
Se um espaço $X$ é contrátil, então cada invariante $\pi_{i}(X)$,
com $i>0$, é trivial. A recíproca, no entanto, não é verdadeira:
no modelo de Ström, existem espaços com grupos de homotopia triviais
que não são contráteis. Isso significa que os functores $\pi_{i}$
não são suficientes para classificar $\mathscr{H}\mathbf{\mathscr{C}}$
como um todo. Realçamos que este fato \emph{depende do modelo fixado
em} $\mathscr{C}$. Com efeito, pode-se introduzir um novo modelo
numa subcategoria de $\mathscr{C}$, chamado de \emph{modelo de Quillen},
segundo o qual os functores $\pi_{i}$ também são homotópicos, mas
desta vez classificam a correspondente categoria homotópica (veja,
por exemplo, a secção 17.2 de \cite{MAY_4}).

\begin{example}
Sob quais condições um subespaço $A\subset X$ possui o mesmo tipo
de homotopia que o espaço inteiro? É suficiente que $A$ seja retrato
por deformação de $X$. Em outras palavras, é suficiente que a inclusão
admita inversa fraca $r$. Por sua vez, para que isto aconteça, basta
existir um \emph{fluxo} entre $A$ e $X$. Este se trata de uma família
a um parâmetro de funções $\varphi_{t}:X\rightarrow X$, tal que $\varphi_{0}=id_{X}$,
com $t\mapsto\varphi_{t}$ é contínua, e para o qual há uma outra
$\tau:X\rightarrow\mathbb{R}$, a qual se anula em $A$ e cumpre $\varphi_{\tau(x)}(x)\in A$
para todo $x\in X$. Com efeito, dado um tal fluxo, a regra $r(x)=\varphi_{\tau(x)}(x)$
evidentemente define uma retração de $X$ em $A$, sendo tal que $\imath_{A}\circ r\simeq id_{X}$,
com homotopia dada por $H(x,t)=\varphi_{t\tau(x)}(x)$. 
\end{example}
\end{example}
Observamos haver uma diferença entre a deformação efetivada por um
homeomorfismo e aquela proporcionada por uma equivalência homotópica:
enquanto a primeira ``preserva dimensões'', a segunda não se submete
a tal restrição. Em outras palavras, equivalências homotópicas têm
a permissão para ``colapsar dimensões''. Ilustremos este fato através
de alguns exemplos:
\begin{example}
Quando $n\neq m$, os espaços $\mathbb{R}^{n}$ e $\mathbb{R}^{m}$
nunca são homeomorfos. Este é um resultado conhecido como \emph{teorema
da invariância da dimensão}, cuja demonstração (assim como o teorema
da retração de Brouwer) segue dos cálculos $\pi_{n}(\mathbb{S}^{n})\simeq\mathbb{Z}$
e $\pi_{i}(\mathbb{S}^{n})=0$, com $1<i<n$, os quais obteremos um
pouco mais adiante. Com efeito, deles se retira $\pi_{n-1}(\mathbb{S}^{n})\neq\pi_{n-1}(\mathbb{S}^{n-1})$,
de modo que esferas de dimensões distintas não podem nem mesmo serem
homotópicas (daremos uma prova deste fato na próxima subsecção sem
fazer uso de cálculos envolvendo os grupos de homotopia). Observando
que compactificações preservam homeomorfismos, segue-se que $\mathbb{R}^{n}=\mathbb{S}^{n}\cup\infty$
e $\mathbb{R}^{m}=\mathbb{S}^{m}\cup\infty$ nunca são homeomorfos.
Em contrapartida, quaisquer espaços euclidianos são sempre homotópicos.
Afinal, ambos são contráteis.

\begin{example}
Os subespaços $\mathbb{R}^{n}-0$ e $\mathbb{S}^{n-1}$ do $\mathbb{R}^{n}$
não são homeomorfos: a esfera é compacta, algo não satisfeito se do
$\mathbb{R}^{n}$ retiramos um ponto. Em contrapartida, a aplicação
$x\mapsto x/\Vert x\Vert$, de $\mathbb{R}^{n}-0$ em $\mathbb{S}^{n-1}$,
é equivalência homotópica. Particularmente, ainda que o cilindro $\mathbb{S}^{1}\times\mathbb{R}$
não seja homeomorfo ao círculo $\mathbb{S}^{1}$, eles possuem a mesma
homotopia.
\end{example}
\end{example}
No modelo de Ström, homotopias admitem uma outra interpretação. Com
efeito, relembrando que a categoria $\mathscr{C}$ é simétrica e cartesianamente
completa, temos 
\[
\mathrm{Mor}_{\mathscr{C}}(X\times I;Y)\simeq\mathrm{Mor}_{\mathscr{C}}(I\times X;Y)\simeq\mathrm{Mor}_{\mathscr{C}}(I;\mathrm{Map}(X;Y)),
\]
de modo que uma homotopia entre $f,g;X\rightarrow Y$ se identifica
como um caminho em $\mathrm{Map}(X;Y)$ que parte de $f$ e chega
em $g$. Assim, duas aplicações são homotópicas se, e somente se,
estão na mesma componente conexa por caminhos do espaço $\mathrm{Map}(X;Y)$.

\subsection*{\uline{Limites Homotópicos}}

$\quad\;\,$No capítulo cinco comentamos que, para quaisquer categorias
$\mathbf{J}$ e $\mathbf{C}$, com $\mathbf{C}$ dotada de equivalências
fracas, o functor $\lim:\mathrm{Func}(\mathbf{J};\mathbf{C})\rightarrow\mathbf{C}$
geralmente não é homotópico. No presente contexto, isto significa
que limites podem não preservar tipo de homotopia. Vejamos dois exemplos:
\begin{example}
Se tomamos o disco $\mathbb{D}^{n}$ e identificamos seu bordo num
único ponto, obtemos a esfera $\mathbb{S}^{n}$, a qual não é contrátil,
pois tem o mesmo tipo de homotopia que $\mathbb{R}^{n}-0$ (primeiro
dos diagramas abaixo). Por sua vez, sabemos que $\mathbb{D}^{n}$
é contrátil e, portanto, homotópico a um ponto. No entanto, o segundo
dos \emph{pushouts} abaixo resulta no próprio $\mathbb{D}^{n}$. Assim,
ainda que troquemos objetos de um diagrama por outros com o mesmo
tipo de homotopia, os correspondentes limites podem não possuir a
mesma homotopia.$$
\xymatrix{\mathrm{Ps} & \ar[l] \mathbb{D}^n && \mathrm{Ps'} & \ar[l] \mathrm{*} \\
\mathrm{*} \ar[u] & \ar[l] \mathbb{S}^{n-1} \ar[u]_{\imath} && \mathrm{*} \ar[u] & \ar[l] \mathbb{S}^{n-1} \ar[u]_{\imath} }
$$

\begin{example}
E se ao invés de considerarmos diagramas com objetos homotópicos passarmos
a olhar para diagramas com morfismos homotópicos? Ainda assim, os
correspondem limites podem não ter a mesma homotopia. Com efeito,
o primeiro dos \emph{pullbacks} abaixo resulta no produto $\mathbb{S}^{n-1}\times\mathbb{S}^{n-1}$.
Como $\mathbb{D}^{n}$ é contrátil, sua identidade é homotópica a
uma (e, portanto, qualquer) aplicação constante. Seja $c$ tal aplicação.
Colocando-a no lugar da identidade, vê-se que o \emph{pullback} resultante
ou é formado de um só ponto (se $c(x)\in\mathbb{S}^{n-1}$), ou então
é vazio.$$
\xymatrix{\mathrm{Pb} \ar[r] \ar[d] & \mathbb{D}^n \ar[d]^{id} &&  \mathrm{Pb'} \ar[r] \ar[d] & \mathbb{D}^n \ar[d]^{c} \\
\mathbb{S}^{n-1} \ar[r]_{\imath} & \mathbb{D}^n && \mathbb{S}^{n-1} \ar[r]_{\imath} & \mathbb{D}^n }
$$
\end{example}
\end{example}
Alguns tipos de limites, no entanto, fogem à regra e preservam certas
propriedades homotópicas. É isto o que veremos no próximo exemplo.
Adiantamos que ele é o caminho para se classificar qualquer fibrado
em $\mathscr{C}$ que seja localmente trivial e cujas fibras estejam
dotadas da ação de um grupo topológico, algo que veremos na próxima
subsecção.
\begin{example}
Vejamos que, se $\pi:X\rightarrow X'$ é um fibrado localmente trivial
e $f,g:Y\rightarrow X'$ são funções homotópicas, então os \emph{pullbacks}
$f^{*}X$ entre $(f,\pi)$, e $g^{*}X$ entre $(g,\pi)$, não só tem
o mesmo tipo de homotopia, como também são fibrados isomorfos. Para
tanto, seja $H$ homotopia entre $f$ e $g$, e consideremos a aplicação
$h:f^{*}X\times I\rightarrow X'$ obtida compondo $pr\times id$ com
$H$. Como $\pi$ é fibração, existe o mapa $h'$ apresentado no segundo
diagrama. Daí, por universalidade obtém-se $u$, que é um isomorfismo.
Fazendo o mesmo para $g$ e utilizando de transitividade, chega-se
em $f^{*}X\times I\simeq g^{*}X\times I$, donde o fibrado $f^{*}X$
ser isomorfo a $g^{*}X$.$$
\xymatrix{&&&& f^{*}X \times I \ar@/_/[rdd]_{pr \times id} \ar@/^/[rrd]^{h'} \ar@{-->}[rd]^u \\
f^{*}X \ar[r] \ar[d]_{pr} & X \ar[d]^{\pi} &  f^{*}X \ar[r] \ar[d]_{\imath _{0}} & X \ar[d]^{\pi} && H^{*}X \ar[r] \ar[d] & X \ar[d]^{\pi} \\
Y' \ar[r]_f & X' & f^{*}X \times I \ar[r]_h \ar@{-->}[ru]^{h'} & X' && Y' \times I \ar[r]_H & X'}
$$
\end{example}

\subsection*{\uline{Classificação dos Fibrados}}

$\quad\;\,$O último exemplo é particularmente interessante: ele nos
dá uma ideia clara de como classificar fibrados localmente triviais
(com base fixa) através da segunda estratégia apresentada no capítulo
dois. Com efeito, fixado $\pi:X\rightarrow X'$, seja $[-]:\mathscr{LB}\rightarrow\mathbf{Set}$
o functor que a cada fibrado localmente trivial associa a sua classe
de isomorfismos. Pelo referido exemplo, $\pi:X\rightarrow X'$ induz
uma transformação $\xi:[-,X']_{\mathscr{C}}\rightarrow[-]$, definida
por $\xi(Y)([f])=[f^{*}X]$. Busca-se, então, por fibrados $\pi:X\rightarrow X'$
tais que $\xi$ seja um isomorfismo natural. Diz-se que eles são \emph{universais}. 

Pode-se pensar no problema da seguinte maneira: o que o exemplo anterior
nos diz é que a parte ``$f:Y\rightarrow X$'' do \emph{pullback}
abaixo apresentado é homotópica. O fibrado $\pi:X\rightarrow X'$
será universal precisamente quando a parte que lhe corresponde (e,
portanto, o \emph{pullback} como um todo) for homotópico. Neste caso,
o fibrado classificado por $f$ pode ser computado através do \emph{mapping
cocylinder} de $\pi$:$$
\xymatrix{& X \ar[d]^{\pi} && \mathrm{HPb} \ar@{.>}[dd] \ar@{.>}[r] & \mathrm{ccyl}(\pi ) \ar@{-->}[r] \ar@{-->}[d] & X \ar[d]^{\pi} \\
Y \ar[r]_f & X' &&& \mathrm{path}(X') \ar[r] \ar[d] & X'\\
&&& Y \ar[r]_f &  X'}
$$

No contexto dos fibrados com grupo estrutural, a tarefa é mais simples:
todo $\pi:X\rightarrow X'$ com espaço total contrátil é trivial (veja,
por exemplo, \cite{Steenrod}). Como apresentaremos em seguida, há
uma construção canônica que permite associar a cada grupo topológico
$G$ um fibrado principal $EG\rightarrow BG$ sobre $G$, tal que
$EG$ é contrátil. Assim, $G$-fibrados principais (e, portanto, quaisquer
fibrados tendo $G$ como grupo estrutural) são classificados pelo
correspondente $BG$. Particularmente, uma aplicação $f:X\rightarrow Y$
classifica precisamente o seu \emph{mapping cocone}.

Iniciamos observando que, como os fibrados associados classificam
os fibrados com grupos estrutural, basta nos resumirmos aos fibrados
principais. Existem dois functores $\mathbf{Grp}\rightarrow\mathbf{Cat}$,
os quais nos permitem ver cada grupo $G$ como categorias $\mathbf{E}G$
e $\mathbf{B}G$, definidas como segue. Os objetos de $\mathbf{G}$
são precisamente os elementos de $G$, ao passo que entre $g,h\in\mathbf{E}G$
há um único morfismo $g\rightarrow h$, constituído do único elemento
$g'\in G$ tal que $g\circ s=h$. Por sua vez, $\mathbf{B}G$ possui
um único objeto $*$. Os morfismos $*\rightarrow*$ correspondem aos
elementos de $G$.

Sejam $N_{s}\mathbf{G}$ e $N_{s}\mathbf{G}'$ os nervos simpliciais
de $\mathbf{E}G$ e $\mathbf{B}G$. Suas $n$-símplices se identificam,
respectivamente, com produtos $G^{n+1}$ e $G^{n}$ de $n+1$ e de
$n$ cópias de $G$. Assim, há uma ação de $G$ em $G^{n+1}$ cujo
espaço de órbitas é isomorfo a $G^{n}$. Tal ação comuta operadores
de bordo, de modo que fica bem definido $p:N_{s}\mathbf{G}\rightarrow N_{s}\mathbf{G}'$
que em $G^{n}$ é $G^{n}\rightarrow G^{n}/G$.

Tem-se também um functor $\Delta\rightarrow\mathscr{C}$, que a cada
$[n]$ faz corresponder o subespaço $\Delta^{n}\subset\mathbb{R}^{n+1}$
formado de todas as listas de $n+1$ números reais não-negativos,
cuja soma é exatamente igual a um. Uma vez que $\mathscr{C}$ é completa
e cocompleta, segue-se a existência de adjuntos $\mathrm{Sing}:\mathscr{C}\rightarrow s\mathbf{Set}$
e $\vert\cdot\vert:s\mathbf{Set}\rightarrow\mathscr{C}$. Para cada
grupo topológico $G$, o espaço $\vert N_{s}\mathbf{G}\vert$ é contrátil,
de modo que o fibrado obtido aplicando $\vert\cdot\vert$ em $p:N_{s}\mathbf{G}\rightarrow N_{s}\mathbf{G}'$
é universal na categoria dos $G$-fibrados.

\section{Invariantes}

$\quad\;\,$Iniciamos esta secção mostrando que $\mathscr{C}_{*}$
pertence a uma classe de categorias com equivalências fracas na qual
suspensões e \emph{loops} podem ser computados através do produto
\emph{smash}. Por um lado, sendo um colimite homotópico, a suspensão
de qualquer espaço pontuado $X\in\mathscr{C}_{*}$ pode ser obtida
das duas formas abaixo apresentadas:$$
\xymatrix{\Sigma X & CX  \ar@{.>}[l]   & \ar@{-->}[l] \mathrm{*}  & \Sigma X \ar@{.>}[r] & CX \ar@{-->}[r]  & \mathrm{*}  \\
& \ar@{-->}[u] X\times I  & \ar[l] \ar[u] X & \ar@{.>}[u] CX  & \ar@{-->}[l] \ar@{-->}[u] X\times I   & \ar[l] X  \ar[u]  \\
\mathrm{*} \ar@{.>}[uu]  & \ar[l] \ar[u] X && \ar@{-->}[u] \mathrm{*}  & \ar[l] \ar[u] X}
$$

A primeira nos diz que a suspensão de $X$ é a colagem de dois cones
idênticos $CX$ que têm $X$ como base (estes se tratam, pois, do
\emph{mapping cylinder} de $X\rightarrow*$). O segundo diagrama mostra
que $\Sigma X$ é homotópico ao quociente $CX/X$. 

Como $I$ é contrátil, $X\vee I$ tem o mesmo tipo de homotopia que
$X$. Além disso, $X=X\wedge\mathbb{S}^{0}$, pois a esfera $\mathbb{S}^{0}$
é o objeto neutro da estrutura monoidal definida em $\mathscr{C}_{*}$
por $\wedge$. Substituindo tais colocações na primeira das maneiras
de calcular a suspensão, vemos que $\Sigma X$ tem o mesmo tipo de
homotopia que o quociente de $X\wedge I$ por $X\wedge\mathbb{S}^{0}$.
Por fim, sendo $\mathscr{C}$ cartesianamente fechada e simétrica,
$X\wedge-$ possui adjunto à direita e, portanto, preserva \emph{pushouts}.
Isto significa que 
\[
X\wedge I/X\wedge\mathbb{S}^{0}\simeq X\wedge(I/\mathbb{S}^{0}),\quad\mbox{donde}\quad\Sigma X\simeq X\wedge\mathbb{S}^{1}.
\]

De maneira dual, verifica-se que o \emph{loop} $\Omega X$ é equivalente
a $\mathrm{Map}_{*}(\mathbb{S}^{1};X)$. Particularmente, estes fatos
nos mostram que, além de adjuntos, os functores $-\wedge\mathbb{S}^{1}$
e $\mathrm{Map}_{*}$ são homotópicos, tendo passagens ao quociente
coincidentes com $\Sigma$ e $\Omega$. Em virtude destas coincidências,
não faremos distinção entre um e outro.
\begin{example}
Tem-se homeomorfismos $\mathbb{S}^{n}\wedge\mathbb{S}^{m}\simeq\mathbb{S}^{n+m}$,
donde $\Sigma\mathbb{S}^{n}\simeq\mathbb{S}^{n+1}$. Uma forma de
visualizar o resultado é a seguinte: por meio das projeções estereográficas,
a esfera admite a decomposição $\mathbb{S}^{n}\simeq e^{0}\cup e^{n}$,
em que $e^{i}\simeq\mathrm{int}\mathbb{D}^{i}$ é chamado de \emph{célula
de dimensão $i$}. Da mesma forma, 
\[
\mathbb{S}^{n}\times\mathbb{S}^{m}\simeq e^{0}\cup e^{n}\cup e^{m}\cup e^{n+m}\quad\mbox{e}\quad\mathbb{S}^{n}\vee\mathbb{S}^{m}\simeq e^{0}\cup e^{n}\cup e^{m},
\]
Passando ao quociente, obtém-se o resultado procurado: 
\[
\mathbb{S}^{n}\wedge\mathbb{S}^{m}\simeq\mathbb{S}^{n}\times\mathbb{S}^{m}/\mathbb{S}^{n}\vee\mathbb{S}^{m}\simeq e^{0}\cup e^{n+m}\simeq\mathbb{S}^{n+m}.
\]
Uma construção explícita do homeomorfismo $\Sigma\mathbb{S}^{n}\simeq\mathbb{S}^{n+1}$
pode ser encontrada na sexta secção do primeiro capítulo de \cite{Spanier}. 
\end{example}

\subsection*{\uline{\mbox{$H$}-Espaços}}

$\quad\;\,$Além da estrutura proveniente do produto \emph{smash},
$\mathscr{H}\mathscr{C}_{*}$ também admite estruturas monoidais definidas
por seus produtos e coprodutos binários (lembre-se de que, apesar
da categoria homotópica $\mathscr{H}\mathscr{C}_{*}$ não ser completa
e cocompleta, ela possui produtos e coprodutos). Os grupos e cogrupos
inerentes a cada uma delas são respectivamente chamados de $H$-\emph{espaços}
e de $H$\emph{-coespaços}, tendo categorias denotadas por $\mathrm{HSp}(\mathscr{C}_{*})$
e $\mathrm{H^{op}Sp}(\mathscr{C}_{*})$.
\begin{example}
Evidentemente, todo grupo em $\mathscr{C}_{*}$ é um $H$-espaço.
Portanto, todo grupo topológico Hausdorff e compactamente gerado,
estando pontuado pelo seu elemento neutro, é um $H$-espaço. Este
é o caso de $(\mathbb{S}^{1},1)$. Da mesma forma, todo cogrupo em
$\mathscr{C}_{*}$ é um $H$-coespaço, de modo que o círculo também
admite uma estrutura natural de $H$-coespaço.

\begin{example}
Provaremos agora que, se $X$ é $H$-coespaço, então o produto \emph{smash}
$X\wedge Y$ também o é, seja qual for o $Y\in\mathscr{C}_{*}$. Com
efeito, basta mostrar que $[X\wedge Y;-]_{\mathscr{C}_{*}}$ é grupo
para todo $Y$. Como $[X\wedge Y;Z]_{\mathscr{C}_{*}}\simeq[X;\mathrm{Map}_{*}(Y;Z)]_{\mathscr{C}_{*}}$,
se $X$ é $H$-co-espaço, então $[X;-]$ é grupo e, portanto, cada
um dos conjuntos da direita também são grupos. Há uma única estrutura
de grupo em $[X\wedge Y;Z]_{\mathscr{C}_{*}}$ que faz das referidas
bijeções isomorfismos. Assim, $[X\wedge Y;-]_{\mathscr{C}_{*}}$ é
grupo, garantindo o que havíamos afirmado.
\end{example}
\end{example}
Como consequência imediata dos exemplos anteriores, segue-se que cada
$\Sigma^{n}$ assume valores na categoria $\mathrm{H^{op}Sp}(\mathscr{C}_{*})$.
Isto significa que cada $X\in\mathscr{C}_{*}$ define uma sequência
de functores $\pi_{n}^{X}:\mathscr{H}\mathscr{C}_{*}\rightarrow\mathbf{Grp}$,
que tomam $Y\in\mathscr{C}_{*}$ e associam o grupo $\pi_{n}^{X}(Y,y_{o})=[\Sigma^{n}X;Y]$.
Em suma, estes são os\emph{ primeiros exemplos de invariantes topológicos
construídos por métodos algébricos com os quais nos deparamos}. Observamos,
em particular, que estes são invariantes \emph{homotópicos}. Isto
significa que, se $Y\simeq Y'$, então os correspondentes grupos a
eles associados são obrigatoriamente isomorfos.

De maneira dual, poderíamos considerar os functores $\pi_{X}^{n}$,
que a cada espaço pontuado $Y$ fazem corresponder $[X;\Omega^{n}Y]_{\mathscr{C}_{*}}$.
As adjunções $[\Sigma^{n}X;Y]_{\mathscr{C}_{*}}\simeq[X;\Omega^{n}Y]_{\mathscr{C}_{*}}$,
no entanto, garantiriam que os respectivos invariantes associados
por $\pi_{X}^{n}$ e $\pi_{n}^{X}$ são sempre isomorfos. Pelo argumento
argumento de Eckmann-Hilton, isto é, pelos isomorfismos 
\[
\mathrm{Mon}(\mathrm{Mon}(\mathbf{Set};\times),\times_{\mathrm{M}})\simeq\mathrm{_{\mbox{\ensuremath{c}}}Mon}(\mathbf{Set};\times)=\mathbf{AbGrp},
\]
para quaisquer que sejam $n,m>0$ e $X,Y\in\mathscr{C}_{*}$, os respectivos
grupos $[\Sigma^{n}X;\Omega^{m}Y]_{\mathscr{C}_{*}}$ são abelianos.
Daí, para $n>1$, os functores $\pi_{n}^{X}$ e $\pi_{X}^{n}$ chegam
em $\mathbf{AbGrp}$. Em outras palavras, na sequência de invariantes
definidos por $X$, somente o primeiro não é abeliano.

\subsection*{\uline{Grupos de Homotopia}}

$\quad\;\,$Procuramos por invariantes que, em certo sentido, não
carregam arbitrariedades. Isto nos leva a questionar os papéis do
espaço $X$ e do ponto base $y_{o}$ em $\pi_{n}^{X}(Y,y_{o})$. 

No que toca o primeiro questionamento, observemos que, como os functores
$-\wedge\mathbb{S}^{1}$ são homotópicos, espaços $X$ e $X'$ que
tenham o mesmo tipo de homotopia produzem functores $\pi_{n}^{X}$
e $\pi_{n}^{X'}$ naturalmente isomorfos. Daí, os invariantes $\pi_{n}^{X}(Y,y_{o})$
não são sensíveis à troca de espaços que estejam na mesma classe de
$X$. Em particular, isto retira nosso interesse dos invariantes definidos
por espaços contráteis. Pois, se $X\simeq*$, então o grupo $\pi_{n}^{X}(Y,y_{o})$
é sempre trivial.

O tipo homotópico mais simples depois de um ponto é um ``ponto pontuado''.
Em outras palavras, tratam-se dos espaços que estão na classe de $\mathbb{S}^{0}$.
Vejamos que, em grande parte dos casos, estes produzem invariantes
$\pi_{n}^{\mathbb{S}^{0}}(X,x_{o})$, denotados simplesmente por $\pi_{n}(X,x_{o})$
e chamados de \emph{grupos de homotopia} de $X$, que não dependem
do ponto base $x_{o}$ escolhido e, portanto, não possuem arbitrariedades.
Isto os torna particularmente interessantes.

Iniciamos observando haver uma $\omega$-categoria $\Pi(X)$, cujos
objetos são os elementos de $X$, cujos $1$-morfismos $x_{o}\rightarrow x_{o}'$
são os caminhos ligando tais pontos, cujos $2$-morfismos são as homotopias
livres (isto é, que preservam os pontos inicial e final) entre caminhos,
e assim sucessivamente. Tal categoria é, em verdade, um $\infty$-grupoide.
Consequentemente, também podemos vê-la enquanto grupoide: basta manter
os objetos e considerar como morfismos as classes de homotopia de
caminhos $x_{o}\rightarrow x_{o}'$. Daí, todo functor $F^{n}:\Pi(X)\rightarrow\mathbf{Grp}$
cumprindo $F^{n}(x_{o})=\pi_{n}(X,x_{o})$ manda classes $[\gamma]:x_{o}\rightarrow x_{o}'$
em isomorfismos. Como consequência, $\pi_{n}(X,x_{o})$ depende apenas
da componente conexa por caminhos de $X$ a qual $x_{o}$ pertence.

Um exemplo destes functores é aquele que toma $[\gamma]$ e associa
$F_{\gamma}^{n}:\pi_{n}(X,x_{o})\rightarrow\pi_{n}(X,x_{o}')$, que
a cada classe $[f]\in[\mathbb{S}^{n};X,x_{o}]_{\mathscr{C}_{*}}$
devolve a correspondente classe das aplicações contínuas $g:\mathbb{S}^{n}\rightarrow(X,x_{o}')$
que podem ser livremente deformadas em $f$ através de $\gamma$.
Isto é, tais que existe homotopia livre $H:\mathbb{S}^{n}\times I\rightarrow X$
entre $g$ e $f$, com $H(*,t)=\gamma(t)$, em que $*$ base de $\mathbb{S}^{n}$.

\subsection*{\uline{Grupo Fundamental}}

$\quad\;\,$O primeiro dos grupos de homotopia é chamado de \emph{grupo
fundamental}. Ele possui uma interpretação bastante clara, como passamos
a discutir.

Seja $\mathscr{C}_{2}$ a categoria dos pares de $\mathscr{C}$. Diz-se
que dois mapas em $\mathscr{C}_{2}$ são homotópicos quando existe
uma homotopia usual entre eles, a qual preserva os subespaços distinguidos.
Isto define relações de equivalência em cada conjunto de morfismos
de $\mathscr{C}_{2}$, a qual é compatível com composições. Daí, fica
definida a categoria $\mathscr{H}\mathscr{C}_{2}$, cujos morfismos
são as classes de mapas entre pares. Como pode ser conferido em \cite{convenient_category_steenrod,gray_homotopy},
quando $A\subset X$ é fechado e $Y\in\mathscr{C}_{*}$, tem-se bijeções
\begin{equation}
\mathrm{Mor}_{\mathscr{HC}_{2}}(X,A;Y,y_{o})\simeq[X/A;Y]_{\mathscr{C}_{*}}.\label{prop_espacos_cmpct_gerados}
\end{equation}

Assim, $\pi_{1}(X,x_{o})$ nada mais é que o conjunto das classes
de homotopia de caminhos $\gamma:I\rightarrow X$ cumprindo $\gamma(0)=x_{o}$
e $\gamma(1)=x_{o}$. Desta forma o grupo fundamental de $(X,x_{o})$
será trivial se, e somente se, todo \emph{loop} em $x_{o}$ puder
ser continuamente deformado até tornar o caminho constante em $x_{o}$.
Isto é, se, e só se, $X$ não possuir buracos no entorno de $x_{o}$.
Assim, o invariante $\pi_{1}$ mensura, exatamente, a existência de
buracos.
\begin{example}
Como discutimos no primeiro capítulo, os invariante proporcionados
pela topologia geral não são muito fortes. Por exemplo, através deles
não se consegue provar que a esfera $\mathbb{S}^{2}$ não é homeomorfa
ao toro $\mathbb{S}^{1}\times\mathbb{S}^{1}$. A topologia algébrica,
como também discutimos, vêm para construir invariantes mais poderosos.
De fato, acabamos de obter um invariante $\pi_{1}$ que mensura a
existência de buracos. Ora, a diferença entre a esfera e o toro está
justamente neste ponto: $\mathbb{S}^{1}\times\mathbb{S}^{1}$ tem
buracos, enquanto que $\mathbb{S}^{2}$ não os possui. Daí, deve-se
ter $\pi_{1}(\mathbb{S}^{1}\times\mathbb{S}^{1})\neq\pi_{1}(\mathbb{S}^{2})$,
de modo que tais espaços não são homeomorfos. Em particular, como
$\pi_{1}$ é invariante homotópico, isto mostra que $\mathbb{S}^{1}\times\mathbb{S}^{1}$
e $\mathbb{S}^{2}$ não possuem nem mesmo igual tipo de homotopia.

\begin{example}
No quarto capítulo, introduzimos uma estrutura monoidal na categoria
$\mathbf{C}^{n}$ das subvariedades compactas e orientáveis de $\mathbb{R}^{n+1}$,
cujo objeto neutro é $\mathbb{S}^{n}$. Quando $n=2$, comentamos
que $\mathrm{Eqv}(\mathbf{C}^{2})$ é isomorfo aos naturais $\mathbb{N}$,
pelo morfismo que a cada $g$ associa a classe de $\mathbb{S}_{g}^{1}$.
O número $g$ é chamado de \emph{genus} da superfície e mede o número
de buracos que ela possui. Assim, pode-se dizer que a classificação
de $\mathbf{C}^{2}$ se dá exatamente por $\pi_{1}$.
\end{example}
\end{example}
Para terminar, mostremos que a independência do ponto base dos grupos
de homotopia se traduz numa ação de $\pi_{1}(X,x_{o})$ em cada $\pi_{n}(X,x_{o})$,
cujo quociente é precisamente o conjunto $[\mathbb{S}^{n};X]_{\mathscr{C}}$
das classes de homotopia livre. Com efeito, pela identificação (\ref{prop_espacos_cmpct_gerados}),
segue-se que $\mathbf{B}\pi_{1}(X,x_{o})$ é a subcategoria de $\Pi(X)$
que possui somente $x_{o}$ como objeto e $x_{o}\rightarrow x_{o}$
como morfismos. Enquanto restrito a ela, o functor $F^{n}:\Pi(X)\rightarrow\mathbf{Grp}$
se traduz numa ação 
\[
*:\pi_{1}(X,x_{o})\times\pi_{n}(X,x_{o})\rightarrow\pi_{n}(X,x_{o}),\quad\mbox{definida por}\quad[\gamma]*[f]=F_{\gamma}^{n}([f]),
\]
cuja órbita está em bijeção com $[\mathbb{S}^{n};X]_{\mathscr{C}}$.
Consequentemente, \emph{se um espaço conexo por caminhos possui $\pi_{1}(X,x_{o})$
trivial para algum $x_{o}$, situação em que é dito ser simplesmente
conexo, então tratar de mapas pontuados é o mesmo que tratar de mapas
sem pontuação. }
\begin{example}
Pode-se ter bijeções $\pi_{n}(X,x_{o})\simeq[\mathbb{S}^{n};X]_{\mathscr{C}}$
mesmo que $X$ não seja simplesmente conexo. Com efeito, basta que
a ação de $\pi_{1}$ em cada $\pi_{n}$ seja trivial. Isto é, basta
que $[\gamma]*[f]=[f]$ para cada $\gamma$ e cada $f$. Isto ocorre,
por exemplo, quando $X$ é um $H$-espaço: dados $\gamma$ e $f$,
a aplicação $H(t,x)=\gamma(t)\cdot f(x)$ é homotopia livre de $f$
em $f$ ao longo de $\gamma$. Veja o teorema 4.18 do terceiro capítulo
da referência \cite{whitehead_homotopy}.
\end{example}

\section{Cálculo}

$\quad\;\,$Uma vez obtidos invariantes $F:\mathbf{C}\rightarrow\mathbf{D}$,
de uma determinada categoria $\mathbf{C}$ preocupa-se em calculá-los.
Isto é, procura-se determinar $F(X)$ módulo isomorfismos. Neste sentido,
as ferramentas mais importantes são as sequências exatas envolvendo
$F$. No contexto da homotopia clássica, estas serão estudadas nas
duas próximas subsecções. 

Observamos, por outro lado, que a tarefa de calcular um invariante
se torna mais simples na medida em que este preserva maior número
de limites e colimites. Com efeito, se um objeto $X$ for obtido por
limites/colimites preservados por $F$, então poderemos calcular $F(X)$
através dos invariantes associados aos objetos cujo limite/colimite
originam $X$. Assim, particularmente, se estamos interessados em
homotopia, haveremos de buscar por functores que preservem o maior
número de limites/colimites homotópicos.

Na subsecção anterior, obtivemos os functores $\pi_{i}:\mathscr{HC}_{*}\rightarrow\mathbf{Grp}$,
cujos invariantes são os grupos de homotopia. Tais invariantes são
tremendamente difíceis de serem calculados, especialmente quando $n>1$.
Por exemplo, ainda não se sabe calcular exatamente os grupos $\pi_{i}(\mathbb{S}^{n})$,
com $i>n$. Uma das razões disso reside justamente no fato de que
os $\pi_{i}$ preservam poucos limites/colimites homotópicos. Com
efeito, na próxima subsecção mostraremos que, em geral, os $\pi_{i}$
não preservam coprodutos. Por sua vez, logo mais veremos que tais
functores costumeiramente falham na preservação de \emph{pushouts}
homotópicos.

Uma exceção à regra são os limites de $\mathscr{HC}_{*}$ (que existem
em pequena quantidade, como já discutimos). Isto se deve precisamente
ao fato dos $\pi_{i}$ serem todos representáveis. Como exemplo, segue-se
que eles preservam produtos. Em outras palavras, tem-se os seguintes
isomorfismos: 
\[
\pi_{i}(X\times Y,x_{o}\times y_{o})\simeq\pi_{i}(X,x_{o})\times\pi_{i}(Y,y_{o}).
\]

Também é fácil mostrar que os grupos de homotopia preservam certos
limites indutivos. Mais precisamente, se $X$ é limite indutivo de
um sistema dirigido de inclusões $X_{j}\rightarrow X_{j+1}$, então
existem os isomorfismos abaixo (a ideia da prova reside na observação
de que qualquer mapa $K\rightarrow X$, com $K$ compacto, assume
valores em algum $X_{i}$): 
\[
\pi_{i}(X)\simeq\mathrm{colim}\pi_{i}(X_{j}).
\]

\subsection*{\uline{Seifert-Van Kampen}}

$\quad\;\,$Uma exceção especial à falta de preservação de colimites
homotópicos por partes dos grupos de homotopia $\pi_{i}$ acontece
para $i=1$. Nela, conta-se com um resultado conhecido como \emph{Teorema
de Seifert-Van Kampen}, o qual afirma que $\pi_{1}$ preserva certos
\emph{pushouts }homotópicos. Mais precisamente, ele garante que, se
um quadrado é comutativo em $\mathscr{C}_{*}$ e um \emph{pushout}
homotópico em $\mathscr{C}$, com $P$ conexo por caminhos, então
sua imagem por $\pi_{i}$ é um \emph{pushout} em $\mathbf{Grp}$.
$$
\xymatrix{P & X \ar@{-->}[l] &&  \pi_1(P) & \pi_1(X) \ar@{-->}[l] \\
X' \ar@{-->}[u] & Y \ar[l] \ar[u]  \ar@<0.7cm>@{=>}[rr]^{\pi _1} && \pi_1(X') \ar@{-->}[u] & \pi_1(Y) \ar[l] \ar[u] }
$$

Ressaltamos que este fato não é válido num caso geral: o problema
está na exigência de que o \emph{pushout }homotópico parta de um diagrama
comutativo em $\mathscr{C}_{*}$.
\begin{example}
Se $X$ é espaço no qual age um grupo topológico $G$, o \emph{pushout}
homotópico associado à órbita da ação em geral não se estende a um
diagrama comutativo em $\mathscr{C}_{*}$. Isto porque nem sempre
a ação de $G$ preserva um ponto base previamente fixado em $X$. 
\end{example}
Assim, o problema está justamente no fato de trabalharmos com espaços
pontuados. Desta forma, a ideia é substituir o grupo fundamental pelo
grupoide fundamental. Isto pode realmente ser feito e, neste caso,
mostra-se que $\Pi$ preserva qualquer colimite homotópico (veja \cite{fundamental_group_homotopy_colimits}). 

Devido ao poderio do teorema de Seifer-Van Kampen, uma questão natural
diz respeito a possibilidade de se obter um análogo para os outros
grupos de homotopia. Tendo-se isto em mente, observamos que tal teorema
foi bastante melhorado quando considerado sob a perspectiva do grupoide
fundamental. Assim a ideia seria buscar outros grupoides, os quais
estejam relacionados com os grupos de homotopia de grau superior.
Ocorre que homotopias de grau superior indicam maior informação categórica.
Desta forma forma, ao grupo $\pi_{n}$ espera-se ter associado não
só um grupoide, mas um $n$-grupoide $\Pi_{n}$. Com o intuito de
analisar esta relação, respondamos algumas questões:
\begin{itemize}
\item \emph{Quais propriedades $\Pi_{n}$ deve possuir}? No caso $n=1$,
tem-se $\Pi(X)\simeq\mathbf{B}\pi_{1}(X,x)$. De maneira análoga,
espera-se poder olhar $\pi_{n}(X,x)$ como uma $n$-categoria com
um único objeto e obter $\Pi_{n}\simeq\pi_{n}(X,x)$. No entanto,
há um problema: o grupo fundamental é o únicos dos grupos de homotopia
que não é necessariamente abeliano. Assim, ao tentar mimetizar o que
é feito para $\pi_{1}$, ao invés de trabalhar com os grupos de homotopia
de grau superior, deve-se trabalhar com functores que a eles estejam
relacionados, mas que ao mesmo tempo assumam valores em certas $n$-categorias
$\mathbf{C}_{n}$ ``não-abelianas''. Diz-se, neste caso, que se
está em busca de uma \emph{topologia algébrica não-abeliana}.
\item \emph{Quem são as} $\mathbf{C}_{n}$? Por um lado, sabemos que $\pi_{1}$
assume valores em $\mathbf{Grp}$, que é uma categoria usual. Por
outro, queremos functores que chegam em $n$-categorias e que estejam
relacionados com os grupos de homotopia. Assim, queremos sair de $\pi_{1}$
e aumentar o nível de informação categórica. A maneira natural de
se fazer isso é através de \emph{categorificação}. Portanto, de maneira
indutiva, espera-se que $\mathbf{C}_{2}$ seja a categoria $\mathrm{Cat}(\mathbf{Grp})$
das categorificações de $\mathbf{Grp}$, e que $\mathbf{C}_{n}$ seja
formada das categorificações de $\mathbf{C}_{n-1}$. Ora, uma maneira
de categorificar é enriquecer sobre $\mathbf{Cat}$. Desta forma,
$\mathrm{Cat}(\mathbf{Grp})$ há de ser identificado com enriquecimentos
da categoria $\mathbf{Grp}$ na categoria dos grupoides, e assim sucessivamente.
\item \emph{Em que sentido as categorias $\mathbf{C}_{n}$ assim definidas
são não-abelianas}? Os functores $\pi_{i\geq2}$ assumem valores em
$\mathbf{AbGrp}\simeq\mathbf{Mod}_{\mathbb{Z}}$. Por sua vez, $R$-módulos
são ações de $R$ em grupos abelianos. Assim, os $\pi_{i\geq2}$ assumem
valores na categoria das ações de grupos abelianos em grupos abelianos.
Olhando por este lado, poder-se-ia pensar em definir $\mathbf{C}_{n}$
como sendo alguma categoria de ações de grupos em grupos (não necessariamente
abelianos). Ocorre que $\mathrm{Cat}(\mathbf{Grp})$ se identifica
com a categoria dos \emph{módulos cruzados}, que são ações deste tipo.
\end{itemize}
$\quad\;\,$Depois desta digressão, conclui-se que para obter uma
generalização do teorema de Seifert-Van Kampen deve-se olhar para
$n$-grupoides correspondentes à functores assumindo valores nas categorias
$\mathbf{C}_{n}$. Tais functores podem realmente ser construídos,
forncendo generalização procurada. Explicitar esta construção e apresentar
importantes consequências do teorema do Seifert-Van Kampen generalizado
constituem os principais objetivos da referência \cite{non_abelian_algebraic_topology}.

Um último comentário: no desenvolvimento anterior, partimos do fato
de que, diferentemente do grupo fundamental, os outros grupos de homotopia
são sempre abelianos. Posto isto, buscamos por functores que substituíssem
os $\pi_{i\geq2}$ e assumissem valores em categorias ``não-abelianas''.
Assim, os passos fundamentais nesta estratégia de generalização consistiram
em \emph{manter o foco nos grupos de homotopia e realizar a passagem
do abeliano para o não-abeliano}. 

Ao invés disso, poderíamos ter-nos desapegado dos grupos de homotopia
e procurado por novos invariantes abelianos que cumpram condição semelhante
ao teorema de Seifert-Van Kampen. Neste caso, cairíamos nas chamadas
\emph{teorias de homologia/cohomologia}, as quais fazem parte de uma
outra área da Topologia Algébrica Clássica. Boas referências sobre
o assunto incluem \cite{Dold_topologia_algebrica,Hu_homologia,Greenberg_topologia_algebrica},
assim como o clássico \cite{axiomas_homologia}. 

A vantagem desta abordagem é que ela conta com as ferramentas da Álgebra
Homológica, que lida essencialmente com a construção e manipulação
de sequências exatas. A respeito disso, referenciamos o leitor aos
textos \cite{Hilton_homological_algebra,homological_algebra_CARTAN,MACLANE_homology},
que tratam do assunto na categoria dos módulos, e também às obras
\cite{Heller_homological_algebra,Rotman_homological_algebra,Weibel_homological_algebra},
que desenvolvem a teoria no contexto mais abstrato das categorias
abelianas.

\subsection*{\uline{Homotopia Relativa}}

$\quad\;\,$Em geral, na tentativa de comparar os invariantes de um
subespaço $A\subset X$ com os invariantes que do espaço todo, introduz-se
invariantes ``relativos'' associados ao par $(X,A)$. Nesta subsecção
buscaremos uma boa definição para os \emph{grupos de homotopia relativa},
usualmente denotados por $\pi_{i}(X,A,x_{o})$, que contemple esta
ideia. Em boas situações, eles serão responsáveis por mensurar efetivamente
a diferença entre $\pi_{i}(A,x_{o})$ e $\pi_{i}(X,x_{o})$.

Como já comentamos, as ferramentas básicas para o cálculo de invariantes
são as sequências exatas. No âmbito das categorias modelo, vimos que
todo morfismo gera sequências exatas longas de fibrações e de cofibrações.
No presente contexto, o morfismo fundamental é a inclusão $\imath:A\rightarrow X$,
que produz a sequência de fibrações$$
\xymatrix{\cdots \ar[r] & \Omega \mathrm{ccone}(\imath) \ar[r] & \Omega A \ar[r]^{-\Omega \imath} & \Omega X \ar[r] & \mathrm{ccone}(\imath) \ar[r] & A \ar[r]^f & Y}
$$ 

Buscamos por uma sequência exata que relacione os grupos $\pi_{i}(A)$
e $\pi_{i}(X)$. Ora, cada $\pi_{i}$ é representável e isomorfo a
$[\mathbb{S}^{n};-]_{\mathscr{C}_{*}}$. Mas functores representáveis
preservam sequências de fibrações. Assim, ao aplicar $\pi_{i}$ na
sequência anterior, como $\pi_{i}(\Omega X)\simeq\pi_{i+1}(X)$, obteremos
aquela que procuramos (veja o diagrama abaixo). Ressaltamos que, da
maneira como foi obtida, tal sequência é exata em $\mathbf{Set}$.
No entanto, pode-se mostrar que, se $i\geq1$ ela é exata em $\mathbf{Grp}$,
ao passo que, se $i\geq2$, então ela é exata em $\mathbf{AbGrp}$
(confira a página 62 de \cite{MAY_1}). $$
\xymatrix{\cdots \ar[r] \ar[r] & \pi_{i+1} (A) \ar[r] & \pi_{i+1}(X) \ar[r] & \pi_{i}(\mathrm{ccone}(\imath)) \ar[r] & \pi_{i}(A) \ar[r] & \pi_{i}(X) \ar[r] & \cdots}
$$ 

Agora observamos que o termo que faz a conexão entre os grupos de
$A$ e os grupos de $X$ é precisamente o \emph{mapping cocone $\mathrm{ccone}(\imath)$
}da inclusão $\imath:A\rightarrow X$. Por definição, este se trata
do \emph{pullback }homotópico abaixo, o qual pode ser calculado substituindo
$\imath$ por uma resolução fibrante, como apresentado ao seu lado.$$
\xymatrix{\mathrm{ccone}(\imath) \ar@{-->}[r] \ar@{-->}[d] & A \ar[d]^{\imath} && \mathrm{ccone}(\imath) \ar@{..>}[dd] \ar@{..>}[r] & \mathrm{ccyl}(\imath) \ar@{-->}[d] \ar@{-->}[r] & A \ar[d]^{\imath} \\
\mathrm{*} \ar[r] & X &&& \mathrm{Path}(X) \ar[d] \ar[r] & X \\
&&& \mathrm{*} \ar[r] & X}
$$

Se tomamos a resolução fibrante canônica, então $\mathrm{Path}(X)\simeq\mathrm{Map}(I;X)$
e o \emph{mapping path space} $\mathrm{ccyl(\imath)}$ se identifica
com o espaço de todos os caminhos em $X$ que chegam em $A$. Portanto,
vemos que o \emph{mapping cocone} é simplesmente o espaço dos caminhos
em $X$ que partem do ponto base $x_{o}\in A\subset X$ e chegam em
$A$, o qual depende da terna par $(X,A,x_{o})$.

Concluímos, assim, que o grupo $\pi_{i}(\mathrm{ccone}(\imath))$
depende não só de $X$, mas também de $A$ e do ponto base $x_{o}$.
Além disso, é responsável por conectar invariantes de $X$ e de $A$.
Estes fatos servem de motivação para defini-lo como sendo o \emph{$i$-ésimo
grupo de homotopia relativa}. 

Há, no entanto, uma requisição bastante natural a ser feita: \emph{espera-se}
\emph{que um invariante relativo do par $(X,A)$ recaia no invariante
absoluto de $X$ quando $A=X$}. Observamos que isto não é satisfeito
se definimos $\pi_{i}(X,A)$ como sendo $\pi_{i}(\mathrm{ccone}(\imath))$.
De fato, se $A=X$, então $\imath=id_{X}$, cujo \emph{mapping cocone}
é $\Omega X$. Portanto, tomando a definição anterior, teríamos 
\[
\pi_{i}(X,X)=\pi_{i}(\mathrm{ccone}(id_{X}))\simeq\pi_{i}(\Omega X)\simeq\pi_{i+1}(X),
\]
contrariando o desejado. O problema pode ser facilmente contornado
definindo $\pi_{i}(X,A)$ como sendo $\pi_{i-1}(\mathrm{ccone}(\imath))$.
É esta a definição costumeiramente adotada na literatura, da qual
compartilharemos neste tomo. Com ela, as sequência exata de fibração
de $\imath:A\rightarrow X$, agora chamada de \emph{sequência exata
em homotopia do par} $(X,A)$, se escreve$$
\xymatrix{\cdots \ar[r] \ar[r] & \pi_{i+1} (A) \ar[r] & \pi_{i+1}(X) \ar[r] & \pi_{i+1}(X,A) \ar[r] & \pi_{i}(A) \ar[r] & \pi_{i}(X) \ar[r] & \cdots}
$$ 

Vejamos um exemplo de situação na qual a homotopia relativa mostra
sua utilidade:
\begin{example}
Mostraremos que, \emph{se $\imath:A\rightarrow X$ admite uma secção
$s$, então para $i\geq2$ a homotopia relativa mede, efetivamente,
quão a homotopia de $A$ difere da de $X$}. De fato, se $s$ é secção
para $\imath$, então $\pi_{i}(s)$ é secção para cada $\pi_{i}(\imath)$,
seja qual for o $i$. Isto faz com que a sequência longa de fibração
de $\imath$ possa ser obtida juntando sequências exatas curtas, referentes
a cada $i$. Mais precisamente, o fato de existirem secções na sequência
longa faz com que $\pi_{i}(\imath)$ sejam sempre sobrejetivas e,
consequentemente, que as $\partial$ sejam injetiva, de modo que as
sequências curtas$$
\xymatrix{0 \ar[r] & \pi_{i+1}(X,A) \ar[r]^-{\partial} & \pi_i (A) \ar[r] &  \pi_i (X) \ar[r] & 0}
$$são todas exatas. No âmbito abeliano, uma sequência exata curta admite
uma secção na segunda aplicação se, e somente se, o objeto do meio
se decompõe como o coproduto dos termos exteriores. Este é o conteúdo
do \emph{sppliting lemma} (veja \cite{Hilton_homological_algebra,MACLANE_homology,Rotman_homological_algebra,Weibel_homological_algebra}),
o qual pode ser entendido como uma generalização do teorema do núcleo
e da imagem utilizado na álgebra linear. Observamos que, se $i\geq2$,
então os grupos de homotopia são todos abelianos e então estamos no
domínio de validade do \emph{sppliting lemma}. Portanto, em tal situação,
tem-se 
\[
\pi_{i}(A)\simeq\pi_{i}(X)\oplus\pi_{i+1}(X,A).
\]
Situação estritamente análoga vale quando $\imath:A\rightarrow X$
possui uma retração ao invés de uma secção. Neste caso, no entanto,
as $\pi_{i}(\imath)$ ficam injetivas (e não mais sobrejetivas), de
modo que são as sequências curtas$$
\xymatrix{0 \ar[r] & \pi_{i}(A) \ar[r] & \pi_i (X) \ar[r] &  \pi_{i} (X,A) \ar[r] & 0}
$$que se mostram exatas. Por uma versão do \emph{sppliting lemma} segue-se
que, para $i\geq2$,
\[
\pi_{i}(X)\simeq\pi_{i}(A)\oplus\pi_{i}(X,A).
\]

\begin{example}
O exemplo anterior nos permite mostrar que, em geral, os grupos de
homotopia não preservam coprodutos. Com efeito, dados espaços pontuados
$X,Y\in\mathscr{H}\mathscr{C}_{*}$, a inclusão natural $X\vee Y\rightarrow X\times Y$
admite uma secção, de modo que, para $n\geq2$, 
\begin{eqnarray*}
\pi_{i}(X\vee Y) & \simeq & \pi_{i}(X\times Y)\oplus\pi_{i+1}(X\times Y,X\vee Y)\\
 & \simeq & \pi_{i}(X)\oplus\pi_{i}(Y)\oplus\pi_{i+1}(X\times Y,X\vee Y).
\end{eqnarray*}
\end{example}
\end{example}

\subsection*{\uline{Sequências de Fibrações}}

$\quad\;\,$Existem diversos livros na literatura que fazem uma introdução
à Topologia Algébrica via grupo fundamental e espaços de recobrimento
(exemplos são \cite{fundamental_group_FULTON,fundamental_group_MASSEY,grupo_fundamental_ELON}
e a segunda parte de \cite{munkres}). A razão é a seguinte: como
provaremos em seguida, toda fibração (como aquelas oriundas dos fibrados
localmente triviais) define uma sequência exata, a qual nos permite
relacionar o grupo de homotopia das fibras, com os grupos de homotopia
da base e do espaço total. 

Ora, quando se quer introduzir um assunto, procura-se motivar seu
interesse mostrando que a teoria desenvolvida contempla alguns teoremas.
Para demonstrá-los, necessita-se de poderosas ferramentas, das quais
sequências exatas constituem exímio exemplo. Por outro lado, o grupo
fundamental é o mais simples dos grupos de homotopia, ao passo que
os espaços de recobrimento são os mais simples fibrados localmente
triviais. Assim, ao se juntar os dois, tem-se uma teoria simples,
mas dotada de boas ferramentas.

Vamos à construção da sequência comentada. Iniciamos observando que,
uma vez fixado um ponto base $x_{o}\in X$, todo mapa $f:(X,x_{o})\rightarrow(Y,y_{o})$,
com $y_{o}=f(x_{o})$, induz uma inclusão natural $\imath$ da fibra
$X_{o}=f^{-1}(y_{o})$ em $X$. Tem-se as sequências exatas longas
de fibrações associadas a $f$ e à $\imath$. A primeira é responsável
por relacionar os grupos da base $Y$ com os grupos do espaço total
$X$, ao passo que a segunda tem o papel $\imath$ relacionar os grupos
da fibra $X_{o}$ com os de $X$. Portanto, ao se ligar tais sequências
obtém-se um vínculo entre todos estes grupos. 

Tal ligação é obtida no caso especial em que $f$ é uma fibração.
Com efeito, esta hipótese faz com que o \emph{pushout} homotópico
associado ao \emph{mapping cocone} de $f$ produza espaços com mesmo
tipo de homotopia se calculado com (ou sem) o uso de uma resolução
fibrante para $f$. Se não a usamos (isto é, se calculamos tal \emph{pushout}
de maneira estrita), obtemos $\mathrm{ccone}(f)\simeq X_{o}$, de
modo que as sequências exatas de $f$ e de $\imath$ se conectam:$$
\xymatrix{\cdots \ar[r] & \Omega \mathrm{ccone}(f) \ar@{~>}[d] \ar[r] & \Omega X  \ar@{~>}[d]^{id} \ar[r] & \Omega Y \ar[d] \ar[r] & \mathrm{ccone}(f)  \ar@{~>}[d]  \ar[r] & X \ar@{~>}[d]^{id} \ar[r]^f & Y \\
\cdots \ar[r] & \Omega X_{o} \ar[r] & \Omega X \ar[r] & \mathrm{ccone}(\imath) \ar[r] & X_{o} \ar[r]_{\imath} & X}
$$ 

Agora, tomando $\pi_{i}$ no diagrama anterior, obtemos o diagrama
abaixo, o qual é formado de cinco setas, sendo as das extremidades
isomorfismos. Consequentemente, por um resultado conhecido como \emph{lema
dos cinco} (confira \cite{Hilton_homological_algebra,MACLANE_homology,Rotman_homological_algebra,Weibel_homological_algebra}),
a seta do meio também é um isomorfismo.$$
\xymatrix{\pi_{i+1}(\mathrm{ccone}(f)) \ar[d] \ar[r] & \pi _{i+1}(X)  \ar[d] \ar[r] & \pi _{i+1}(Y) \ar[d] \ar[r] & \pi_i(\mathrm{ccone}(f))  \ar[d]  \ar[r] & \pi _i(X) \ar[d]\\
\pi_{i+1}(X_{o}) \ar[r] & \pi _{i+1}(X) \ar[r] & \pi_{i+1}(X,X_o) \ar[r] & \pi _i(X_o) \ar[r] & \pi _i(X)}
$$ 

Utilizando de tal identificação na sequência exata longa de $\imath$,
chegamos, finalmente, naquela que cumpre o papel que procurávamos:$$
\xymatrix{\cdots \ar[r] & \pi_{i+1}(X_{o}) \ar[r] & \pi _{i+1}(X) \ar[r] & \pi_{i+1}(Y) \ar[r] & \pi _i(X_o) \ar[r] & \pi _i(X) \ar[r] & \cdots}
$$

Ilustremos a aplicabilidade de tal sequência por meio de dois exemplos.
No segundo deles, esclarecemos a relação entre espaços de recobrimento
e grupo fundamental que a pouco realçamos.
\begin{example}
Dos exemplos da última subsecção segue-se que, para todo fibrado localmente
trivial $f:X\rightarrow Y$ que possui uma secção $s:Y\rightarrow X$,
a correspondente sequência curta$$
\xymatrix{0 \ar[r] & \pi_{i}(X_{o}) \ar[r] & \pi _{i}(X) \ar[r] & \pi_{i}(Y) \ar[r] & 0}
$$é exata, seja qual for o $i$. Se $i\geq2$, então os grupos de homotopia
são abelianos, de modo que podemos utilizar do \emph{splliting lemma
}para concluirmos $\pi_{i}(X)\simeq\pi_{i}(X_{o})\oplus\pi_{i}(Y)$.
Particularmente, se as fibras são contráteis, então $\pi_{i}(X)\simeq\pi_{i}(Y)$.
Este é o caso dos fibrados vetoriais (que englobam os fibrados tangentes
das variedades), cujas fibras são espaços lineares.

\begin{example}
De maneira dual, se $f:X\rightarrow Y$ possui uma retração, então
a sequência curta abaixo apresentada é exata e, em particular, quando
$i\geq2$ podemos aplicar o \emph{splliting lemma}.$$
\xymatrix{0 \ar[r] & \pi_{i}(X) \ar[r] & \pi _{i}(Y) \ar[r] & \pi_{i-1}(X_o) \ar[r] & 0}
$$Suponhamos que $f$ é um \emph{espaço de recobrimento}. Em outras
palavras, suponhamos ser $f$ um fibrado localmente trivial com fibras
discretas. Isto não garante nem mesmo que ele possui uma retração,
mas faz com que cada $\pi_{i}(f)$ a tenha, o que é suficiente para
que as sequências curtas acima continuem exata. Em particular, para
$i\geq2$, faz com que $\pi_{i-1}(X_{o})=0$, de modo que $\pi_{i}(X)\simeq\pi_{i}(Y)$.
Num âmbito geral não se tem muita informação quando $i=1$. Ocorre
que, sob condições bastante razoáveis, se um espaço $Y$ admite um
recobrimento $X$, então também admite um recobrimento simplesmente
conexo $X'$. Daí, a exatidão da sequência anterior para fornece uma
$\pi_{1}(Y)\simeq\pi_{0}(X_{o}')$. Em suma, para boa parte dos espaços
que admitem um recobrimentos, calcular grupo fundamental é contar
o número de elementos de alguma fibra. Por exemplo, a aplicação $\exp:\mathbb{R}\rightarrow\mathbb{S}^{1}$,
definida por $\exp(t)=(\cos t,\sin t)$ é recobrimento para o círculo.
Como a reta é simplesmente conexa, $\pi_{1}(\mathbb{S}^{1})$ está
em bijeção com $\exp^{-1}(0,1)\simeq\mathbb{Z}$. De outro lado, como
$\pi_{i}(\mathbb{R})\simeq\pi_{i}(\mathbb{S}^{1})$, vê-se que todos
os grupos de homotopia de ordem superior do círculo são triviais. 
\end{example}
\end{example}


\begin{thebibliography}{Bibliografia}
\bibitem{Adachi_embedding} Adachi, M. ``Embeddings and Immersions'',
Translations of Mathematical Monographs, vol. 124, 1993.

\bibitem{Aguiar_monoidal_categories} Aguiar, M., Mahajan, S., \emph{Monoidal
Functors, Species and Hopf Algebras}

\bibitem{categorification} Baez, J. C., Dolan, J., \emph{Categorification},
arXiv:math/9802029

\bibitem{morse_homology} Banyaga, A., Hurtubise, D., \emph{Lectures
on Morse Homology}, Kluer Academic Publishers, 2004

\bibitem{Bernays_axiomatic_set} Bernays, P. \emph{Axiomatic Set Theory},
North-Holand Publishing Campany, Amsterdam, 1958. 

\bibitem{non_abelian_algebraic_topology} Brown, R., Higgins, P. J.,
Sivera, R. \emph{Nonabelian Algebraic Topology}, European Mathematical
Society, 2011. 

\bibitem{homological_algebra_CARTAN} Cartan, H., Eilenberg, S., \emph{Homological
Algebra}, Princeton University Press, 1956.

\bibitem{weak_omega_categories_CHENG_TOM} Cheng, E., Leinster, T.,
``Weak $\omega$-categories via terminal coalgebras'', arXiv:1212.5853.

\bibitem{higher_categories_CHENG} Cheng, E., Lauda, A. \emph{Higher
Dimensional Categories: an illustred guide book. }University of Cambridge.

\bibitem{Dold_topologia_algebrica} Dold, A., \emph{Lectures on Algebraic
Topology}, Springer-Verlag, 1980.

\bibitem{equivalence_not_model} Dugger, D., Shipley, B., ``A curious
example of triangulated-equivalent model categories that is which
are not Quillen equivalent'', \textbf{Algebraic \& Geometric Topology},
n.9, 2009, p. 135-166. 

\bibitem{model_categories_KAN-1} Dwyer, W. G., Hirshhorn, P. S.,
Kan, D. M., \emph{Model Categories and More Abstract Homotopy Theory:
A Work in What We Like to Think of as Progress}.

\bibitem{simplicial_localization_1} Dwyer, W. G., Kan, D. M., ``Simplicial
Localization of Categories'', \textbf{Journal of Pure and Applied
Algebra}, 17, 1980, p. 267-284.

\bibitem{simplicial_localization_2} Dwyer, W. G., Kan, D. M., ``Calculating
Simplicial Localization'', \textbf{Journal of Pure and Applied Algebra},
18, 1980, p. 17-35.

\bibitem{simplicial_localization_3} Dwyer, W. G., Kan, D. M., ``Function
Complexes in Homotopical Algebra'', \textbf{Topology}, 19, 1980,
p. 427-440.

\bibitem{Dwyer_model_categories} Dwyer, W. G, Spalinski, J., \emph{Homotopy
Theories and Model Category.}

\bibitem{eckmann_hilton_argument} Eckmann, B., Hilton, P. J., ``Group-Like
Structures in General Categories I: Multiplications and Comultiplications'',
\textbf{Math. Annalen}, 142, p. 227-255, 1962. 

\bibitem{axiomas_homologia} Eilenberg, S., Steenrod, N., \emph{Foundations
of Algebraic Topology}, Princeton University Press, Princeton, 1952.

\bibitem{fundamental_group_homotopy_colimits} Farjoun, E. D., ``Fundamental
Group of Homotopy Colimits'', \textbf{Advances in Mathematics}, 182,
2004, p. 1-27

\bibitem{Filipkiewicz} Filipkiewicz, R. P., \textquotedbl Isomorphism
between diffeomorphism groups\textquotedbl , \textbf{Ergodic Theory
Dynamical System}, vol. 2, 1982, 159-171.

\bibitem{Freyd_abelian_categories} Freyd, P., \emph{Abelian Categories:
An introduction to the Theory of Functors}, Reprints in Theory and
Applications of Categories, n.3, 2003, p. 23-164.

\bibitem{fundamental_group_FULTON} Fulton, W., \emph{Algebraic Topology:
A First Course}, Springer, 1997.

\bibitem{calculus_fractions_GABRIEL} Gabriel, P., Zisman, M., \emph{Calculus
of Fractions and Homotopy Theory}, Springer-Verlag, 1967. 

\bibitem{godement_feixes} Godement, R., \emph{Topologie Algébrique
et Théorie des Faisceaux}, Hermann, 1958.

\bibitem{simplicial_homotopy_JARDINE} Goerss, P. G., Jardine, J.
F., \emph{Simplicial Homotopy Theory}, Birkhäuser Verlag, 1999.

\bibitem{guillemin_teoria_singularidades} Golubitsky, M., Guillemin,
V., \emph{Stable Maps and Their Singularities}, GTM 14 Springer-Verlag.

\bibitem{gray_homotopy} Gray, B. \emph{Homotopy Theory: An introduction
to Algebraic Topology}, Pure and Applied Mathematics, Academic Press,
1975.

\bibitem{Greenberg_topologia_algebrica} Greenberg, M. J. \emph{Lectures
on Algebraic Topology}.

\bibitem{Groth_infinity_categories} Groth, M., \emph{A short course
on $\infty$-categories}, arXiv:1007.2925.

\bibitem{Heller_homological_algebra} HELLER, A., ``Homological Algebra
in Abelian Categories'', \textbf{Ann. of Math}. 68 (1958), pp. 484-525

\bibitem{Hovey_model_categories} Hovey, M. \emph{Model Categories},
1991.

\bibitem{Hilton_homological_algebra} Hilton, J. B., Stammbach, U.
\emph{A course in Homological Algebra, }GTM 4, Springer-Verlag, 1970.

\bibitem{Hirsch} Hirsch, M. W. ``Differential Topology'', Springer-Verlag,
1976.

\bibitem{Hirschhorn_model_categories} Hirschhorn, P. S., \emph{Model
Categories and Their Localizations}, AMS, 2003.

\bibitem{Husemoller} Husemoller, D. ``Fibre Bundles'', GTM 20,
Springer-Verlag, 1993.

\bibitem{Hu homotopia} Hu, S. \emph{Homotopy Theory}, Academic Press,
1959.

\bibitem{Hu_homologia} Hu, S. \emph{Homology Theory}, Holden-Day,
1966.

\bibitem{Irwin_sistemas dinamicos} Irwin, M., C., \emph{Smooth Dynamical
Systems, }Advanced Series in Nonlinear Dynamics, vol. 17, World Scientific,
2001.

\bibitem{Joyal_quasi_categories} Joyal, A., \emph{Notes on Quasi-Categories}.

\bibitem{Kan_c.s.s_complexes} Kan, D. M., ``On c.s.s Complexes'',
\textbf{American Journal of Mathematics}, Vol. 79, No. 3 (Jul., 1957),
pp. 449-476

\bibitem{Kelley_general_topology} Kelley, J. L. \emph{General Topology},
GTM 27, Springer-Verlag, 1991.

\bibitem{KERVAIRE} Kervaire, M. A. ``A Manifold with does not admit
any Differentiable Structure'', \textbf{Commentarii Mathematici Helvetici},
34, 1960, pp. 304-312.

\bibitem{LANG_algebra} Lang, S., \emph{Algebra}, GTM 211, Springer-Verlag,
2000.

\bibitem{lang_variedades} Lang, S., \emph{Introduction to Differentiable
Manifolds}, Springer, 2000.

\bibitem{survey_n_categories_TOM} Leinster, T. ``A Survey of Definitions
of $n$-Category'', \textbf{Theory and Applications of Categories}
10 (2002), 1-70.

\bibitem{category_theory_TOM} Leinster, T. \emph{Basic Category Theory},
Cambridge University Press, 2014.

\bibitem{higher_categories_TOM} Leinster, T. \emph{Higher Operads,
Higher Categories}, Cambridge University Press.

\bibitem{grupo_fundamental_ELON} Lima, E. L., \emph{Grupo Fundamental
e Espaços de Recobrimento}, Projeto Euclides, IMPA.

\bibitem{Higher_topos_LURIE} Lurie, J., \emph{Higher Topos Theory},
Princeton University Press, 2009.

\bibitem{MACLANE_algebra} Mac Lane, S., Birckoff, G. \emph{Algebra},
AMS Chelsea Publishing, 1999. 

\bibitem{MACLANE_categories} Mac Lane, S. \emph{Categories for the
Working Mathematician}, Springer, 1991.

\bibitem{MACLANE_homology} Mac Lane, S. \emph{Homology}, Springer,
1991.

\bibitem{fundamental_group_MASSEY} Massey, S. M., \emph{Algebraic
Topology: An Introduction}, Springer, 1974.

\bibitem{Morse_matsumoto} Matsumoto, Y., \emph{An Introduction to
Morse Theory}, AMS, 1997.

\bibitem{MAY_1} May, J. P. \emph{A Concise Course on Algebraic Topology},
Chicago University Press, 1999.

\bibitem{MAY_4} May, J. P., Ponto, K., \emph{More Concise Algebraic
Topology}, Chicago University Press, 2012.

\bibitem{MAY_3} May, J. P. \emph{Simplicial Objects in Algebraic
Topology}, Chicago University Press, 1967.

\bibitem{MAY_2} May, J. P., Sigerdssan, J. \emph{Parametrized Homotopy
Theory, }AMS., vol 10, 2006.

\bibitem{Milnor_morse} Milnor, J. W., \emph{Morse Theory}, Princeton
University Press, Princeton, 1966.

\bibitem{Milnor_classes_caract} Milnor, J. W., Stasheff, J. D., \emph{Charactertistic
Classes}, Princeton University Press, 1974.

\bibitem{MILNOR1}  Milnor, J. On manifolds homeomorphics to the 7-sphere.
\textbf{Annals of Mathematics}, vol. 64, n.2, 1956, pp. 399-405.

\bibitem{Mitchell_categories-1} Mitchell, B. \emph{Theory of Categories},
Academic Press, 1965.

\bibitem{munkres} Munkres, J., \emph{Topology}, Pearson, 2000.

\bibitem{Nicolaescu_Morse} Nicolaescu, L., \emph{An Invitation to
Morse Theory}, Springer, 2000.

\bibitem{Palis_sistemas_dinamicos} Palis, J., Melo, W., \emph{Geometric
Theory of Dynamical System: An Introduction}, Springer-Verlag, 1982.

\bibitem{Polcino} Polcino, C. M., Sehgal, S., K., \emph{An Introduction
to Group Rings}, Springer Science \& Business Media, 2002 

\bibitem{simplicial_sets_RIEHL} Riehl, E. \emph{A Leisurely Introduction
to Simplicial Sets}, 2011.

\bibitem{categorical_homotopy} Riehl, E. \emph{Categorical Homotopy
Theory}, Cambridge University Press, 2014.

\bibitem{Rotman_homological_algebra} Rotman, J. J., \emph{An Introduction
to Homological Algebra}, Springer, 2000.

\bibitem{Spanier} Spanier, E. H. ``Algebraic Topology'', McGraw
Hill, 1966;

\bibitem{Steenrod} Steenrod, N. \textquotedbl The topology of Fibre
Bundle\textquotedbl , Princeton University Press, 1951.

\bibitem{convenient_category_steenrod} Steenrod, N. ``A convenient
category of topological spaces'', \textbf{Michigan Math. Journal},
14, 1967. 

\bibitem{Steiner_complicial_sets_1} Steiner, R., \emph{The algebra
of the nerves of omega-categories}, arXiv:1307.4236.

\bibitem{Steiner_complicial_sets_2} Steiner, R., \emph{Complicial
structures in the nerves of omega-categories, }arXiv:1308.1532

\bibitem{Street_oriented_complex} Street, R.,\emph{ The algebra of
oriented complexes, }\textbf{Journal of Pure And Applied Algebra},
49, 1987, p. 283-335.

\bibitem{Street_fillers_nerves} Street, R.\emph{ Fillers for Nerves},
\textbf{Lecture Notes in Mathematics}, 1348, p. 337-341, 1988.

\bibitem{homotopia_strom} Strom, J., \emph{Modern Homotopy Theory},
Graduate Studies in Mathematics, vol. 127, AMS, 2011.

\bibitem{CGWH} Strickland, N. P., \emph{The Category of CGWH Spaces}.

\bibitem{Swan_sheaves} Swan, R. S., \emph{The Theory of Sheaves},
Chicago University Press, 1964.

\bibitem{TAMURA} Tamura, I. ``8-manifolds admitting no differentiable
structure'', \textbf{J. Math. Soc. Japan}, vol. 13, n. 4, 1961, pp.
377-382. 

\bibitem{Tennison_sheaf} Tennison, B. R., \emph{Sheaf Theory}, Cambridge
University Press, 1975.

\bibitem{weak_n_categories_TRIMBLE} Trimble, T., ``What are 'fundamental
grupoids'?'', 1999, seminar at DPMMS, Cambridge, 24 August 1999.

\bibitem{Verity_complicial_sets_0} Verity, D. ``Complicial Sets:
Characterising the simplicial nerves of strict $\omega$-categories'',
\textbf{Mem. Amer. Math. Soc.}, 193, n.905, 2008.

\bibitem{Verity_complicial sets_1} Verity, D., \emph{Weak Complicial
Sets, Part I: Basic Homotopy Theory}, arXiv:math/0604414.

\bibitem{Verity_complicial_sets_2} Verity, D., \emph{Weak Complicial
Sets, Part II: Nerves of Complicial Gray-Categories}, arXiv:math/0604416.

\bibitem{Weibel_homological_algebra} Weibel, C. A., \emph{An Introduction
to Homological Algebra}, Cambridge University Press, 1994.

\bibitem{whitehead_homotopy}Whitehead, G. W. \emph{Elements of Homotopy
Theory}, Springer-Verlag, 1978.

\bibitem{whittaker}Whittaker, J. V. \textquotedbl On isomorphic
groups and homeomorphic spaces\textquotedbl , \textbf{Ann. of Math.},\textbf{
}vol. 2, n. 78, 1963, 74-91.
\end{thebibliography}
\end{document}